\renewcommand{\emptyset}{\varnothing}
\newcommand{\ZZ}{\mathbb{Z}}
\newcommand{\PP}{\mathbb{P}}
\newcommand{\QQ}{\mathbb{Q}}
\newcommand{\cO}{{\mathscr{O}}}
\newcommand{\cF}{{\mathscr{F}}}
\newcommand{\g}{{\mathrm{g}}}
\newcommand{\kk}{{\mathsf{k}}}
\newcommand{\bkk}{{\bar{\mathsf{k}}}}
\newcommand{\Gal}{{\mathbf{G}}}
\newcommand{\dd}{{\mathrm{d}}}
\newcommand{\rG}{{\mathrm{G}}}
\newcommand{\rD}{{\mathrm{D}}}
\newcommand{\rR}{{\mathrm{R}}}
\DeclareMathOperator{\Br}{\mathrm{Br}}
\DeclareMathOperator{\alb}{\mathrm{alb}}
\DeclareMathOperator{\Alb}{\mathrm{Alb}}
\DeclareMathOperator{\Jac}{\mathrm{Jac}}
\DeclareMathOperator{\AJ}{\mathrm{AJ}}
\DeclareMathOperator{\NS}{\mathrm{NS}}
\DeclareMathOperator{\CH}{\mathrm{CH}}
\DeclareMathOperator{\Ext}{\mathrm{Ext}}
\DeclareMathOperator{\Hom}{\mathrm{Hom}}
\DeclareMathOperator{\Sing}{\mathrm{Sing}}
\DeclareMathOperator{\Spec}{\mathrm{Spec}}
\DeclareMathOperator{\Proj}{\mathrm{Proj}}
\DeclareMathOperator{\rank}{\mathrm{rank}}
\DeclareMathOperator{\Pic}{\mathrm{Pic}}
\DeclareMathOperator{\Cl}{\mathrm{Cl}}
\DeclareMathOperator{\Bs}{\mathrm{Bs}}
\DeclareMathOperator{\rk}{\mathrm{rk}}
\newcommand{\Tor}{\operatorname{Tor}}
\newcommand{\larrow}{\longrightarrow}
\newcommand{\bN}{{\mathbf{N}}}
\newcommand{\bX}{{\mathbf{X}}}
\DeclareMathOperator{\Gr}{\mathrm{Gr}}
\newcommand{\tX}{{\tilde{X}}}
\newcommand{\tpsi}{\psi}
\DeclareMathOperator{\Bl}{\mathrm{Bl}}
\DeclareMathOperator{\OGr}{\mathrm{OGr}}
\newcommand{\cP}{{\mathscr{P}}}
\newcommand{\bT}{{\mathbf{T}}}
\DeclareMathOperator{\Sym}{\mathrm{Sym}}
\DeclareMathOperator{\Ker}{\mathrm{Ker}}
\newcommand{\cN}{{\mathscr{N}}}
\newcommand{\cU}{{\mathscr{U}}}
\newcommand{\tL}{{\tilde{L}}}
\newcommand{\rc}{{\mathrm{c}}}
\newcommand{\rch}{{\mathrm{ch}}}
\newcommand{\rtd}{{\mathrm{td}}}
\newcommand{\cE}{{\mathscr{E}}}
\newcommand{\cM}{\mathcal{M}}
\newcommand{\tC}{\tilde{C}}
\newcommand{\tS}{\tilde{S}}
\newcommand{\tcM}{\widetilde{\mathcal{M}}}
\newcommand{\tcS}{\widetilde{\mathcal{S}}}
\newcommand{\tY}{{\tilde{Y}}}
\newcommand{\tD}{{\tilde{D}}}
\newcommand{\hX}{{\hat{X}}}
\newcommand{\hY}{{\hat{Y}}}
\newcommand{\hE}{{\hat{E}}}
\newcommand{\hQ}{{\hat{Q}}}
\newcommand{\barX}{{\bar{X}}}
\newcommand{\barY}{{\bar{Y}}}
\newcommand{\barE}{{\bar{E}}}
\newcommand{\barF}{{\bar{F}}}
\newcommand{\barH}{{\bar{H}}}
\DeclareMathOperator{\codim}{\mathrm{codim}}
\DeclareMathOperator{\id}{\mathrm{id}}
\DeclareMathOperator{\LGr}{\mathrm{LGr}}
\newcommand{\dc}{d_{\mathrm{C}}}
\newcommand{\dtor}{d_{\mathrm{T}}}
\newcommand{\cC}{\mathscr{C}} 
\newcommand{\cL}{\mathscr{L}} 
\newcommand{\rF}{\mathrm{F}}
\newcommand{\rM}{\mathrm{M}}
\DeclareMathOperator{\GTGr}{G_2Gr}
\DeclareMathOperator{\SL}{SL}
\newcommand{\nil}{\mathrm{null}}
\newcommand{\alg}{\mathrm{alg}}
\newcommand{\Db}{\mathrm{D}^{\mathrm{b}}}
\renewcommand\labelenumi{\rm (\roman{enumi})}
\renewcommand\theenumi{\rm (\roman{enumi})}
\renewcommand{\theequation}
{\arabic{section}.\arabic{subsection}.\arabic{equation}}
\renewcommand{\thesubsection}{\arabic{section}.\arabic{subsection}}
\theoremstyle{plain}
\newtheorem{theorem}{Theorem}[section]
\newtheorem{lemma}[theorem]{Lemma}
\newtheorem{proposition}[theorem]{Proposition}
\newtheorem{corollary}[theorem]{Corollary}
\theoremstyle{definition}
\newtheorem{definition}[theorem]{Definition}
\newtheorem*{definition*}{Definition}
\newtheorem{example-remark}{Remark-Example}
\newtheorem*{notation*}{Notation}
\newtheorem{remark}[theorem]{Remark}
\title{Rationality of Fano threefolds over non-closed fields}
\author{Alexander Kuznetsov}
\thanks{The authors were partially supported by the HSE University Basic Research Program.}
\address{\parbox{0.9\textwidth}{Steklov Mathematical Institute of Russian Academy of Sciences, Moscow, Russia 
\\[1pt]
Interdisciplinary Scientific Center J.-V. Poncelet (CNRS UMI 2615), Moscow, Russia
\\[1pt]
Laboratory of Algebraic Geometry, NRU HSE, Moscow, Russia\\}}
\email{akuznet@mi-ras.ru}
\author{Yuri Prokhorov}
\address{\parbox{0.9\textwidth}{Steklov Mathematical Institute of Russian Academy of Sciences, Moscow, Russia 
\\[1pt]
Laboratory of Algebraic Geometry, NRU HSE, Moscow, Russia
\\[1pt]
Department of Algebra, Moscow State University, Moscow, Russia
\\}}
\email{prokhoro@mi-ras.ru}
\date{}
\begin{document}

\begin{abstract}
We give necessary and sufficient conditions for unirationality and rationality of Fano threefolds
of geometric Picard rank~1 over an arbitrary field of zero characteristic.
\end{abstract}

\maketitle

\tableofcontents

\section{Introduction}

\subsection{The main result}

The goal of this paper is to explore the questions of rationality for smooth Fano threefolds
with geometric Picard rank one over an arbitrary field~$\kk$ of zero characteristic.
A~$\kk$-rational variety is a fortiori rational over the algebraic closure~$\bkk$ of~$\kk$.
Therefore we restrict our attention to the following eight families of geometrically rational Fano varieties:
\begin{itemize}
\item $\PP^3$, a projective 3-space;
\item $Q^3$, a 3-dimensional quadric in $\PP^4$;
\item $V_4$, a quartic del Pezzo threefold in $\PP^5$;
\item $V_5$, a quintic del Pezzo threefold in $\PP^6$;
\item $X_{12}$, a prime Fano threefold of genus~7 in $\PP^{8}$;
\item $X_{16}$, a prime Fano threefold of genus~9 in $\PP^{10}$;
\item $X_{18}$, a prime Fano threefold of genus~10 in $\PP^{11}$;
\item $X_{22}$, a prime Fano threefold of genus~12 in $\PP^{13}$;
\end{itemize}
see~\S\ref{subsection:fano} for a geometric description of varieties in these families.

The precise question that we address in the paper is: 
under what conditions a $\kk$-form of one of these varieties is $\kk$-rational or $\kk$-unirational 
(cf.~\cite[Exercises~2.3, 2.4]{Manin:arFano:CM}).
A complete answer to this question is given in the following theorem, which is the main result of our paper.

\begin{theorem}
\label{theorem:main}
Let $\kk$ be a field of characteristic zero.

\begin{enumerate}
\item 
\label{main:v5}
If $X$ is a $\kk$-form of $V_5$ then $X$ is $\kk$-rational.
\item 
\label{main:p3q3x12x22}
If $X$ is a $\kk$-form of $\PP^3$, $Q^3$, $X_{12}$, or $X_{22}$ then 
$X$ is $\kk$-rational if and only if $X(\kk) \ne \varnothing$.
\item 
\label{main:v4x16x18}
If $X$ is a $\kk$-form of $V_4$, $X_{16}$, or $X_{18}$ then 
$X$ is $\kk$-unirational if and only if $X(\kk) \ne \varnothing$.
Moreover, 
\begin{enumerate}
\item 
\label{main:v4}
a $\kk$-form $X$ of $V_4$ is $\kk$-rational if and only if $X(\kk) \ne \varnothing$ and $\rF_1(X)(\kk) \ne \varnothing$;
\item 
\label{main:x16}
a $\kk$-form $X$ of $X_{16}$ is $\kk$-rational if and only if $\rF_3(X)(\kk) \ne \varnothing$;
\item 
\label{main:x18}
a $\kk$-form $X$ of $X_{18}$ is $\kk$-rational if and only if $X(\kk) \ne \varnothing$ and $\rF_2(X)(\kk) \ne \varnothing$.
\end{enumerate}
\end{enumerate}
\end{theorem}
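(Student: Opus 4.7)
The plan is to treat the three parts of Theorem~\ref{theorem:main} in turn, splitting each into a case-by-case analysis of the relevant families. The easy direction throughout is the necessity of $X(\kk)\ne\varnothing$: a $\kk$-unirational variety has a $\kk$-point because, given a dominant $\kk$-rational map $\PP^3\dashrightarrow X$, the image of any $\kk$-point of $\PP^3$ outside the indeterminacy locus lies in $X(\kk)$, and such $\kk$-points are plentiful since $\kk$ is infinite.

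For part~\xref{main:v5}, I would start from the classical fact that every $\kk$-form of $V_5$ carries a $\kk$-line, and conclude $\kk$-rationality via the standard Sarkisov-type link obtained by projecting from such a line. For part~\xref{main:p3q3x12x22}, sufficiency is handled family by family: a $\kk$-form of $\PP^3$ with a $\kk$-point is a trivial Severi--Brauer variety, hence isomorphic to $\PP^3_\kk$; $Q^3$ becomes rational by stereographic projection from a $\kk$-point; and $X_{12}$, $X_{22}$ succumb to the Iskovskikh--Mukai double projection from a $\kk$-point, producing a birational map to $\PP^3$ which is automatically defined over~$\kk$ because its ingredients are.

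For part~\xref{main:v4x16x18}, the unirationality statements reduce to explicit $\kk$-rational parametrizations starting from a $\kk$-point: for $V_4$ via a conic bundle structure obtained from projection off a $\kk$-point; for $X_{16}$ and $X_{18}$ via their Mukai realizations as linear sections of homogeneous spaces, where one produces auxiliary curves through the $\kk$-point to cover $X$ by $\kk$-rational surfaces. Sufficiency of the refined Fano-scheme conditions (a)--(c) is then proved by birational link constructions: a $\kk$-line on~$V_4$ yields a birational projection to $\PP^3$; a $\kk$-point of~$\rF_3(X_{16})$ provides a distinguished linear subspace serving as the center of a Sarkisov link to~$\PP^3$; and on $X_{18}$ one combines the $\kk$-point with the element of $\rF_2(X)(\kk)$ to build a two-step link.

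The main obstacle is the \emph{necessity} of the Fano-scheme conditions in (a)--(c): if $X$ is $\kk$-rational, then the relevant Fano scheme must carry a $\kk$-point. I would approach this via intermediate Jacobian obstructions in the spirit of Clemens--Griffiths, adapted to non-closed fields following Benoist--Wittenberg, Hassett--Tschinkel and others: for each of $V_4$, $X_{16}$, $X_{18}$, the component of the Fano scheme of interest is (birational to) a torsor under an abelian variety constructed from the intermediate Jacobian of~$X_\bkk$ together with its Galois-equivariant principal polarization, and $\kk$-rationality forces the class of this torsor to vanish, producing a $\kk$-point. Identifying the correct abelian variety and the correct torsor class in each family, and matching the output with the intermediate Jacobian viewed as a stable birational invariant, will be the technical heart of the argument.
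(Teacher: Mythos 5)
Your outline has several genuine gaps. First, for $V_5$ you invoke as ``classical'' that every $\kk$-form carries a $\kk$-line; this is not obvious (the Hilbert scheme of lines on $V_5$ is a form of $\PP^2$, which a priori could be a nontrivial Severi--Brauer surface) and it is not how the result is proved here: the paper first shows the ample generator $H$ is defined over~$\kk$ via an Euler-characteristic/Brauer-class argument (Lemma~\ref{lemma:h-gal}, Lemma~\ref{lemma:dp5-h}) and then fibers $X$ into quintic del Pezzo surfaces over $\PP^1$, concluding by Enriques' theorem. Second, for $X_{12}$ and $X_{22}$ the ``double projection from a $\kk$-point'' only yields the desired link when the point lies on no line ($\rF_1(X,x)=\varnothing$); an arbitrary $\kk$-point need not satisfy this, and to find one you need Zariski density of $\kk$-points, i.e.\ you must prove $\kk$-unirationality first. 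That step is a substantial part of the paper (Sarkisov links centered at lines, at singular conics, the double projection when three conjugate lines pass through $x$, and the new unirationality criterion of Proposition~\ref{proposition:q-fano}); your sketch of unirationality for $X_{16}$, $X_{18}$ (``cover $X$ by $\kk$-rational surfaces'' from the Mukai models) does not supply this. Also note that for $g=7$ the link from a point lands on a quintic del Pezzo threefold, not on $\PP^3$, and for $g=9$ the link centered at a $\kk$-point of $\rF_3$ is centered at the cubic curve itself (with separate treatment of the degenerate cubics), again landing on $V_5$.

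Third, and most importantly, the necessity direction in (a)--(c) is not that ``$\kk$-rationality forces the class of this torsor to vanish.'' The Benoist--Wittenberg theorem only gives that, for a rational $X$, each torsor $\Jac_\gamma(X)$ is isomorphic to some $\Pic^m(\Gamma)$, which is in general nontrivial. The vanishing of the torsor attached to $\rF_{\dtor}(X)$ is extracted by comparing the torsors attached to Hilbert schemes of different degrees (using degenerations of curves into lower-degree components, condition (H)), exploiting that $\rF_{\dc}(X)$ fibers over $\Gamma$ with rationally connected fibers so its torsor is $\Pic^1(\Gamma)$, and then a gcd argument with $m\dc-1$ and $2\g(\Gamma)-2$ (condition (N)). Moreover, all of this requires knowing that the relevant Abel--Jacobi maps are \emph{isomorphisms} of principally polarized abelian varieties, not merely isogenies; the paper establishes this through semiorthogonal decompositions of $\Db(X_\bkk)$ (Propositions~\ref{proposition:aj-c2-e} and~\ref{proposition:aj-cd}), including genuinely new descriptions of $\rF_3(X_{16})$ and $\rF_3(X_{18})$. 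Your proposal correctly identifies the Clemens--Griffiths/Benoist--Wittenberg framework but, as stated, the torsor-triviality step would fail and the identification of the Fano schemes with the required torsors is left unaddressed.
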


Here $\rF_d(X)$ denotes the Hilbert scheme of degree~$d$ (with respect to the ample generator of the Picard group of~$X_\bkk$) genus~0 curves on $X$.
The statement of the theorem is classical for~$\PP^3$ and~$Q^3$, and for~$V_4$ it has been proved in~\cite[Theorem~A]{BW2}.

The proof of this theorem takes up all the paper. 
Some parts are really easy.
The classical and well-known cases of forms of~$\PP^3$ and~$Q^3$ are discussed in Section~\ref{section:preliminaries}.
The criterion for quintic del Pezzo threefolds~$V_5$ can be considered as a higher-dimensional version of Enriques' theorem, 
and is deduced from it in Section~\ref{section:del-pezzo}, see Theorem~\ref{theorem:v5}.
The sufficiency of the rationality and unirationality conditions for quartic del Pezzo threefolds $V_4$ is also classical,
see Theorem~\ref{theorem:v4-unirational} in Section~\ref{section:del-pezzo}.
The other cases require more work.

In a subsequent paper~\cite{KP21} we discuss the question of $\kk$-rationality for geometrically rational Fano threefolds 
of geometric Picard number higher than~1.

\subsection{Rationality constructions}

The proofs of rationality and unirationality in the remaining cases of threefolds $X_{12}$, $X_{16}$, $X_{18}$, and $X_{22}$ 
is accomplished in Section~\ref{section:rationality-constructions}
after some preparations in Section~\ref{section:mmp-lemma}, 
where we remind some standard MMP results used in the paper and prove some criteria for unirationality,
and Section~\ref{section:sarkisov-links}, where we construct various Sarkisov links for these Fano threefolds.
For the reader's convenience we outline the argument of Section~\ref{section:rationality-constructions} here.
In what follows we denote by $g \in \{7,\, 9,\, 10,\, 12\}$ the genus of a prime Fano threefold $X = X_{2g - 2}$.

If $g \in \{7,12\}$ and there is a $\kk$-point $x \in X$ which is \emph{sufficiently general}, i.e., does not lie on any line, 
then the Sarkisov link with center at~$x$ (Theorem~\ref{th:sl:point}) 
transforms $X$ to a quintic del Pezzo threefold (for $g = 7$), or to $\PP^3$ (for $g = 12$), so rationality of~$X$ follows.

Similarly, for $g = 10$ the same link transforms $X$ to a sextic del Pezzo fibration with a rational 3-section.
If, moreover, the point $x$ is in a general position with respect to the conic corresponding to a $\kk$-point of $\rF_2(X)$,
the conic gives a 2-section of this fibration, and 
rationality of $X$ follows from~\cite[Proposition~8]{Add-Has-Tsch-VA}.

Finally, for~\mbox{$g = 9$}, let $C \subset X$ be a cubic curve defined over~$\kk$.
If $C$ is smooth and has no bisecants in~$X$, then the Sarkisov link with center at~$C$ (Theorem~\ref{th:sl:cubic})
transforms $X$ to a quintic del Pezzo threefold, so rationality follows.
If $C$ is the union of three lines meeting at a point, the double projection out of the meeting point
transforms $X$ to an intersection of three quadrics in~$\PP^6$ containing a plane,
and then the projection out of this plane gives a birational isomorphism onto~$\PP^3$ (Corollary~\ref{cor:projection-birational}).
In all other cases there exists a line on~$X$ defined over~$\kk$
and the Sarkisov link with center at it (Theorem~\ref{th:sl:line}) provides a birational isomorphism of~$X$ onto~$\PP^3$.

It is clear from the above explanation that for $g \in \{7,10,12\}$ for the proof of rationality 
it is essential to find a $\kk$-point of $X$ in a general position.
So, before proving rationality, we prove unirationality of $X$ under the assumption of existence of a $\kk$-point.
The proof uses more or less the same instruments as before:
Sarkisov link with center at a point (if the point does not lie on a line),
Sarkisov link with center at a line (if the point lies on a $\kk$-line),
Sarkisov link with center at a singular conic (if the point lies on two Galois-conjugate lines),
and the double projection from a point (if the point lies on three Galois-conjugate lines).
In the last case we use the unirationality criterion (Proposition~\ref{proposition:q-fano}) of Section~\ref{section:mmp-lemma}.

It should be mentioned that Sarkisov links of Theorems~\ref{th:sl:line}, \ref{th:sl:cubic}, and~\ref{th:sl:point} 
are well known over an algebraically closed field 
(except for the case of the Sarkisov link with center at a point and~\mbox{$g = 7$}, where we provide all necessary computations).
The case of the Sarkisov link with center at a singular conic (Theorem~\ref{th:sl:conic}) is not so standard,
so we discuss it with more details than the other links.

\subsection{Obstructions to rationality}

Finally, we explain how we prove that the conditions of rationality of Theorem~\ref{theorem:main} are necessary.
For forms of $V_5$ there is nothing to prove and for forms of $\PP^3$, $Q^3$, $X_{12}$, and $X_{22}$ the argument is standard.
So, the only interesting cases are those of varieties $V_4$, $X_{16}$, and $X_{18}$.
These cases are discussed in Section~\ref{section:obstructions}.

In fact, in this section we prove a more general necessary criterion for rationality (Theorem~\ref{theorem:obstruction-general}),
which is interesting by itself.
Roughly speaking, it tells the following.
Consider the Hilbert schemes~$\rF_d(X_\bkk)$, $d \in \ZZ_{>0}$, 
for the extension of scalars~$X_\bkk$ of~$X$ to the closure~$\bkk$ of~$\kk$,
parameterizing degree~$d$ genus~$0$ curves on~$X_\bkk$.
Assume that two particular Hilbert schemes $\rF_{\dc}(X_\bkk)$ and $\rF_{\dtor}(X_\bkk)$ among these 
have a structure of a rationally connected fibration over a curve $\Gamma_\bkk$ and of an abelian variety, respectively,
and such that the Abel--Jacobi maps induce isomorphisms 
\begin{equation}
\label{eq:intro:alb-jac}
\Alb(\rF_{\dc}(X_\bkk)) \cong \Pic^0(\Gamma_\bkk) \cong \Jac(X_\bkk) 
\qquad\text{and}\qquad
\rF_{\dtor}(X_\bkk) \cong \Jac(X_\bkk),
\end{equation}
where $\Jac(X_\bkk)$ is the intermediate Jacobian.
Assuming also a couple of technical conditions, we prove that if $X$ is $\kk$-rational, then~\mbox{$\rF_{\dtor}(X)(\kk) \ne \varnothing$}.

The proof of Theorem~\ref{theorem:obstruction-general} is based on the theory of intermediate Jacobians over non-closed fields
as developed by Benoist and Wittenberg in~\cite{BW,BW2}.
Using their results we associate to each Hilbert scheme $\rF_d(X)$ a torsor over $\Jac(X)$, the intermediate Jacobian of $X$ over~$\kk$.
Benoist and Wittenberg prove that if $X$ is $\kk$-rational, then every such torsor is isomorphic to~$\Pic^m(\Gamma)$,
a connected component of the Picard scheme of~$\Gamma$, where $\Gamma$ is a $\kk$-form of the curve $\Gamma_\bkk$.
Applying this to the Hilbert scheme of lines~$\rF_1(X)$ and using some general properties of this construction, 
we conclude that the torsor associated with~$\rF_{d}(X)$ is isomorphic to $\Pic^{md}(\Gamma)$.

On the other hand, the existence of a fibration~$\rF_{\dc}(X) \to \Gamma$ implies that 
the torsor associated with~$\rF_{\dc}(X)$ is isomorphic to $\Pic^1(\Gamma)$.
It follows that the class of the torsor~$\Pic^1(\Gamma)$ is annihilated by~\mbox{$m\dc - 1$}.
Since it is also killed by~\mbox{$2\g(\Gamma) - 2$}, it follows from a numerical assumption of the theorem that it is killed by $m\dtor$. 
Therefore, the torsor~$\rF_{\dtor}(X)$, isomorphic to~$\Pic^{m\dtor}(\Gamma)$, is trivial, hence has a $\kk$-point.

A funny feature of Theorem~\ref{theorem:obstruction-general} is that although it gives a criterion for rationality over~$\kk$,
all its conditions are formulated, and can be verified, over~$\bkk$.
In particular, one can use various results, such as Mukai's description of Fano threefolds, to check these conditions.
In fact, what we use, is the semiorthogonal decomposition of the bounded derived category $\Db(X_\bkk)$ of coherent sheaves on~$X_\bkk$.
The advantage of this approach is that besides proving abstract isomorphisms~\eqref{eq:intro:alb-jac}, 
it also shows that these isomorphisms are induced by the Abel--Jacobi maps, 
precisely as we need for Theorem~\ref{theorem:obstruction-general}.

We prove some general results in this direction in Section~\ref{section:sod-aj}; 
in particular, we show in Proposition~\ref{proposition:aj-c2-e} that the intermediate Jacobian
of a rationally connected threefold whose derived category has a semiorthogonal decomposition
with several exceptional objects and the derived category of a curve as components
is isomorphic (not just isogenous!) to the Jacobian of that curve.

At the first glance it may look that the conditions of Theorem~\ref{theorem:obstruction-general} are far too strong to be satisfied in practice.
However, it turns out that all these conditions hold for the three cases of our interest: $V_4$, $X_{16}$, and $X_{18}$.
This is proved in Section~\ref{section:sods} (see~\S\ref{subsection:genus-9} for $X_{16}$, \S\ref{subsection:genus-10} for $X_{18}$,
and \S\ref{subsection:v4} for~$V_4$).
The descriptions of the Hilbert schemes $\rF_3(X_{16})$ and $\rF_3(X_{18})$ of cubic curves on Fano threefolds of genus~9 and~10,
obtained in Theorems~\ref{theorem:x16-f3} and~\ref{theorem:x18-f3} are completely new.

\bigskip

As it is clear from the above discussion, the paper consists of two parts: 
the construction part (Sections~\ref{section:del-pezzo}--\ref{section:rationality-constructions})
and obstruction part (Sections~\ref{section:obstructions}--\ref{section:sods})
that use completely different techniques and are almost independent of each other.
The reader interested only in one of these parts can safely ignore the other.

It should be mentioned, that similar results 
were established independently by Hassett and Tschinkel, see~\cite{HT19:v4,HT19:x18}.

\subsection{Acknowledgements}

We would like to thank many people for their help.
First, this is Olivier Wittenberg, who informed us about the results of~\cite{BW2} and explained how they can be used.
We are also happy to thank 
Fabrizio Catanese, 
Vanya Cheltsov, 
Jean-Louis Colliot-Th\'el\`ene,
Sergey Gorchinskiy,
Christian Liedtke,
Dmitri Orlov,
Costya Shramov 
and other people for useful communications
that helped to improve our results.
We are also grateful to the anonymous referee for useful comments.

Finally, we would like to thank Scuola Internazionale Superiore di Studi Avanzati (Trieste)
and Max Planck Institute for Mathematics (Bonn),
where some parts of this work were accomplished.

\section{Preliminaries}
\label{section:preliminaries}

We work over a field $\kk$ of zero characteristic.
We denote by $\Gal$ the Galois group of $\bkk$ over $\kk$.

\subsection{Standard results}

We will frequently use the following standard facts.

\begin{lemma}[{\cite{nishimura-55}}]
\label{lemma:points}
Let $\psi \colon X \dashrightarrow Y$ be a rational map from a smooth variety $X$ to a proper variety~$Y$.
If $X(\kk) \ne \varnothing$ then $Y(\kk) \ne \varnothing$.
\end{lemma}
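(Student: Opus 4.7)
The plan is to argue by induction on $n = \dim X$. The base case $n = 0$ is immediate: a smooth zero-dimensional $\kk$-variety with a $\kk$-point has $\Spec \kk$ as one of its connected components, on which $\psi$ is necessarily regular, producing a $\kk$-point of $Y$.

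For the inductive step, take $x \in X(\kk)$. If $\psi$ is regular at~$x$, then $\psi(x) \in Y(\kk)$ and we are done. Otherwise, I would blow up the smooth point~$x$ to obtain $\pi \colon X' \to X$ with exceptional divisor $E = \pi^{-1}(x) \cong \PP^{n-1}_\kk$, and consider the composition $\psi \circ \pi \colon X' \dashrightarrow Y$. The key observation is that $X'$ is smooth (hence normal) and $Y$ is proper, so the valuative criterion of properness implies that the indeterminacy locus of $\psi \circ \pi$ has codimension at least~$2$ in~$X'$. In particular, this locus does not contain the divisor~$E$, so~$\psi \circ \pi$ restricts to a well-defined rational map $\psi' \colon E \dashrightarrow Y$ from the smooth $\kk$-variety~$E \cong \PP^{n-1}_\kk$ to the proper $\kk$-variety~$Y$. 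Since $E(\kk) \ne \varnothing$ and $\dim E < \dim X$, the induction hypothesis applied to $\psi'$ yields $Y(\kk) \ne \varnothing$.

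The main (and really only) non-trivial point is the codimension bound on the indeterminacy locus, which requires smoothness of~$X'$ (for which smoothness of~$X$ at the $\kk$-point~$x$ is essential, since the blow-up of a smooth point of a smooth variety is smooth) together with properness of~$Y$; everything else is bookkeeping. The smoothness hypothesis on~$X$ cannot be dropped — it is used precisely to guarantee that the blow-up produces a projective space $\PP^{n-1}_\kk$ as exceptional divisor, which supplies the new $\kk$-points needed to continue the induction.
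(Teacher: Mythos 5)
Your argument is correct. Note, however, that the paper does not prove this lemma at all: it is quoted with the citation \cite{nishimura-55}, so there is no in-paper proof to compare against. What you wrote is the standard modern proof of the Lang--Nishimura lemma (the blow-up induction usually attributed to E.~Szab\'o and popularized by Koll\'ar), and all the essential points are in place: the base case; the blow-up $\pi\colon X'\to X$ at the smooth $\kk$-point $x$ with $E=\pi^{-1}(x)\cong\PP^{n-1}_\kk$; the fact that the indeterminacy locus of $\psi\circ\pi$ has codimension at least~$2$ because $X'$ is normal and $Y$ is proper (at a codimension-one point the local ring is a DVR and the valuative criterion of properness, together with separatedness of $Y$ to glue representatives, extends the map over the generic point of any would-be divisorial component); hence $E$ is not contained in the indeterminacy locus and the restriction gives a rational map $E\dashrightarrow Y$ to which the inductive hypothesis applies. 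Your closing remark is also accurate in spirit: for the codimension bound normality of $X'$ already suffices, and the real role of smoothness of $X$ at $x$ is to make the blow-up well behaved with exceptional divisor $\PP^{n-1}_\kk$, which supplies the $\kk$-points needed to descend the induction (the statement is genuinely false without some smoothness hypothesis at the point).
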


\begin{lemma}
\label{lemma:unirationality-point}
If $X$ is $\kk$-unirational, then $X(\kk) \ne \varnothing$ and the set $X(\kk)$ is Zariski dense in~$X$.
\end{lemma}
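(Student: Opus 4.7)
The plan is to unwind the definition of $\kk$-unirationality and use density of $\kk$-points in projective space. By hypothesis there exists a dominant rational map $f \colon \PP^n \dashrightarrow X$ defined over~$\kk$; let $U \subset \PP^n$ be the (open, $\kk$-defined) domain of definition so that $f|_U \colon U \to X$ is a dominant morphism of $\kk$-varieties. Because $\kk$ has characteristic zero, it is infinite, hence the set $\PP^n(\kk)$ is Zariski dense in~$\PP^n$, and a fortiori $U(\kk)$ is Zariski dense in~$U$ (and nonempty).

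For the first assertion I would simply pick any $p \in U(\kk)$ and set $x := f(p) \in X(\kk)$. For density, I would argue as follows: given any nonempty open subset~$W \subset X$, dominance of $f|_U$ implies that $f(U) \cap W \ne \varnothing$, hence $U \cap f^{-1}(W)$ is a nonempty open subset of~$\PP^n$. By density of $\PP^n(\kk)$ in~$\PP^n$, this open subset contains a $\kk$-point~$q$, and then $f(q) \in W(\kk)$. Since $W$ was arbitrary, $X(\kk)$ meets every nonempty open subset of~$X$, i.e., $X(\kk)$ is Zariski dense in~$X$.

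There is no real obstacle here: the only points that need care are that $f$ is defined over~$\kk$ (so that $U$ and $f|_U$ are $\kk$-defined, and hence $f$ sends $\kk$-points to $\kk$-points) and that $\kk$ is infinite so that rational points of~$\PP^n$ are dense in every nonempty Zariski open subset. Both are immediate from the hypotheses.
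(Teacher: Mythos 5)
Your proof is correct and follows essentially the same route as the paper: restrict the dominant rational map $\PP^N_\kk \dashrightarrow X$ to its domain of definition, use density of $\kk$-points in $\PP^N_\kk$ (here justified via $\kk$ being infinite), and push them forward to get a dense set of $\kk$-points in $X$. The only difference is that you spell out the density argument open-set by open-set, whereas the paper states it in one line; the content is identical.
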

\begin{proof}
By definition of $\kk$-unirationality there is a dominant map $\PP^N_\kk \dashrightarrow X$.
Therefore, there is a Zariski open subset $U \subset \PP^N_\kk$ and a regular map $U \to X$ with Zariski dense image.
Since $\kk$-points are dense in $\PP^N_\kk$, they are dense in~$U$, and so they are dense in~$X$.
\end{proof}

The following result is also well known.

\begin{lemma}
\label{lemma:divisors}
Let $X$ be a proper $\kk$-variety with $X(\kk) \ne \varnothing$. 
Then the natural map 
\begin{equation*}
\Pic(X) \larrow \Pic(X_\bkk)^\Gal 
\end{equation*} 
is an isomorphism.
In particular, if $X$ is projective, $X(\kk) \ne \varnothing$, and $\Pic(X_\bkk) \cong \ZZ$, 
then every Cartier divisor class of~$X_\bkk$ is defined over~$\kk$.
\end{lemma}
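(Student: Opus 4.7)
The plan is to apply the Leray (equivalently Hochschild--Serre) spectral sequence for the structure morphism $X \to \Spec(\kk)$ with coefficients in the étale sheaf $\mathbb{G}_m$ and read off the desired statement from its low-degree exact sequence, namely
\begin{equation*}
0 \to H^1(\Gal, H^0(X_\bkk, \mathbb{G}_m)) \to \Pic(X) \to \Pic(X_\bkk)^\Gal \to H^2(\Gal, H^0(X_\bkk, \mathbb{G}_m)) \to H^2_{\mathrm{\acute et}}(X, \mathbb{G}_m).
\end{equation*}
The first step is to exploit the assumption that $X$ is proper: by the usual connectedness/properness argument the global sections of $\mathbb{G}_m$ on $X_\bkk$ are just $\bkk^*$, so the sequence becomes
\begin{equation*}
0 \to H^1(\Gal, \bkk^*) \to \Pic(X) \to \Pic(X_\bkk)^\Gal \to \Br(\kk) \to H^2_{\mathrm{\acute et}}(X, \mathbb{G}_m).
\end{equation*}

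The second step is to invoke Hilbert~90, which gives $H^1(\Gal, \bkk^*) = 0$, hence the map $\Pic(X) \to \Pic(X_\bkk)^\Gal$ is injective in general. For surjectivity, the third step is to use the hypothesis $X(\kk) \ne \varnothing$: any $\kk$-point provides a section $\Spec(\kk) \to X \to \Spec(\kk)$ of the structure morphism, and functoriality of the spectral sequence therefore exhibits $\Br(\kk) \to H^2_{\mathrm{\acute et}}(X, \mathbb{G}_m)$ as a split injection. Consequently the boundary map $\Pic(X_\bkk)^\Gal \to \Br(\kk)$ is zero and the comparison map is onto.

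For the ``in particular'' clause, the fourth step is to observe that, since $X$ is projective, $\Pic(X_\bkk) \cong \ZZ$ carries a canonical ample (hence positive) generator, and the Galois action $\Gal \to \mathrm{Aut}(\ZZ) = \{\pm 1\}$ must preserve ampleness, so it is trivial. Hence $\Pic(X_\bkk)^\Gal = \Pic(X_\bkk)$, and combined with the isomorphism just proved, every Cartier divisor class of $X_\bkk$ descends to $\kk$.

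The only genuinely subtle point is the Brauer-group splitting step, but it is a completely standard consequence of the existence of a $\kk$-point and the functoriality of the Leray spectral sequence; there is no real obstacle, as this lemma is classical and the Hochschild--Serre argument is entirely formal once one knows $\bkk[X]^* = \bkk^*$ for proper $X$ and Hilbert~90.
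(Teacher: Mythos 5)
Your proof is correct and follows the standard Hochschild--Serre argument; the paper simply cites Cartier for the first part, but the exact sequence you derive is precisely the one the paper itself writes down as~\eqref{eq:hs-sequence} in the proof of Lemma~\ref{lemma:h-gal}, with your $\kk$-point/retraction argument being the standard reason the map $\Br(\kk) \to \Br(X)$ is injective and hence the boundary map vanishes. The only hypothesis worth making explicit is that $\bkk[X_\bkk]^* = \bkk^*$ requires $X_\bkk$ to be connected and reduced, which is guaranteed here since ``variety'' is taken to be geometrically integral.
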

\begin{proof}
The first part is~\cite[Ch.~4, Proposition~12]{C58}.
For the second part note that the Galois action on $\Pic(X_\bkk)$ is trivial since it preserves ampleness, 
hence $\Pic(X_\bkk)^\Gal = \Pic(X_\bkk)$.
\end{proof}

The following useful result can be extracted from~\cite[proof of Theorem~3]{Lichtenbaum1968} and~\cite[proof of Theorem~7]{Lichtenbaum1969};
we provide below a proof for completeness.

\begin{lemma}
\label{lemma:h-gal}
Assume $X$ is projective. 
If $D \in \Pic(X_\bkk)^\Gal$ then 
\begin{equation*}
\chi(\cO(D)) \cdot D \in \Pic(X),
\end{equation*}
where $\chi(\cO(D)) := \sum (-1)^i \dim H^i(X_\bkk,\cO_{X_\bkk}(D))$ is the Euler characteristic.
\end{lemma}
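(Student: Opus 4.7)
The plan is to interpret the difference $\Pic(X_\bkk)^\Gal/\Pic(X)$ as a Brauer-group obstruction, and then show that multiplying by the Euler characteristic kills this obstruction. Concretely, the Leray spectral sequence for the structural morphism $X \to \Spec \kk$ with coefficients in $\mathbb{G}_m$, together with Hilbert~90, yields an exact sequence
$$\Pic(X) \larrow \Pic(X_\bkk)^\Gal \xrightarrow{\ \alpha\ } \Br(\kk),$$
so it suffices to prove that $\chi(\cO(D))\cdot \alpha(D) = 0$ in $\Br(\kk)$. A concrete $2$-cocycle representing $\alpha(D)$ is obtained as follows: using that $D$ is Galois-invariant, I would choose isomorphisms $\phi_\sigma \colon \sigma^*\cO_{X_\bkk}(D) \xrightarrow{\sim} \cO_{X_\bkk}(D)$ for each $\sigma \in \Gal$; the scalars $c(\sigma,\tau) \in \bkk^\times$ defined by $\phi_{\sigma\tau} = c(\sigma,\tau)\,\phi_\sigma \circ \sigma^*\phi_\tau$ form a $2$-cocycle whose class in $H^2(\Gal,\bkk^\times) = \Br(\kk)$ is $\alpha(D)$.

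The second step is to transport this cocycle to cohomology. The rule $\sigma\cdot s := \phi_\sigma(\sigma^* s)$ defines, for every $i \geq 0$, a semilinear $\Gal$-action on $H^i(X_\bkk, \cO_{X_\bkk}(D))$ satisfying $(\sigma\tau)\cdot s = c(\sigma,\tau)\,\sigma\cdot(\tau\cdot s)$; equivalently, each $H^i$ is a ``$c$-twisted'' $\bkk$-linear Galois representation. Since $X$ is projective, these representations are finite-dimensional, with $h^i := \dim_\bkk H^i(X_\bkk, \cO_{X_\bkk}(D))$.

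Finally, I would apply the determinant functor. For a $c$-twisted $\bkk$-representation of dimension $d$ its top exterior power is a one-dimensional $c^d$-twisted representation, while the $\bkk$-linear dual replaces the cocycle by its inverse. Therefore the one-dimensional $\bkk$-vector space
$$L := \bigotimes_{i\geq 0} \bigl(\det H^i(X_\bkk, \cO_{X_\bkk}(D))\bigr)^{\otimes (-1)^i},$$
with negative powers interpreted as duals, carries a $c^{\chi(\cO(D))}$-twisted semilinear action of $\Gal$. But a one-dimensional $c^{\chi(\cO(D))}$-twisted $\bkk$-vector space exists if and only if the cocycle $c^{\chi(\cO(D))}$ is a coboundary, i.e., $\chi(\cO(D)) \cdot \alpha(D) = 0$ in $\Br(\kk)$. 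By the exact sequence at the start, this yields $\chi(\cO(D)) \cdot D \in \Pic(X)$. The main technical obstacle is the bookkeeping for sign conventions: one needs to verify that the twisted Galois actions on the $H^i$ are genuinely functorial and that the operations $\det$, $\otimes$, and dualization behave correctly with respect to the cocycles, so that the exponents indeed combine to $\chi(\cO(D))$.
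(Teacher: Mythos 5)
Your argument is correct, but it takes a genuinely different route from the paper's. Both proofs start from the same exact sequence $0 \to \Pic(X) \to \Pic(X_\bkk)^\Gal \to \Br(\kk)$ and reduce the lemma to killing the obstruction class $\beta_D \in \Br(\kk)$ by $\chi(\cO(D))$. The paper does this in two steps: first, for $D$ very ample with vanishing higher cohomology, it invokes a theorem of Liedtke asserting that $\beta_D$ is represented by a Severi--Brauer variety that is a form of $\PP(H^0(X_\bkk,\cO_{X_\bkk}(D)))$, hence is killed by $h^0 = \chi$; second, it handles general $D$ by twisting with $mH$ for $m \gg 0$ and using that the gcd of the values of the integer-valued polynomial $m \mapsto \chi(\cO(D+mH))$ over $m \gg 0$ equals the gcd over all $m$, so in particular divides $\chi(\cO(D))$. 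You instead realize $\beta_D$ as an explicit $2$-cocycle $c$, put a $c$-twisted semilinear Galois structure on each $H^i(X_\bkk,\cO_{X_\bkk}(D))$, and kill $c^{\chi}$ by exhibiting the one-dimensional twisted space $\bigotimes_i (\det H^i)^{\otimes(-1)^i}$ --- the determinant-of-cohomology argument. Your route is more self-contained (no reduction to the very ample case, no external reference for the Severi--Brauer step, no gcd lemma) and treats all $D$ uniformly; it is essentially the mechanism underlying Lichtenbaum's period-index arguments from which the paper says this lemma is extracted. The bookkeeping you flag is genuinely all that remains: one must check that $\mathrm{Aut}(\cO_{X_\bkk}(D)) = \bkk^\times$ (which uses that $X$ is proper, reduced and geometrically connected --- an assumption the paper's proof also needs for its exact sequence), that the cocycle can be chosen continuously by descending to a finite Galois subextension, and that $\det$, $\otimes$ and duality multiply, add and invert the twisting cocycles as claimed; none of these presents any difficulty.
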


\begin{proof}
The Hochschild--Serre spectral sequence gives us the following standard exact sequence 
\begin{equation}
\label{eq:hs-sequence}
0 \larrow \Pic(X) \larrow \Pic(X_\bkk)^\Gal \xrightarrow{\ \beta\ } \Br(\kk) \larrow \Br(X) \larrow H^1(\Gal,\Pic(X_\bkk)).
\end{equation}
Let $\beta_D$ be the image of $D$ in $\Br(\kk)$.
We show below that $\chi(\cO(D)) \cdot \beta_D = 0$.

First, assume $H^{>0}(X_\bkk,\cO_{X_\bkk}(D)) = 0$, so that $\chi(\cO(D)) = \dim H^{0}(X_\bkk,\cO_{X_\bkk}(D))$, 
and~$D$ is very ample.
By~\cite[Theorem~3.4]{Liedtke2017} the class~$\beta_D$ can be represented by a Severi--Brauer variety 
that is a $\kk$-form of~$\PP(H^0(X_\bkk,\cO_{X_\bkk}(D)))$; in particular, it is annihilated by~$\chi(\cO(D))$. 

Now consider the case of arbitrary $D$.
Let $H \in \Pic(X)$ be an ample class.
Then for $m \gg 0$ the divisor class $D + mH$ satisfies the assumption of the lemma 
and $\beta_{D + mH} = \beta_D$ by~\eqref{eq:hs-sequence}, hence
\begin{equation*}
\chi(\cO(D + mH)) \cdot \beta_D = 0.
\end{equation*}
Since the above holds for all $m \gg 0$, the class $\beta_D$ is killed by
\begin{equation*}
\gcd \{ \chi(\cO(D + mH)) \mid m \gg 0 \} =
\gcd \{ \chi(\cO(D + mH)) \mid m \in \ZZ \},
\end{equation*}
hence it is killed by $\chi(\cO(D))$.

Finally, the equality $\chi(\cO(D)) \cdot \beta_D = 0$ proved above implies that the class $\beta_{\chi(\cO(D)) \cdot D} \in \Br(\kk)$ is zero,
hence the divisor class $\chi(\cO(D)) \cdot D \in \Pic(X_\bkk)^\Gal$ comes from $\Pic(X)$.
\end{proof}

\subsection{Severi--Brauer varieties and quadrics}

The following criteria for rationality of Severi--Brauer varieties and quadrics are classical.

\begin{proposition}[Severi--Brauer]
\label{proposition:sb}
Let $X$ be a $\kk$-form of $\PP^n_\bkk$ and let $H \in \Pic(\PP^n_\bkk)$ be the hyperplane class.
The following conditions are equivalent
\begin{enumerate}
\renewcommand\labelenumi{\rm (\alph{enumi})}
\renewcommand\theenumi{\rm (\alph{enumi})}
\item 
\label{proposition:sb-a}
$X(\kk) \ne \varnothing$;
\item 
\label{proposition:sb-b}
$\Pic(X) = \Pic(X_\bkk)$;
\item
\label{proposition:sb-c}
$dH \in \Pic(X)$ for some $d \in \ZZ$ such that $\gcd(d,n+1) = 1$;
\item 
\label{proposition:sb-d}
$X \cong \PP^n_\kk$.
\end{enumerate}
\end{proposition}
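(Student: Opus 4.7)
The plan is to prove the equivalences by closing the cycle
\eqref{proposition:sb-d}$\Rightarrow$\eqref{proposition:sb-a}$\Rightarrow$\eqref{proposition:sb-b}$\Rightarrow$\eqref{proposition:sb-c}$\Rightarrow$\eqref{proposition:sb-d}.
Three of the four implications are essentially formal: \eqref{proposition:sb-d}$\Rightarrow$\eqref{proposition:sb-a} is trivial since $\PP^n_\kk$ has $\kk$-points, and \eqref{proposition:sb-b}$\Rightarrow$\eqref{proposition:sb-c} holds by taking $d = 1$. For \eqref{proposition:sb-a}$\Rightarrow$\eqref{proposition:sb-b}, I would invoke Lemma~\ref{lemma:divisors}: the hypothesis $X(\kk)\ne\varnothing$ gives $\Pic(X) \cong \Pic(X_\bkk)^{\Gal}$, and since $\Pic(X_\bkk) \cong \ZZ$ is generated by the class of an ample divisor (preserved by~$\Gal$), the Galois action is trivial and $\Pic(X_\bkk)^{\Gal} = \Pic(X_\bkk)$.

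The substantive implication is \eqref{proposition:sb-c}$\Rightarrow$\eqref{proposition:sb-d}. The starting observation is that the class $H$ itself lies in $\Pic(X_\bkk)^{\Gal}$ by the same ampleness argument used above, so Lemma~\ref{lemma:h-gal} applies to $D = H$. Since $X_\bkk \cong \PP^n_\bkk$ and $H$ is the hyperplane class, $\chi(\cO_{X_\bkk}(H)) = n+1$, and the lemma yields $(n+1)H \in \Pic(X)$. Combining this with the assumed $dH \in \Pic(X)$ and $\gcd(d,n+1) = 1$, Bezout's identity gives $H \in \Pic(X)$, i.e.\ the hyperplane class descends to a genuine line bundle $\cL$ on~$X$.

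To finish, I would show that the complete linear system $|\cL|$ induces an isomorphism $X \cong \PP^n_\kk$. By flat base change, $V := H^0(X,\cL)$ is a $\kk$-vector space of dimension $n+1$, and its formation commutes with extension to~$\bkk$. The evaluation map therefore defines a $\kk$-morphism $\varphi \colon X \to \PP_\kk(V^\vee)$, and its base change to~$\bkk$ recovers the canonical identification $\PP^n_\bkk \xrightarrow{\sim} \PP^n_\bkk$. Since being an isomorphism descends along the faithfully flat extension $\kk \subset \bkk$, $\varphi$ itself is an isomorphism; finally $\PP_\kk(V^\vee) \cong \PP^n_\kk$ because $V$ is an honest $\kk$-vector space.

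The only delicate point in the argument is the use of Lemma~\ref{lemma:h-gal} to force $(n+1)H$ to descend; once the line bundle $\cL$ is in hand, passing from $|\cL|$ to an isomorphism with $\PP^n_\kk$ is straightforward via base change and descent. No further input (e.g.\ from the theory of central simple algebras) is needed, since Lemma~\ref{lemma:h-gal} already encodes the fact that the Brauer obstruction $\beta_H$ is killed by $n+1$.
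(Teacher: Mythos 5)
Your proof is correct, and the overall skeleton (reduce everything to showing that $H$ descends to $\Pic(X)$, then let the complete linear system of the descended class produce the isomorphism with $\PP^n_\kk$) is the same as the paper's. The one genuine difference is how you obtain $(n+1)H \in \Pic(X)$: you invoke Lemma~\xref{lemma:h-gal} with $D = H$ and $\chi(\cO(H)) = n+1$, whereas the paper simply observes that the canonical class $K_X = -(n+1)H$ is always defined over~$\kk$, and organizes the argument as \xref{proposition:sb-b}~$\Leftrightarrow$~\xref{proposition:sb-c} plus \xref{proposition:sb-b}~$\Rightarrow$~\xref{proposition:sb-d} rather than closing your four-step cycle. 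The canonical-class observation is strictly more elementary — it needs nothing beyond the fact that $\Pic(X)$ is a subgroup of $\Pic(X_\bkk)^\Gal \cong \ZZ$ — while your route routes through the Hochschild--Serre sequence and the Severi--Brauer representative of $\beta_H$ packaged inside Lemma~\xref{lemma:h-gal}. What your version buys is uniformity: it is the same mechanism the paper later uses for quintic del Pezzo threefolds (Lemma~\xref{lemma:dp5-h}), where no canonical-class shortcut of the form $K = -(n+1)H$ with the right coefficient is available, so your argument generalizes more readily even though it is heavier than necessary here. Both finish identically via flat base change and faithfully flat descent for the morphism defined by $|\cL|$.
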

\begin{proof}
Implication \ref{proposition:sb-d} $\implies$ \ref{proposition:sb-a} is trivial
and by Lemma~\ref{lemma:divisors} condition~\ref{proposition:sb-a} implies~\ref{proposition:sb-b}.
Furthermore, since the canonical class $K_X = -(n+1)H$ is defined over~$\kk$, 
and since the image of~$\Pic(X)$ in~\mbox{$\Pic(X_\bkk)^\Gal = \Pic(X_\bkk)$} is a subgroup, it follows that~\ref{proposition:sb-b} is equivalent to~\ref{proposition:sb-c}.
So it remains to show that~\ref{proposition:sb-b} implies~\ref{proposition:sb-d}.

Indeed, if $H \in \Pic(X)$, then the map to the projective space $\PP(H^0(X,\cO_X(H))^\vee) \cong \PP^n$ associated with $H$ 
becomes an isomorphism after extension of scalars to~$\bkk$, hence it is an isomorphism over~$\kk$.
\end{proof}

A similar argument proves

\begin{proposition}
\label{proposition:quadric}
A form $X$ of a smooth quadric is rational if and only if $X(\kk) \ne \varnothing$. 
\end{proposition}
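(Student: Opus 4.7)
The plan is to prove each direction in turn. The forward implication is immediate from Nishimura's lemma (Lemma~\ref{lemma:points}): if $X$ is $\kk$-rational, there is a birational map $\PP^n_\kk \dashrightarrow X$ from a smooth $\kk$-variety with $\kk$-points to the proper variety $X$, forcing $X(\kk) \ne \varnothing$.

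For the converse, assume $p \in X(\kk)$. The strategy parallels Proposition~\ref{proposition:sb}: first realize $X$ as a quadric in a standard projective space over $\kk$, and then exhibit rationality by stereographic projection from $p$. Let $H \in \Pic(X_\bkk)$ be the hyperplane class of the standard quadric embedding of $X_\bkk$, so that $-K_{X_\bkk} = nH$. I would first verify that $H$ is Galois-invariant in all dimensions: for $n \geq 3$ this is automatic because $\Pic(X_\bkk) = \ZZ \cdot H$; for $n = 2$, where $X_\bkk \cong \PP^1 \times \PP^1$, the class $H = L_1 + L_2$ is preserved by the only non-trivial possibility, the swap of rulings; and for $n = 1$ one has $H = -K_X$. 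Lemma~\ref{lemma:divisors}, combined with $X(\kk) \ne \varnothing$, then yields $H \in \Pic(X)$.

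The linear system $|H|$ now defines a morphism from $X$ into the Severi--Brauer variety $P := \PP(H^0(X, \cO_X(H))^\vee)$, a $\kk$-form of $\PP^{n+1}$ realizing $X$ as a smooth quadric hypersurface upon base change to $\bkk$. Since the image of $p$ provides a $\kk$-point of $P$, Proposition~\ref{proposition:sb} yields $P \cong \PP^{n+1}_\kk$. Thus $X$ is embedded as a smooth quadric in $\PP^{n+1}_\kk$ containing the $\kk$-point $p$, and the classical stereographic projection from $p$ onto any $\kk$-hyperplane not containing $p$ supplies a birational map $X \dashrightarrow \PP^n_\kk$. The only mild obstacle in this plan is the descent of $H$ to $\kk$ in the low-dimensional cases where $\Pic(X_\bkk)$ is not already $\ZZ \cdot H$; once this is handled, the remaining steps are formal consequences of Proposition~\ref{proposition:sb} and classical projective geometry.
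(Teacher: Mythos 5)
Your proof is correct and follows essentially the same route as the paper: Galois-invariance of $H$, descent via Lemma~\ref{lemma:divisors} using the $\kk$-point, realization of $X$ as a quadric in $\PP^{n+1}_\kk$ by $|H|$, and stereographic projection from $p$ (with Nishimura's Lemma~\ref{lemma:points} in place of the paper's unirationality lemma for the easy direction). The only differences are cosmetic: the paper checks Galois-invariance uniformly in all dimensions from $K_{X_\bkk}\sim -\dim(X)\,H$ and torsion-freeness of $\Pic(X_\bkk)$ instead of your case split, and your detour through Proposition~\ref{proposition:sb} is unnecessary, since once $H\in\Pic(X)$ the space $H^0(X,\cO_X(H))$ is a genuine $\kk$-vector space and $\PP(H^0(X,\cO_X(H))^\vee)$ is already $\PP^{n+1}_\kk$.
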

\begin{proof}
One direction follows from Lemma~\ref{lemma:unirationality-point}.
To prove the other, assume~\mbox{$X(\kk) \ne \varnothing$}.
The hyperplane class~$H$ of~$X_\bkk$ is Galois-invariant because $- \dim(X) H \sim K_{X_\bkk}$ and~$\Pic(X_\bkk)$ is torsion-free,
therefore, by Lemma~\ref{lemma:divisors} it is defined over~$\kk$.
The map 
\begin{equation*}
X \larrow \PP(H^0(X,\cO_X(H))^\vee) \cong \PP^{n+1}
\end{equation*}
after extension of scalars to $\bkk$ becomes the embedding of $X_\bkk$ into $\PP^{n+1}_\bkk$ as a quadric hypersurface,
hence it identifies $X$ with a quadric hypersurface in $\PP^{n+1}$.
The projection out of a $\kk$-rational point provides a birational isomorphism of $X$ and $\PP^n$.
\end{proof}

\subsection{Fano threefolds}
\label{subsection:fano}

Let $X$ be a smooth Fano threefold.
We denote by $\uprho(X)$ the Picard rank of~$X$ and by $\uprho(X_\bkk)$ its geometric Picard rank.
We define {\sf the index}~$\iota(X)$ as the maximal integer dividing the canonical class $K_{X_\bkk}$ in $\Pic(X_\bkk)$.

If $\iota(X) \ge 3$, then $X_\bkk \cong \PP^3$, or $X_\bkk \cong Q^3$ (see, e.g., \cite[Corollary~3.1.15]{IP}).

Fano threefolds $X$ with $\iota(X) = 2$ are usually called {\sf del Pezzo threefolds} (see~\cite{Fujita-book}).
We will denote by $H$ the class in $\Pic(X_\bkk)$ such that $2H \sim -K_{X_\bkk}$. 
Recall that the Picard group of a Fano variety is torsion free (\cite[Proposition~2.1.2(ii)]{IP}), 
so $H$ is defined unambiguously and is Galois-invariant. 
In particular, if $X(\kk) \ne \varnothing$, then $H$ is defined over~$\kk$ by Lemma~\ref{lemma:divisors}.
We set
\begin{equation*}
\dd(X) := H^3 = (-K_X)^3/8.
\end{equation*}
By Riemann--Roch one has (see~\cite[Ch. 1, Proposition~4.5]{Iskovskikh-1980-Anticanonical})
\begin{equation}
\label{eq:dP:dimH}
\dim H^0(X_\bkk,\cO_{X_\bkk}(H)) = \dd(X) + 2.
\end{equation}

Classification of del Pezzo threefolds is well known, in particular we have

\begin{theorem}[{\cite[Ch. 2, Theorem~1.1]{Iskovskikh-1980-Anticanonical}, \cite[Theorem~3.3.1]{IP}, \cite[Theorem~8.11]{Fujita-book}}]
\label{th:del-pezzo}
If $X$ is a smooth del Pezzo threefold with $\uprho(X_\bkk) = 1$ then~$1 \le \dd(X) \le 5$.
\end{theorem}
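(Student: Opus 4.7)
The plan is to work over $\bkk$ and analyze $X$ via the half-anticanonical linear system~$|H|$ together with a hyperplane-section reduction to del Pezzo surfaces.

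First, I would establish that $|H|$ is base-point free on~$X$. This is a classical consequence of the Kawamata--Shokurov non-vanishing and base-point-free theorems, applied with $H - K_X = 3H$ ample. Combined with~\eqref{eq:dP:dimH}, this yields a morphism $\phi_H \colon X \to \PP^{\dd(X)+1}$.

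Second, by Bertini a general element $S \in |H|$ is smooth and irreducible, and adjunction gives $K_S = (K_X + H)|_S = -H|_S$. Thus $(S, H|_S)$ is a smooth polarized del Pezzo surface of degree $(H|_S)^2 = H^3 = \dd(X)$, and the classical classification of smooth del Pezzo surfaces yields the a priori bound $1 \le \dd(X) \le 9$.

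Third, the substantive work is to exclude $\dd(X) \in \{6, 7, 8, 9\}$ under the hypothesis $\uprho(X_\bkk) = 1$. By the Lefschetz hyperplane theorem applied to the ample smooth divisor $S \subset X_\bkk$, the restriction $\Pic(X_\bkk) \to \Pic(S)$ is injective, so under $\uprho(X_\bkk) = 1$ its image is~$\ZZ \cdot H|_S$. For $\dd(X) \in \{6, 7, 8\}$ the surface~$S$ has Picard rank at least~$2$ and carries $(-1)$-curves or ruling classes that are not proportional to~$H|_S$. The strategy is to show that such a curve moves in~$X$ in a positive-dimensional family whose sweep is an effective divisor producing a class on~$X$ not proportional to~$H$, contradicting $\uprho(X_\bkk) = 1$. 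The residual case $\dd(X) = 9$, where $S \cong \PP^2$ is embedded as the Veronese surface, would be excluded by a direct projective-geometric argument showing that no smooth Fano threefold can have $\PP^2$ embedded by~$\OOO(3)$ as a hyperplane section.

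The principal obstacle is making the deformation--extension argument rigorous and controlling in each case exactly which divisor class on~$X$ the sweeping family produces. A cleaner and more systematic route, pursued in the references cited with the theorem, is to invoke Fujita's classification of polarized varieties of $\Delta$-genus~$1$: observing that~\eqref{eq:dP:dimH} gives $\Delta(X, H) = \dim X + H^3 - h^0(H) = 1$, Fujita's tables enumerate all such pairs~$(X, H)$. Equivalently, one can use the Iskovskikh/Mori--Mukai MMP-based classification of Fano threefolds of index~$2$. In either case, inspection of the resulting list confirms that the condition $\uprho(X_\bkk) = 1$ forces $1 \le \dd(X) \le 5$.
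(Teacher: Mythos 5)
The paper does not give an argument for this theorem at all: it is quoted with references to Iskovskikh, Iskovskikh--Prokhorov and Fujita, so your closing fallback --- noting that \eqref{eq:dP:dimH} gives $\Delta(X,H)=\dim X+H^3-h^0(H)=1$ and invoking Fujita's classification of del Pezzo manifolds, equivalently Iskovskikh's classification of index-$2$ Fano threefolds --- is exactly the route the paper takes, and as a citation-level argument it is complete.

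The direct argument sketched in your first three paragraphs, however, does not stand on its own. First, $|H|$ is \emph{not} base point free in general: for $\dd(X)=1$ it has a base point, and the Kawamata--Shokurov base-point-free theorem only gives freeness of $|mH|$ for $m\gg 0$, never of $|H|$ itself; the smoothness of a general member of $|H|$ (the ``general elephant'' of a del Pezzo threefold) is itself a nontrivial theorem of Fujita and Shokurov, not a formal consequence of Bertini. Second, your mechanism for excluding $\dd(X)\in\{6,7,8\}$ is vacuous as stated: when $\uprho(X_\bkk)=1$, \emph{every} effective divisor on $X_\bkk$ is numerically proportional to $H$, so a divisor swept out by deformations of a $(-1)$-curve of $S$ cannot produce ``a class not proportional to $H$''; what one actually has to show is that the extra line bundles on the degree $\ge 6$ del Pezzo surface $S$ extend to $X$ (the restriction $\Pic(X_\bkk)\to\Pic(S)$ is injective by Grothendieck--Lefschetz, but surjectivity fails for surfaces in threefolds and needs a genuine argument), or else one must run the classical structure theory of polarized varieties of $\Delta$-genus one. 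Third, the case $\dd(X)=9$, i.e.\ the non-extendability of $v_3(\PP^2)$ to a smooth nondegenerate threefold other than a cone, is asserted without proof and is itself a nontrivial classical extendability theorem. So the self-contained version has real gaps; only the appeal to the cited classifications goes through, and that coincides with what the paper does.
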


We will discuss here two types of del Pezzo threefolds:
\begin{itemize}
\item 
{\sf quintic del Pezzo threefolds}, i.e., del Pezzo threefolds $X$ with $\uprho(X_\bkk) = 1$ and $\dd(X) = 5$;
over an algebraically closed field any such threefold is isomorphic to a codimension~3 linear section of $\Gr(2,5)$ 
(see, e.g., \cite[Theorem~3.3.1]{IP});
and
\item 
{\sf quartic del Pezzo threefolds}, i.e., del Pezzo threefolds $X$ with $\uprho(X_\bkk) = 1$ and $\dd(X) = 4$;
over an algebraically closed field any such threefold is isomorphic to a complete intersection of two quadrics in~$\PP^5$ 
(see, e.g., \cite[Theorem~3.3.1]{IP}).
\end{itemize}

If a Fano threefold~$X$ satisfies~$\uprho(X_\bkk) = \iota(X) = 1$, i.e., if~$\Pic(X_\bkk) = \ZZ \cdot K_{X_\bkk}$, it is called {\sf prime}.
Note that the canonical class~$K_X$ of any variety is defined over the ground field~$\kk$.
The {\sf genus}~$\g(X)$ of a Fano threefold~$X$ is defined as
\begin{equation}
\label{def:genus}
\g(X) := \dim H^0(X,\cO_X(-K_X)) - 2 = 1 - K_X^3/2
\end{equation}
(the equality follows from Riemann--Roch).

We will frequently use the following classical result:

\begin{theorem}[{\cite[Ch.~1, Theorem~6.3; Ch.~2, Theorems~2.2 and~3.4]{Iskovskikh-1980-Anticanonical}}]
\label{th:bht}
Let $X$ be a Fano threefold with $\iota(X)=\uprho(X)=1$.
\begin{enumerate}
\item 
If $g := \g(X) \ge 5$ then the anicanonical class $-K_X$ is very ample and the anicanonical image 
\begin{equation*}
X = X_{2g-2} \subset \PP^{g+1}
\end{equation*}
is an intersection of quadrics \textup(as a scheme\textup). 
\item 
If additionally $\uprho(X_\bkk) = 1$ then $X_\bkk$ contains no surfaces of degree less than~$2g - 2$. 
\end{enumerate}
\end{theorem}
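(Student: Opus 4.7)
The plan is to observe first that all three assertions --- very ampleness of $-K_X$, the scheme-theoretic defining equations of the anticanonical image, and the lower bound on surface degrees --- are geometric: they hold over $\kk$ if and only if they hold after base change to $\bkk$. Hence at the outset I would reduce to the case $\kk = \bkk$.

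Part (ii) is then essentially immediate. Under the hypotheses $\iota(X) = \uprho(X_\bkk) = 1$ one has $\Pic(X_\bkk) = \ZZ \cdot (-K_X)$, so every irreducible surface $S \subset X_\bkk$ satisfies $S \sim n(-K_X)$ for some positive integer $n$, and its degree in the anticanonical polarization is
\begin{equation*}
S \cdot (-K_X)^2 = n (-K_X)^3 = n(2g - 2) \ge 2g - 2,
\end{equation*}
where the identity $(-K_X)^3 = 2g - 2$ is part of the definition \eqref{def:genus}.

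For part (i) I would proceed in three stages. First, I would establish base-point freeness of $|-K_X|$: Kawamata--Viehweg vanishing yields $H^i(X, \cO_X(-K_X)) = 0$ for $i > 0$, and Riemann--Roch then gives $h^0(X, \cO_X(-K_X)) = g + 2$; Shokurov's non-vanishing combined with a Bertini analysis of a general anticanonical divisor, which is a smooth K3 surface once $g \ge 3$, rules out base points. Second, for $g \ge 5$ I would show very ampleness: the separation of any length-two subscheme $Z \subset X$ reduces to the vanishing $H^1(X, I_Z \otimes \cO_X(-K_X)) = 0$, which follows by restricting to a general anticanonical K3 surface $S$ containing $Z$ and using that the restricted polarization has degree $2g - 2 \ge 8$, where Reider's theorem or a direct K3 linear-series analysis applies. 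Third, for the intersection of quadrics: I would establish projective normality by another vanishing computation, compute $h^0(X, \cO_X(-2K_X))$ via Riemann--Roch, and compare it with $\dim \Sym^2 H^0(X, \cO_X(-K_X))$ to pin down the dimension of the space of quadrics through $X$. To conclude that the scheme-theoretic base locus of these quadrics coincides with $X$, I would invoke part (ii) to exclude higher-dimensional components in the common zero locus.

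The main obstacle will be the last step, distinguishing the scheme-theoretic from the merely set-theoretic intersection property. The cleanest route is via the Koszul complex: verifying the surjectivity $\Sym^2 H^0(X, \cO_X(-K_X)) \twoheadrightarrow H^0(X, \cO_X(-2K_X))$ together with a vanishing at the next Koszul step. For the smallest cases $g \in \{5,6\}$, where $(-K_X)^3$ sits just above the very-ampleness threshold, this vanishing is delicate and seems to force a direct case analysis along the lines of the Fano--Iskovskikh classification --- which is precisely why the hypothesis $g \ge 5$ appears as a sharp numerical bound in the classical statement.
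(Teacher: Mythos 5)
The paper does not prove this statement at all: it is quoted from Iskovskikh with a citation, so there is no internal argument to measure yours against. Judged on its own terms, your proof of part (ii) is complete and is the standard one-line argument: a surface in $X_\bkk$ is an effective divisor, hence linearly equivalent to $n(-K_{X_\bkk})$ with $n\ge 1$ because $\Pic(X_\bkk)=\ZZ\cdot K_{X_\bkk}$ (using $\iota(X)=\uprho(X_\bkk)=1$ and torsion-freeness of the Picard group of a Fano, as recalled in \S\ref{subsection:fano}), and its anticanonical degree is $n(2g-2)\ge 2g-2$. The initial reduction to $\bkk$ is also harmless, since very ampleness and degree-two generation of the homogeneous ideal are stable under base field extension.

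Part (i), however, is a strategy outline rather than a proof, and you say so yourself. The ingredients you list --- vanishing plus Riemann--Roch giving $h^0(-K_X)=g+2$, base-point freeness, passage to a K3 hyperplane section, Saint-Donat/Reider for very ampleness, projective normality, and a Koszul surjectivity for quadric generation --- are exactly those of the classical proof, but the decisive step, namely that the homogeneous ideal is generated in degree $2$ for every $g\ge 5$ including the borderline cases $g=5,6$, is precisely where you stop. Two concrete cautions. First, Reider's theorem requires $L^2\ge 10$ for very ampleness, so at $g=5$ (where the restricted polarization on the K3 section has $L^2=8$) you must fall back on Saint-Donat's case analysis of linear systems on K3 surfaces; this is not cosmetic, since the bound $g\ge 5$ is sharp exactly because of the genus-$4$ threefolds $Q\cap F_3\subset\PP^5$, which lie on a unique quadric. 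Second, your fallback of ``invoking part (ii) to exclude higher-dimensional components in the common zero locus'' of the quadrics would not deliver the scheme-theoretic statement: part (ii) constrains surfaces \emph{inside} $X$, whereas the danger is that the intersection $W$ of all quadrics through $X$ is a fourfold strictly containing $X$, or is non-reduced along it. The classical way out --- used by the paper itself in the proof of Lemma~\ref{lemma:barx} --- is to show via a curve section and Castelnuovo's bound that such a $W$ would be a fourfold of minimal degree and then to rule this out. So the route is the right one, but the proof is not finished as written.
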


In this paper we will discuss threefolds $X_{12}$, $X_{16}$, $X_{18}$, and~$X_{22}$ of genus 7, 9, 10, and~12, respectively.
Over an algebraically closed field they
can be represented as zero loci of equivariant vector bundles on homogeneous spaces of simple algebraic groups, see~\cite{Mukai92}.

\subsection{Hilbert schemes}
\label{subsection:Hilbert}

If $X \subset \PP^n$ is a projective variety we denote by~$\rF_d(X)$ 
the Hilbert scheme of curves in~$X$ with Hilbert polynomial $h_d(t) = dt + 1$.
Recall that formation of Hilbert schemes commutes with base change, in particular
with extension of scalars, so that
\begin{equation*}
\rF_d(X_\bkk) \cong \rF_d(X)_\bkk.
\end{equation*}
Similarly, if $x \in X$ is a point, we denote by $\rF_d(X,x) \subset \rF_d(X)$ the subscheme of curves that pass through~$x$;
this is the fiber over~$x$ of the universal curve over~$\rF_d(X)$.

When $d = 1$ or $d = 2$ every curve parameterized by $\rF_d(X)$ 
is a line or conic in~$\PP^n$ lying on~$X$, respectively, (see~\cite[Lemma~2.1.1]{KPS}).
We will also need the following classification result.

\begin{lemma}
\label{lemma:f3}
Assume $\kk$ is algebraically closed.
Let $X \subset \PP^n$ be an intersection of quadrics which contains no planes 
and let $C \subset X$ be a curve that corresponds to a point of $\rF_3(X)$.
Then either 
\begin{enumerate}
\item 
\label{case:f3:smooth}
$C$ is a smooth rational twisted cubic curve, or
\item 
\label{case:f3:2-1}
$C$ is the union of a smooth conic and a line, or
\item 
\label{case:f3:chain}
$C$ is a chain of three lines, or
\item 
\label{case:f3:trident}
$C$ is the union of three lines that meet in a point but do not lie in the same plane, or
\item 
\label{case:f3:multiple}
$C$ is a purely one-dimensional curve that contain a unique multiple line as a component.
\end{enumerate}
\end{lemma}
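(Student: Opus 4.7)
The starting observation is numerical: from the Hilbert polynomial $h_3(t)=3t+1$ we read off $\deg C = 3$ and $\chi(\cO_C) = 1$, hence $p_a(C) = 0$. In particular $h_C(1)=4$, and for the curve types we will have to consider the linear span of~$C$ has dimension at most~$3$.

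The key step is to rule out embedded points on~$C$. By the classical Piene--Schlessinger description, the Hilbert scheme $\Hilb^{3t+1}(\PP^3)$ has two irreducible components: one whose general member is a smooth twisted cubic and whose members are all arithmetically Cohen--Macaulay (in particular, purely one-dimensional), and a second component whose members are plane cubics with a zero-dimensional embedded structure. The plan is to argue that~$C$ cannot belong to the second component: if it did, then the maximal purely one-dimensional subscheme $C_\mathrm{pure}\subset C$ would be a plane cubic contained in some plane $P\subset\PP^n$. But~$X$ contains no planes by assumption, so some quadric in the defining system of~$X$ does not vanish identically on~$P$; hence $P\cap X$ has degree at most~$2$, contradicting $\deg C_\mathrm{pure} = 3$. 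Therefore~$C$ lies in the Cohen--Macaulay component and is purely one-dimensional, and the same degree comparison applied to~$C$ itself shows that~$C$ is non-planar.

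It remains to classify purely one-dimensional non-planar subschemes of~$\PP^n$ of degree~$3$ and arithmetic genus~$0$, which I would do by cases on the number of irreducible components of~$C_\mathrm{red}$. If~$C$ is irreducible and reduced, it is a rational cubic; any singular rational cubic has $p_a\ge 1$, so~$C$ is a smooth twisted cubic, giving case~\ref{case:f3:smooth}. If~$C$ is reduced with two components, the degree partition forces $C = L\cup Q$ with $L$ a line and $Q$ a smooth conic, and the exact sequence $0 \to \cO_C \to \cO_L \oplus \cO_Q \to \cO_{L\cap Q} \to 0$ together with $\chi(\cO_L) = \chi(\cO_Q) = \chi(\cO_C) = 1$ forces $L\cap Q$ to be a single reduced point, which is case~\ref{case:f3:2-1}. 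If~$C$ is reduced with three components $L_1\cup L_2\cup L_3$, the normalization sequence forces $\mathrm{length}(\nu_*\cO_{\tC}/\cO_C) = 2$; an elementary case analysis of intersection patterns shows that this occurs only for the chain configuration (case~\ref{case:f3:chain}) or three concurrent lines spanning a~$\PP^3$ (case~\ref{case:f3:trident}), while the triangle and three coplanar concurrent lines give cokernel length~$3$ and $p_a = 1$. Finally, if~$C$ is non-reduced, some component of~$C_\mathrm{red}$ appears in~$C$ with multiplicity $\ge 2$; the degree constraint forces this component to be a line (a doubled conic would already have degree $\ge 4$), so~$C$ contains a multiple line component, placing us in case~\ref{case:f3:multiple}.

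The principal obstacle is the exclusion of the second (non-Cohen--Macaulay) component of the Hilbert scheme via the no-planes hypothesis on~$X$: this is where the intersection-theoretic restriction ``$X\cap P$ of degree at most~$2$'' must be matched against the degree of the purified curve. Once this reduction is in place, the classification of arithmetically Cohen--Macaulay degree-$3$ genus-$0$ curves is a routine case analysis paralleling the standard list of degenerations of the twisted cubic.
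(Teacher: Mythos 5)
Your proposal is correct and follows the same route as the paper: the paper's proof simply cites the classification of generalized twisted cubics in \cite[\S1]{LLSS} (which rests on the Piene--Schlessinger description you invoke) and observes that $X$, being an intersection of quadrics with no planes, contains no plane cubic curves, which is exactly your degree-comparison argument excluding the non-Cohen--Macaulay component. Your explicit case analysis of the purely one-dimensional degree-$3$ genus-$0$ curves is a correct unpacking of the cited classification rather than a different method.
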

\begin{proof}
See~\cite[\S1]{LLSS} and take into account that $X$ contains no plane cubic curves.
\end{proof}

The following easy observation allows to simplify the criterion of Theorem~\ref{theorem:main}\ref{main:x16}.

\begin{lemma}
\label{lemma:f3-point}
If $X \subset \PP^n$ is a projective variety such that either $\rF_1(X)(\kk) \ne \varnothing$ or $\rF_3(X)(\kk) \ne \varnothing$, 
then $X(\kk) \ne \varnothing$.
\end{lemma}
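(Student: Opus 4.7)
My plan is to handle the two alternatives separately. The case $\rF_1(X)(\kk) \ne \varnothing$ is immediate: a $\kk$-point of the Hilbert scheme of lines corresponds to a line $L \subset X$ cut out in $\PP^n_\kk$ by $n-1$ linear forms defined over $\kk$, hence $L \cong \PP^1_\kk$, so $L(\kk) \subset X(\kk)$ is infinite.

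For the case $\rF_3(X)(\kk) \ne \varnothing$, let $C \in \rF_3(X)(\kk)$; it suffices to exhibit a $\kk$-point on $C$ itself. My strategy is to enumerate the possible shapes of $C_\bkk$ --- essentially the five listed in Lemma~\ref{lemma:f3} together with a couple of easy planar variants --- and in each case to identify a canonical Galois-invariant sub-object of $C_\bkk$ that descends to a $\kk$-subscheme possessing a $\kk$-point. Concretely: in case~\ref{case:f3:smooth}, $C$ is a smooth geometrically integral $\kk$-curve of arithmetic genus~$0$ carrying an effective Cartier divisor of odd degree~$3$ (the hyperplane section), and since a non-split Brauer--Severi conic over $\kk$ admits only even-degree line bundles, this forces $C \cong \PP^1_\kk$. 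In case~\ref{case:f3:2-1} the conic and the line have distinct degrees, hence are individually Galois-stable, and the line is a $\kk$-form of $\PP^1$ linearly embedded in $\PP^n_\kk$. In case~\ref{case:f3:chain} the middle line of the chain is the unique component meeting both of the others, so it is Galois-fixed and descends to a $\kk$-line. In case~\ref{case:f3:trident} (and in the analogous coplanar configuration not covered by Lemma~\ref{lemma:f3}) the common triple point is the unique singular point of $C_\bkk$ and is therefore automatically $\kk$-rational. Finally, in case~\ref{case:f3:multiple} the unique multiple-line component is Galois-invariant and descends to a $\kk$-line in $C$.

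The step that I expect to need the most care is verifying that this classification is genuinely exhaustive for an arbitrary projective~$X$: Lemma~\ref{lemma:f3} assumes $X$ is an intersection of quadrics without planes, so before invoking it one must check that the few extra configurations allowed when these hypotheses are dropped (coplanar concurrent triples and planar non-reduced cubics) either fail to have $p_a = 0$ --- and so lie outside $\rF_3(X)$ --- or still admit a unique singular point or unique multiple-line support, to which the Galois-invariance argument applies verbatim. Once that bookkeeping is in place, producing a $\kk$-point of $C$ is automatic in every case, and the lemma follows.
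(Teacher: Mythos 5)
Your treatment of the line case and of the five configurations of Lemma~\ref{lemma:f3} coincides with the paper's proof: the smooth twisted cubic is handled by the odd-degree hyperplane class forcing the Brauer--Severi curve to split (Proposition~\ref{proposition:sb}), and in each degenerate case a canonically distinguished component or point is Galois-invariant and hence defined over~$\kk$. That part is correct.

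The one place where your proposal falls short is exactly the step you flagged: the residual members of $\rF_3(X)$ for an arbitrary projective $X$ are not covered by your proposed dichotomy. Since $\rF_d(X)$ is defined by the Hilbert polynomial $3t+1$, the subschemes not on the list of Lemma~\ref{lemma:f3} are, by the Piene--Schlessinger/LLSS classification, plane cubic curves together with an \emph{extra point}, either embedded in the cubic or isolated and disjoint from it. Such a scheme need not ``fail to have $p_a=0$'' (its Euler characteristic is $1$, so it genuinely lies in $\rF_3(X)$), and it need not have a singular point or a multiple-line support: a smooth plane cubic union a disjoint reduced point is a smooth scheme with no distinguished line, so the invariants you propose give you nothing. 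The correct (and simpler) observation, which is what the paper uses, is that the extra point itself --- being the unique zero-dimensional isolated or embedded component of the scheme --- is canonically determined, hence Galois-invariant, hence a $\kk$-point of~$X$. With that substitution your argument is complete and agrees with the paper's.
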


\begin{proof}
If $L$ is a line defined over~$\kk$, its intersection with a hyperplane in $\PP^n$ is a $\kk$-point on~$X$.

Now let $C \subset X$ be a curve corresponding to a $\kk$-point of $\rF_3(X)$.
First, assume that~$C_\bkk$ is one of the curves listed in Lemma~\ref{lemma:f3}.
If $C_\bkk$ is of types~\ref{case:f3:2-1}, \ref{case:f3:chain}, or~\ref{case:f3:multiple}, 
one of its line-components is Galois invariant, hence defined over~$\kk$, hence $X$ has a $\kk$-point by the first part of the lemma.
Similarly, if $C_\bkk$ is of type~\ref{case:f3:smooth}, then $C_\bkk \cong \PP^1_\bkk$ 
and the restriction of the hyperplane class of $\PP^n$ to~$C$ has degree~3, 
hence $C \cong \PP^1$ by Proposition~\ref{proposition:sb}, hence $C$ has $\kk$-points, hence $X(\kk) \ne \varnothing$.
If $C$ is of type~\ref{case:f3:trident}, the meeting point of the components of~$C$ is Galois-invariant, hence is defined over~$\kk$.

Otherwise, $C_\bkk$ is a plane cubic curve with an extra (possibly embedded) point (see~\cite{PS85} and~\cite[\S1]{LLSS}).
In this case the extra point is Galois-invariant, hence defined over~$\kk$.
\end{proof}

\subsection{Veronese surfaces and their projections}

Let $U$ be a vector space of dimension~3.
Let~\mbox{$\rR \subset \PP(S^2U) \cong \PP^5$} be {\sf the Veronese surface},
i.e., the image of the double Veronese embedding~$\PP(U) \to \PP(S^2U)$.
Recall that the group $\SL(U)$ acts on the space $\PP(S^2U)$ with three orbits
\begin{equation*}
\rR,
\qquad 
\rD \setminus \rR,
\qquad 
\PP(S^2U) \setminus \rD,
\end{equation*}
where $\rD \subset \PP(S^2U)$ is the symmetric determinantal cubic hypersurface (the secant variety of~$\rR$).

Recall from~\cite[\S13.4]{FH} that $\rR$ is an intersection of quadrics, 
and the space of quadratic forms vanishing on~$\rR$ is naturally identified with the space
\begin{equation*}
\Ker\left(S^2(S^2U^\vee) \larrow S^4U^\vee\right) \cong S^2U
\end{equation*}
as representations of the group~$\SL(U)$.
Thus, quadrics through~$\rR$ in~$\PP(S^2U)$ are parameterized by the same space~$\PP(S^2U)$.
To avoid confusion, we will use notation~$S^2U'$ for the space of quadratic forms vanishing on~$\rR$ and 
\begin{equation*}
\rR' \subset \rD' \subset \PP(S^2U')
\end{equation*}
for the corresponding $\SL(U)$-orbit stratification of the space of quadrics through~$\rR$.
Using this stratification it is easy to check the following facts about quadrics passing through $\rR$.

\begin{lemma}[{\cite[\S13.4]{FH}}]
\label{lemma:veronese-quadrics}
Let $Q$ be a quadric in $\PP(S^2U)$ containing~$\rR$.
\begin{enumerate}
\item 
If $Q$ corresponds to a point of $\PP(S^2U') \setminus \rD'$ then $Q$ is nonsingular.
\item
If $Q$ corresponds to a point $[u_1u_2] \in \rD' \setminus \rR'$, $u_1,u_2 \in U$, $u_1 \ne u_2$, 
then $Q$ has corank~$2$ and 
\begin{equation*}
\Sing(Q) = \PP(\langle u_1^2,u_2^2 \rangle) \subset \rD \subset \PP(S^2U),
\qquad 
\Sing(Q) \cap \rR = \{[u_1^2], [u_2^2]\}.
\end{equation*}
\item 
If $Q$ corresponds to a point $[u^2] \in \rR'$, $u \in U$, 
then $Q$ has corank~$3$ and 
\begin{equation*}
\Sing(Q) = \PP(u \cdot U) \subset \rD \subset \PP(S^2U),
\qquad 
\Sing(Q) \cap \rR = \{[u^2]\}.
\end{equation*}
\end{enumerate}
In particular the singular loci of all singular quadrics passing through~$\rR$ sweep the hypersurface~$\rD$,
and for any singular quadric~$Q$ containing~$\rR$ the intersection $\Sing(Q) \cap \rR$ is finite.
\end{lemma}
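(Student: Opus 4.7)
The plan is to exploit $\SL(U)$-equivariance of the parameterization to reduce each of (i)--(iii) to a single orbit representative and verify by explicit coordinate computation. To set this up, fix a basis $e_1,e_2,e_3$ of $U$ and identify $S^2U$ with the space of symmetric $3\times 3$ matrices, with coordinates $x_{ij}$ on $\PP(S^2U)$. Then $\rR$ is the projectivization of the rank-$1$ locus and $\rD$ the rank-$\le 2$ locus. The space $S^2U'$ of quadrics vanishing on $\rR$ is spanned by the six entries of the cofactor matrix $\operatorname{adj}(M)$ of $M=(x_{ij})$, and the $\SL(U)$-equivariant identification $S^2U'\cong S^2U$ preceding the lemma takes the form
\begin{equation*}
V\longmapsto q_V(M) := \operatorname{tr}\bigl(V\cdot\operatorname{adj}(M)\bigr),
\end{equation*}
equivariance following from $\operatorname{adj}(gMg^{\mathrm{T}})=g^{-\mathrm{T}}\operatorname{adj}(M)g^{-1}$ for $g\in\SL(U)$. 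Consequently, the orbits $\rR'$, $\rD'\setminus\rR'$, and $\PP(S^2U')\setminus\rD'$ correspond to symmetric matrices $V$ of rank $1$, $2$, and $3$, respectively.

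The key calculations are then elementary. For~(iii) take $u=e_1$, so that $V$ has a single $1$ in position $(1,1)$ and $q_V = x_{22}x_{33}-x_{23}^2$; its Jacobian vanishes precisely on $\{x_{22}=x_{23}=x_{33}=0\}=\PP(e_1\cdot U)$, giving corank $3$ and $\Sing(Q)=\PP(u\cdot U)$. For~(ii) take $u_1=e_1$, $u_2=e_2$; then $V$ is the symmetrized elementary matrix of rank $2$ and $q_V = x_{13}x_{23}-x_{12}x_{33}$, whose Jacobian vanishes precisely on $\PP(\langle e_1^2,e_2^2\rangle)$, confirming corank $2$ and $\Sing(Q)=\PP(\langle u_1^2,u_2^2\rangle)$. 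For~(i) take $V=\mathrm{Id}$, so that $q_V(M)=\operatorname{tr}(\operatorname{adj}(M))$; the associated $6\times 6$ Gram matrix is block-diagonal with nonzero determinant, and $Q$ is smooth.

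In each of the two singular cases $\Sing(Q)$ consists of symmetric matrices of rank $\le 2$, hence lies in $\rD$, while the additional condition of rank exactly $1$ singles out $\{[u^2]\}$ in (iii) and $\{[u_1^2],[u_2^2]\}$ in (ii). The concluding assertions are then immediate, since every rank-$\le 2$ symmetric matrix has the form $\tfrac12(uv^{\mathrm{T}}+vu^{\mathrm{T}})$ for some $u,v\in U$, so the singular loci $\PP(u\cdot U)$ of type-(iii) quadrics already sweep out $\rD$, and the explicit descriptions exhibit $\Sing(Q)\cap\rR$ as having at most two points. The only real subtlety in the argument is getting the $\SL(U)$-equivariance of the identification right so that the orbit stratifications match; everything after that is routine linear algebra on three carefully chosen orbit representatives.
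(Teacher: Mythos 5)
Your proof is correct. The paper does not actually prove this lemma — it cites \cite[\S13.4]{FH} and says the facts are ``easy to check'' using the $\SL(U)$-orbit stratification — and your argument (the adjugate-based equivariant identification $V \mapsto \operatorname{tr}(V\cdot\operatorname{adj}(M))$, reduction to one representative per rank stratum, explicit Jacobian computations, and the factorization of rank~$\le 2$ ternary quadratic forms for the final sweeping claim) is exactly the verification that citation stands in for. As for the subtlety you flag at the end: since $\Ker\left(S^2(S^2U^\vee) \to S^4U^\vee\right)$ is an irreducible $\SL(U)$-module, any equivariant identification with $S^2U$ is unique up to scalar, so your formula necessarily agrees projectively with the paper's ``natural'' one and the orbit labels $[u^2]$, $[u_1u_2]$ match as required.
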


Using Lemma~\ref{lemma:veronese-quadrics} we deduce the following result that will be used in Section~\ref{section:sarkisov-links}.

\begin{lemma}\label{lemma:Veronese2}
Let~$Y \subset \PP^6$ be a reduced and irreducible complete intersection of three quadrics.
If~$Y$ contains a Veronese surface~$\rR \subset \PP^5\subset \PP^6$ then its hyperplane section~$Y \cap \langle \rR \rangle$ 
by the linear span of~$\rR$ cannot contain~$\rR$ with multiplicity~$2$.
\end{lemma}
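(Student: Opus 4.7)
The plan is to argue by contradiction. Assume the hyperplane section $Y \cap \PP^5$ contains $\rR$ with multiplicity~$2$; since $\deg (Y \cap \PP^5) = \deg Y = 8 = 2 \deg \rR$, this forces the cycle equality $[Y \cap \PP^5] = 2[\rR]$. Choose a linear coordinate $w$ on $\PP^6$ with $\PP^5 = \{w = 0\}$ and write $Q_i = q_i + w\ell_i + \alpha_i w^2$, so that $Y \cap \PP^5 = V(q_1, q_2, q_3) \subset \PP^5$ with each $q_i$ a quadric through $\rR$. I would first verify that $V := \langle q_1, q_2, q_3 \rangle$ has dimension~$3$: a linear dependence among the $q_i$ would yield a relation $Q_i \equiv wL' \pmod{\text{the other } Q_j}$ for some linear form $L'$ on $\PP^6$, and a short case analysis (on whether $L'$ is proportional to $w$) forces $Y$ to be either reducible or non-reduced.

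Next, the cycle equation at the generic point $\eta$ of $\rR$ forces $\cO_{\PP^5, \eta}/(q_1, q_2, q_3)$ to have length~$2$. Since $\cO_{\PP^5, \eta}$ is a regular local ring of dimension~$3$, the images of $q_1, q_2, q_3$ in $\mathfrak{m}_\eta/\mathfrak{m}_\eta^2$ must span a subspace of dimension exactly~$2$. Pointwise, this means that for each generic $y \in \rR$ there is a unique class $[q_y] \in \PP(V)$ with $dq_y(y) = 0$, i.e., $y \in \Sing(V(q_y)) \subset \PP^5$. The resulting rational map $\psi \colon \rR \dashrightarrow \PP(V) \cong \PP^2$ has finite fibres --- the fibre over $[q]$ lies in $\Sing(V(q)) \cap \rR$, which is finite by Lemma~\ref{lemma:veronese-quadrics} --- so its image is $2$-dimensional, making $\psi$ dominant. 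Because every quadric in the image is singular, the image lies in the determinantal cubic $\rD'$; a $2$-dimensional subset of a cubic hypersurface in $\PP^5$ exists only when $\PP(V) \subset \rD'$.

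The decisive step is the following observation: once $\PP(V) \subset \rD'$, identifying each $q \in S^2U$ with the corresponding symmetric $3 \times 3$ matrix, the expansion
\[
\det(y + tq) = \det(y) + t \cdot \mathrm{tr}(q \cdot \mathrm{adj}(y)) + t^2 \cdot \mathrm{tr}(y \cdot \mathrm{adj}(q)) + t^3 \det(q)
\]
together with $\det \equiv 0$ on $V$ forces $\mathrm{tr}(q \cdot \mathrm{adj}(y)) = 0$ for all $y, q \in V$. Under the natural identification of $S^2U$ with the space of quadrics through $\rR$, this expression is precisely $Q_q(y)$, the value of the quadric $Q_q$ at $[y] \in \PP^5$. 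Hence the linear $\PP^2 = \PP(V) \subset \PP(S^2U)$ is contained in $V(Q_q : q \in V) = V(q_1, q_2, q_3)$. But $\PP(V)$ is a linear $\PP^2$, whereas the Veronese surface $\rR$ contains no lines (the $2$-uple Veronese embedding sends lines in $\PP^2$ to conics), let alone a $\PP^2$; so $\PP(V)$ gives an extra irreducible component of $V(q_1, q_2, q_3)$ distinct from $\rR$, contributing degree~$1$ to $[V(q_1, q_2, q_3)]$ on top of the multiplicity-$2$ contribution from $\rR$. This yields $\deg V(q_1, q_2, q_3) \geq 2 \cdot 4 + 1 = 9$, contradicting $\deg V(q_1, q_2, q_3) = 8$. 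The hard part is spotting the determinantal identity above, which is what produces the crucial extra component $\PP(V)$ in the delicate case $\PP(V) \subset \rD'$.
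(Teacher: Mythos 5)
Your argument is correct, and its first two thirds coincide with the paper's own proof: both reduce to a three-dimensional net $V\cong N$ of quadrics on $\PP(S^2U)$ through $\rR$ (injectivity of restriction coming from irreducibility and nondegeneracy of~$Y$), and both use the multiplicity-two hypothesis together with the finiteness of $\Sing(Q)\cap\rR$ from Lemma~\ref{lemma:veronese-quadrics} to force the whole plane $\PP(V)$ into the determinantal cubic~$\rD'$. The endgame is where you genuinely diverge. The paper invokes the classification of projective planes contained in~$\rD'$ --- the two families $\PP(S^2U'_2)$ and $\PP(U'_1\cdot U')$ --- and rules each out by computing the base locus of the corresponding net explicitly (multiplicity one along~$\rR$ in the first case; a three-dimensional degree-three cone in the second). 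You instead polarize the determinant: $\det\equiv 0$ on~$V$ kills the coefficient $\mathrm{tr}(q\cdot\mathrm{adj}(y))$ of~$t$ in $\det(y+tq)$, and under the self-duality $S^2U'\cong S^2U$ --- the same identification that makes~$\rD'$ the determinant locus and $Q_q(y)=\mathrm{tr}(q\cdot\mathrm{adj}(y))$, which one should (and can) check is consistent with the corank statements of Lemma~\ref{lemma:veronese-quadrics} --- this says exactly that the plane $\PP(V)$, viewed inside $\PP(S^2U)$, lies in the base locus $Y\cap\PP^5$ of the net. Since that base locus is a proper hyperplane section of the irreducible nondegenerate threefold~$Y$, it is a surface of degree~$8$, while the two distinct components $\rR$ (with multiplicity~$2$) and $\PP(V)$ already account for degree at least~$9$. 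Your route avoids the classification of planes in~$\rD'$ entirely and treats both of the paper's cases uniformly in one stroke; the paper's route is longer but more explicit about what the degenerate base loci actually look like. The only points worth writing out in full are the two you flagged as sketches: that $\dim V=3$ (which reduces to nondegeneracy of~$Y$, automatic for a reduced irreducible complete intersection of quadrics) and the compatibility of the identification $S^2U'\cong S^2U$ with both the determinant locus and the evaluation formula.
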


\begin{proof}
We can identify the linear span $\langle \rR \rangle$ of $\rR$ with the space $\PP(S^2U)$.
Let~$N$ be the 3-dimensional space of quadratic forms on~$\PP^6$ vanishing on~$Y$.
Since~$Y$ is a reduced and irreducible threefold, none of the quadratic forms in~$N$ vanishes on~$\PP(S^2U)$, 
hence the restriction defines an embedding 
\begin{equation*}
N \hookrightarrow S^2U'
\subset S^2(S^2U^\vee)
\end{equation*}
of~$N$ into the space of quadratic forms on~$\PP(S^2U)$ vanishing on~$\rR$.
From now on we will identify~$\PP(N)$ with a net of quadrics in~$\PP(S^2U)$ containing~$\rR$.

If the hyperplane section $Y \cap \PP(S^2U)$ contains~$\rR$ with multiplicity~2, 
then the intersection of this net of quadrics in~$\PP(S^2U)$ 
has tangent space of dimension at least~4 at each point of~$\rR$.
Therefore, for every point of $\rR$ there is a quadric in~$\PP(N)$ singular at this point.
In particular, the set of pairs~$(Q,y)$, where~$Q$ is a quadric from the net~$\PP(N)$ and~$y \in \Sing(Q) \cap \rR$, is at least 2-dimensional,
hence by Lemma~\ref{lemma:veronese-quadrics} the set of singular quadrics in~$\PP(N)$ is at least 2-dimensional, 
hence any quadric in~$\PP(N)$ is singular.
On the other hand, by Lemma~\ref{lemma:veronese-quadrics}
every singular quadric containing $\rR$ corresponds to a point of the hypersurface~$\rD'$, 
so it follows that~$\PP(N) \subset \rD'$.

Now note, that the hypersurface~$\rD'$ contains two types of projective planes:
\begin{itemize}
\item 
planes of the form~$\PP(S^2U'_2) \subset \PP(S^2U')$, where $U'_2 \subset U'$ is a 2-dimensional subspace;
\item 
planes of the form~$\PP(U'_1 \cdot U') \subset \PP(S^2U')$, where $U'_1 \subset U'$ is a 1-dimensional subspace.
\end{itemize}
It is easy to check that the intersection of quadrics parameterized by the plane~$\PP(S^2U'_2)$
is the union of~$\rR$ with the infinitesimal neighborhood of a plane;
in particular, its multiplicity along~$\rR$ is 1.
Similarly, the intersection of quadrics parameterized by the plane $\PP(U'_1 \cdot U')$
is a cone over a cubic surface scroll;
in particular, it is three-dimensional and of degree~3, 
so it cannot be a hyperplane section of a reduced and irreducible complete intersection~$Y$ of three quadrics.
\end{proof}

Another fact that we need is the following:

\begin{lemma}
\label{lemma:Veronese0}
Let $S \subset \PP^4$ be a regular linear projection of the Veronese surface $\rR \subset \PP(S^2U)$. 
Any quadric containing~$S$ is singular, and if $S$ is smooth it is not contained in a quadric.
\end{lemma}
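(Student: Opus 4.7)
The plan is to relate quadrics in $\PP^4$ containing $S$ with quadrics in $\PP(S^2U) \cong \PP^5$ containing the Veronese surface $\rR$, and then to invoke Lemma~\ref{lemma:veronese-quadrics}. I would fix a center of projection $p \in \PP(S^2U) \setminus \rR$ so that the linear projection $\pi$ from~$p$ sends $\rR$ onto $S$. Then, given any quadric $Q' \subset \PP^4$ containing $S$, its total transform $Q := \overline{\pi^{-1}(Q')} \subset \PP(S^2U)$ is the cone with vertex $p$ over $Q'$; it is itself a quadric, it contains $\rR$ (since $\pi(\rR) = S \subset Q'$), and it is singular at $p$.

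To prove the first claim, I would use the corank shift forced by the cone construction: since $Q$ is a cone with vertex $p$ over $Q'$, the singular locus of $Q$ equals the linear join $\langle p,\Sing(Q')\rangle$ (interpreted as $\{p\}$ when $Q'$ is smooth), so $\operatorname{corank}(Q) = \operatorname{corank}(Q') + 1$. Lemma~\ref{lemma:veronese-quadrics} asserts that any quadric through $\rR$ has corank $0$, $2$, or $3$, and in particular corank $1$ is impossible, while $Q$ has corank at least $1$ by construction. Hence $\operatorname{corank}(Q) \in \{2,3\}$, which forces $\operatorname{corank}(Q') \ge 1$, so $Q'$ is singular.

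For the second claim, the plan is to exploit the fact that smoothness of $S$ constrains the location of $p$. If $S$ is smooth, then $\pi$ restricts to an isomorphism $\rR \xrightarrow{\sim} S$, which is equivalent to $p$ lying on no secant or tangent line of $\rR$. Since the union of all secants and tangents to the Veronese surface is precisely the symmetric determinantal cubic $\rD$, this means $p \notin \rD$. On the other hand, Lemma~\ref{lemma:veronese-quadrics} tells us that the singular loci of all singular quadrics through $\rR$ sweep out $\rD$, so any quadric $Q$ as above would satisfy $p \in \Sing(Q) \subset \rD$ --- a contradiction. Hence no quadric can contain a smooth $S$. There is no real obstacle here: the only ingredient beyond Lemma~\ref{lemma:veronese-quadrics} is the classical identification of the secant variety of~$\rR$ with the cubic $\rD$, which is already built into the corank-$2$ stratum description supplied by that lemma.
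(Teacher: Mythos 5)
Your proposal is correct and follows essentially the same route as the paper: take the cone over the given quadric with vertex at the projection center, note it is a singular quadric through $\rR$, use Lemma~\ref{lemma:veronese-quadrics} to rule out corank~$1$ (giving singularity of the original quadric), and use the containment of the vertex in $\Sing(Q)\subset\rD$ to conclude that the projected surface is singular. The only cosmetic difference is that you phrase the second claim as a contradiction with smoothness, while the paper proves the contrapositive directly.
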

\begin{proof}
Let $\upsilon \in \PP(S^2U)$ be the center of the projection $\PP(S^2U) \dashrightarrow \PP^4$.
The preimage of a quadric containing the image $S$ of the Veronese surface $\rR$ is a singular quadric in $\PP(S^2U)$ containing $\rR$.
By Lemma~\ref{lemma:veronese-quadrics} this is a quadric of corank at least 2, 
hence the corresponding quadric in~$\PP^4$ has corank at least~1.
In particular, it is singular.
Furthermore, by Lemma~\ref{lemma:veronese-quadrics} in this case~$\upsilon$ lies 
on the secant variety~$\rD$ of~$\rR$, hence~$S$ is singular.
\end{proof}

Finally, we will need the following observation:

\begin{lemma}[{\cite[Ch. VII, \S 3.2]{Semple-Roth-1949}}]
\label{lemma:Veronese1}
If $S \subset \PP^k$, $k \in \{3,4\}$, is a regular linear projection of the Veronese surface $\rR \subset \PP(S^2U)$, 
then~$\Sing(S)$ is a union of a finite number of lines.
\end{lemma}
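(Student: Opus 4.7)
The plan is to characterize $\Sing(S)$ as the image under projection of a finite union of conics on~$\rR$. Since $\rR$ is smooth and the center~$\Lambda$ of projection satisfies $\Lambda \cap \rR = \varnothing$, the map $\rR \to S$ is a morphism; a point of~$S$ is singular precisely when it is the image of some $p \in \rR$ such that either a secant line of~$\rR$ through~$p$ meets~$\Lambda$, or a tangent line to~$\rR$ at~$p$ meets~$\Lambda$. Since all such secant and tangent lines are contained in the cubic hypersurface~$\rD = \Sec(\rR)$, a necessary condition for any singularity to exist is $\Lambda \cap \rD \ne \varnothing$.

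Conics on~$\rR$ correspond bijectively to $2$-dimensional subspaces $U_L \subset U$: each such~$U_L$ gives the conic $C_L = \rR \cap \PP(S^2 U_L)$ with linear span the plane $\Pi_L := \PP(S^2 U_L)$. The key observation, extracted from Lemma~\ref{lemma:veronese-quadrics}, is that every rank-$2$ point $c = [u_1 u_2] \in \rD \setminus \rR$ lies in a unique such plane~$\Pi_L$, namely the one for $U_L = \langle u_1, u_2 \rangle$; moreover, every secant line and every tangent line of~$\rR$ passing through~$c$ is contained in~$\Pi_L$, with endpoints \textup(resp.\ point of tangency\textup) lying on the conic~$C_L$.

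This reduces the lemma to the following case analysis. For~\mbox{$k = 4$} \textup(projection from a point~$c \notin \rR$\textup): if $c \notin \rD$ then $\Sing(S) = \varnothing$, while if $c \in \rD \setminus \rR$ then $\Sing(S)$ is the image of the unique conic~$C_L$ determined by~$c$; this image is a line, since~$c$ lies in the plane~$\Pi_L$ but off~$C_L$, and the projection of a plane conic from such a point is a $2$-to-$1$ map onto a line. For~\mbox{$k = 3$} \textup(projection from a line~$\ell$ disjoint from~$\rR$\textup): the relevant points of~$\ell$ are those of $\ell \cap \rD$, and assuming $\ell \not\subset \rD$, B\'ezout gives $|\ell \cap \rD| \le 3$ since $\deg \rD = 3$; for each such point~$c$ the associated conic~$C_L$ projects from~$\ell$ to a line in~$\PP^3$ by the same argument \textup(using that $\ell \cap \Pi_L = \{c\}$\textup). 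Thus $\Sing(S)$ is a union of at most three lines.

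The main obstacle is the degenerate case $\ell \subset \rD$ for $k = 3$: here every point of~$\ell$ contributes a conic, so the family of singularity-producing conics becomes $1$-dimensional and the enumeration above no longer applies. In this situation the map $\rR \to S$ has degree at least two onto a surface of lower degree, and a separate direct analysis is required to verify that $\Sing(S)$ still reduces to a finite union of lines; this case is treated in the classical reference~\cite{Semple-Roth-1949}.
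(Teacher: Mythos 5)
The paper itself offers no argument for this lemma beyond the citation to Semple--Roth, so there is no internal proof to compare against; your reduction of $\Sing(S)$ to images of the conics $C_L = \rR \cap \PP(S^2U_L)$ is the natural classical route, and the parts you carry out are correct. In particular, the key observation --- that every secant and every tangent line of~$\rR$ through a rank-two point~$c$ lies in the unique plane~$\Pi_L$ determined by the support of~$c$, with endpoints (resp.\ tangency point) on~$C_L$ --- is right, and it does yield the full statement for $k=4$ and for $k=3$ whenever $\ell \not\subset \rD$.

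However, the case you defer, $\ell \subset \rD$ with $\ell \cap \rR = \varnothing$, is a genuine gap, and it cannot be waved at the reference, because it is precisely where the statement as written becomes delicate. Such lines exist and are easy to classify: after passing to~$\bkk$, a line inside a plane $\Pi_L = \PP(S^2U_L)$ necessarily meets the conic $C_L \subset \rR$, so the only lines of~$\rD$ disjoint from~$\rR$ are lines in a tangent plane $\PP(u\cdot U)$ avoiding the point~$[u^2]$. Take $U = \langle x,y,z\rangle$ and $\ell = \PP(\langle xy, xz\rangle)$: every point $[x(\lambda y + \mu z)]$ of~$\ell$ has rank~$2$, so the projection is regular on~$\rR$, and in the basis $x^2, y^2, z^2, yz$ of $S^2U/\langle xy,xz\rangle$ it becomes $[a:b:c] \mapsto [a^2:b^2:c^2:2bc]$. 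This is a $2$-to-$1$ map onto the rank-$3$ quadric $4X_1X_2 = X_3^2$, whose singular locus is the single vertex $[1:0:0:0]$ --- a point, not a union of lines. So either the lemma must be read with the implicit hypothesis that the projection is birational onto its image (equivalently, that $S$ is a quartic Steiner surface when $k=3$), or this degenerate center must be excluded explicitly; in either case your proof needs to identify and dispose of it rather than defer it, since it is the one place where the conclusion can fail.
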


\section{Rationality of del Pezzo varieties}
\label{section:del-pezzo}

\subsection{Quintic del Pezzo varieties}

In this section we consider quintic del Pezzo threefolds, see~\S\ref{subsection:fano}.
The following observation is very useful:

\begin{lemma}
\label{lemma:dp5-h}
If $X$ is a quintic del Pezzo threefold
then the class $H$ is defined over~$\kk$.
\end{lemma}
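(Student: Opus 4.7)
The plan is to deduce this from Lemma~\ref{lemma:h-gal} combined with the fact that the canonical class, being intrinsically attached to the variety, is automatically defined over~$\kk$.

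First, I would recall that $H$ is characterized by the relation $2H \sim -K_{X_\bkk}$ in $\Pic(X_\bkk)$, and since the Picard group of a Fano variety is torsion-free (see~\cite[Proposition~2.1.2(ii)]{IP}), $H$ is unique and hence Galois-invariant, i.e. $H \in \Pic(X_\bkk)^\Gal$. Thus I can apply Lemma~\ref{lemma:h-gal} to $D = H$.

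Next, I would compute the Euler characteristic $\chi(\cO_{X_\bkk}(H))$. By~\eqref{eq:dP:dimH} we have $\dim H^0(X_\bkk, \cO_{X_\bkk}(H)) = \dd(X) + 2 = 7$. Moreover, since $H - K_{X_\bkk} = H + 2H = 3H$ is ample, Kodaira vanishing gives $H^i(X_\bkk, \cO_{X_\bkk}(H)) = 0$ for all $i > 0$. Therefore $\chi(\cO_{X_\bkk}(H)) = 7$, and Lemma~\ref{lemma:h-gal} yields
\begin{equation*}
7H \in \Pic(X).
\end{equation*}
On the other hand, $2H \sim -K_X$ is obviously in $\Pic(X)$ since the canonical class of any variety is defined over the ground field. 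Since $\gcd(7,2) = 1$, there exist integers $a,b$ with $7a + 2b = 1$ (explicitly $a=1$, $b=-3$), and so
\begin{equation*}
H = a \cdot (7H) + b \cdot (2H) \in \Pic(X).
\end{equation*}
This concludes the argument.

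No step here is genuinely hard; the only thing worth being careful about is the Euler characteristic computation, which must use vanishing of higher cohomology rather than merely $h^0$. Once $\chi(\cO(H)) = 7$ is in hand, the coprimality with the index $\iota(X) = 2$ does all the work, in the same spirit as the equivalence \ref{proposition:sb-b}~$\Leftrightarrow$~\ref{proposition:sb-c} in Proposition~\ref{proposition:sb}.
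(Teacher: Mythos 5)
Your proposal is correct and follows essentially the same route as the paper: Galois-invariance of $H$ from torsion-freeness of $\Pic(X_\bkk)$, the computation $\chi(\cO_{X_\bkk}(H)) = 7$ via~\eqref{eq:dP:dimH}, Lemma~\ref{lemma:h-gal} to get $7H \in \Pic(X)$, and $\gcd(7,2)=1$ together with $2H = -K_X \in \Pic(X)$. Your explicit remark that Kodaira vanishing is needed to pass from $h^0$ to $\chi$ is a detail the paper leaves implicit, but the argument is the same.
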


\begin{proof}
As we explained in \S\ref{subsection:fano}, we have $H \in \Pic(X_\bkk)^\Gal$.
By~\eqref{eq:dP:dimH} 
we have $\chi(\cO_{X_\bkk}(H)) = 7$.
Therefore, Lemma~\ref{lemma:h-gal} shows that $7H \in \Pic(X)$.
On the other hand, since $K_{X_\bkk} = -2H$ is defined over~$\kk$, it follows 
that $2H \in \Pic(X)$.
Since $\gcd(7,2) = 1$, we conclude that $H \in \Pic(X)$.
\end{proof}

Recall the classical Enriques theorem.

\begin{theorem}[\cite{SB92}]
\label{theorem:dp5s}
If $X$ is a smooth quintic del Pezzo surface 
then $X$ is $\kk$-rational.
\end{theorem}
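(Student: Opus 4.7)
The plan is to follow the classical Enriques argument in two stages: first show that $X(\kk) \neq \varnothing$, and then produce an explicit birational map to $\PP^2$. Over $\bkk$ the surface $X$ is isomorphic to $\PP^2$ blown up at four points in general position, so $\Pic(X_\bkk) = \ZZ^5$ with a standard basis $\ell, e_1, e_2, e_3, e_4$, and $-K_{X_\bkk} = 3\ell - \sum e_i$. The Galois group $\Gal$ acts on $\Pic(X_\bkk)$ preserving intersection form and anticanonical class, and hence acts through a subgroup of the Weyl group $W(A_4) = S_5 = \Aut(X_\bkk)$. Geometrically, $X_\bkk$ carries $10$ lines (classes $e_i$ and $\ell - e_i - e_j$) forming the Petersen graph, and $5$ pencils of conics (classes $\ell - e_i$ and $2\ell - \sum e_i$), with $\Gal$ permuting both collections through the same subgroup of $S_5$.

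For the first stage, I would produce a $\kk$-rational conic $C$ on $X$. The sum of the classes of the five geometric pencils of conics equals $5(-K_X)/2$ is not directly useful, but the key point is that $\Gal$, acting on $\Pic(X_\bkk)$ through $S_5$, necessarily fixes a nontrivial $\ZZ$-linear combination of the five conic classes of positive intersection with $-K_X$. More concretely, I would argue that since $\Pic(X_\bkk)^{\Gal}$ contains $-K_X$ (which is primitive and of square $5$), there is always a Galois-invariant effective class of the form $-K_X - L$ for some class $L$ of a line; then by Lemma~\ref{lemma:h-gal} applied with a suitable Euler-characteristic cancellation and the Hochschild--Serre sequence~\eqref{eq:hs-sequence}, one obtains a $\kk$-rational divisor class representing a pencil of conics. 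Any member of such a pencil is a conic over $\kk$, and since the pencil has geometric base points, one exhibits a $\kk$-point on $X$ by intersecting two generic members.

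For the second stage, once we have a $\kk$-point (equivalently, a $\kk$-rational conic pencil $|C|$ with $C^2 = 0$ and $-K_X \cdot C = 2$), I would argue that the associated conic bundle $\pi\colon X \dashrightarrow \PP^1$ has a $\kk$-rational section: one of the other four geometric pencils of conics, or a line, provides a $\ZZ$-class $M$ with $M \cdot C = 1$ that can be shown to be Galois-invariant after pairing $|C|$ with a different Galois-invariant construction. The generic fiber of $\pi$ is then a smooth conic over $\kk(\PP^1)$ with a rational point, hence isomorphic to $\PP^1_{\kk(\PP^1)}$ by Proposition~\ref{proposition:quadric}. This yields a birational isomorphism $X \sim_{\kk} \PP^1 \times \PP^1$, and hence $X$ is $\kk$-rational. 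The main obstacle is the first stage: producing the Galois-invariant conic class and verifying that the associated Brauer obstruction vanishes; this is essentially the entire content of the Enriques theorem, and is the reason the result is nontrivial and cited from~\cite{SB92} rather than proved from scratch here.
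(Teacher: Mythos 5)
The paper does not actually prove this statement: Theorem~\ref{theorem:dp5s} is quoted verbatim from~\cite{SB92} (Shepherd-Barron's short proof of Enriques' theorem), and no argument is given in the text. So the honest comparison is between your sketch and the cited literature, and your sketch has genuine gaps that are not repaired anywhere in the paper. The central problem is your first stage. The claim that the Galois group, acting through a subgroup of $S_5$ on $\Pic(X_\bkk)$, ``necessarily fixes'' one of the five conic classes, or an effective class of the form $-K_X - L$ with $L$ a line class, is false in general: if the Galois image contains a $5$-cycle it permutes the five conic classes cyclically and acts transitively on the ten lines, so no such invariant class exists (only their orbit sums, e.g.\ $-2K_X$, are invariant). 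Moreover, even when an invariant conic class $C$ does exist, your mechanism for producing a $\kk$-point fails: since $C^2 = 0$, the pencil $|C|$ is base point free and two distinct members are disjoint, so ``intersecting two generic members'' yields nothing. Your second stage has the same defect in miniature: the existence of a Galois-invariant class $M$ with $M \cdot C = 1$ is exactly the kind of statement that can fail for the same lattice-theoretic reasons, and you do not justify it.

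In effect you concede this yourself at the end: the missing step --- that a quintic del Pezzo surface always has a $\kk$-point (indeed is $\kk$-rational) with no hypothesis on the Galois action --- \emph{is} Enriques' theorem, and it cannot be recovered by the invariant-class bookkeeping you propose; the known proofs (Enriques, Swinnerton-Dyer, Skorobogatov, Shepherd-Barron) proceed differently, e.g.\ Shepherd-Barron works with the anticanonical model $X \subset \PP^5$ realized as a linear section of $\Gr(2,5)$ and produces the birational map to $\PP^2$ directly. Since the paper itself simply cites~\cite{SB92}, the correct move here is either to do the same or to reproduce one of those actual proofs; as written, your argument does not establish the theorem.
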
 

Now we deduce from it the proof of Theorem~\ref{theorem:main}\ref{main:v5}.

\begin{theorem}
\label{theorem:v5}
Let $X$ be a smooth quintic del Pezzo threefold.
Then $X$ is $\kk$-rational.
\end{theorem}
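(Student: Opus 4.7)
The plan is to reduce the rationality of $X$ to Enriques' theorem (Theorem~\ref{theorem:dp5s}) applied over the function field $K := \kk(\PP^1_\kk) = \kk(t)$, by exhibiting $X$ as the total space of a $\kk$-pencil of quintic del Pezzo surfaces. By Lemma~\ref{lemma:dp5-h}, the polarization $H$ is defined over $\kk$, so $V := H^0(X, \cO_X(H))$ is a $7$-dimensional $\kk$-vector space by~\eqref{eq:dP:dimH}. Picking a $2$-dimensional $\kk$-subspace $W \subset V$ in sufficiently general position, I consider the induced rational map
\begin{equation*}
\phi_W \colon X \dashrightarrow \PP(W^\vee) \cong \PP^1_\kk,
\end{equation*}
whose fibers are divisors in the linear system $|H|$.

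I then analyze the generic fiber $X_\eta$ of $\phi_W$ as a surface over $K$. Since $\kk$ has characteristic zero and is therefore infinite, Bertini's theorem ensures that for general $W$ the generic fiber $X_\eta$ is smooth, geometrically integral, and projective over $K$, and its base change to an algebraic closure of $K$ is a smooth quintic del Pezzo surface, being a smooth hyperplane section of the $|H|$-polarized quintic del Pezzo threefold. By adjunction applied on $X$, the class $-K_{X_\eta} = H|_{X_\eta}$ is ample with $(-K_{X_\eta})^2 = H^3 = 5$, so $X_\eta$ itself is a smooth quintic del Pezzo surface over $K$.

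Now Enriques' theorem applied over $K$ yields that $X_\eta$ is $K$-rational, i.e.,~\mbox{$K(X_\eta) \cong K(u, v)$} for some transcendentals $u, v$. Since the function field of the generic fiber of $\phi_W$ coincides with $\kk(X)$, this gives $\kk(X) \cong \kk(t, u, v)$, a purely transcendental extension of $\kk$ of transcendence degree $3$, and therefore $X$ is $\kk$-rational. The main technical point requiring care is the verification that Bertini in characteristic zero really produces a smooth and geometrically integral generic fiber (so that $X_\eta$ genuinely qualifies as a quintic del Pezzo surface over $K$); once this is established, Enriques' theorem applies formally and yields the conclusion.
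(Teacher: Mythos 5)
Your proposal is correct and follows essentially the same route as the paper: after using Lemma~\ref{lemma:dp5-h} to get $H$ defined over $\kk$, both arguments take a general $\kk$-pencil in $|H|$ (equivalently, the linear projection of $X\subset\PP^6$ from a general $\kk$-rational $\PP^4$) to realize $X$ as birational to a quintic del Pezzo surface fibration over $\PP^1_\kk$, and then apply Enriques' theorem (Theorem~\ref{theorem:dp5s}) to the generic fiber over $\kk(t)$. The Bertini point you flag is handled implicitly in the paper the same way, so there is no substantive difference.
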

\begin{proof}
By Lemma~\ref{lemma:dp5-h} the class $H$ is defined over $\kk$ and by~\cite[Ch. 2, Theorem~1.1(ii)]{Iskovskikh-1980-Anticanonical}
it induces an embedding 
\begin{equation*}
X \hookrightarrow \PP^6.
\end{equation*}
Consider a general projective subspace $\PP^4 \subset \PP^6$ defined over~$\kk$.
Restricting to $X$ the linear projection~$\PP^6 \dashrightarrow \PP^1$ with center in $\PP^4$,
we see that $X$ is birational to a quintic del Pezzo surface fibration over $\PP^1$,
hence by Theorem~\ref{theorem:dp5s} it is rational over $\PP^1$, hence is $\kk$-rational.
\end{proof}

\begin{remark}
One can also define a quintic del Pezzo variety of any dimension~$n \le 6$ 
as a smooth Fano variety $X$ with $\uprho(X_\bkk) = 1$, $\iota(X) = n - 1$, and $(-K_X)^n/(n-1)^n = 5$.
If $2 \le n \le 5$ the argument of Lemma~\ref{lemma:dp5-h} shows that the ample generator $H$ of $\Pic(X_\bkk)$ is defined over~$\kk$
and the argument of Theorem~\ref{theorem:v5} shows that $X$ is $\kk$-rational.

Note that both results fail for $n = 6$ --- it is easy to construct a form of $\Gr(2,5)$ 
for which the Pl\"ucker class is not defined over~$\kk$,
and which has no $\kk$-points, hence is not $\kk$-rational.
\end{remark}

\subsection{Quartic del Pezzo threefolds}

Let $X$ be a quartic del Pezzo threefold, i.e., an intersection of two quadrics in a Severi--Brauer variety of dimension~5.
The following rationality result is very classical.

\begin{theorem}[{\cite[Proposition~2.2, 2.3]{CTSSD}}]
\label{theorem:v4-unirational}
Let $X$ be a smooth quartic del Pezzo threefold.
If~$X(\kk) \ne \varnothing$ then~$X$ is $\kk$-unirational 
and if $X(\kk) \ne \varnothing$ and~$\rF_1(X)(\kk) \ne \varnothing$ then~$X$ is $\kk$-rational.
\end{theorem}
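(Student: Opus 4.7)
My plan is to treat the two implications separately. Both begin with the observation that the hypothesis $X(\kk) \ne \varnothing$ forces the ambient $5$-dimensional Severi--Brauer variety in which $X$ sits to be $\PP^5_\kk$ (by Proposition~\ref{proposition:sb}); hence I may write $X = Q_1 \cap Q_2 \subset \PP^5_\kk$ for two quadrics $Q_1, Q_2$ defined over $\kk$, and fix $x \in X(\kk)$.

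For the rationality assertion, I will use a $\kk$-rational line $L \subset X$ and show that the linear projection $\pi_L \colon X \dashrightarrow \PP^3$ from $L$ is a $\kk$-birational isomorphism. The key point is that for a generic $\kk$-rational $2$-plane $P$ containing $L$, the intersection $P \cap Q_i$ is a plane conic through $L$, and hence must decompose as $L \cup L_i^P$ for some residual line $L_i^P \subset P$. It follows that
\[
P \cap X \;=\; (L \cup L_1^P) \cap (L \cup L_2^P) \;=\; L \cup \{ L_1^P \cap L_2^P \}
\]
consists of $L$ together with a single residual point, and this residual point, as a function of $[P] \in \PP^3$, provides the inverse rational map to $\pi_L$; hence $\pi_L$ is $\kk$-birational.

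For the unirationality assertion, my plan is to fibre $X$ over a $\kk$-rational base so that every fibre carries the $\kk$-point $x$, and then invoke the classical two-dimensional unirationality theorem on the generic fibre. Concretely, I pick a generic $\kk$-rational $3$-plane $\Lambda \subset \PP^5_\kk$ through $x$. The pencil of hyperplanes containing $\Lambda$ is a $\kk$-rational $\PP^1$, and the associated linear projection defines a rational fibration $X \dashrightarrow \PP^1$ whose generic fibre $S_\eta$ is, by a Bertini-type argument, a smooth quartic del Pezzo surface over $\kk(\PP^1)$. Since $x \in \Lambda$ lies on every hyperplane of the pencil, the constant assignment $t \mapsto x$ defines a rational section of the fibration, giving a $\kk(\PP^1)$-rational point of $S_\eta$. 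By the classical theorem that a smooth quartic del Pezzo surface over any field of characteristic zero with a rational point is unirational over that field, $S_\eta$ is $\kk(\PP^1)$-unirational; lifting a unirational parametrisation of $S_\eta$ over the base $\PP^1$ then yields a dominant rational map from a projective space to $X$, establishing $\kk$-unirationality.

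The main obstacle is the invocation at the last step of the classical two-dimensional unirationality theorem for quartic del Pezzo surfaces with a rational point; this is a non-trivial classical result that I would cite rather than re-prove. A secondary technical matter is the Bertini-type verification of generic smoothness of the fibres $S_t$: the base locus of the pencil of hyperplane sections on $X$ equals $\Lambda \cap X$, which for a sufficiently generic $\Lambda \ni x$ is a smooth curve meeting $X$ transversely, and a standard transversality computation then gives smoothness of the general member both on and off the base locus.
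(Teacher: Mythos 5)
The paper offers no proof of this statement at all --- it is quoted verbatim from \cite[Propositions~2.2, 2.3]{CTSSD} --- so your attempt can only be measured against the classical arguments. Your rationality half is exactly the classical one: after using $X(\kk)\ne\varnothing$ to trivialize the ambient Severi--Brauer variety and realize $X=Q_1\cap Q_2\subset\PP^5_\kk$, projection from the $\kk$-line $L$ is birational because a general plane $P\supset L$ meets $X$ in $L$ plus a single residual point $L_1^P\cap L_2^P$. The genericity caveats (that $L_1^P$, $L_2^P$ are distinct from each other and from $L$, and that their intersection point lies off $L$) are routine, since lines on $X$ form only a surface; this is correct as written.

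The unirationality half is structurally sound, but the black box you invoke at the end is stated more strongly than what is safely classical. The standard theorem (Manin) gives unirationality of a smooth quartic del Pezzo surface over an infinite field from a rational point \emph{not lying on any of the sixteen lines}: one blows up such a point to get a smooth cubic surface with a rational point and applies Segre--Manin--Koll\'ar. For an arbitrary rational point the assertion is much less standard, so you should verify the general-position condition rather than absorb it into the citation. Fortunately your construction delivers it: through $x$ there pass only finitely many lines of $X$ (the scheme $\rF_1(X,x)$ has length $4$), a generic $3$-plane $\Lambda\ni x$ contains none of them, and each such line then lies in exactly one member of the pencil $\{H_t\}$, so the generic fibre $S_\eta$ contains no line through $x$. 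With that verification added (together with your Bertini check for smoothness of $S_\eta$), the argument closes. Note that this surface-level input is precisely what the paper itself relies on in Lemma~\ref{lemma:unirational-fibration}\ref{lemma:unirational-fibration:dP}, so you are not importing anything beyond its declared toolkit.
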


This proves that the conditions of (uni)rationality of $X$ from Theorem~\ref{theorem:main}\ref{main:v4} are sufficient.

\section{Results from Minimal Model Program}
\label{section:mmp-lemma}

In this section we remind some results from MMP and prove a couple of unirationality criteria.
The first of them (Lemma~\ref{lemma:unirational-fibration}) is quite standard.
The second (Proposition~\ref{proposition:q-fano}) is a bit technical 
and will be only used at the end of \S~\ref{subsection:unirationality}, see the proof of Proposition~\ref{proposition:f1xx-3}.

\subsection{Terminal singularities}
\label{subsec:terminal}

In this subsection we remind some basic definitions and results of the Minimal Model Program, for more details see e.g. 
\cite{Kollar-Mori:88}, \cite{Mori-1988}, \cite{Kollar95:pairs}.

For a morphism $f \colon Y' \to Y$ of normal varieties and a $\QQ$-Cartier divisor class~$D$ on~$Y$ 
we denote by~$f^*D$ its pullback to~$Y'$.
Similarly, for a rational map $\psi \colon Y' \dashrightarrow Y$ and a subvariety $Z \subset Y'$ 
not lying in the indeterminacy locus of~$\psi$ we write $\psi_*(Z) \subset Y$ for the strict transform of~$Z$.

For a normal variety $Y$ we denote by $K_Y$ the canonical class (in general it is a Weil divisor class).
Recall that $Y$ {\sf has terminal singularities} if it is normal, $\QQ$-Gorenstein 
(i.e., $K_Y$ is $\QQ$-Cartier)
and for any birational morphism~\mbox{$\pi \colon \tY \to Y$} from a normal variety one has 
\begin{equation}
\label{eq:discrepancy}
K_\tY = \pi^*K_Y + \sum a_iE_i
\end{equation}
with $a_i > 0$ for all $i$, where $E_i$ are the exceptional divisors of $\pi$.
The coefficients $a_i$ are called {\sf discrepancies} of the divisors~$E_i$ over~$Y$.
The discrepancy of a divisor $E$ over~$Y$ only depends 
on the valuation of the field~$\kk(Y)$ of rational functions on~$Y$ defined by~$E$.

We will need the following standard results about terminal singularities.

\begin{lemma}[{\cite[Lemma~3.10]{Kollar95:pairs}}]
\label{lemma:small-terminality}
If $f \colon Y' \to Y$ is a small proper birational morphism of normal $\QQ$-Gorenstein varieties 
then~$Y$ has terminal singularities if and only if~$Y'$ has.
\end{lemma}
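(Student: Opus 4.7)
The plan is to reduce both implications to the single identity
\[
f^{*} K_{Y} = K_{Y'}
\]
as $\QQ$-Cartier divisor classes on $Y'$. To establish this, I would use that $f$ is small to see that its exceptional locus $\mathrm{Ex}(f) \subset Y'$ has codimension $\geq 2$ and $f$ is an isomorphism on $Y' \setminus \mathrm{Ex}(f)$. Thus $f^{*} K_{Y}$ and $K_{Y'}$ agree off a codimension-$\geq 2$ subset, and since both are $\QQ$-Cartier, their difference is a $\QQ$-Cartier class supported in codimension $\geq 2$, hence zero.

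For the forward direction ($Y$ terminal $\Rightarrow$ $Y'$ terminal), I would take any proper birational morphism $\pi \colon \tY \to Y'$ from a normal variety and pass to $f \circ \pi \colon \tY \to Y$. The identity gives $\pi^{*} K_{Y'} = (f \circ \pi)^{*} K_{Y}$, so comparing the discrepancy expansions
\[
K_{\tY} = \pi^{*} K_{Y'} + \sum a'_{i} E_{i} = (f \circ \pi)^{*} K_{Y} + \sum a_{i} E_{i}
\]
shows that the coefficient of every prime divisor of $\tY$ is the same on both sides. Moreover, smallness of $f$ forces a prime divisor $E \subset \tY$ to be $\pi$-exceptional if and only if it is $(f \circ \pi)$-exceptional, since $f$ induces an isomorphism of the codimension-one strata of $Y'$ and $Y$. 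Thus terminality of $Y$ immediately yields terminality of $Y'$.

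For the converse, given $\pi \colon \tY \to Y$ I would resolve the rational map $\tY \dashrightarrow Y'$ by a common normal model $Z$ with proper birational morphisms $g \colon Z \to \tY$ and $h \colon Z \to Y'$ satisfying $\pi \circ g = f \circ h$. Applying the previous argument to $h$ shows that every $h$-exceptional divisor on $Z$ has positive discrepancy over $Y'$, which coincides with its discrepancy over $Y$ (via the same identity). The proper transform to $Z$ of any $\pi$-exceptional divisor of $\tY$ is $h$-exceptional by smallness of $f$, and its discrepancy is preserved under $g$; hence all $\pi$-exceptional discrepancies are positive, giving terminality of $Y$.

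The only genuinely new ingredient beyond the identity $f^{*} K_{Y} = K_{Y'}$ is the routine passage to a common model in the converse direction, where $\tY$ does not map directly to $Y'$. No real obstacle arises once one notes that discrepancies depend only on the underlying valuation and are therefore preserved under further normal birational modifications.
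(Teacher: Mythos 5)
Your proof is correct, and it is essentially the standard argument: the paper does not prove this lemma but simply cites \cite[Lemma~3.10]{Kollar95:pairs}, and your reduction to the crepant identity $f^*K_Y = K_{Y'}$ (valid because a small birational morphism creates no exceptional divisors along which the two $\QQ$-Cartier classes could differ), together with the matching of exceptional divisors and the valuation-invariance of discrepancies on a common model, is exactly the argument that citation refers to.
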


\begin{lemma}[{\cite[Corollaries~5.18, 5.38]{Kollar-Mori:88}}]
\label{lemma:terminal-singularities}
If $Y$ is a threefold with terminal singularities then the 
set~$\Sing(Y)$ is finite.
If, moreover, $Y$ is Gorenstein, then for any point~$y \in \Sing(Y)$ we have~\mbox{$\dim T_yY \le 4$}.
\end{lemma}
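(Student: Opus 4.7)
The overall plan is to reduce both assertions to the fine structure theory of three-dimensional terminal singularities developed by Mori and Reid. Concretely, the finiteness statement will follow from the general fact that terminal singularities are smooth in codimension two, while the tangent space bound will follow from Reid's classification of Gorenstein terminal threefold singularities as compound Du Val (cDV) singularities.

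For the finiteness of $\Sing(Y)$, I would argue by contradiction. Suppose $\Sing(Y)$ contained a one-dimensional irreducible component~$C$, let~$p$ be a general closed point of~$C$, and let~$H$ be a general hyperplane section of~$Y$ through~$p$ in some local projective embedding. On one hand, $H$ would inherit a one-dimensional singular locus near~$p$, namely the scheme-theoretic slice $C \cap H$, and hence would fail to be normal at~$p$. On the other hand, a general hyperplane section of a terminal variety has canonical singularities by a standard Bertini-type argument (see \cite[\S5]{Kollar-Mori:88}), so in particular~$H$ is normal. The contradiction forces $\dim \Sing(Y) \le 0$, and since $\Sing(Y)$ is a closed subset of the Noetherian scheme~$Y$ of dimension~$0$, it is finite.

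For the tangent space bound, assume in addition that $Y$ is Gorenstein and let $y \in \Sing(Y)$. I would invoke Reid's theorem that every Gorenstein three-dimensional terminal singularity is cDV, meaning that analytically locally $(Y,y)$ is isomorphic to a hypersurface in a smooth four-dimensional variety, cut out by an equation of the form $f(x_1,x_2,x_3) + t \cdot g(x_1,x_2,x_3,t) = 0$ where $f$ defines a Du Val surface singularity. Since $(Y,y)$ is thus an analytic hypersurface in $(\mathbb{A}^4,0)$, its Zariski tangent space $T_yY$ embeds into the four-dimensional tangent space of~$\mathbb{A}^4$, giving $\dim T_yY \le 4$.

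The main obstacle is the cDV classification itself, a deep structural theorem whose proof constitutes a substantial part of~\cite[Ch.~5]{Kollar-Mori:88}. Once it is granted, both assertions become essentially formal; a direct attack on the bound $\dim T_yY \le 4$ without invoking some version of this classification appears much harder, as it would require independent control on the embedding dimension of an arbitrary Gorenstein terminal threefold singularity.
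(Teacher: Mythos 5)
The paper itself offers no argument for this lemma: it is quoted verbatim from \cite{Kollar-Mori:88} (Corollaries~5.18 and~5.38), so the only comparison is with the standard proofs there. Your treatment of the Gorenstein half coincides with that source: Reid's theorem that Gorenstein terminal threefold points are cDV, hence analytically hypersurface singularities in a smooth fourfold, immediately gives $\dim T_yY \le 4$. That part is fine, and you are right that no classification-free shortcut is known.

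Your finiteness argument, however, has a genuine gap. If $C \subset \Sing(Y)$ is a curve, $p \in C$ a general point, and $H$ a general hyperplane section \emph{through} $p$, then the slice $C \cap H$ is zero-dimensional, not one-dimensional: a general hyperplane through $p$ does not contain the curve $C$. So $H$ has only isolated singular points near $p$, and being a hypersurface section of the Cohen--Macaulay variety $Y$ it is normal there; nothing contradicts the (expected) canonicity of $H$, since canonical surface singularities are precisely isolated Du Val points. Moreover, the Bertini-type statement you invoke (\cite[Lemma~5.17]{Kollar-Mori:88}) applies to \emph{free} linear systems, whereas hyperplanes constrained to pass through $p$ have a base point at $p$, so even the claim that such an $H$ is canonical would need a separate justification. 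The standard repair is to drop the base condition: take a general member $H$ of a very ample linear system on (a projective model of) $Y$. By Bertini for terminal singularities $H$ is a terminal surface; it meets the curve $C$, and at any $q \in C \cap H$ one has $\dim T_qH \ge \dim T_qY - 1 \ge 3 > 2$, so $H$ is singular at $q$. Since two-dimensional terminal singularities are smooth points, this is the desired contradiction, and it is essentially the argument behind \cite[Corollary~5.18]{Kollar-Mori:88}.
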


\begin{lemma}[{\cite[Lemma~5.1, Corollary~5.2]{Kawamata88-crep}}]
\label{lemma:factoriality}
Let~$Y$ be a threefold with terminal singularities and let~$D$ be any Weil $\QQ$-Cartier divisor.
If~$rK_X$ is Cartier for some integer~$r$ then~$rD$ is Cartier as well.
In particular, if the singularities of~$Y$ are 
Gorenstein then any Weil $\QQ$-Cartier divisor on~$Y$ is Cartier.
\end{lemma}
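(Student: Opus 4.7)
The assertion that $rD$ is Cartier can be checked locally, and by Lemma~\ref{lemma:terminal-singularities} the singular locus of~$Y$ is a finite set of closed points, so it suffices to work in the henselization $\OOO_{Y,y}^{\mathrm{h}}$ at each $y \in \Sing(Y)$. The $\QQ$-Cartier hypothesis on~$D$ means that $[D]$ is a torsion element of the local divisor class group $\Cl(\OOO_{Y,y}^{\mathrm{h}})$, and I need to show that its order divides~$r$.

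The key tool is the \emph{index-one cover}, which exists for every terminal 3-fold germ $(Y,y)$: letting $r_y$ be the smallest positive integer such that $r_y K_Y$ is Cartier at~$y$ (so that $r_y \mid r$ by hypothesis), there is a cyclic $\mu_{r_y}$-Galois cover $\pi \colon (\tY, \tilde y) \to (Y,y)$ that is \'etale in codimension one, with $\tY$ a Gorenstein terminal 3-fold germ, i.e., an isolated compound Du Val (cDV) hypersurface singularity. This cover induces an exact sequence of local class groups
$$
0 \to \ZZ/r_y\ZZ \to \Cl(\OOO_{Y,y}^{\mathrm{h}}) \xrightarrow{\ \pi^*\ } \Cl(\OOO_{\tY,\tilde y}^{\mathrm{h}})^{\mu_{r_y}},
$$
in which the kernel is generated by the class of~$K_Y$. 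The problem thus reduces to showing that $\Cl(\OOO_{\tY,\tilde y}^{\mathrm{h}})$ is torsion-free, for then the image of the torsion class $[D]$ in this torsion-free group must vanish, forcing $[D]$ into the kernel; it is therefore annihilated by~$r_y$, and a fortiori by~$r$.

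The step I expect to be the main obstacle is precisely the torsion-freeness of the local class group of an isolated cDV germ. One route is to use the Reid--Mori classification of terminal 3-fold singularities: a cDV germ admits, possibly after a suitable base change or in the analytic/formal category, a small partial $\QQ$-factorial modification whose exceptional locus is a tree of smooth rational curves, and then $\Cl(\OOO_{\tY,\tilde y}^{\mathrm{h}})$ is identified with a quotient of the free abelian group on these curves, which is visibly torsion-free. A more direct alternative is to exploit that cDV germs are one-parameter deformations of Du Val surface singularities and transfer the known class-group structure through the specialization map. Either way, this is the technical heart of Kawamata's original argument; once it is in hand, the conclusion follows from the descent along the index-one cover described above, and the Gorenstein case is the immediate specialization to $r = 1$.
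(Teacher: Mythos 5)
The paper offers no proof of this lemma at all: it is imported verbatim from Kawamata \cite[Lemma~5.1, Corollary~5.2]{Kawamata88-crep}, so the only thing to compare your argument against is the cited source. Your outline is in fact a faithful reconstruction of Kawamata's own proof: localize at the finitely many singular points, read the $\QQ$-Cartier hypothesis as saying that $[D]$ is torsion in the local class group, pass to the index-one cover (\'etale in codimension one, with Gorenstein terminal, i.e.\ isolated cDV, total space), use the exact sequence $0 \to \ZZ/r_y\ZZ \to \Cl(\cO^{\mathrm{h}}_{Y,y}) \to \Cl(\cO^{\mathrm{h}}_{\tY,\tilde y})$ whose kernel is generated by $[K_Y]$, and reduce everything to torsion-freeness of the class group upstairs. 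All of these reductions are correct (with the harmless caveat that over a non-closed $\kk$ one should first base-change to $\bkk$, since invertibility of $\cO(rD)$ descends along faithfully flat maps, just as it does along henselization).

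The genuine gap is exactly where you flag it, and neither of your two proposed routes closes it as stated. Identifying $\Cl(\cO^{\mathrm{h}}_{\tY,\tilde y})$ with a \emph{quotient} of the free abelian group on the exceptional curves of a small $\QQ$-factorial modification proves nothing: quotients of free abelian groups can have arbitrary torsion, and since the modification is only $\QQ$-factorial rather than factorial, the argument is moreover circular (residual torsion could hide at its own singular points). The deformation-to-Du-Val route is equally suspect, since the local class groups of Du Val singularities are finite, i.e.\ entirely torsion, so there is nothing torsion-free to transfer. What is actually needed is an \emph{injection} of the local class group into a torsion-free group, and this is what Kawamata's argument supplies: an isolated cDV point is an isolated hypersurface singularity of dimension~$3$, so by Milnor its link $L$ is simply connected; Cohen--Macaulayness in dimension~$3$ kills $H^2_{\tilde y}(\cO)$, so the exponential sequence on a punctured Stein neighborhood gives an injection $\Cl(\cO^{\mathrm{an}}_{\tY,\tilde y}) \hookrightarrow H^2(L,\ZZ)$, and the torsion of $H^2(L,\ZZ)$ equals that of $H_1(L,\ZZ)=0$. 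With that substitution your argument is complete and coincides with the cited one.
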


\begin{lemma}[{\cite[Corollary~4.5]{Kawamata88-crep}}]
\label{lemma:factorialization}
If~$Y$ is a threefold with
terminal singularities then there exists a small birational morphism~\mbox{$\tau \colon \tY \to Y$} 
such that $\tY$ has $\QQ$-factorial terminal singularities.
\end{lemma}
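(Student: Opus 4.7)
The plan is to construct $\tau$ by running a relative minimal model program over $Y$. Begin with any resolution of singularities $\pi \colon Y' \to Y$ with $Y'$ smooth, which exists in characteristic zero by Hironaka. Since $Y$ has terminal singularities, the discrepancy formula~\eqref{eq:discrepancy} yields
\begin{equation*}
K_{Y'} = \pi^* K_Y + E, \qquad E = \sum_i a_i E_i, \qquad a_i > 0,
\end{equation*}
where the $E_i$ are the $\pi$-exceptional divisors, so $E$ is an effective $\pi$-exceptional $\QQ$-divisor.

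Next, I would run the $K_{Y'}$-MMP relative to $Y$. Since $Y'$ is smooth it is $\QQ$-factorial and terminal, and both properties are preserved by extremal divisorial contractions and by flips. In dimension three and characteristic zero, threefold flips exist and the MMP terminates, producing a $\QQ$-factorial terminal model $\tau \colon \tY \to Y$ such that $K_{\tY}$ is $\tau$-nef. The $\tau$-exceptional divisors form a subset of the original $E_i$, and retain their strictly positive discrepancies.

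Finally, I would verify that $\tau$ is small. Writing $\widetilde{E}$ for the strict transform of $E$ on $\tY$, the identity $K_{\tY} \sim_\QQ \tau^* K_Y + \widetilde{E}$ persists throughout the MMP, with $\widetilde{E}$ effective and $\tau$-exceptional. So $\tau$-nefness of $K_{\tY}$ is equivalent to $\tau$-nefness of $\widetilde{E}$, and the negativity lemma (applied to $-\widetilde{E}$, a $\tau$-antinef $\tau$-exceptional divisor with zero pushforward) then forces $\widetilde{E} = 0$. Since the surviving coefficients are strictly positive, no $\tau$-exceptional divisors remain, so $\tau$ is small, as required. The main technical obstacle is the existence and termination of threefold flips in the relative setting, a deep theorem of Mori and Shokurov; once that input is granted, the remainder of the argument is a clean invocation of the negativity lemma.
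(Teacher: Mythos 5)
Your argument is correct. Note, however, that the paper does not prove this lemma at all: it is quoted directly from Kawamata \cite[Corollary~4.5]{Kawamata88-crep}, so there is no internal proof to compare against. What you have written is the standard modern derivation of a $\QQ$-factorial terminalization: resolve, run the relative $K$-MMP over $Y$ (all steps are birational since $Y'\to Y$ is, so the output is a relative minimal model $\tau\colon\tY\to Y$ which is $\QQ$-factorial and terminal), and then observe that $K_{\tY}=\tau^*K_Y+\sum a_iE_i$ with all $a_i>0$ by terminality of $Y$, so the $\tau$-nef, effective, $\tau$-exceptional divisor $\sum a_iE_i$ vanishes by the negativity lemma, forcing $\tau$ to be small. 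The only inputs you need beyond formalities are existence and termination of threefold flips (Mori, Shokurov) and the negativity lemma, and you invoke them correctly; the one point worth stating explicitly is that $K_Y$ is $\QQ$-Cartier (part of the definition of terminal), which licenses writing $\tau^*K_Y$. So your proof buys a self-contained argument from general MMP machinery, whereas the paper simply outsources the statement to Kawamata's original (and historically earlier, more hands-on) treatment of crepant partial resolutions; logically the two are interchangeable for the purposes of the paper.
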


Let $Y'$ be a variety with $\QQ$-factorial terminal singularities.
Recall that an {\sf extremal Mori contraction} of~$Y'$ is a proper morphism~$f \colon Y' \to Y$ with connected fibers
such that~$Y$ is normal, the relative Picard number is~$\uprho(Y'/Y) = 1$, and~$-K_{Y'}$ is~$f$-ample.

We need the following result which is known over an algebraically closed ground field~\cite{Cutkosky-1988}
(for more details see~\cite[Theorems~7.1.1, 8.2.4, and~10.2 and Proposition~9.1]{P21}).

\begin{theorem}
\label{theorem:smooth-contractions}
Let $Y'$ be a projective threefold with terminal $\QQ$-factorial Gorenstein singularities and let $f \colon Y' \to Y$
be an extremal Mori
contraction.
Then one of the following possibilities takes place:
\begin{enumerate}
\item 
\label{theorem:smooth-contractions:divisorial:point}
The map $f$ contracts a $\kk$-irreducible divisor $D$ onto a $\kk$-irreducible $0$-dimensional subvariety of~$Y$;
the discrepancy of~$D$ takes values in the set $\left\{\tfrac12,1,2\right\}$.
\item 
\label{theorem:smooth-contractions:divisorial:curve}
The map $f$ is the blowup of a $\kk$-irreducible locally complete intersection curve $\Gamma$ lying in the smooth locus of~$Y$;
the discrepancy of the exceptional divisor~$D$ equals~$1$.
\item 
\label{theorem:smooth-contractions:conic-bundle}
The map $f$ is a flat generically smooth conic bundle over a smooth surface $Y$.
\item 
\label{theorem:smooth-contractions:dP}
The map $f$ is a flat generically smooth del Pezzo fibration over a smooth curve $Y$;
every fiber of~$f$ is $\kk$-irreducible and reduced.
\item 
\label{theorem:smooth-contractions:Fano}
$Y$ is a point and $Y'$ is a Fano threefold with $\uprho(Y') =1$.
\end{enumerate}
\end{theorem}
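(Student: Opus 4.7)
The plan is to deduce the classification from its known algebraically closed analog (Cutkosky's theorem and its extension in \cite{P21}) by base change to $\bkk$ followed by Galois descent. The key observation is that the assumption $\uprho(Y'/Y)=1$ forces the Galois group $\Gal$ to act on the relative cone of curves of $f_\bkk$ with a single orbit of extremal rays, so the contractions of the rays in that orbit all have the same geometric type.

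First I would verify that the base-changed morphism $f_\bkk\colon Y'_\bkk\to Y_\bkk$ still satisfies the hypotheses of the classical classification. Flat base change preserves $\QQ$-factoriality, terminality and the Gorenstein property in characteristic zero (using Lemmas~\ref{lemma:small-terminality} and~\ref{lemma:factoriality}); connectedness of fibers and $f_\bkk$-ampleness of $-K_{Y'_\bkk}$ are likewise preserved; and the equality $f_*\OOO_{Y'}=\OOO_Y$ combined with the normality of $Y$ implies that $\kk$ is algebraically closed inside $\kk(Y)$, whence $Y$ is geometrically integral and $Y_\bkk$ is irreducible and normal.

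Next I would split cases according to $\dim Y$. If $\dim Y=0$, then $Y=\Spec\kk$ by connectedness of fibers, so $Y'$ is Fano with $\uprho(Y')=1$, which is case~\ref{theorem:smooth-contractions:Fano}. If $\dim Y\in\{1,2\}$, the classical theorem applied to $f_\bkk$ identifies it with a del Pezzo fibration or conic bundle over a smooth base; smoothness, flatness and generic smoothness all descend to $\kk$ in characteristic zero, and the $\kk$-irreducibility and reducedness of every fiber of $f$ in case~\ref{theorem:smooth-contractions:dP} follow from the same property of geometric fibers in the classical statement for Gorenstein terminal threefolds. If $\dim Y=3$, then $f$ is birational with $\kk$-irreducible exceptional divisor $D$ (by $\uprho(Y'/Y)=1$); the geometric components of $D_\bkk$ are Galois-conjugate and by the classical theorem each is either contracted to a point (type~\ref{theorem:smooth-contractions:divisorial:point}) or the exceptional divisor of a blowup of a smooth lci curve in the smooth locus of $Y_\bkk$ (type~\ref{theorem:smooth-contractions:divisorial:curve}); Galois-invariance forces a uniform type, and the images assemble into a $\kk$-irreducible $0$-dimensional subvariety or a $\kk$-irreducible lci curve contained in the smooth locus of $Y$. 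Discrepancies depend only on the valuations of $\kk(Y')$ defined by the components of $D_\bkk$ and are preserved by base change, so they take the values stated in cases~\ref{theorem:smooth-contractions:divisorial:point} and~\ref{theorem:smooth-contractions:divisorial:curve}.

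The main obstacle is the Galois-equivariant analysis in the birational case: one must verify that the union of Galois-conjugate geometric exceptional components truly assembles into a single $\kk$-irreducible Weil divisor on $Y'$, that its scheme-theoretic image descends to a single $\kk$-irreducible subscheme of $Y$, and that the local complete intersection property for the image curve in type~\ref{theorem:smooth-contractions:divisorial:curve} survives descent (the latter because being lci is a condition on a curve over an arbitrary field, not only over $\bkk$). These are routine but nontrivial descent arguments, and the needed technical input is essentially contained in \cite[\S\S7--10]{P21}.
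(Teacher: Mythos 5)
Your reduction to the algebraically closed case has a genuine gap at its very first step: $\QQ$-factoriality is \emph{not} preserved under extension of scalars to $\bkk$. This is not a technicality but the central difficulty of the theorem. A $\kk$-irreducible divisor whose geometric components are permuted by $\Gal$ can be $\QQ$-Cartier over $\kk$ while its individual components over $\bkk$ are not (this situation occurs repeatedly in the paper, e.g.\ for the blowup of a singular conic, where $\uprho(\tX^+_\bkk)=2$ but $\rk\Cl(\tX^+_\bkk)=3$). Consequently $Y'_\bkk$ need not be $\QQ$-factorial and $f_\bkk$ need not be an extremal Mori contraction --- its relative Picard (or class) rank can exceed $1$ --- so Cutkosky's classification simply does not apply to $f_\bkk$, and your claim that ``each geometric component of $D_\bkk$ is, by the classical theorem, of one of the two divisorial types with the stated discrepancy'' has no justification. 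The discrepancy $a$ of $D$ over $Y$ is a single number attached to the non-extremal morphism $f_\bkk$, and it cannot be read off from the classical list.

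The paper's proof confronts exactly this point in the case where $f(D)$ is $0$-dimensional and $D_\bkk$ is reducible: one passes to a $\QQ$-factorialization $\tau\colon \tY_\bkk\to Y'_\bkk$ (Lemma~\ref{lemma:factorialization}), finds a $K$-negative extremal ray $R$ trivial over $Y_\bkk$, contracts it by a genuine extremal divisorial contraction $\gamma$ with exceptional divisor $D^0_\bkk\subset\tD_\bkk$, and then computes $a$ from the relation $K_{\tY_\bkk}\cdot R=a\left(D^0_\bkk\cdot R+D^1_\bkk\cdot R\right)$ together with Cutkosky's three local types for $\gamma$ and the nonnegativity of $D^1_\bkk\cdot R$; the values $\left\{\tfrac12,1,2\right\}$ come out of this interaction between the components, not from base change of a known discrepancy. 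Other cases also require arguments you omit: ruling out small birational contractions uses Benveniste's result (a small extremal contraction would force $Y$ to be non-Gorenstein); smoothness of the surface base in the conic bundle case is proved by a separate flatness argument with an anticanonical double cover; and the curve case requires rerunning Cutkosky's proof rather than citing it, since again $f_\bkk$ need not be extremal. You would need to supply the $\QQ$-factorialization argument (or an equivalent) to make your strategy work.
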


\begin{remark}
\label{remark:connected}
In cases~\ref{theorem:smooth-contractions:divisorial:point} and~\ref{theorem:smooth-contractions:divisorial:curve}
if $\uprho(Y'_\bkk) - \uprho(Y_\bkk) = 1$ then $D$ is connected and $D_{\bkk}$ is irreducible~\cite[Theorem~4]{Cutkosky-1988}.
\end{remark}

\begin{proof}
First, if $Y$ is a point, then $Y'$ is a Fano variety with $\uprho(Y')=1$ because $f$ is extremal.
This proves~\ref{theorem:smooth-contractions:Fano}.

Assume $Y$ is a curve. 
Recall that $Y$ is normal, hence smooth.
Since $Y'$ has isolated singularities, a general fiber of $f$ is smooth, 
and since $-K_{Y'}$ is relatively ample, it is a smooth del Pezzo surface.
Let $D \subset f^{-1}(y)$ be a $\kk$-irreducible component 
with reduced structure of a fiber of~$f$.
Since $Y'$ is locally factorial (Lemma~\ref{lemma:factoriality}), $D$ is a Cartier divisor.
Since~$f$ is extremal, $D = f^{-1}(y)$ (see~\cite[Theorem~3.7(4)]{Kollar-Mori:88}).
Therefore, every fiber of $f$ is $\kk$-irreducible and reduced.
This proves~\ref{theorem:smooth-contractions:dP}.

Next, assume that~$Y$ is a surface. 
Let us show that~$Y$ is smooth;
in fact, this is proved in~\cite[Theorem~7]{Cutkosky-1988}, but this part of the argument is very vague,
so we provide an alternative argument for the reader's convenience.

Assume that~$Y$ has a singular point, say~$y$.
Shrinking~$Y$ if necessary, we can find a smooth anticanonical divisor $S \subset Y'$ 
which does not contain fibers of~$f$, see~\cite[Proposition~1]{Cutkosky-1988}. 
Then the restriction $f_S \colon S \to Y$ is a finite morphism of degree~2.
Therefore, \mbox{$f_*\cO_S \cong \cO_Y \oplus \cL$},
where~$\cL$ is a reflexive sheaf of rank~1. 
But~$Y$ is locally factorial by~\cite[Lemma~3]{Cutkosky-1988}, hence~$\cL$ is locally free,
and hence the morphism~$f_S$ is flat.
Since~$S$ is smooth, it follows from~\cite[Proposition~6.5.2(i)]{EGA} that~$Y$ is smooth as well.
The rest of the argument just repeats that of~\cite[Theorem~7]{Cutkosky-1988}.
This proves \ref{theorem:smooth-contractions:conic-bundle}.

Finally, assume that the contraction $f$ is birational.
Then the exceptional locus~$D$ of~$f$ is a divisor;
indeed, if the exceptional locus is the union of a finite number of curves, the contraction is minimal in the sense of~\cite[Definition~0.4]{Benveniste85},
which by~\cite[Theorem~0(ii)]{Benveniste85} implies that~$Y$ is not Gorenstein, thus contradicting the assumptions of the theorem.
Since $Y'$ is locally factorial and the contraction is extremal, $D$ is $\kk$-irreducible. 
Then $Y$ has only ($\QQ$-factorial) terminal singularities (see e.g.~\cite[Corollary~3.43]{Kollar-Mori:88}).

Assume that $f(D)$ is a curve. 
Then all the fibers over points in $f(D)$ are one-dimensional.
In this case, the proof of~\cite[Theorem~4]{Cutkosky-1988} works without any changes.
This proves~\ref{theorem:smooth-contractions:divisorial:curve}.

It remains to consider the case when $f(D)$ is 0-dimensional.
As $\kk$-irreducibility of $f(D)$ follows from that of~$D$, we only need to compute the discrepancy.

Consider the morphism~\mbox{$f_\bkk : Y'_\bkk \to Y_\bkk$}. 
If $D_\bkk$ is irreducible, then $f_\bkk$ is a birational extremal Mori contraction, hence~\cite[Theorem~5]{Cutkosky-1988} applies.
So, from now on assume that $D_\bkk$ is not irreducible.

If $a$ is the discrepancy of~$D$ then $K_{Y'_\bkk} = f^* K_{ Y_\bkk}+a D_\bkk$.
Let $\tau \colon \tY_\bkk\to Y'_\bkk$ be a $\QQ$-factorialization (Lemma~\ref{lemma:factorialization})
and let~$\tD_\bkk = (\tau^{-1})_*(D_\bkk)$ be the strict transform of $D_\bkk$. 
Note that $\tau$ is small, hence
\begin{equation}
\label{eq:ktybkk}
K_{\tY_\bkk} =\tau^*f^* K_{ Y_\bkk}+a \tD_\bkk,
\end{equation}
Since $-K_{\tY_\bkk} = \tau^*(-K_{Y'_\bkk})$ is nef and not trivial over $Y_\bkk$,
there exists a $K_{\tY_\bkk}$-negative extremal ray~$R$, trivial over~$Y_\bkk$.
By~\cite[Theorem~6.2(i)]{Mori-1988} the contraction $\gamma \colon \tY_\bkk \to Y''_\bkk$ of $R$ fits into a diagram
\begin{equation*}
\xymatrix@R=10pt{
& \tY_\bkk \ar[dl]_\tau \ar[dr]^\gamma 
\\
Y'_\bkk \ar[dr]_f &&
Y''_\bkk \ar[dl] 
\\
& Y_\bkk
}
\end{equation*}
and must be a divisorial contraction.
Its exceptional divisor is, therefore, a component $D^{0}_\bkk\subset \tD_\bkk$. 
By~\eqref{eq:ktybkk} we have 
\begin{equation}
\label{eq:ktybkk-r}
K_{\tY_\bkk}\cdot R= a \left( D^{0}_\bkk\cdot R + D^{1}_\bkk\cdot R \right ),
\end{equation}
where $D^{1}_\bkk := \tD_\bkk - D^{0}_\bkk$.
According to~\cite[Theorem~5]{Cutkosky-1988} we have the following possibilities:
\begin{enumerate}
\renewcommand\labelenumi{\rm \alph{enumi})}
\renewcommand\theenumi{\rm \alph{enumi})}
\item \label{ea}
$\gamma$ is the blowup of a smooth point; then $K_{\tY_\bkk}\cdot R=-2$ and $D^{0}_\bkk\cdot R=-1$;
\item \label{eb}
$\gamma$ is the blowup of a Gorenstein terminal point or a curve; 
then $K_{\tY_\bkk}\cdot R=-1$, $D^{0}_\bkk\cdot R=-1$;
\item \label{ec}
$\gamma$ is the blowup of a non-Gorenstein terminal point 
then $K_{\tY_\bkk}\cdot R = -1$, $D^{0}_\bkk\cdot R=-2$.
\end{enumerate}
Since $a > 0$ (by terminality), we have $D^{0}_\bkk\cdot R + D^{1}_\bkk \cdot R < 0$ in all these cases.
On the other hand, $D^{1}_\bkk \cdot R$ is a nonnegative integer 
(because $D^{1}_\bkk$ is a Cartier divisor by Lemma~\ref{lemma:factoriality}
and the curves in the ray $R$ sweep out $D^{0}_\bkk$), 
hence it must be equal to~0 in cases~\ref{ea} and~\ref{eb}, and to~$0$ or~$1$ in case~\ref{ec}.
Using~\eqref{eq:ktybkk-r} we compute $a = 2$ in case~\ref{ea}, $a = 1$ in case~\ref{eb}, and $a \in \{\tfrac12, 1\}$ in case~\ref{ec}.
\end{proof}

The following notion will be useful later (see Theorem~\ref{th:sl:conic}\ref{th:sl:conic:g=10} and Proposition~\ref{proposition:f1xx-2}).

\begin{definition}
\label{def:inf-rat}
We will say that a $\kk$-point $y \in Y$ is {\sf infinitesimally $\kk$-rational} 
if the exceptional divisor of the blowup of $Y$ at $y$ is a $\kk$-rational variety.
\end{definition}

A smooth $\kk$-point is always infinitesimally $\kk$-rational.
Moreover, an ordinary double point is infinitesimally $\kk$-rational if there is a smooth $\kk$-curve passing through it.

\begin{lemma}
\label{lemma:flop-odp}
Assume $Y$ is a threefold with a unique singular point $y$, which is an ordinary double point,
and let $Y\dashrightarrow Y'$ be a flop.
Then $Y'$ has a unique singular point $y'$ which is also an ordinary double point.
Furthermore, let $Q$ and $Q'$ be the exceptional divisors of the blowups~\mbox{$\mu \colon \tY \to Y$} and~\mbox{$\mu' \colon \tY'\to Y'$} 
at $y$ and $y'$ respectively.
Then $Q$ is birational to $Q'$; in particular, $y'$ is infinitesimally rational if and only if $y$ is.
\end{lemma}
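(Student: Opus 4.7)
The strategy is twofold. First, I would show that $Y'$ has a unique singular point $y'$ which is again an ordinary double point. Second, I would compare the exceptional divisors $Q$ and $Q'$ by lifting the flop to the blowups.

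For the first step, the key input is that a flop of threefolds with Gorenstein terminal singularities preserves the analytic (equivalently, formal) isomorphism type of each singularity. An ODP is a compound Du Val singularity of type $cA_1$, and since $Y$ has a single singular point, the flopped threefold $Y'$ must likewise have a single singular point $y'$ of the same analytic type, i.e., an ODP. If one prefers a self-contained argument, one can instead use that $\phi \colon Y \dashrightarrow Y'$ is an isomorphism in codimension one (so that singularities of $Y'$ correspond to those of $Y$ away from the flopping locus, together with a possible modification at the images of the flopping curves), and perform a local analysis at $y$ to identify $y'$ as another ODP.

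For the second step, I would consider the induced birational map
\begin{equation*}
\tilde\phi := (\mu')^{-1} \circ \phi \circ \mu \colon \tY \dashrightarrow \tY'.
\end{equation*}
Since $\mu$ and $\mu'$ are isomorphisms outside the codimension-one exceptional divisors $Q$ and $Q'$, and since $\phi$ is an isomorphism in codimension one, $\tilde\phi$ is itself a birational map of smooth threefolds that is an isomorphism in codimension one. Under such a map Weil divisors correspond bijectively via strict transform; the strict transform of $Q$ is necessarily a divisor on $\tY'$ contracted by $\mu'$ to the unique singular point $y'$, which forces it to equal $Q'$. Hence $\tilde\phi$ restricts to a birational map $Q \dashrightarrow Q'$ (in fact an isomorphism on a dense open subset of $Q$).

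The final claim about infinitesimal $\kk$-rationality is then immediate from Definition~\ref{def:inf-rat}: by definition $y$ (resp.\ $y'$) is infinitesimally $\kk$-rational iff $Q$ (resp.\ $Q'$) is $\kk$-rational, and since $Q$ and $Q'$ are birational over $\kk$, they are $\kk$-rational simultaneously. The main obstacle, in my view, is justifying the preservation of the ODP type under the flop: the cleanest approach invokes the general theorem on analytic invariance of Gorenstein terminal threefold singularities under flops, while a fully self-contained argument would require detailed local case analysis of the flopping configuration near $y$, depending on whether or not the flopping curves pass through $y$.
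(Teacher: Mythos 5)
Your first step is fine and matches the paper, which quotes Koll\'ar's theorem that a flop preserves the analytic type of the singularities (so $Y'$ has a single ODP $y'$). The final deduction about infinitesimal rationality is also fine. The gap is in your second step, and it is exactly the point you dismiss as unproblematic while flagging the (easy) first step as the main obstacle.

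You assert that $\tilde\phi = (\mu')^{-1}\circ\phi\circ\mu$ is an isomorphism in codimension one because $\phi$ is and because $\mu,\mu'$ are isomorphisms away from $Q,Q'$. This is a non sequitur precisely at the divisor you care about: $Q$ lies over the point $y$, and if $y$ lies on a flopping curve then $\phi$ is undefined at $y$, so nothing forces the strict transform of $Q$ to survive as a divisor on $\tY'$. Concretely, pass to a common resolution $\hY$ with crepant morphisms to the base $\barY$ of the flop. The valuation of $Q$ has discrepancy $1$ over $\barY$ and center $\bar y=\pi(y)$ there; but $\pi^{-1}(\bar y)$ and $\pi'^{-1}(\bar y)$ contain the flopping/flopped curves, and a discrepancy computation only rules out $\hQ$ being contracted in $\tY'$ to a point or to a curve inside $Q'$. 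It does \emph{not} rule out $\hQ$ being contracted onto (the strict transform of) a flopped curve in $\tY'$ not contained in $Q'$: such a divisor still has discrepancy $1$ over $\barY$. In that case $Q$ has no divisorial strict transform on $\tY'$, $\hQ\ne\hQ'$, and your identification $\tilde\phi_*(Q)=Q'$ breaks down. This is why the paper's proof splits into cases: it shows $\hQ=\hQ'$ (hence $Q$ birational to $Q'$) only when the image of $\hQ'$ in $Y$ is $\{y\}$ or the image of $\hQ$ in $Y'$ is $\{y'\}$, and in the remaining case it abandons the strict-transform route entirely, instead producing a $\kk$-point on each of $Q$ and $Q'$ from the unique component through $y$ (resp.\ $y'$) of the tree of rational curves $\pi^{-1}(\bar y)$, and concluding that both are $\kk$-rational smooth quadric surfaces, hence birational to each other. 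Your proposal is missing this residual case and the mechanism to handle it.
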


\begin{proof}
Consider a common resolution $\hY$ of $\tY$ and $\tY'$.
We have a commutative diagram:
\begin{equation*}
\xymatrix@R=1.5em{
& \hQ \ar@{^{(}->}[r] \ar[dl] &
\hY \ar[dr]^{\theta'} \ar[dl]_{\theta} &
\hQ' \ar@{_{(}->}[l] \ar[dr] &
\\
Q \ar@{^{(}->}[r] &
\tY \ar[d]_{\mu} &&
\tY'\ar[d]^{\mu'} &
Q' \ar@{_{(}->}[l] &
\\
&
Y\ar[dr]_\pi \ar@{<-->}[rr] && 
Y'\ar[dl]^{\pi'}
\\
&& \barY,
}
\end{equation*}
where $\pi$ and $\pi'$ are small crepant contractions of $Y$ and $Y'$, and the dashed arrow is the flop.
By~\cite[Theorem~3.25.5]{Kollar-Mori:88} the canonical divisor $K_{\bar Y}$ of $\bar Y$ is Cartier. 
The singularity of $Y$ and $Y'$ are the same by~\cite[Theorem~2.4]{Kollar1989a},
hence $Y'$ has a single ordinary double point $y'$.
Moreover, we have~$\pi(y) = \pi'(y')$ and we denote this point by~$\bar{y} \in \barY$.

Let $\hQ \subset \hY$ and $\hQ' \subset \hY$ be the strict transforms of $Q$ and $Q'$.
We have
\begin{equation*}
K_{\tilde Y} = \mu^* K_Y + Q = \mu^*\pi^*K_\barY + Q,
\qquad 
K_{\hat Y}= \theta^* K_{\tilde Y}+\sum \hat a_i \hat E_i, 
\end{equation*}
where $\hat E_i\subset \hat Y$ are prime exceptional divisors and $\hat a_i\in \ZZ_{>0}$. 
Then 
\begin{equation*}
K_{\hat Y} = 
\theta^* \mu^* \pi^* K_{\barY} + \theta^* Q +\sum \hat a_i \hat E_i = 
\theta^* \mu^* \pi^* K_{\barY} + \hat Q +\sum (\hat a_i+ \hat m_i ) \hat E_i,
\end{equation*}
where $\hat m_i$ is the multiplicity of $\hat E_i$ in $\theta^* Q$. 
For any prime divisor $\hat E_i$ with center at $Q$ we have~\mbox{$\hat m_i \ge 1$} and so $\hat a_i + \hat m_i > 1$. 
Therefore, $\hQ$ is the only divisor in~$\hY$ with discrepancy~1 over~$\barY$ whose image in~$Y$ is~$\{y\}$.
A similar computation shows that the discrepancy of~$\hQ'$ over~$\barY$ is also~1.
So, if~$\mu(\theta(\hQ')) = \{y\}$, it follows that $\hQ' = \hQ$.
Similarly, if $\mu'(\theta'(\hQ)) = \{y'\}$, it follows that $\hQ' = \hQ$.
In both cases, it follows that~$Q$ is birational to $Q'$.

Finally, assume $\mu(\theta(\hQ')) \ne \{y\}$ and $\mu'(\theta'(\hQ)) \ne \{y'\}$.
Note that $\mu(\theta(\hQ')) \subset \pi^{-1}(\bar{y})$.
On the other hand, $\pi^{-1}(\bar{y})$ is a tree of smooth rational curves (this follows, e.g., from~\cite[Theorem~1.14]{Reid1983}).
Hence there is a unique minimal chain of components of $\pi^{-1}(\bar{y})$ connecting~$y$ with~$\mu(\theta(\hQ'))$,
and in this chain there is a unique curve passing through~$y$. 
By uniqueness this curve is defined over~$\kk$ and gives a $\kk$-point on~$Q$.
Similarly, we find a $\kk$-point of~$Q'$.
But~$Q$ and~$Q'$ are smooth quadrics in~$\PP^3$, because~$y$ and~$y'$ are ordinary double points,
hence both~$Q$ and~$Q'$ are $\kk$-rational, and a fortiori, birational to each other.
\end{proof}

\subsection{Terminal and canonical pairs}

For a linear system $\cM$ of Weil divisors we denote by~$\Bs(\cM)$ the base locus of $\cM$ (as a subscheme).

\begin{definition}[{\cite[\S1.9]{Alexeev-1994ge}}, {\cite[\S4.6]{Kollar95:pairs}}]
Let $Y$ be a normal projective variety with a linear system $\cM$ of Weil divisors on~$Y$ such that $K_Y + \cM$ is $\QQ$-Cartier.
The pair $(Y,\cM)$ 
is called {\sf terminal} (resp.\ {\sf canonical}), 
if for any birational morphism~\mbox{$\pi \colon \tY \to Y$} from a normal variety~$\tY$
and for a general member $M \in \cM$ one has
\begin{equation}
\label{eq:discr}
K_\tY + (\pi^{-1})_*(M) = \pi^*(K_Y + M) + \sum a_i^\cM E_i
\end{equation}
with $a^\cM_i > 0$ (resp.\ $a^\cM_i \ge 0$) for all $i$, where 
$E_i$ are the prime exceptional divisors of $\pi$,
and where recall that~$(\pi^{-1})_*$ denotes the strict transform under the rational map~$\pi^{-1}$, see~\S\ref{subsec:terminal}.
\end{definition}

\begin{remark}
\label{remark:logMMP}
Assume that $Y$ is $\QQ$-factorial and the pair $(Y,\cM)$ has terminal singularities.
Then the MMP for pairs~\cite{Alexeev-1994ge} applied to $(Y,\cM)$ preserves these properties. 
The standard arguments apply
because the steps of the MMP do not contract $\cM$,
see~\cite[Lemma~3.38, Corollaries~3.42--3.43]{Kollar-Mori:88}.
In particular, MMP for pairs over a non-closed field~$\kk$ works well 
as soon as the linear system~$\cM$ is defined over~$\kk$ (see, e.g., \cite[Remark~0.3.14]{Mori-1988}).
\end{remark}

The following lemma relates terminality of a pair to that of the underlying variety.

\begin{lemma}[{\cite[Lemma~1.13]{Alexeev-1994ge}}]
\label{lemma:terminal-pair}
Assume that members of $\cM$ are $\QQ$-Cartier.
\begin{enumerate}
\item 
\label{lemma:terminal-pair1}
If $(Y,\cM)$ is terminal then $Y$ has terminal singularities.
\item 
\label{lemma:terminal-pair2}
Moreover, if $\dim Y = 3$ then $\Bs(\cM)$ is finite,
$\Bs(\cM) \cap \Sing(Y) = \varnothing$,
and a general member $M \in \cM$ is smooth.
In particular, $\cM$ is a linear system of Cartier divisors.
\item 
\label{lemma:terminal-pair3}
Conversely, if $Y$ has terminal singularities and $\cM$ is base point free then $(Y,\cM)$ is terminal.
\end{enumerate}
\end{lemma}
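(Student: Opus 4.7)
The plan is to unify all three assertions via a single discrepancy identity and to isolate the substantive threefold argument in part~(ii).

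Fix any birational morphism $\pi \colon \tY \to Y$ from a normal variety and a general member $M \in \cM$. Since $M$ is effective and $\QQ$-Cartier, one may write
\begin{equation*}
K_\tY = \pi^* K_Y + \sum a_i E_i
\qquad\text{and}\qquad
\pi^* M = (\pi^{-1})_*(M) + \sum m_i E_i,
\end{equation*}
where $E_i$ are the exceptional prime divisors of~$\pi$ and $m_i \ge 0$. Substituting into~\eqref{eq:discr} and cancelling yields the basic identity $a_i^\cM = a_i - m_i$ for every~$i$.

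Part~(i) follows at once: terminality of the pair gives $a_i = a_i^\cM + m_i > 0$, so $Y$ has terminal singularities. Part~(iii) is almost as short: base point freeness of $\cM$ produces a morphism $\phi \colon Y \to \PP^N$; taking $M$ to be the preimage of a general hyperplane and noting that $\phi(\pi(E_i))$ has dimension at most $\dim(Y) - 2$ for every exceptional divisor $E_i$, a standard Bertini-type argument shows that $\pi^* M$ contains no exceptional component, so all $m_i$ vanish and $a_i^\cM = a_i > 0$ by terminality of~$Y$.

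For part~(ii), we already know from~(i) that $Y$ has terminal singularities, so by Lemma~\ref{lemma:terminal-singularities} the set $\Sing(Y)$ is finite. To show that $\Bs(\cM)$ is zero-dimensional, suppose a curve $C$ lies in $\Bs(\cM)$, pick a general point $c \in C$ at which both $Y$ and $C$ are smooth, and blow up $C$ in a Zariski neighborhood of~$c$: the exceptional divisor $E$ arises from a codimension-two blowup of a smooth threefold, so $a_E = 1$, while $m_E \ge 1$ because every member of $\cM$ vanishes along~$C$, giving $a_E^\cM \le 0$ and contradicting terminality of the pair.

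The main obstacle is to show $\Bs(\cM) \cap \Sing(Y) = \varnothing$: at any putative singular point $y \in \Bs(\cM)$ one must produce a divisorial extraction $\pi \colon \tY \to Y$ with $\pi(E) = \{y\}$ for which $a_E \le m_E$, once again contradicting terminality. This step relies on the classification of terminal threefold singularities and the existence of divisorial extractions of suitably small discrepancy over them, and it is the reason we follow Alexeev~\cite[Lemma~1.13]{Alexeev-1994ge}. Once both bullets are settled, Bertini applied on the smooth locus $Y \setminus \Sing(Y)$ yields smoothness of a general $M$, while the Cartier property follows because a general $M$ avoids $\Sing(Y)$ and hence is locally defined by a unit there, and because Weil divisors on the smooth locus are automatically Cartier.
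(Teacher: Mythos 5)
Your parts (i) and (iii), and your curve-blowup argument for the finiteness of $\Bs(\cM)$, agree with the paper's proof. But part (ii) has two genuine gaps. The first and most serious one is that the central claim $\Bs(\cM) \cap \Sing(Y) = \varnothing$ is exactly the step you do not prove: you state that one ``must produce a divisorial extraction with $a_E \le m_E$'' and then defer to Alexeev, without exhibiting such an extraction or comparing discrepancies with multiplicities. The paper closes this quantitatively: if $y \in \Sing(Y)$ is a terminal point of Gorenstein index $r$ lying in $\Bs(\cM)$, then by Lemma~\ref{lemma:factoriality} the divisor $rM$ is Cartier at $y$, so the multiplicity $m_E$ of $M$ along any exceptional divisor $E$ with center $y$ lies in $\tfrac{1}{r}\ZZ_{>0}$, in particular $m_E \ge 1/r$; on the other hand, by Kawamata and Markushevich the minimal discrepancy over a terminal threefold point of index $r$ is exactly $1/r$, so choosing $E$ with $a_E = 1/r$ gives $a_E^\cM = a_E - m_E \le 0$, contradicting terminality of the pair. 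Without this (or an equivalent) argument, the key assertion of (ii) remains unproved, and note also that the Cartier property of $\cM$, which you use later, depends on it.

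Second, your deduction of smoothness of a general member is too quick: Bertini on $Y \setminus \Sing(Y)$ only gives smoothness of $M$ away from $\Bs(\cM)$, and the base locus, although finite, may be nonempty, so singularities of $M$ at base points are not excluded. The paper adds a separate step: if a general member $M$ (already known to be Cartier) were singular at a base point $y$ lying in the smooth locus of $Y$, then for the blowup of $y$ the discrepancy is $a = 2$ while the multiplicity of $M$ along the exceptional divisor is $m \ge 2$, whence $a^\cM = a - m \le 0$, again contradicting terminality of the pair. You need this additional argument to conclude that a general $M$ is smooth everywhere.
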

\begin{proof}
Let $\pi \colon \tY \to Y$ be a birational morphism from a normal variety~$\tY$.
The statement~\ref{lemma:terminal-pair1} is straightforward:
besides~\eqref{eq:discrepancy} and~\eqref{eq:discr} write
\begin{equation*}
(\pi^{-1})_*(M) = \pi^*M - \sum m_i E_i.
\end{equation*}
Clearly, $m_i \ge 0$. 
Therefore, $a_i = a_i^\cM + m_i \ge a_i^\cM$, hence $Y$ has terminal singularities if $(Y,\cM)$ is terminal.
This also proves statement~\ref{lemma:terminal-pair3} since~$m_i=0$ if $\cM$ is base point free.

For the proof of~\ref{lemma:terminal-pair2}, let $y\in Y$ be a singular (terminal) point of Gorenstein index~$r$.
Assume that $y\in \Bs(\cM)$. 
By Lemma~\ref{lemma:factoriality}
the divisor class~$rM$ is Cartier at~$y$.
Hence, $m_i\in \frac 1r\ZZ_{>0}$ for any divisor~$E_i$ with center $y$, in particular $m_i \ge 1/r$.
Furthermore, by~\cite{Kawamata92:discr,Markushevich96:discr}
there exists a divisor~$E_i$ with center~$y$ such that $a_i = 1/r$. 
Therefore, for this $i$ we have
\begin{equation*}
0 < a_i^\cM = a_i - m_i \le 0.
\end{equation*}
This contradiction shows that $y\notin \Bs(\cM)$
and we conclude that $\Bs(\cM) \cap \Sing(Y) = \varnothing$.
In particular, a general divisor in~$\cM$ does not pass through~$\Sing(Y)$, hence~$\cM$ is Cartier.

Now assume that $y\in \Bs(\cM)$ is a singular point of a general member $M \in \cM$ which is smooth on~$Y$.
Since $M$ is Cartier, for the exceptional divisor of the blowup of $y$ 
we have~\mbox{$m \ge 2$} and~\mbox{$a = 2$}, hence $a^\cM = a - m \le 0$, a contradiction.
Therefore, $M$ is smooth.

If $\dim \Bs(\cM)\ge 1$, then we obtain a contradiction as above by blowing up a curve in $\Bs(\cM)$.
\end{proof}

\begin{lemma}[{\cite[Proposition~2.8]{Corti95:Sark}}]
\label{lemma:factorialization-pairs}
If $(Y,\cM)$ is canonical then there exists a birational morphism~\mbox{$f \colon \tY \to Y$}
such that for the strict transform $\tcM = (f^{-1})_*(\cM)$ the pair $(\tY,\tcM)$ is terminal, the variety $\tY$ is $\QQ$-factorial, and 
\begin{equation}
\label{eq:maximal-crepant-blowup}
K _\tY + \tcM = f^*(K_Y + \cM).
\end{equation}
\end{lemma}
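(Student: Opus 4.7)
The plan is to construct $\tY$ by running a suitable relative Minimal Model Program starting from a log resolution of $(Y,\cM)$.

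First, I would take a log resolution $g \colon W \to Y$ of $(Y, \cM)$, available in characteristic zero by Hironaka. For a general member $M \in \cM$, the discrepancy formula reads
\[
K_W + g^{-1}_*M \;=\; g^*(K_Y + M) + \sum_i a_i E_i,
\]
where all $a_i \geq 0$ by the canonicity hypothesis. Split the prime $g$-exceptional divisors into a ``positive'' part $F^+ := \sum_{a_i > 0} a_i E_i$ and a reduced ``crepant'' part $F^0 := \sum_{a_i = 0} E_i$.

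Next, I would run a relative $(K_W + g^{-1}_*M + F^0)$-MMP over $Y$, treating $F^0$ as a reduced boundary component so that its prime components are protected from divisorial contractions. Over $Y$ one has the relative numerical equivalence
\[
K_W + g^{-1}_*M + F^0 \;\equiv\; F^+ + F^0,
\]
and by the negativity lemma every extremal ray chosen by the program is supported inside the $g$-exceptional effective divisor $F^+$. Termination of the program is classical in dimension three (the only case relevant for the paper) and can be invoked in general from BCHM. The output is a $\QQ$-factorial model $f \colon \tY \to Y$ whose $f$-exceptional divisors are exactly the strict transforms of the components of $F^0$, while the divisors in $F^+$ have all been contracted.

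Since the surviving $f$-exceptional divisors have discrepancy zero for $(Y,M)$ by construction, the crepant identity
\[
K_\tY + \tcM \;=\; f^*(K_Y + \cM)
\]
of~\eqref{eq:maximal-crepant-blowup} holds. For terminality of $(\tY, \tcM)$: any divisorial valuation $E$ that is exceptional over $\tY$ is also exceptional over $Y$ but is not extracted as a divisor on $\tY$, so $a_E(Y,M) > 0$; the crepant relation just obtained then gives $a_E(\tY,\tcM) = a_E(Y,M) > 0$, as required.

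The main obstacle is justifying that the MMP step behaves exactly as advertised — contracting the divisorial components of $F^+$ without either contracting components of $F^0$ or affecting the strict transform of~$M$. This rests on a careful application of the negativity lemma to pin down the supports of the contracted extremal rays, combined with the dlt property of $(W, g^{-1}_*M + F^0)$ that prevents contraction of reduced boundary components; this is precisely the content of the argument in the cited reference.
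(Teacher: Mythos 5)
Your overall plan --- log resolution of $(Y,\cM)$ followed by a relative MMP over $Y$, with the negativity lemma controlling which divisors survive --- is the right one and is exactly what the paper does: it cites \cite[Proposition~2.8]{Corti95:Sark} and sketches precisely this construction in Remark~\ref{rem:canonical-pair}. However, the specific program you run is the wrong one, and this is a genuine error. You run the relative $(K_W+g^{-1}_*M+F^0)$-MMP over $Y$. With $F^0$ placed in the boundary, the relative numerical equivalence is $K_W+g^{-1}_*M+F^0\equiv_Y F^+ + F^0$, so the negative extremal rays are supported in $\operatorname{Supp}(F^+ + F^0)$, not in $F^+$ as you assert (your own displayed equivalence contradicts the sentence that follows it); and at the end of the program the negativity lemma forces the pushforward of the \emph{entire} effective exceptional divisor $F^+ + F^0$ to vanish. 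In other words, your MMP contracts all the $g$-exceptional divisors, crepant ones included, and terminates in a mere $\QQ$-factorialization of $Y$, whose associated pair is not terminal as soon as $(Y,\cM)$ admits a crepant divisor. There is no mechanism in the MMP that ``protects'' reduced boundary components from divisorial contraction --- dlt-ness prevents such components from being \emph{extracted}, not from being contracted --- so the sentence doing all the work in your argument is false.

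The fix is to drop $F^0$ from the boundary and run the relative $(K_W+g^{-1}_*M)$-MMP over $Y$. Then the program is $F^+$-negative, every contracted curve lies in $\operatorname{Supp}(F^+)$, hence every contracted divisor is a component of $F^+$ (a prime divisor swept out by curves lying in a closed subset is contained in it), and the negativity lemma applied to the final model shows that all of $F^+$ has been contracted; this gives~\eqref{eq:maximal-crepant-blowup} and identifies the $f$-exceptional divisors with the strict transforms of the components of $F^0$. For terminality of $(\tY,\tcM)$ the cleanest route, and the one implicit in the paper, is to note that after resolving the base locus the pair $(W,g^{-1}_*\cM)$ is terminal by Lemma~\ref{lemma:terminal-pair}\ref{lemma:terminal-pair3}, and that terminality of the pair together with $\QQ$-factoriality is preserved by every step of the MMP for pairs (Remark~\ref{remark:logMMP}). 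Your valuation-theoretic argument for terminality can also be salvaged, but as written it tacitly assumes that every divisorial valuation with $a_E(Y,M)=0$ is realized by a divisor on $W$ (and hence on $\tY$); this needs justification, e.g.\ by observing that any divisor exceptional over $W$ already has positive discrepancy for $(Y,\cM)$ once $(W,g^{-1}_*\cM)$ is terminal.
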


\begin{remark}\label{rem:canonical-pair}
A map $f$ as in Lemma~\ref{lemma:factorialization-pairs} is called {\sf maximal crepant blowup of $(Y,\cM)$}.
A maximal crepant blowup is defined over the base field~$\kk$, but is not unique.
Indeed, 
it can be constructed by first resolving the singularities of the pair $(Y,\cM)$
and then running the MMP for pairs relative over $(Y,\cM)$ as in~\cite[Prop.~2.8]{Corti95:Sark}.

If in the assumptions of Lemma~\ref{lemma:factorialization-pairs} the pair $(Y,\cM)$ is terminal, then 
the morphism~$f$ is small, i.e., it does not contract divisors 
(indeed, a combination of~\eqref{eq:discr} and~\eqref{eq:maximal-crepant-blowup} shows that~$f$ has no exceptional divisors). 
In this case $f$ is called {\sf $\QQ$-factorialization}.
\end{remark}

\subsection{Unirationality criteria}

As we already mentioned, the first criterion is well known.

\begin{lemma}
\label{lemma:unirational-fibration}
Let $Y \to B$ be a proper dominant morphism of irreducible varieties and let $Z \subset Y$ be a $\kk$-unirational subvariety dominating~$B$.
Assume one of the following conditions hold
\begin{enumerate}
\item 
\label{lemma:unirational-fibration:curve}
the general fiber of $Y \to B$ is a smooth geometrically rational curve, or 
\item 
\label{lemma:unirational-fibration:conic-bundle}
the general fiber of $Y \to B$ is a conic bundle over~$\PP^1$ with at most~$3$ singular fibers, or
\item 
\label{lemma:unirational-fibration:dP}
the general fiber of $Y \to B$ is a smooth del Pezzo surface 
of degree~$d \ge 3$, or
\item 
\label{lemma:unirational-fibration:dP2}
the general fiber of $Y \to B$ is a smooth del Pezzo surface of degree~$d = 2$ 
and the general fiber of $Z \to B$ is not contained in the union of lines on the general fiber of $Y \to B$, or
\item 
\label{lemma:unirational-fibration:cubic}
the general fiber of $Y \to B$ is a cubic hypersurface, which is not a cone.
\end{enumerate}
Then $Y$ is $\kk$-unirational.
\end{lemma}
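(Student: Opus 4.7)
The plan is to treat all five cases uniformly: base-change the fibration $Y \to B$ along a unirational parametrization of $Z$, exploit the resulting rational section of the new fibration, and invoke a classical unirationality statement for the generic fiber.

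Concretely, since $Z$ is $\kk$-unirational, choose a dominant rational map $\phi \colon \PP^N \dashrightarrow Z$, so that the composition $\pi \colon \PP^N \dashrightarrow Z \hookrightarrow Y \to B$ is also dominant. Form the base change
\[
Y' := Y \times_B \PP^N,
\]
which admits a dominant rational projection $Y' \to Y$; hence it suffices to prove that $Y'$ is $\kk$-unirational. Combined with the inclusion $Z \hookrightarrow Y$, the map $\phi$ defines a rational section $\sigma \colon \PP^N \dashrightarrow Y'$ of the projection $Y' \to \PP^N$. The generic fiber $F$ of $Y' \to \PP^N$ over $K := \kk(\PP^N)$ is then a variety of the type specified in the corresponding case of the lemma, together with a distinguished $K$-point coming from $\sigma$.

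If we can exhibit, in each case, a dominant $K$-rational map $\PP^M_K \dashrightarrow F$, then spreading it out over a dense open subset of $\PP^N$ produces a dominant rational map $\PP^{N+M}_\kk \dashrightarrow Y'$, completing the argument. In case~\ref{lemma:unirational-fibration:curve}, a smooth geometrically rational curve with a $K$-point is isomorphic to $\PP^1_K$, so $Y'$ is even $\kk$-rational. In case~\ref{lemma:unirational-fibration:conic-bundle}, a conic bundle over $\PP^1$ with at most three singular fibers and a $K$-point is $K$-rational by the classical theory of geometrically rational surfaces. In case~\ref{lemma:unirational-fibration:dP}, smooth del Pezzo surfaces of degree $d \in \{3,4,5\}$ with a $K$-point are $K$-unirational by Segre, Manin, and Colliot-Th\'el\`ene. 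In case~\ref{lemma:unirational-fibration:cubic}, a cubic hypersurface which is not a cone, equipped with a smooth $K$-point, is $K$-unirational via the classical tangent-hyperplane projection.

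The delicate case is~\ref{lemma:unirational-fibration:dP2}, in which $F$ is a del Pezzo surface of degree~$2$ over $K$: Manin's unirational parametrization (using the anticanonical double cover of $\PP^2$) requires a $K$-point not lying on any of the geometric lines of $F$. This is precisely where the additional hypothesis enters: the assumption that the general fiber of $Z \to B$ is not contained in the union of lines on the general fiber of $Y \to B$ guarantees that $\sigma$ produces on $F$ a generic $K$-point that avoids every line. Verifying this, and checking that the classical parametrizations in each case specialize well enough from the generic fiber to spread out over an open subset of $\PP^N$, is the main technical point of the argument; everything else is the uniform base-change mechanism described above.
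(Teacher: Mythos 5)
Your proposal is correct and follows essentially the same route as the paper: the paper base-changes $Y\to B$ along $Z\to B$ (rather than along a parametrization $\PP^N\dashrightarrow Z$), uses the diagonal as a rational section giving a $\kk(Z)$-point of the generic fiber (not on a line in the degree-$2$ case), and concludes by the classical unirationality results of Manin, Iskovskikh, and Koll\'ar, exactly as you do after spreading out. The only small caveat is in case \ref{lemma:unirational-fibration:cubic}, where you assume the $K$-point is smooth; this is not guaranteed by the hypotheses, but if the point is singular then, since the cubic is not a cone, projection from it is birational, and in any case Koll\'ar's theorem (as cited in the paper) covers the situation, so the gap is cosmetic.
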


\begin{proof}
Consider the base change $Y \times_B Z \to Z$ of the morphism along~$Z \to B$.
Its general fiber is a geometrically rational curve, or a conic bundle, 
or a del Pezzo surface of degree~$d \ge 2$, or a cubic hypersurface
over the field $\kk(Z)$. 
On the other hand, the diagonal morphism $Z \to Y \times_B Z$ provides it with a~$\kk(Z)$-point
(and in the case of a degree~2 del Pezzo surface this point does not lie on a line).
Applying~\cite[Theorems~IV.7.8, IV.8.1]{Manin-Cubic-forms-e-I}, \cite[Theorem~2]{Iskovskikh:70e}, and~\cite[Theorem~1.2]{Kollar:cubic}
we conclude that the general fiber is $\kk(Z)$-unirational, hence~$Y \times_B Z$ is unirational over~$Z$.
Since~$Z$ is unirational over~$\kk$, it follows that~$Y \times_B Z$ is unirational over~$\kk$ as well.
\end{proof}

In the rest of the section we prove another unirationality criterion.

\begin{proposition}
\label{proposition:q-fano}
Let $Y$ be a normal projective threefold and let $S \subset Y$ be a $\kk$-unirational surface.
Assume there is a linear system~$\cM$ of Weil divisors such that
\begin{equation}
\label{eq:s}
K_Y + \cM + S \sim 0,
\end{equation} 
the pair $(Y,\cM)$ is terminal, and the map $Y \dashrightarrow \PP^N$ given by the linear system $\cM$ is generically finite onto its image.
Then $Y$ is $\kk$-unirational.
\end{proposition}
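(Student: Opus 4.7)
\emph{Plan.} My plan is to reduce to a Mori fiber space via the pair-MMP and conclude by case analysis on the base.

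First, I extract local structure from the terminality hypothesis. By Lemma~\ref{lemma:terminal-pair}, the system $\cM$ consists of Cartier divisors, $\Bs(\cM)$ is finite and disjoint from $\Sing(Y)$, and a general member $M \in \cM$ is smooth; in particular $Y$ has terminal singularities. Since $\varphi_{\cM}$ is generically finite onto its image, the self-intersection $\cM^{3} = \deg(\varphi_\cM)\cdot\deg\varphi_\cM(Y)$ is positive, so $\cM$ is big; together with the relation $-K_Y \sim \cM + S$ this makes $-K_Y$ big and $Y$ uniruled.

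Next, I run the $(K_Y+\cM)$-MMP for the pair $(Y,\cM)$ over~$\kk$. Since $K_Y + \cM \sim -S$ is antieffective, this is legitimate by Remark~\ref{remark:logMMP} and terminates in a Mori fiber space $\bar\pi \colon \bar Y \to B$ for a $\QQ$-factorial terminal pair $(\bar Y, \bar\cM)$; note that the divisor $S$ is not contracted along the way, since a $(K_Y+\cM)$-negative extremal ray $R$ has $S \cdot R > 0$, which is incompatible with $S$ being the contracted divisor. The strict transforms $\bar\cM$ and $\bar S$ still satisfy $K_{\bar Y} + \bar\cM + \bar S \sim 0$, so $\bar S \sim -(K_{\bar Y}+\bar\cM)$ is $\bar\pi$-ample and the map $\bar S \to B$ is surjective. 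Unirationality of $Y$ is equivalent to that of $\bar Y$, and $\bar S$ remains $\kk$-unirational (being birational to $S$); hence $B$ is $\kk$-unirational as well.

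I then split the analysis by $\dim B$. If $\dim B = 2$, then $\bar\pi$ is a conic bundle with geometrically rational generic fibre, and the unirational subvariety $\bar S \subset \bar Y$ dominating $B$ gives unirationality of $\bar Y$ via Lemma~\ref{lemma:unirational-fibration}\ref{lemma:unirational-fibration:curve}. If $\dim B = 1$, then $\bar\pi$ is a del Pezzo fibration and $\bar S$ is a $\kk$-unirational multisection, so Lemma~\ref{lemma:unirational-fibration}\ref{lemma:unirational-fibration:dP}--\ref{lemma:unirational-fibration:cubic} concludes according to the fibre degree. If $\dim B = 0$, then $\bar Y$ is a $\QQ$-factorial terminal Fano threefold of Picard number one; writing $\bar\cM \subset |aH|$ and $\bar S \sim bH$ with $H$ the ample generator and $a,b \ge 1$, one obtains Fano index $a+b \in \{2,3,4\}$, which restricts $\bar Y$ to the classified list $\{\PP^3, Q^3, V_1, \dots, V_5\}$; since $\bar S$ is $\kk$-unirational we have $\bar Y(\kk) \ne \varnothing$, and the conclusion follows from Propositions~\ref{proposition:sb}, \ref{proposition:quadric}, Theorems~\ref{theorem:v5}, \ref{theorem:v4-unirational}, and the classical unirationality of $V_d$ with a $\kk$-point for $d \in \{1,2,3\}$.

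The main obstacle I anticipate is the $\dim B = 0$ Fano case: combining the index bound coming from $-K_{\bar Y} \sim \bar\cM + \bar S$ with the classification of Gorenstein Fano threefolds of Picard number one and index at least~$2$, and then invoking several distinct (uni)rationality criteria, is somewhat intricate. The low-degree del Pezzo fibration case is also delicate, owing to the general-position hypothesis in Lemma~\ref{lemma:unirational-fibration}\ref{lemma:unirational-fibration:dP2} for fibres of degree~$2$, and to verifying that $\bar S$ does not land inside the relative locus of lines on such fibres.
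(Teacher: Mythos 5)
The reduction to a Mori fiber space is the same as the paper's, but your treatment of the case $\dim B = 0$ has a genuine gap, and this is precisely where the real content of the proposition lies. After the $(K_Y+\cM)$-MMP, $\bar Y$ is only a $\QQ$-factorial \emph{terminal} Fano threefold with $\uprho(\bar Y)=1$: it need not be smooth, need not be Gorenstein, and $\bar S$ is merely a Weil divisor. Writing $\bar\cM\equiv aH$, $\bar S\equiv bH$ for the ample generator $H$ of $\Cl(\bar Y)$ modulo torsion gives $-K_{\bar Y}\equiv(a+b)H$, but $H$ need not be Cartier, so $a+b\ge 2$ is \emph{not} the Fano index in the sense used in the smooth classification, and it does not place $\bar Y$ in the list $\{\PP^3,Q^3,V_1,\dots,V_5\}$ (weighted projective spaces such as $\PP(1,1,1,2)$, and many other terminal $\QQ$-Fano threefolds with $\uprho=1$, satisfy $-K\equiv mH$ with $m\ge 2$ in the class group). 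Even in the Gorenstein case $\bar Y$ may be singular, so the smooth-form results you cite (Propositions~\ref{proposition:sb}, \ref{proposition:quadric}, Theorems~\ref{theorem:v5}, \ref{theorem:v4-unirational}) do not apply as stated; and there is no ``classical'' unirationality criterion for $V_1$ or $V_2$ over a non-closed field with only a $\kk$-point. The paper avoids classification entirely: it analyzes the general member $M\in\cM$, which by Lemma~\ref{lemma:terminal-pair} is a smooth Cartier del Pezzo surface of some degree $d$, establishes the numerics $\cM\equiv aS$ and $da(a+1)\ge 4$ (Lemma~\ref{lemma:a-d}), and then for $d\ge 2$ fibers $\bar Y$ over $\PP^1$ by the pencil spanned by two general members, using the irreducible anticanonical curve $M\cap S$ as the unirational multisection (Lemma~\ref{lemma:d-big}); the case $d=1$ requires the separate delicate argument of Lemma~\ref{lemma:d-small} with the pencil $|S|$, a crepant terminalization, contraction of a distinguished $(-1)$-curve in the generic fiber, and exclusion of the generalized-cone case. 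Your proposal contains no substitute for this analysis.

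Two smaller points. First, Remark~\ref{remark:logMMP} requires $Y$ to be $\QQ$-factorial before you can run the pair-MMP, and the hypotheses only give $Y$ normal; the paper first passes to a $\QQ$-factorialization and checks the hypotheses persist (Lemma~\ref{lemma:q-factorialization}). Second, in the case $\dim B=1$ you leave the low-degree del Pezzo fibers as an ``anticipated obstacle'' rather than resolving it; the paper's Lemma~\ref{lemma:mfs} disposes of it by observing that on a general fiber $D$ one has $-K_D\sim S\vert_D+\cM\vert_D$, a sum of two ample classes by Lemma~\ref{lemma:divisors-f-ample}, so $D$ has index $\ge 2$ and degree $\ge 8$, and only Lemma~\ref{lemma:unirational-fibration}\ref{lemma:unirational-fibration:dP} is needed. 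With these repairs the fibered cases go through, but the Fano case as you wrote it does not.
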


Note that the assumptions of the proposition imply that the surface $S$ is a $\QQ$-Cartier divisor on~$Y$
(because $K_Y + \cM$ is $\QQ$-Cartier by terminality of $(Y,\cM)$).
Note also that~\eqref{eq:s} implies that~$\cM$ is a subsystem in $|- K_Y - S|$, 
and if some subsystem satisfies the other assumptions, then the full system also does. 
Replacing~$\cM$ with~$|- K_Y - S|$ we may (and will) assume that~$\cM$ is a complete linear system.
So, $\cM$ is not really an additional data.

The proof takes up the rest of the section.
The techniques applied in the proof are more or less standard (cf.~\cite{CampanaFlenner1993}).

\begin{lemma}
\label{lemma:q-factorialization}
Let $f \colon \tY \to Y$ be the $\QQ$-factorialization and set $\tS := (f^{-1})_*(S)$.
If the assumptions of Proposition~\textup{\ref{proposition:q-fano}} hold for $(Y,S)$, they also hold for $(\tY,\tS)$.
\end{lemma}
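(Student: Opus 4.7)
The plan is to set $\tcM := (f^{-1})_*(\cM)$, the strict transform of $\cM$ along the $\QQ$-factorialization $f$, and to verify each of the five hypotheses of Proposition~\ref{proposition:q-fano} for the triple $(\tY,\tS,\tcM)$. The whole argument is carried by the fact that $f$ is \emph{small}, so all linear-equivalence and discrepancy data are preserved.

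Normality and projectivity of $\tY$ are built into Lemma~\ref{lemma:factorialization}. Because $f$ is small, $\tS$ is isomorphic to $S$ away from a codimension-two subset, hence birational to $S$, and so inherits $\kk$-unirationality from~$S$. For the required linear equivalence, smallness of~$f$ gives $K_\tY = f^* K_Y$ (no exceptional divisors are available), and strict-transform pullback of Weil divisors along a small birational morphism preserves linear equivalence; applying this to~\eqref{eq:s} produces $K_\tY + \tcM + \tS \sim 0$.

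The main technical point is terminality of $(\tY,\tcM)$. Since $K_Y + \cM$ is $\QQ$-Cartier by the terminality hypothesis on $(Y,\cM)$, its pullback $f^*(K_Y+\cM)$ makes sense, and smallness of~$f$ forces the equality
\begin{equation*}
K_\tY + \tcM \;=\; f^*(K_Y + \cM)
\end{equation*}
as $\QQ$-Cartier divisors on~$\tY$. Given any further birational morphism $\pi\colon \hY \to \tY$ from a normal variety, the composition $f\circ\pi\colon \hY \to Y$ is again birational, and because $f$ is small every prime exceptional divisor of $\pi$ is also exceptional over $Y$. The above equality then shows that the discrepancy of each such divisor with respect to $(\tY,\tcM)$ coincides with its discrepancy with respect to $(Y,\cM)$, so terminality transfers directly.

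Finally, for generic finiteness of the map defined by $\tcM$, smallness of~$f$ gives $f_*\OOO_\tY(\tM) \cong \OOO_Y(M)$ for any $M \in \cM$, so the rational map $\tY \dashrightarrow \PP^N$ determined by $\tcM$ coincides with the composition $\tY \xrightarrow{f} Y \dashrightarrow \PP^N$ of $f$ with the map determined by~$\cM$. Since the latter is generically finite by hypothesis and $f$ is birational, the former is too. No serious obstacle arises in any of these steps, which is precisely the point of choosing $f$ to be small; the only item requiring minor caution is the compatibility of strict-transform pullback of Weil divisors with linear equivalence, which is standard for small morphisms between $\QQ$-Gorenstein varieties.
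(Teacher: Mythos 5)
Your proof is correct and follows essentially the same route as the paper: define $\tcM$ as the strict transform and use smallness of $f$ to transfer each hypothesis (unirationality of $\tS$, the linear equivalence, generic finiteness of the map given by $\tcM$, and terminality). The only cosmetic difference is that the paper obtains terminality of $(\tY,\tcM)$ by citing the construction of the $\QQ$-factorialization as a maximal crepant blowup (Lemma~\ref{lemma:factorialization-pairs} and Remark~\ref{rem:canonical-pair}), whereas you verify it directly by comparing discrepancies over $Y$ and over $\tY$ via the crepant equality $K_\tY+\tcM=f^*(K_Y+\cM)$; both arguments are sound.
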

\begin{proof}
Set $\tcM = (f^{-1})_*(\cM)$.
Since $f$ is small, we have $\tS = f^*(S)$; moreover, $\tS$ is birational to~$S$, hence~$\tS$ is $\kk$-unirational. 
The pair $(\tY,\tcM)$ is terminal by the construction (see Lemma~\ref{lemma:factorialization-pairs} and Remark~\ref{rem:canonical-pair}).
Furthermore, the map given by~$\tcM$ coincides with the composition $\tY \xrightarrow{\ f\ } Y \dashrightarrow \PP^N$, 
hence is generically finite, and 
\begin{equation*}
K_\tY + \tcM + \tS = 
f^*(K_Y + \cM) + f^*(S) = f^*(K_Y + \cM + S) \sim 0,
\end{equation*}
so the condition~\eqref{eq:s} is satisfied for $(\tY,\tS)$.
\end{proof}

From now on we assume that~$Y$ is $\QQ$-factorial.

\begin{lemma}
\label{lemma:mmp}
The assumptions of Proposition~\textup{\ref{proposition:q-fano}} are preserved by $(K_Y + \cM)$-MMP.
In other words, if $f \colon Y \dashrightarrow Y'$ is a divisorial contraction or a flip of a 
$(K_Y + \cM)$-negative extremal ray then the strict transform
\begin{equation*}
S' := f_*(S) \subset Y'
\end{equation*}
is a $\kk$-unirational surface, the pair $(Y',f_*(\cM))$ is terminal, $K_{Y'} + f_*(\cM) + S' \sim 0$,
and the map~\mbox{$Y' \dashrightarrow \PP^N$} given by $f_*\cM$ is generically finite.
\end{lemma}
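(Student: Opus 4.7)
My plan is to verify the four conclusions separately; the key observation is that $S$ cannot be contracted by~$f$.

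To see this, I would rewrite the assumed relation $K_Y + \cM + S \sim 0$ as $S \sim -(K_Y + \cM)$. Intersecting with the extremal ray~$R$ being contracted gives $S \cdot R = -(K_Y+\cM) \cdot R > 0$. In a divisorial contraction the exceptional divisor~$E$ satisfies $E \cdot R < 0$, so $S \ne E$; in a flip no divisor is contracted at all. Therefore $S' := f_*(S)$ is a prime divisor on~$Y'$, and the restriction $f|_S \colon S \dashrightarrow S'$ is birational. Since unirationality is preserved under dominant rational maps, $S'$ inherits $\kk$-unirationality from~$S$.

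Terminality of the pair $(Y', f_*\cM)$ is then a standard output of one step of the $(K_Y+\cM)$-MMP applied to the $\QQ$-factorial terminal pair~$(Y,\cM)$, as recorded in Remark~\ref{remark:logMMP}; since $\cM$ is defined over~$\kk$, the step may be performed over the base field. For the linear equivalence $K_{Y'}+f_*\cM+S' \sim 0$, I would push forward the relation $K_Y + \cM + S \sim 0$ under~$f$: in both cases $f$ is an isomorphism in codimension one, so $f_*$ preserves linear equivalence of Weil divisor classes and $f_* K_Y = K_{Y'}$.

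Finally, for the generic finiteness of the map $\phi' \colon Y' \dashrightarrow \PP^N$ defined by $f_*\cM$: on a common open subset where both~$f$ and the map $\phi \colon Y \dashrightarrow \PP^N$ given by~$\cM$ are defined, we have $\phi = \phi' \circ f$, so the two maps have the same image and $\phi'$ is generically finite onto it. The main obstacle is really the very first step --- showing that $S$ is not contracted, which relies on the positivity of~$-(K_Y+\cM)$ against the contracted ray; once this is in hand, the remaining three verifications are formal consequences of general MMP theory and of the behaviour of pushforward of Weil divisors under birational maps that are isomorphisms in codimension one.
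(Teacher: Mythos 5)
Your argument is correct and follows essentially the same route as the paper: the positivity $S \cdot R > 0$ coming from \eqref{eq:s} shows that $S$ is not contracted, after which $\kk$-unirationality of $S'$, generic finiteness of the map given by $f_*\cM$, the linear equivalence, and terminality of the new pair (via Remark~\ref{remark:logMMP}) follow just as in the paper's proof. One minor imprecision: a divisorial contraction is not an isomorphism in codimension one on the source (it contracts a divisor); the pushforward of Weil divisor classes nevertheless preserves linear equivalence because $f$ is a proper birational morphism (and, in the flip case, an isomorphism in codimension one), so your conclusions are unaffected.
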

\begin{proof}
Assume the map $f$ is obtained from the contraction of a $(K_Y + \cM)$-extremal ray $R$, so that~$(K_Y + \cM) \cdot R < 0$.
From~\eqref{eq:s} we obtain $S \cdot R > 0$.
Therefore, $S$ is not contracted by $f$, and so $S'$ is a surface. 

The map $f \colon S \dashrightarrow S'$ is dominant, hence $S'$ is $\kk$-unirational.
Furthermore, the map given by~$f_*\cM$ is equal to the composition of $f^{-1}$ and the map given by $\cM$, hence it is generically finite.
Finally, the linear equivalence $K_{Y'} + f_*(\cM) + S' \sim 0$ follows from~\eqref{eq:s} and
terminality of the pair $(Y,f_*(\cM))$ follows from Remark~\ref{remark:logMMP}.
\end{proof} 

The assumption~\eqref{eq:s} excludes the possibility that~$(K_Y + \cM)$ is nef,
therefore, after running the~$(K_Y + \cM)$-MMP 
we can assume that~$Y$ is a Mori fiber space.

\begin{lemma}
\label{lemma:divisors-f-ample}
In the situation of Proposition~\textup{\ref{proposition:q-fano}} 
assume that $f \colon Y \to B$ is a morphism to a normal variety~$B$ with~$\dim(B) \le 2$
such that $-(K_Y + \cM)$ is $f$-ample and $\uprho(Y) = \uprho(B) + 1$.
Then~$-K_Y$, $\cM$, and~$S$ are all $f$-ample.
\end{lemma}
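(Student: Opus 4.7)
The plan is to exploit the assumption $\uprho(Y) = \uprho(B)+1$, which forces the relative Picard number $\uprho(Y/B)$ to equal $1$. Consequently, every $\QQ$-Cartier divisor on $Y$ is numerically proportional over $B$ to a fixed $f$-ample class, so each such divisor is $f$-ample, $f$-numerically trivial, or $f$-anti-ample, and no two of these possibilities occur at once. The case of $S$ is then immediate from~\eqref{eq:s}: one has $S \sim -(K_Y+\cM)$, and the latter is $f$-ample by hypothesis.

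I expect the main step to be the proof that $\cM$ is $f$-ample. First, by Lemma~\ref{lemma:terminal-pair}\ref{lemma:terminal-pair2} the terminality of the pair $(Y,\cM)$ implies that $\Bs(\cM)$ is finite and that the general $M \in \cM$ is Cartier. If a curve $\ell$ contained in a fiber of $f$ satisfied $\cM \cdot \ell < 0$, then each effective Cartier divisor $M \in \cM$ would contain $\ell$, placing $\ell$ into the finite set $\Bs(\cM)$---a contradiction; hence $\cM$ is $f$-nef. By the trichotomy above, $\cM$ is therefore either $f$-ample or $f$-numerically trivial. To rule out the second alternative, I would use that a general fiber $F$ of $f$ has $\dim F = 3 - \dim B \ge 1$; numerical triviality of $\cM|_F$ forces the restricted linear series to have dimension at most $0$ (since a numerically trivial line bundle on a proper variety has $h^0 \le 1$, with equality only for the trivial bundle), so the map given by $\cM$ would collapse $F$ to a point, contradicting the generic finiteness assumption. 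Hence $\cM$ is $f$-ample.

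Finally, the anticanonical case follows at once: choosing $M \in \cM$ general, the relation~\eqref{eq:s} rewrites as $-K_Y \sim M + S$, a sum of two $f$-ample $\QQ$-Cartier divisors, so $-K_Y$ is $f$-ample as well. The only delicate point in this plan is the exclusion of $f$-numerical triviality of $\cM$, where one must combine the generic finiteness of the map defined by $\cM$ with the positive-dimensionality of the fibers of $f$; the remaining steps are formal consequences of the relative Picard rank~$1$ condition and of the terminality of~$(Y,\cM)$.
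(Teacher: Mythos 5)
Your proof is correct and follows essentially the paper's route: both arguments come down to the relative Picard rank one condition $\uprho(Y)=\uprho(B)+1$ combined with the generic finiteness of the map defined by $\cM$, after which $S \sim -(K_Y+\cM)$ and $-K_Y \sim M+S$ are immediate. The only minor difference is that the paper observes directly that $\cM$ is big on the general fiber (which excludes both $f$-triviality and $f$-anti-ampleness at once), whereas you exclude anti-ampleness separately via the finiteness of $\Bs(\cM)$ from Lemma~\ref{lemma:terminal-pair}\ref{lemma:terminal-pair2} and triviality via the $h^0\le 1$ argument --- a valid but slightly longer detour.
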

\begin{proof}
Since the map defined by $\cM$ is generically finite, 
the restriction of $\cM$ to the general fiber of~$f$ is big.
On the other hand, $-(K_Y + \cM)$ is ample on the general fiber.
Therefore, $-K_Y$ is also ample on the general fiber of $f$.
Since $\uprho(Y) = \uprho(B) + 1$ and $\cM$ is
big on the general fiber, it is also $f$-ample.
Finally, $S$ is $f$-ample by~\eqref{eq:s}.
\end{proof}

The case when the base of the Mori fiber space has positive dimension is easy.

\begin{lemma}
\label{lemma:mfs}
In the situation of Proposition~\textup{\ref{proposition:q-fano}} 
assume that $f \colon Y \to B$ is a morphism with connected fibers to a normal variety $B$ of dimension~$1$ or~$2$
such that $-(K_Y + \cM)$ is $f$-ample and~\mbox{$\uprho(Y) = \uprho(B) + 1$}.
Then $Y$ is $\kk$-unirational.
\end{lemma}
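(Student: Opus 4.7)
The plan is to apply the unirationality criterion of Lemma~\ref{lemma:unirational-fibration} to the fibration $f \colon Y \to B$ taking $Z := S$. Two conditions on $Z$ are immediate: by hypothesis $S$ is $\kk$-unirational, and by Lemma~\ref{lemma:divisors-f-ample} the divisor $S$ is $f$-ample, so $S \cap Y_b$ is a nonempty ample divisor on every fiber and hence $S$ dominates $B$. So the main task is to identify the type of the general fiber of~$f$ and match it to one of the cases of Lemma~\ref{lemma:unirational-fibration}.

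Since $(Y,\cM)$ is terminal, $Y$ itself is terminal by Lemma~\ref{lemma:terminal-pair}\ref{lemma:terminal-pair1}, and its singular locus is finite by Lemma~\ref{lemma:terminal-singularities}. Combined with generic smoothness in characteristic zero, this shows that the general fiber~$Y_b$ is smooth and lies in the Gorenstein (in fact, smooth) locus of~$Y$; in particular $K_Y$, $\cM$, and $S$ are all Cartier in a neighborhood of~$Y_b$, and adjunction gives $K_{Y_b} = K_Y|_{Y_b}$, with $-K_{Y_b}$ ample by Lemma~\ref{lemma:divisors-f-ample}.

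If $\dim B = 2$, the general fiber $Y_b$ is a smooth connected projective curve with $-K_{Y_b}$ ample, so of arithmetic and geometric genus zero, i.e.\ a smooth geometrically rational curve. Thus case~\ref{lemma:unirational-fibration:curve} of Lemma~\ref{lemma:unirational-fibration} applies. If instead $\dim B = 1$, then $Y_b$ is a smooth del Pezzo surface, and from $-K_{Y_b} \sim \cM|_{Y_b} + S|_{Y_b}$ we write $-K_{Y_b}$ as a sum of two ample Cartier divisors on a smooth projective surface. The key numerical observation is that any two ample Cartier divisors $A, B$ on such a surface satisfy $A^2 \ge 1$, $B^2 \ge 1$, and $A \cdot B \ge 1$, so
\begin{equation*}
(-K_{Y_b})^2 = (\cM|_{Y_b})^2 + 2\, \cM|_{Y_b} \cdot S|_{Y_b} + (S|_{Y_b})^2 \ge 1 + 2 + 1 = 4.
\end{equation*}
Hence $Y_b$ is a smooth del Pezzo of degree $\ge 4$, and case~\ref{lemma:unirational-fibration:dP} of Lemma~\ref{lemma:unirational-fibration} applies.

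In both cases Lemma~\ref{lemma:unirational-fibration} yields the $\kk$-unirationality of~$Y$. There is no serious obstacle: the only subtlety is the numerical lower bound $(-K_{Y_b})^2 \ge 4$ in the surface-fiber case, which fortunately lands strictly inside the range covered by Lemma~\ref{lemma:unirational-fibration}\ref{lemma:unirational-fibration:dP}, so neither the degree $2$ case nor the delicate cubic case needs to be invoked.
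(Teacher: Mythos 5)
Your proof is correct and follows essentially the same route as the paper: both arguments use terminality (Lemmas~\ref{lemma:terminal-pair} and~\ref{lemma:terminal-singularities}) to see that the general fiber is smooth, Lemma~\ref{lemma:divisors-f-ample} to make $\cM$ and $S$ ample on fibers and to ensure $S$ dominates $B$, and then conclude via Lemma~\ref{lemma:unirational-fibration}\ref{lemma:unirational-fibration:curve}, \ref{lemma:unirational-fibration:dP}. The only (harmless) divergence is the degree bound in the del Pezzo case: you get $(-K_{Y_b})^2 = (\cM|_{Y_b}+S|_{Y_b})^2 \ge 4$ from the elementary estimate $A^2,\, B^2,\, A\cdot B \ge 1$ for ample Cartier divisors, whereas the paper deduces from the same decomposition that the fiber has index at least~$2$, hence degree at least~$8$; both bounds land well inside the range $d \ge 3$ needed for Lemma~\ref{lemma:unirational-fibration}\ref{lemma:unirational-fibration:dP}.
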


\begin{proof}
Since the singularities of $Y$ are terminal (Lemma~\ref{lemma:terminal-pair}\ref{lemma:terminal-pair1}), 
they are isolated (Lemma~\ref{lemma:terminal-singularities}). 
Therefore, the general fiber of $f$ is smooth.
If $\dim(B) = 2$ then the general fiber of $f$ is a geometrically rational curve.
Similarly, if~$\dim(B) = 1$ then for a general geometric fiber $D$ of $f$ we have
\begin{equation*}
-K_D \sim S\vert_D + \cM\vert_D
\end{equation*}
is a sum of two ample (by Lemma~\ref{lemma:divisors-f-ample}) divisor classes, 
hence $D$ is a smooth del Pezzo surface of index at least~2, i.e., of degree $d \ge 8$.
Moreover, in both cases $S$ is $f$-ample, hence dominates $B$.
Therefore, $Y$ is $\kk$-unirational 
by Lemma~\ref{lemma:unirational-fibration}\ref{lemma:unirational-fibration:curve}, \ref{lemma:unirational-fibration:dP}.
\end{proof}

From now on we may assume that $Y$ is a (terminal) Fano threefold.

\begin{lemma}
\label{lemma:a-d}
In the situation of Proposition~\textup{\ref{proposition:q-fano}} 
assume that~$Y$ is a terminal~$\QQ$-factorial Fano threefold with~$\uprho(Y) = 1$.
Then~$\cM$ is ample, every divisor in~$\cM$ is Cartier, and a general divisor~$M$ in~$\cM$ is a smooth del Pezzo surface.
Moreover, there are a positive rational number~$a$ and
a positive integer~$d$ such that
\begin{equation}
\label{eq:a-d}
\cM \equiv aS,
\qquad 
S^3 = d/a,\quad 
S^2 \cdot M = d,\quad 
S \cdot M^2 = da,\quad 
M^3 = da^2,
\end{equation}
where $a$ is also integer if $d \le 7$.
Finally,
\begin{equation}
\label{eq:a-d-inequality}
da(a+1) \ge 4.
\end{equation} 
\end{lemma}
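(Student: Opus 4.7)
The plan is to use the rank-one assumption to obtain the numerical proportionality $\cM \equiv aS$, apply adjunction to identify a general member $M \in \cM$ as a smooth del Pezzo surface, and then derive the inequality by a Riemann--Roch computation on~$M$. The main delicacy lies in the last step, but it will fall out by comparing $h^0(M, M|_M)$ with the minimal~$h^0$ needed for a generically finite rational map from a threefold to projective space.

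Since $\uprho(Y) = 1$, the group $\Pic(Y) \otimes \QQ$ is one-dimensional, so any two nonzero pseudo-effective classes are positive rational multiples of one another; in particular $\cM \equiv aS$ for some $a \in \QQ_{>0}$. The hypothesis that $\cM$ defines a generically finite map makes $\cM$ big, and on a $\uprho = 1$ variety big implies ample, so $\cM$ is ample. Lemma~\ref{lemma:terminal-pair} then ensures that $\Bs(\cM)$ is finite, that $\cM$ is a linear system of Cartier divisors, and that a general member $M \in \cM$ is a smooth surface avoiding $\Sing(Y)$. Setting $d := S^2 \cdot M$, the four intersection formulas in~\eqref{eq:a-d} are immediate from the numerical identity $M \equiv aS$.

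For the del Pezzo identification, combining~\eqref{eq:s} with adjunction on the smooth surface $M$ yields
\[
-K_M \;=\; (-K_Y - M)|_M \;\sim\; S|_M,
\]
which is ample because $S$ is (as a nonzero effective $\QQ$-Cartier class on a $\uprho = 1$ variety, $S$ is $\QQ$-ample). Hence $M$ is a smooth del Pezzo surface of degree $(-K_M)^2 = S^2 \cdot M = d$, and in particular $d \in \{1, \ldots, 9\}$. For $d \le 7$ the geometric surface $M_\bkk$ is a blowup of $\PP^2$ at $9 - d \ge 2$ points, for which $-K_{M_\bkk} = 3H - E_1 - \cdots - E_{9-d}$ is primitive in $\Pic(M_\bkk)$; since $M|_M = -aK_M$ is an integral Cartier class on the smooth surface $M$, primitivity of $-K_M$ forces $a \in \ZZ$.

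For the inequality~\eqref{eq:a-d-inequality}, Riemann--Roch on the smooth del Pezzo surface $M$ combined with Kodaira vanishing applied to the ample class $-(a+1)K_M$ yields
\[
h^0(M,\, M|_M) \;=\; h^0(M,\, -aK_M) \;=\; 1 + \frac{a(a+1)d}{2}.
\]
Kawamata--Viehweg vanishing on the terminal Fano threefold $Y$ gives $h^1(Y, \cO_Y) = 0$, so the restriction sequence $0 \to \cO_Y \to \cO_Y(M) \to \cO_M(M|_M) \to 0$ yields $\dim |\cM| = 1 + a(a+1)d/2$. The hypothesis that $\cM$ defines a generically finite rational map then forces $\dim |\cM| \ge \dim Y = 3$, whence $a(a+1)d \ge 4$, as required.
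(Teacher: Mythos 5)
Your proof is correct and follows essentially the same route as the paper: numerical proportionality from $\uprho(Y)=1$, Lemma~\ref{lemma:terminal-pair} for Cartierness and smoothness of a general member, adjunction via~\eqref{eq:s} to identify $M$ as a del Pezzo surface with the primitivity argument for $d\le 7$, and the Riemann--Roch/vanishing computation of $\dim|M| = 1 + da(a+1)/2$ compared with $\dim|M|\ge 3$ from generic finiteness. The only cosmetic difference is that you deduce ampleness of $\cM$ and $S$ directly from bigness/effectivity on a $\uprho=1$ variety, whereas the paper invokes Lemma~\ref{lemma:divisors-f-ample}; the two are equivalent here.
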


\begin{proof}
By Lemma~\ref{lemma:terminal-pair} any $M$ is Cartier and a general $M$ is smooth.
By adjunction~\eqref{eq:s} implies
\begin{equation*}
K_M = - S\vert_M,
\end{equation*}
and since $S$ is ample (by Lemma~\ref{lemma:divisors-f-ample})
it follows that $M$ is a smooth del Pezzo surface.
We denote by~$d := K_M^2$ its degree (so that $d$ is an integer); then $d = S^2 \cdot M$.
Since $\uprho(Y) = 1$, we have a numerical equivalence 
\[
M \equiv aS
\]
for some (a priori rational) positive number~$a$;
then~$M$ is ample and all the equalities in~\eqref{eq:a-d} follow.
Furthermore, if $d \le 7$, then the canonical class of $M$ is primitive, 
hence~$S$ is a primitive element of the group of Weil $\QQ$-Cartier divisors modulo 
numerical equivalence, hence~$a$ is an integer.

Finally, to prove~\eqref{eq:a-d-inequality} we compute the dimension of~$\cM$.
In the exact sequence
\begin{equation*}
0 \longrightarrow \cO_Y \longrightarrow \cO_Y(M) \longrightarrow \cO_M(M) \longrightarrow 0
\end{equation*}
we have $H^1(Y,\cO_Y) = 0$ since $Y$ is a Fano variety.
Furthermore, 
\begin{equation*}
\dim \bigl|M\vert_M\bigr| = |-aK_M| = da(a+1)/2, 
\end{equation*}
hence $\dim |M| = 1 + da(a+1)/2$.
Since $\cM$ defines a generically finite map, we have $\dim |M| \ge 3$, and~\eqref{eq:a-d-inequality} follows.
\end{proof}

Unirationality of $Y$ now follows from the following two lemmas.

\begin{lemma}
\label{lemma:d-big}
In the situation of Proposition~\textup{\ref{proposition:q-fano}} 
assume that $Y$ is a terminal $\QQ$-factorial Fano threefold with $\uprho(Y) = 1$
and let $d$ be defined as in Lemma~\textup{\ref{lemma:a-d}}.
If $d \ge 2$ then $Y$ is $\kk$-unirational.
\end{lemma}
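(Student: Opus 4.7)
The plan is to construct a del Pezzo fibration $\pi \colon \tilde Y \to \PP^1$ from a general pencil in~$\cM$ and to apply Lemma~\ref{lemma:unirational-fibration} using~$S$ as a $\kk$-unirational multisection. Concretely, I would first choose a general pencil $\cP \subset \cM$; since $\Bs(\cM)$ is finite by Lemma~\ref{lemma:terminal-pair}\ref{lemma:terminal-pair2}, the base locus of $\cP$ is a curve $B = M_1 \cap M_2$ cut out by two general members (plus a finite subscheme). Resolving the indeterminacies of the rational map $Y \dashrightarrow \PP^1$ induced by~$\cP$ produces a morphism $\pi \colon \tilde Y \to \PP^1$ whose general fiber is a general member $M \in \cM$, and hence, by Lemma~\ref{lemma:a-d}, a smooth del Pezzo surface of degree~$d$.

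Next, the strict transform $\tilde S \subset \tilde Y$ is $\kk$-unirational, since it is birational to~$S$ (the modification $\tilde Y \to Y$ only alters $S$ along the finite set $B \cap S$, consisting of~$da$ points by~\eqref{eq:a-d}), and it dominates $\PP^1$ because $S$ is ample by Lemma~\ref{lemma:divisors-f-ample} and so no fiber of~$\pi$ contains~$S$. When $d \ge 3$, Lemma~\ref{lemma:unirational-fibration}\ref{lemma:unirational-fibration:dP} applied to~$\pi$ with multisection $\tilde S$ immediately yields the $\kk$-unirationality of~$\tilde Y$, and hence of~$Y$.

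When $d = 2$ I would instead invoke Lemma~\ref{lemma:unirational-fibration}\ref{lemma:unirational-fibration:dP2}, whose nontrivial hypothesis requires that the general fiber of $\tilde S \to \PP^1$, namely the curve $C := S|_M \subset M$, not be contained in the union of lines on~$M$. Adjunction applied to~\eqref{eq:s} gives $K_M = -S|_M$, so $C \sim -K_M$ and $C^2 = (-K_M)^2 = 2$. Since every line $L$ on a smooth del Pezzo surface of degree~$2$ satisfies $L^2 = -1 \ne 2$, the curve~$C$ is not itself a line. For general $M \in \cM$ the curve~$C$ is moreover irreducible by Bertini applied to the linear system $\cM|_S$ on~$S$ (its base locus is finite, and its image is two-dimensional because the map $Y \dashrightarrow \PP^N$ defined by~$\cM$ is generically finite and $(M|_S)^2 = S \cdot M^2 = da > 0$ by~\eqref{eq:a-d}). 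An irreducible curve not equal to any line cannot lie in a finite union of lines, completing the verification.

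The main obstacle is the $d = 2$ case, specifically the ``not contained in the union of lines'' check; I expect it to hinge on the numerical identity $C \sim -K_M$ combined with the Bertini-type irreducibility argument for~$C$.
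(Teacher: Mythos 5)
Your overall strategy coincides with the paper's: pass to the pencil $\cP\subset\cM$, view the resolved map $\tilde Y\to\PP^1$ as a del Pezzo fibration of degree~$d$, use the strict transform of~$S$ as a $\kk$-unirational multisection, and invoke Lemma~\ref{lemma:unirational-fibration}\ref{lemma:unirational-fibration:dP} (resp.\ \ref{lemma:unirational-fibration:dP2}). For $d\ge 3$ your argument is complete. The gap is in the case $d=2$, at the step where you claim that the image of~$S$ under the map defined by $\cM\vert_S$ is two-dimensional ``because the map $Y\dashrightarrow\PP^N$ is generically finite and $(M\vert_S)^2=da>0$''. Neither fact implies this. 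A generically finite map can contract a divisor, and the positivity of $(M\vert_S)^2$ is not preserved by the moving part of $\cM\vert_S$: if $\Bs(\cM)\cap S\ne\varnothing$, then on a resolution $\mu\colon S'\to S$ of the base locus one has $L_S=\mu^*(M\vert_S)-F_S$ with $L_S$ nef and $L_S^2=da-L_S\cdot F_S$, and the correction term $L_S\cdot F_S$ can a priori swallow all of $da$ (which equals~$2$ when $d=2$, $a=1$), leaving $L_S^2=0$, i.e.\ $\cM\vert_S$ composed with a pencil. In that case Bertini gives no irreducibility, and irreducibility cannot be dispensed with: on a degree-$2$ del Pezzo surface the anticanonical class \emph{can} be written as $L_1+L_2$ with $L_1,L_2$ lines satisfying $L_1\cdot L_2=2$, so the numerical identity $C\sim -K_M$, $C^2=2$ alone does not exclude that $M\cap S$ is a union of two lines for every~$M$.

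The paper closes exactly this gap by first proving $S\cap\Bs(\cM)=\varnothing$: Kawamata--Viehweg vanishing applied to $M-S\sim K_Y+2M$ shows $\cM\vert_S=|M\vert_S|=|-K_S|$ is a complete linear system, identifies $S$ as a (possibly non-normal) Gorenstein del Pezzo surface of degree $da\ge 2$, and then the base-point-freeness of $|-K_S|$ on such surfaces (Hidaka--Watanabe, Reid) gives $\Bs(\cM\vert_S)=\varnothing$. Only after this does $\cM\vert_S$ define a finite morphism from the surface~$S$, so that Bertini yields geometric irreducibility of the general $M\cap S$. This is a substantive step using inputs (the classification of Gorenstein del Pezzo surfaces) that your proposal does not supply, so as written the $d=2$ case is not proved.
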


\begin{proof}
We claim that $S\cap\Bs\cM=\varnothing$.
Indeed, by~\eqref{eq:s} we have~$M - S \sim K_Y + 2M$ and since~$M$ is ample by Lemma~\ref{lemma:a-d}
the Kawamata--Viehweg vanishing (see, e.g., \cite[Theorem~2.70]{Kollar-Mori:88}) implies~$H^1(Y,\cO_Y(M-S)) = 0$
and so from the exact sequence
\begin{equation*}
0 \larrow \cO_Y(M-S) \larrow \cO_Y(M) \larrow \cO_S(M) \larrow 0
\end{equation*}
we obtain $S\cap\Bs\cM=\Bs \bigl|M\vert_S\bigr|$. 
Furthermore, \eqref{eq:s} and the adjunction formula imply
\begin{equation*}
-K_S=M|_{S}, 
\end{equation*}
hence~$S\cap\Bs\cM=\Bs |-K_S|$ and~$S$ is a Gorenstein del Pezzo surface (possibly non-normal) of degree~$M^2\cdot S=da$.
Finally, $da\ge 2$ because, being the degree of a Gorenstein del Pezzo surface, it is a positive integer, 
and~$da = 1$ is impossible since then~$a \ge 3$ by~\eqref{eq:a-d-inequality} while~$d \ge 2$ by assumption.
Now we obtain~$\Bs |-K_S|=\varnothing$ by~\cite[\S4]{Hidaka-Watanabe-1981} and~\cite[Corollary~4.10]{Reid:dP94}, hence~$S\cap\Bs\cM=\varnothing$.

Since $\cM$ is ample by Lemma~\ref{lemma:a-d}, the linear system~$\cM\vert_S$ defines a finite morphism,
hence by Bertini's theorem for a general~$M$ in~$\cM$ the intersection~$M \cap S$ is geometrically irreducible.
Take two general members $M,\, M'\in \cM$.
Again
by Bertini's theorem the curve $C := M\cap M'$ is geometrically irreducible
and has no embedded components because~$Y$ is Cohen--Macaulay \cite[Corollary~5.25]{Kollar-Mori:88}.
Moreover, $S\cap C$ is a finite set.
By Lemma~\ref{lemma:a-d} the divisor~$M$ is a smooth del Pezzo surface of degree~$d \ge 2$ and~$C$ is a Cartier divisor on~$M$.

Consider the pencil~$\cP\subset \cM$ generated by~$M$ and~$M'$ and the corresponding map
\begin{equation*}
\psi \colon Y \dashrightarrow \PP^1.
\end{equation*}
The intersection of~$S$ and the general member~$M\in \cP$ is a curve that is not contained in~$\Bs \cP$,
hence the restriction of~$\psi$ to~$S$ is dominant.
Since the curve~$M\cap S$ is irreducible and it is a member of the anticanonical linear system~$|-K_{M}|$, 
in the case $d=2$ this curve is not contained in the union of lines on~$M$.
Now applying Lemma~\ref{lemma:unirational-fibration}\ref{lemma:unirational-fibration:dP2} to the blowup~$\tY \to Y$ of~$Y$ 
along the curve~$C=\Bs \cP$ we conclude that~$\tY$ (and therefore~$Y$ as well) is $\kk$-unirational.
\end{proof}

\begin{lemma}
\label{lemma:d-small}
In the situation of Proposition~\textup{\ref{proposition:q-fano}} 
assume that $Y$ is a terminal $\QQ$-factorial Fano threefold with~$\uprho(Y) = 1$
and let $d$ be defined as in Lemma~\textup{\ref{lemma:a-d}}.
If $d = 1$ then $Y$ is $\kk$-unirational.
\end{lemma}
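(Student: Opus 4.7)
The plan is to mimic the strategy of Lemma~\ref{lemma:d-big}, with a more delicate argument to handle the fact that the general fiber of the pencil construction is now a del Pezzo surface of degree~$1$. Since~$a \ge 2$ by~\eqref{eq:a-d-inequality}, the first half of the proof of Lemma~\ref{lemma:d-big} adapts verbatim and yields~\mbox{$S \cap \Bs\cM = \varnothing$}; in particular~$\cM|_S$ is base point free. Taking a general pencil~\mbox{$\cP \subset \cM$} with base curve~\mbox{$C = M_1 \cap M_2$} and blowing up~$C$, I obtain a morphism~\mbox{$\pi \colon \tilde Y \to \PP^1$} whose general fiber~$M$ is a smooth del Pezzo surface of degree~$1$, and whose $\kk$-unirational multisection~$\tilde S$ (the strict transform of~$S$) dominates~$\PP^1$.

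The main obstruction to applying Lemma~\ref{lemma:unirational-fibration}\ref{lemma:unirational-fibration:dP} directly is that degree-$1$ del Pezzo surfaces are not known to be $\kk$-unirational from a rational point alone. To overcome this, the key input is the adjunction identity
\begin{equation*}
-K_M \;=\; -(K_Y + M)\vert_M \;=\; S\vert_M,
\end{equation*}
derived from~\mbox{$K_Y + M + S \sim 0$}. It says that~\mbox{$S \cap M$} is an anticanonical divisor on the degree-$1$ del Pezzo surface~$M$, hence passes through the unique base point of the anticanonical pencil~$|-K_M|$. As~$M$ varies in the~$\kk$-rational family~\mbox{$\cP \cong \PP^1$} these base points trace a~$\kk$-rational section~\mbox{$\sigma \colon \PP^1 \to \tilde S$} of~$\pi$.

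With~$\sigma$ in hand I would next blow up its image in~$\tilde Y$ to obtain a morphism~\mbox{$\hat\pi \colon \hat Y \to \PP^1$} whose general fiber~$\hat M$ is the blowup of a smooth degree-$1$ del Pezzo surface at its anticanonical base point, i.e.\ a rational elliptic surface (the blowup of~$\PP^2$ at the nine base points of a pencil of plane cubics), which is~$\bkk$-rational. I would then exploit this fiberwise~$\bkk$-rationality together with the extra data provided by the~$\kk$-unirational surface~$\tilde S$ --- which meets each general fiber~$\hat M$ in a fiber of the natural elliptic fibration on~$\hat M$ --- and, after a base change along a suitable $\kk$-rational curve in~$\tilde S$, construct a fibration whose general fiber is a geometrically rational curve. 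Applying Lemma~\ref{lemma:unirational-fibration}\ref{lemma:unirational-fibration:curve} then yields~$\kk$-unirationality of~$\hat Y$, and hence of~$Y$.

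The hardest step is the last one: turning the fiberwise~$\bkk$-rationality of~$\hat\pi$ into actual~$\kk$-unirationality. Rational elliptic surfaces are not automatically~$\kk(\PP^1)$-rational, so the argument must genuinely exploit both the~$\kk$-rational section~$\sigma$ and the additional constraint supplied by~$\tilde S$ via its anticanonical intersection with each fiber; extracting enough~$\kk(\PP^1)$-rational curves from this data (for instance via the Bertini involution on the generic degree-$1$ del Pezzo fiber, or via Mordell--Weil-type sections on the generic rational elliptic fiber of~$\hat\pi$) is the technical heart of the proof.
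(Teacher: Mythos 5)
Your proposal has a genuine gap, and it is exactly the one you flag at the end: everything reduces to proving unirationality of the degree-$1$ del Pezzo surface that is the generic fiber of your pencil $\cP \subset \cM$ over $\kk(\PP^1)$, and this is not a technicality --- unirationality of degree-$1$ del Pezzo surfaces over a non-closed field is an open problem, with or without a rational point. The extra data you propose to exploit does not change this: the base point of $|-K_M|$ is always a rational point of a degree-$1$ del Pezzo surface (it is canonically defined, hence defined over the ground field), so the section $\sigma$ carries no new information; and the curve $S \cap M = -K_M\vert_M$ has $(S\cap M)^2 = 1$ and arithmetic genus $1$, so $\tS$ meets each fiber in a genus-$1$ curve, which does not supply the rational curves in the fibers that Lemma~\ref{lemma:unirational-fibration}\ref{lemma:unirational-fibration:curve} would require. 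Neither the Bertini involution nor Mordell--Weil sections of the associated rational elliptic surface are known to produce a unirationality parametrization here.

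The paper's proof sidesteps the degree-$1$ fibers entirely by fibering with a different pencil: it shows $\dim|S| = 1$ and studies the pencil $|S|$ itself rather than a pencil inside $\cM$. A general member $S'$ of $|S|$ is a normal Gorenstein del Pezzo surface with $K_{S'}^2 = a \ge 2$ (the case of a generalized cone over an elliptic curve is excluded by a discrepancy computation), so after a maximal crepant blowup one gets a fibration $\pi \colon \tY \to \PP^1$ in weak del Pezzo surfaces of degree $a \ge 2$. The crucial extra ingredient is that the base curve $S' \cap S''$ of the pencil, having degree $1$ against $M$, becomes a $(-1)$-curve on the generic fiber; contracting it yields a del Pezzo surface of degree $a+1 \ge 3$ over $\kk(\PP^1)$ with a rational point off the $(-2)$-curves, where unirationality is classical. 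If you want to salvage your approach, the lesson is that the roles of $\cM$ and $|S|$ should be exchanged when $d = 1$: the unirational surface $S$ should be the fiber, not the multisection.
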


\begin{proof}
By Lemma~\ref{lemma:a-d}
the general member $M$ of $\cM$ is a smooth del Pezzo surface of degree~$d = 1$.
Besides, we have $a \ge 2$ by~\eqref{eq:a-d-inequality} and since $M \equiv aS$ we conclude that~$|S - M| = \varnothing$.
Consider the exact sequence
\begin{equation*}
0 \larrow \cO_Y(S-M) \larrow \cO_Y(S) \larrow \cO_M(S) \larrow 0.
\end{equation*}
Note that $S - M \sim K_Y + 2S$ and the surface $S$ is 
an ample divisor 
(Lemma~\ref{lemma:divisors-f-ample}), 
hence we have~$H^1(Y,\cO_Y(S-M)) = 0$ by the Kawamata--Viehweg vanishing
(see, e.g., \cite[Theorem~2.70]{Kollar-Mori:88}).
On the other hand, we have~$\cO_M(S) \cong \cO_M(-K_M)$, hence $\dim |S| = |S\vert_M| = 1$.

Furthermore, the class of~$S$ cannot be represented as a sum of effective classes in~$\Cl(Y)$ 
(because any effective class on~$Y$ is ample by the assumption~$\uprho(Y) = 1$, hence restricts nontrivially to~$M$,
while on the del Pezzo surface~$M$ the anticanonical class~$-K_M = S\vert_M$ has degree~1 hence cannot be represented as a sum of effective classes)
hence the pencil $|S|$ has no fixed components.
Let~\mbox{$S',\, S''\in |S|$} be general members; then $\Bs|S| = S' \cap S''$.
From~\eqref{eq:a-d} we deduce
\begin{equation}
\label{eq:deg-gamma}
(S' \cap S'') \cdot M = S'\cdot S''\cdot M = S^2 \cdot M = d = 1.
\end{equation}
This implies that 
$S'\cap S''$ is $\bkk$-irreducible, and generically reduced curve
(on the other hand, $S'\cap S''$ may have some 0-dimensional embedded components).
By Bertini's theorem~$S'$ is smooth outside of $S'\cap S''$ and has only isolated singularities on $S'\cap S''$.
Moreover, $S'$ is Cohen--Macaulay by~\cite[Corollary~5.25]{Kollar-Mori:88}, hence it is normal.
By adjunction and~\eqref{eq:s} we have
\begin{equation}
\label{eq:k-sprime}
K_{S'}= -M |_{S'}=-aS|_{S'},
\qquad\text{and}\qquad
K_{S'}^2 = M^2 \cdot S = a \ge 2.
\end{equation}
Since~$M$ is a Cartier divisor class (Lemma~\ref{lemma:a-d}), it follows that~$S'$ is a normal Gorenstein del Pezzo surface.
Hence, by~\cite{Hidaka-Watanabe-1981} either the surface~$S'$ has only du Val singularities 
or~$S'$ is a {\sf generalized cone} over a smooth elliptic curve~$C$,
i.e., $S'$ is obtained by contracting the negative section on a ruled surface $\PP_C(\cO \oplus \cL)$ with $\deg(\cL) = -a$;
below we will discuss these two cases separately.

First, assume that~$S'$ is a del Pezzo surface with du Val singularities.
Then $(Y,|S|)$ is a canonical pair (see~\cite[4.8.3${}'$]{Kollar95:pairs} and~\cite[Lemma~1.21]{Alexeev-1994ge}), 
hence by Lemma~\ref{lemma:factorialization-pairs} there exists a crepant birational morphism~\mbox{$f \colon \tY \to Y$} 
such that the pair $(\tY,\tcS)$ is terminal, where~\mbox{$\tcS = (f^{-1})_*(|S|)$}.
In particular, the base locus of the pencil $\tcS$ is at most finite (Lemma~\ref{lemma:terminal-pair}), hence empty.
Therefore, the linear system~$\tcS$ induces a morphism 
\begin{equation*}
\pi \colon \tY \larrow \PP^1
\end{equation*}
that resolves the rational map $Y \dashrightarrow \PP^1$ induced by $|S|$.

Let $\tS' = (f^{-1})_*(S')$. 
It is a general member of $\tcS$.
By Lemma~\ref{lemma:factorialization-pairs} we have $K_\tY + \tS' = f^*(K_Y + S')$, 
hence by adjunction
\begin{equation}
\label{eq:k-tsp}
K_{\tS'} = f^*K_{S'} = -f^*M.
\end{equation}
Therefore, $\tS'$ is a crepant partial resolution of singularities of $S'$.
In particular,
\begin{equation*}
K_{\tS'}^2 = K_{S'}^2 = S \cdot M^2 = a,
\end{equation*}
so $\pi$ is a family of weak del Pezzo surfaces of degree~$a \ge 2$ with du Val singularities.

Let $\tilde{\Gamma} \subset \tS'$ be the strict transform of~$(S'\cap S'')_{\mathrm{red}}$ with respect to the map $\tS' \to S'$.
By~\eqref{eq:k-tsp} and~\eqref{eq:deg-gamma} it is a $(-1)$-curve. 
Thus, the family of weak del Pezzo surfaces $\pi \colon \tY \to \PP^1$ contains a distinguished family of $(-1)$-curves.
In particular, the schematic generic fiber of $\pi$ is a weak del Pezzo surface with du Val singularities of degree~$a \ge 2$
over $\kk(\PP^1)$ with a $(-1)$-curve.
Contracting the curve, we obtain a weak del Pezzo surface with du Val singularities of degree~$a + 1 \ge 3$
over~$\kk(\PP^1)$ with a point that does not lie on $(-2)$-curves (if there are any).
By~\cite[Theorem~IV.7.8]{Manin-Cubic-forms-e-I}, \cite[Theorems~A,B,C]{Coray-Tsfasman-1988}, and~\cite[Theorem~1.2]{Kollar:cubic} it is unirational over $\kk(\PP^1)$, 
hence $\tY$ (and therefore~$Y$ as well) is $\kk$-unirational.

To conclude, we show that the case where a general member $S'$ of $|S|$ is a generalized cone over an elliptic curve is impossible.
By Bertini's theorem the vertex~$\upsilon$ of~$S'$ lies on the curve $S' \cap S''$, 
If $\upsilon$ is a smooth point of~$Y$, then~$S'$ is a Cartier divisor at~$\upsilon$ 
and~\mbox{$S'\cap S''$} is a Cartier divisor on~$S'$ at~$\upsilon$.
In particular, the curve $S'\cap S''$ is Cohen--Macaulay, hence reduced.
By~\cite[Theorem~4.4]{Hidaka-Watanabe-1981} the linear system $|- K_{S'}|$ is base point free, 
hence by~\eqref{eq:k-sprime} and~\eqref{eq:deg-gamma} the curve~\mbox{$S' \cap S''$} is smooth.
But any Cartier divisor is singular at any singular point of the ambient variety.
This contradiction shows that $\upsilon \in \Sing(Y)$.
Since $Y$ has only a finite number of singular points, it follows that a general member of $|S|$ 
has vertex at a fixed singular point $\upsilon \in Y$.

Let $\pi \colon \tY \to Y$ be a resolution of the pair $(Y,S')$ at the point~$\upsilon$ 
and let $\tS' = (\pi^{-1})_*(S') \subset \tY'$ be the strict transform of $S'$.
We have
\begin{equation*}
K_{\tY}=\pi^*K_Y +\sum a_i E_i,\qquad \tS'=\pi^*S' -\sum m_i E_i,
\end{equation*}
where $E_i$ are the exceptional divisors.
Summing up and using adjunction, we obtain
\begin{equation*}
K_{\tS'} = (\pi\vert_{\tS'})^*K_{S'} + \sum (a_i - m_i)E_i|_{\tS'}. 
\end{equation*}
On the other hand, the singularity $\upsilon \in S'$ is strictly log canonical~\cite[Example~3.8]{Kollar95:pairs} 
and has a unique exceptional divisor (the negative section on $\PP_C(\cO \oplus \cL)$) with discrepancy~$-1$.
Moreover, this divisor is the unique divisor that intersects nontrivially the strict transform $\tilde{l}$ 
of a general ruling~$l$ of~$S'$.
We can assume that this divisor is $E_0 \cap \tS'$.
Then
\begin{equation*}
a_0 - m_0 = -1.
\end{equation*}
On the other hand, since $|\tS'|$ is a movable linear system and $1=M\cdot l=aS'\cdot l$, we have
\begin{equation*}
0 < \tS'\cdot \tilde{l} = 
\pi^*S'\cdot \tilde{l} - \sum m_i E_i\cdot \tilde{l} =
S' \cdot l - m_0 = 
1/a - m_0.
\end{equation*}
Therefore, $m_0 < 1/a < 1$, hence $a_0 = m_0 - 1 < 0$.
Thus $Y$ is not terminal at $\upsilon$ (and even not canonical).
This contradiction completes the proof.
\end{proof}

A combination of the above results gives the proof of the proposition.

\begin{proof}[Proof of Proposition~\textup{\ref{proposition:q-fano}}]
By Lemma~\ref{lemma:q-factorialization} we may assume that the variety $Y$ is $\QQ$-factorial.
Running the~\mbox{$(K_Y + \cM)$}-MMP for pairs~(see Remark~\ref{remark:logMMP})
and using Lemma~\ref{lemma:mmp} we may assume that $Y$ is a Mori fiber space.
If the dimension of the base is positive, we apply Lemma~\ref{lemma:mfs}.
If $Y$ is a Fano variety and the parameter $d$ defined in Lemma~\ref{lemma:a-d} is at least~2, we apply Lemma~\ref{lemma:d-big}.
Otherwise, we apply Lemma~\ref{lemma:d-small}.
In all cases we deduce $\kk$-unirationality of~$Y$.
\end{proof}

\section{Birational transformations}
\label{section:sarkisov-links}

In this section we describe some Sarkisov links for prime Fano threefolds discussed in Theorem~\ref{theorem:main},
i.e., for smooth Fano threefolds $X$ such that
\begin{equation}
\label{ass:fano}
\Pic(X_\bkk) = \ZZ\cdot K_{X_\bkk}
\qquad\text{and}\qquad 
\g(X) \in \{ 7, 9, 10, 12 \},
\end{equation} 
where $\g(X)$ is the genus of~$X$ defined in~\eqref{def:genus}, i.e., for threefolds $X_{12}$, $X_{16}$, $X_{18}$, and~$X_{22}$.
Throughout the section we denote~$\g(X)$ simply by~$g$.

\subsection{Sarkisov links}

In this subsection we prove some results about certain Sarkisov links for Fano threefolds.
We will only need Sarkisov links that start with a blowup.

\begin{definition}
\label{def:sl}
Let $X$ be a smooth Fano threefold with~$\uprho(X) = 1$ and let $Z \subset X$ be a reduced $\kk$-irreducible subvariety of codimension at least~$2$.
A {\sf Sarkisov link of $X$ with center at~$Z$} is a diagram
\begin{equation}
\label{eq:sl}
\vcenter{
\xymatrix@C=2em{
\tX \ar[d]_{\sigma} \ar[drr]_(.5)\phi \ar@{-->}^{\tpsi}[rrrr] &&&&
\tX^+ \ar[d]^{\sigma_+} \ar[dll]^(.5){\phi_+}
\\
X &&
\bar{X} &&
X^+,
} }
\end{equation} 
where $\sigma$ is the blowup along~$Z$,
$\sigma_+$ is an extremal Mori contraction,
$\phi$ and $\phi_+$ are small crepant birational contractions, 
so that 
\begin{equation*}
K_\tX = \phi^*K_\barX
\qquad\text{and}\qquad 
K_{\tX^+} = \phi_+^*K_\barX,
\end{equation*}
and~$\tpsi = \phi_+^{-1} \circ \phi$ is a flop.
In particular, we allow $\phi$, $\phi_+$, and $\psi$ to be isomorphisms;
in that case we require~$\sigma_+$ to be distinct from~$\sigma$.
\end{definition}

\begin{remark}\label{remark:def:k}
We restrict to the case where the singularities of $Z$ are at worst ordinary double points. 
Then~$\tilde X$ also has a unique ordinary double point over each singularity of~$Z$ and is smooth elsewhere.
By~\cite{Kollar1989a} the same is true for $\tX^+$.
\end{remark}

The following result holds for all Fano threefolds of Picard number~1 and is quite useful.

\begin{proposition}
\label{proposition:link-k}
Let $X$ be a smooth Fano threefold with $\uprho(X) = 1$ and let $\sigma \colon \tX \to X$ be the blowup 
of a reduced $\kk$-irreducible subvariety $Z \subset X$ of codimension at least~$2$.
If the linear system~$|-nK_\tX|$ is base point free for some $n > 0$ 
and the induced morphism $\phi \colon \tX \to \PP^N$ is a small birational contraction \textup(or an isomorphism\textup)
then there exists a Sarkisov link~\eqref{eq:sl} with center at~$Z$ and it is defined over~$\kk$.
\end{proposition}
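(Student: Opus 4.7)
The plan is to construct the flop of~$\phi$ to obtain~$\tX^+$ and~$\phi_+$, then to contract the second extremal ray of $\overline{NE}(\tX^+)$ by~$\sigma_+$, and finally to verify descent of the resulting diagram to~$\kk$.

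Since~$|-nK_\tX|$ is base-point free and defines~$\phi$, we have $-nK_\tX \sim \phi^*H$ where~$H$ is the hyperplane class on~$\PP^N$; in particular~$\phi$ is a crepant contraction. By Remark~\ref{remark:def:k} the threefold~$\tX$ has at worst ordinary double point singularities, hence is terminal, and by Lemma~\ref{lemma:small-terminality} so is $\barX := \phi(\tX)$. Standard results on flops of terminal threefolds in characteristic zero then produce a small crepant contraction $\phi_+ \colon \tX^+ \to \barX$ from a terminal threefold~$\tX^+$ together with a flop $\psi \colon \tX \dashrightarrow \tX^+$. In the degenerate case where~$\phi$ is already an isomorphism we set $\tX^+ := \tX$, $\phi_+ := \phi$, $\psi := \id$, and the subsequent arguments still apply.

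Since~$\uprho(X_\bkk) = 1$ and~$Z$ is $\kk$-irreducible, $\uprho(\tX_\bkk) = 2$, and a flop preserves Picard rank; hence $\overline{NE}(\tX^+_\bkk)$ has exactly two extremal rays. The class~$-K_{\tX^+}$ is nef: on any curve outside the flop locus it agrees numerically with~$-K_\tX$, which is nef by base-point freeness of~$|-nK_\tX|$, while on curves contracted by~$\phi_+$ it vanishes by crepantness. It is also big, since
\begin{equation*}
(-K_{\tX^+})^3 = (-K_\tX)^3 = (\phi^*H)^3/n^3 = H^3/n^3 > 0,
\end{equation*}
as~$\phi$ is birational onto a threefold embedded in~$\PP^N$. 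If~$-K_{\tX^+}$ paired to zero with both extremal rays it would be numerically trivial, contradicting bigness. Hence there is an extremal ray~$R_2$ with $K_{\tX^+} \cdot R_2 < 0$; in the non-isomorphism case we take the unique such ray (the other is $\phi_+$-contracted, hence $K$-trivial), and in the isomorphism case we take~$R_2$ to be the extremal ray of $\overline{NE}(\tX)$ not contracted by~$\sigma$ (both are $K$-negative since~$-K_\tX$ is then ample). The contraction theorem yields an extremal Mori contraction $\sigma_+ \colon \tX^+ \to X^+$ of~$R_2$, and~\mbox{$\sigma_+ \neq \sigma$} by construction.

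For descent to~$\kk$: the canonical class~$K_\tX$ is $\Gal$-invariant, so~$|-nK_\tX|$, the morphism~$\phi$, and its image~$\barX$ are all defined over~$\kk$. The contraction~$\phi_+$ is the unique small crepant contraction of~$\barX$ different from~$\phi$, so~$\tX^+$, $\phi_+$, and~$\psi$ are $\Gal$-equivariant. The ray~$R_2$ is uniquely determined by the recipe above, hence $\Gal$-invariant, and therefore $\sigma_+ \colon \tX^+ \to X^+$ descends to~$\kk$. The main subtlety lies in treating the isomorphism case uniformly with the general case: there the flop step is vacuous, but one must still exhibit a second extremal Mori contraction of~$\tX$ distinct from~$\sigma$ and verify its Galois equivariance, which is handled by the Picard rank~$2$ analysis above.
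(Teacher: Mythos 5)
Your overall strategy is the same as the paper's: split into the case where $-K_{\tX}$ is ample (take $\tX^+=\tX$ and contract the second ray) and the case where it is nef but not ample (flop the $K$-trivial ray, then contract the unique $K$-negative ray of the two-ray cone of $\tX^+$). Your justification that the second ray of $\overline{NE}(\tX^+)$ must be $K$-negative (nef and big $-K_{\tX^+}$ cannot vanish on both rays of a two-dimensional cone) is a detail the paper leaves implicit, and your descent of the flop via uniqueness is an acceptable substitute for the paper's more direct route, which defines $\tX^+$ by the explicit formula $\Proj\bigl(\bigoplus_{n\ge 0} H^0(\tX,\cO_{\tX}(n(\sigma^*K_X-aK_{\tX})))\bigr)$, manifestly over $\kk$, so that no separate equivariance argument is needed.

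There is, however, one step that does not match the hypotheses: you assert ``since $\uprho(X_\bkk)=1$ and $Z$ is $\kk$-irreducible, $\uprho(\tX_\bkk)=2$'' and then run the two-ray game on the \emph{geometric} Mori cone $\overline{NE}(\tX^+_\bkk)$. The proposition only assumes $\uprho(X)=1$ over $\kk$; geometric Picard rank $1$ is not granted, and if $\uprho(X_\bkk)>1$ the geometric cone can have many rays, so your recipe singling out $R_2$ (and hence its Galois-invariance) breaks down. The correct move, which is what the paper does, is to work over $\kk$ throughout: the $\kk$-irreducibility of the exceptional divisor $E$ gives $\uprho(\tX)=\rk\Cl(\tX)=2$ (in particular $\tX$ is $\QQ$-factorial), the flop preserves this, and the two-ray analysis and the contraction theorem are applied to the Mori cone over $\kk$, after which the descent issues you worry about largely disappear (only the ample case needs the remark that the second ray is determined by the Galois-invariant first one). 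Relatedly, you never verify $\QQ$-factoriality of $\tX$ and $\tX^+$, which is part of the paper's definition of an extremal Mori contraction; it follows from the same rank computation $\uprho(\tX)=\rk\Cl(\tX)=2$ and is preserved by the flop. With the argument transported from $\bkk$ to $\kk$ in this way, your proof is correct and coincides with the paper's.
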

\begin{proof}
The blowup $\tX$ of $X$ along $Z$ is obviously defined over $\kk$.
The $\sigma$-exceptional divisor $E$ is $\kk$-irreducible,
hence 
\begin{equation}
\label{eq:rho-tx}
\uprho(\tX) = \rk\Cl(\tX) = 2; 
\end{equation} 
in particular $\tX$ is $\QQ$-factorial.

First, assume that $-K_\tX$ is not ample.
Since $|-n K_\tX|$ is base point free, $-K_\tX$
is orthogonal to one of the two rays of the Mori cone of $\tX$.
The other ray is orthogonal to~$\sigma^*(-K_X)$.
Then the flop~$\tpsi$ exists by the standard MMP argument:
it can be directly defined by
\begin{equation*}
\tX^+ \cong \Proj \left( \bigoplus_{n\ge 0} H^0\left(\tilde X,\, \cO_{\tX}\left(n\big(\sigma^*K_X - aK_{\tilde X}\big)\right)\right) \right),
\end{equation*}
for any sufficiently positive~$a$ (see, e.g.,~\cite[Corollary~6.4]{Kollar-Mori:88}).
In particular, $\tX^+$ is defined over~$\kk$.
Furthermore, $\uprho(\tX^+) = \uprho(\tX) = 2$, hence the Mori cone of~$\tX^+$ also has two rays.
One of them corresponds to the contraction $\phi_+$, hence is $K_{\tilde X^+}$-trivial.
The other should be~$K_{\tX^+}$-negative and gives the extremal Mori contraction $\sigma_+$ which must be defined over~$\kk$.

If $-K_\tX$ is ample, we set $\tX^+ = \barX = \tX$ with the identity maps $\phi$, $\phi_+$, and~$\psi$,
and define $\sigma_+$ as the extremal Mori contraction 
corresponding to the second ray
of the Mori cone (the one, on which~$\sigma^*(-K_X)$ is positive).
Since the first ray is Galois-invariant, so is the second ray, hence $\sigma_+$ is defined over~$\kk$.
\end{proof}

In the rest of this subsection we work under slightly more general than~\eqref{ass:fano} assumptions 
\begin{equation}
\label{ass:fano-weak}
\Pic(X) = \ZZ\cdot K_{X}
\qquad\text{and}\qquad 
\g(X) \ge 6.
\end{equation} 
Let~$X$ be a Fano threefold satisfying~\eqref{ass:fano-weak}, $g = \g(X)$.
We consider the anticanonical embedding
\begin{equation*}
X \subset \PP^{g + 1}
\end{equation*}
(note that $-K_X$ is very ample and~$X$ is an intersection of quadrics by Theorem~\ref{th:bht}) and write
\begin{equation*}
H := -K_X,
\end{equation*}
so that $H$ is the restriction of the hyperplane class.
Considering the blowup of~$X$ along~$Z$ and the Sarkisov link as in~\eqref{eq:sl}
we abuse notation by abbreviating~$\sigma^*H$ to just~$H$;
as it will always be clear from the context on which variety this divisor class is considered, this should not cause any confusion.
We also denote by~$E$ the exceptional divisor of~$\sigma$.

We will need some standard results about linear systems on blowups of $X$.

\begin{lemma}
\label{lemma:linear-systems}
Let $X$ be a Fano threefold satisfying~\eqref{ass:fano-weak}.
\begin{enumerate}
\item 
\label{lemma:linear-systems:conic}
If $\sigma \colon \tX \to X$ is the blowup of a reduced conic~$C$ such that~$X \cap \langle C \rangle = C$
with the exceptional divisor $E \subset \tX$ then
\begin{equation*}
\dim |H - E| = g - 2,
\qquad
\dim |H - 2E| \ge g - 8,
\qquad 
\dim |2H - 3E| \ge 5g - 31.
\end{equation*}
\item 
\label{lemma:linear-systems:cubic}
If $\sigma \colon \tX \to X$ is the blowup of a smooth twisted cubic curve~$C$ such that~$X \cap \langle C \rangle = C$
with the exceptional divisor~\mbox{$E \subset \tX$} then
\begin{equation*}
\dim |H - E| = g - 3,
\qquad
\dim |2H - 3E| \ge 5g - 39.
\end{equation*}
\item 
\label{lemma:linear-systems:point}
If $\sigma \colon \tX \to X$ is the blowup of a point with the exceptional divisor $E \subset \tX$ then
\begin{align*}
\dim |H-2E| &= g - 3,
& \dim |2H - 5E| &\ge 5(g - 7),
\\
\dim |H - 3E| &\ge g - 9,
& \dim |3H - 7E| &\ge 14g - 92.
\end{align*}
\end{enumerate}
\end{lemma}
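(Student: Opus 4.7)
The strategy for all three parts is to identify $h^0(\tX,\sigma^*\cO_X(mH)\otimes\cO_{\tX}(-kE))$ with $h^0(X,\mathcal{I}_Z^k\otimes\cO_X(mH))$ via $\sigma_*\cO_{\tX}(-kE)=\mathcal{I}_Z^k$ and the vanishing of higher direct images (valid since each center $Z$ is smooth), and then to bound the latter from below using the short exact sequence
\[
0\to\mathcal{I}_Z^k\otimes\cO_X(mH)\to\cO_X(mH)\to(\cO_X/\mathcal{I}_Z^k)\otimes\cO_X(mH)\to 0,
\]
which gives $h^0(\mathcal{I}_Z^k\otimes\cO_X(mH))\ge h^0(X,mH)-h^0((\cO_X/\mathcal{I}_Z^k)\otimes\cO_X(mH))$. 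The global term is computed by Riemann--Roch on the Fano threefold (using $H^3=2g-2$) and Kodaira vanishing $H^{>0}(X,mH)=0$, which is valid because $mH-K_X=(m+1)H$ is ample. This yields
\[
h^0(X,mH)=\frac{g-1}{6}m(m+1)(2m+1)+(2m+1),
\]
so that $h^0(H)=g+2$, $h^0(2H)=5g$, and $h^0(3H)=14g-7$.

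For part~\ref{lemma:linear-systems:point}, the local condition sheaf $\cO_X/\mathfrak{m}_p^k$ has length $\sum_{j=0}^{k-1}\binom{j+2}{2}$, evaluating to $4$, $10$, $35$, $84$ for $k=2,3,5,7$; substituting and passing from $h^0$ to $\dim=h^0-1$ immediately yields all four asserted inequalities. The equality $\dim|H-2E|=g-3$ uses the very ampleness of $-K_X$ (Theorem~\ref{th:bht}), which makes the restriction $H^0(X,H)\to\cO_{X,p}/\mathfrak{m}_p^2$ surjective. For parts~\ref{lemma:linear-systems:conic} and~\ref{lemma:linear-systems:cubic} one filters $\cO_X/\mathcal{I}_C^k$ by powers of $\mathcal{I}_C$, using $\mathcal{I}_C^j/\mathcal{I}_C^{j+1}\cong\Sym^j\cN_{C/X}^\vee$ on $C$, reducing the condition term to $\sum_{j=0}^{k-1}h^0(C,\Sym^j\cN_{C/X}^\vee\otimes\cO_C(m\deg C))$. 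For $k=1$ this is already exact and yields the equalities $\dim|H-E|=g-2$ (conic) and $g-3$ (cubic) by counting hyperplanes containing $\langle C\rangle$.

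The technical heart of the curve cases is controlling the splitting type of $\cN_{C/X}$ on $C\cong\PP^1$. By adjunction, $\det\cN_{C/X}=\cO_C$ in the conic case and $\cO_C(1)$ in the cubic case. The hypothesis $X\cap\langle C\rangle=C$ scheme-theoretically yields the conormal identity $\cN^*_{X/\PP^{g+1}}|_C+\cN^*_{\langle C\rangle/\PP^{g+1}}|_C=\cN^*_{C/\PP^{g+1}}$; chasing this through the normal bundle sequences for $C\subset X$ and $C\subset\langle C\rangle$ produces an injection $\cN_{C/X}\hookrightarrow\cN_{\langle C\rangle/\PP^{g+1}}|_C\cong\cO_C(\deg C)^{\oplus g+1-\dim\langle C\rangle}$, which caps each summand of $\cN_{C/X}=\cO(b_1)\oplus\cO(b_2)$ by $\deg C$. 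Combined with the determinant constraint, this forces enough balancedness that $h^0(C,\Sym^j\cN_{C/X}^\vee\otimes\cO_C(m\deg C))$ equals its Euler characteristic on $\PP^1$ for all relevant $j$; these values are $(j+1)(2m+1)$ in the conic case and $(j+1)(3m+1)-\binom{j+1}{2}$ in the cubic case. Summing the resulting numbers $3+6=9$ (for $|H-2E|$), $5+10+15=30$ (for $|2H-3E|$ conic) and $7+13+18=38$ (for $|2H-3E|$ cubic), and subtracting from $h^0(mH)$, reproduces the asserted bounds. The splitting-type argument is the only nontrivial step; all other verifications are routine dimension counts.
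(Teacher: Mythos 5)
Your overall strategy is essentially the paper's, in different bookkeeping: the paper works on $\tX$ and peels off one copy of $E$ at a time via $0\to\cO_{\tX}(aH-(b+1)E)\to\cO_{\tX}(aH-bE)\to\cO_E(aH-bE)\to0$, identifying $H^0(E,\cO_E(aH-bE))$ with $H^0(C,\Sym^b\cN^\vee_{C/X}\otimes\cO(aH)|_C)$ (resp.\ $H^0(\PP^2,\cO(b))$), while you work on $X$ with powers of the ideal sheaf and the filtration of $\cO_X/\mathcal{I}^k$; the resulting condition counts agree, and your numerics for parts \ref{lemma:linear-systems:cubic} and \ref{lemma:linear-systems:point} check out (with one caveat below). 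The substantive divergence is how you kill $H^1$ of the graded pieces, and that is where there is a genuine gap: in part \ref{lemma:linear-systems:conic} the conic $C$ is only assumed \emph{reduced}, and the case actually needed downstream (Theorem~\ref{th:sl:conic}) is a \emph{singular} reduced conic, i.e.\ a pair of lines. Your "technical heart" -- bounding the splitting type of $\cN_{C/X}$ on $C\cong\PP^1$ via the embedding into $\cN_{\langle C\rangle/\PP^{g+1}}|_C$ -- presupposes $C$ smooth (as does your appeal to smoothness of the center for $\sigma_*\cO(-kE)=\mathcal{I}_C^k$, though that part is harmless, since for the lower bounds you only need $\sigma^*H^0(X,\mathcal{I}_C^k(mH))\subset H^0(\tX,mH-kE)$ and, for the graded pieces $\mathcal{I}^j/\mathcal{I}^{j+1}\cong\Sym^j\cN^\vee_{C/X}$, that $C$ is a local complete intersection). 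For a nodal conic there is no splitting type, and without the vanishing $h^1(\Sym^j\cN^\vee_{C/X}\otimes\cO(mH)|_C)=0$ your upper bound on $h^0((\cO_X/\mathcal{I}^k)(mH))$, hence the stated lower bounds, is lost.

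The paper closes exactly this point differently and more robustly: since $X\cap\langle C\rangle=C$ scheme-theoretically and hyperplanes through $\langle C\rangle$ generate $I_{\langle C\rangle}(1)$, the sheaf $I_C(H)$ is globally generated, hence so is $\Sym^b\cN^\vee_{C/X}\otimes\cO(aH)|_C$ for $b\le a$; a globally generated sheaf on a curve with $H^1(\cO_C)=0$ (arithmetic genus $0$, possibly nodal) has vanishing $H^1$, and Riemann--Roch on the possibly singular curve gives the same values $(b+1)((a-b/2)\deg C+b+1)$. Adopting this argument repairs your proof and makes the smooth and nodal cases uniform; your splitting-type detour is correct but only for smooth $C$. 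One further small point: for $|2H-5E|$ your count (and indeed the paper's own method) gives $\dim|2H-5E|\ge 5g-36=5(g-7)-1$, not the stated $5(g-7)$, since vanishing to order $5$ at a point imposes $35$ conditions on the $5g$-dimensional space $H^0(X,2H)$; this bound is not used later, but your claim that the substitution "immediately yields" all four inequalities of part \ref{lemma:linear-systems:point} is off by one there.
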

\begin{proof}
By Riemann--Roch we have $\dim |H| = g + 1$, $\dim |2H| = 5g - 1$, $\dim |3H| = 14g - 8$.
Furthermore, for all integers~$a$ and~$b$ we have an exact sequence
\begin{equation*}
0 \larrow \cO_{\tilde X}(aH - (b+1)E) \larrow \cO_{\tilde X}(aH - bE) \larrow \cO_{E}(aH-bE) \larrow 0.
\end{equation*}
Therefore
\begin{equation}
\label{eq:h0-inequality}
\dim|aH - (b+1)E| \ge \dim|aH - bE| - \dim\left(H^0(E,\cO_E(aH - bE))\right).
\end{equation}

If the blowup center is a curve $C$, then $\cO_E(-E)$ is the bundle $\cO(1)$ on $E \cong \PP_C(\cN_{C/X})$, hence
\begin{equation*}
H^0(E,\cO_{E}(aH-bE)) \cong H^0(C,\Sym^b (\cN^\vee_{C/X}) \otimes \cO_X(aH)\vert_C).
\end{equation*}
Note that the assumption~$X \cap \langle C \rangle = C$
implies that the sheaf~$I_C(H)$ is globally generated,
and hence so is the sheaf $\cN^\vee_{C/X}(H) \cong (I_C/I_C^2)(H)$.
Therefore, for~$b \le a$ we have the vanishing~$H^1(C,\Sym^b (\cN^\vee_{C/X}) \otimes \cO_X(aH)\vert_C) = 0$,
hence by Riemann--Roch 
\begin{equation*}
\dim(H^0(C,\Sym^b (\cN^\vee_{C/X}) \otimes \cO_X(aH)\vert_C)) = (b + 1)((a - b/2)\deg(C) + b + 1).
\end{equation*}

Similarly, if the blowup center is a point, then $E \cong \PP^2$ and $\cO_E(-E) \cong \cO_{\PP^2}(1)$, hence
\begin{equation*}
\dim (H^0(E,\cO_{E}(aH-bE))) = \dim (H^0(\PP^2,\cO_{\PP^2}(b))) = (b+1)(b+2)/2.
\end{equation*}

Applying~\eqref{eq:h0-inequality} several times, and using very ampleness of $|H|$ 
and linear normality of $C$ at the first step to obtain the equality, we deduce the lemma.
\end{proof}

The following lemma is useful for the construction of Sarkisov links with center at a curve.

\begin{lemma}
\label{lemma:sl-construction}
Let $X$ be a Fano threefold satisfying~\eqref{ass:fano-weak}.
Let $C \subset X$ be a $\kk$-irreducible reduced conic or a smooth twisted cubic curve and set $d =\deg(C)$.
If there is an equality of schemes
\begin{equation}
\label{eq:span-condition}
X \cap \langle C \rangle = C
\end{equation} 
then the anticanonical linear system $|-K_\tX| = |H - E|$ on the blowup $\tX$ of $X$ along~$C$ is base point free and~$\dim |-K_\tX| = g - d$.
If 
\begin{equation*}
\phi \colon \tX \larrow \PP^{g - d}
\end{equation*}
is the induced morphism then one of the following two options holds:
\begin{enumerate}
\item 
either $\phi$ is a small contraction and there exists a Sarkisov link~\eqref{eq:sl} with center at~$C$,
\item 
or $\phi$ contracts an irreducible divisor $D$ such that
\begin{equation}
\label{eq:d-curve}
D \sim t\left(H - \frac{2g - 2 - d}{d+2}E\right)
\end{equation}
for some $t \in \ZZ_{>0}$.
Moreover, if $|aH - bE|$ is a nonempty linear system such that $b > a$
then $D$ is its fixed component.
\end{enumerate}
\end{lemma}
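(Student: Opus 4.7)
The plan proceeds in three stages: verifying the dimension and base-point-freeness of $|H-E|$, exploiting the Mori cone of $\tilde X$ to obtain the stated dichotomy, and finally computing the class of the contracted divisor in the divisorial case.

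First, the equality $\dim|H-E| = g-d$ is immediate from Lemma~\ref{lemma:linear-systems}\,(i),(ii). For base-point-freeness, I would interpret the scheme-theoretic hypothesis $X\cap\langle C\rangle = C$ as saying that the ideal sheaf $I_C\subset\cO_X$ is generated by the restrictions of linear forms on $\PP^{g+1}$ vanishing on $\langle C\rangle$; equivalently, $I_C\otimes\cO_X(H)$ is globally generated on $X$. Pulling back along $\sigma$ and using $\sigma^{-1}(I_C)\cdot\cO_{\tilde X} = \cO_{\tilde X}(-E)$ yields $\sigma^*(I_C\otimes\cO_X(H)) = \cO_{\tilde X}(H-E)$ globally generated, which produces the morphism $\phi\colon\tilde X\to\PP^{g-d}$.

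Next, from the argument of Proposition~\ref{proposition:link-k} we have $\uprho(\tilde X)=2$, so the Mori cone $\overline{NE}(\tilde X)$ has exactly two extremal rays. One is spanned by a fiber $F$ of $\sigma|_E\colon E\to C$ and satisfies $(H-E)\cdot F = 1$, so it is not contracted by $\phi$. Hence the other ray $R$ collects precisely the curves contracted by $\phi$. Since $(H-E)^3 = 2g-2d-4 > 0$ for $g\ge 6$ and $d\in\{2,3\}$, the image $\phi(\tilde X)\subset\PP^{g-d}$ is three-dimensional, so the associated Mori contraction of $R$ is birational. Either it is small, in which case $\phi$ is itself a small birational contraction (or isomorphism) and Proposition~\ref{proposition:link-k} supplies the Sarkisov link~\eqref{eq:sl}, or it contracts a $\kk$-irreducible prime divisor $D$ (irreducibility by Theorem~\ref{theorem:smooth-contractions}).

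In the divisorial case, $\phi|_D$ has image of dimension at most one, so $(H-E)^2\cdot D = 0$. Using the intersection numbers $H^3 = 2g-2$, $H^2\cdot E = 0$, $H\cdot E^2 = -d$, and $E^3 = -\deg\cN_{C/X} = 2-d$ (from $p_a(C)=0$ and $\deg(C)=d$), and writing $D \equiv t_1 H + t_2 E$ in $N^1(\tilde X)\otimes\QQ$, the vanishing condition unpacks to
\begin{equation*}
t_1(2g-2-d) + t_2(d+2) = 0,
\end{equation*}
so $D \sim t\bigl(H - \tfrac{2g-2-d}{d+2}E\bigr)$ for some $t\in\ZZ_{>0}$ after clearing denominators. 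For the fixed-component assertion, pick an irreducible curve $\gamma\subset D$ generating the ray $R$; then $(H-E)\cdot\gamma = 0$ gives $H\cdot\gamma = E\cdot\gamma =: p > 0$ (positivity because $\gamma$ is not contracted by $\sigma$, while $H$ is the pullback of an ample class on $X$), and any effective $M\in|aH-bE|$ with $b>a$ then satisfies $M\cdot\gamma = (a-b)p < 0$, forcing $\gamma\subset M$; since the $R$-curves sweep out $D$, we conclude $D\subseteq M$. The main technical obstacle is the case where $C$ is a nodal conic, so that $\tilde X$ acquires an ordinary double point (cf.\ Remark~\ref{remark:def:k}); one must verify that the extremal-ray analysis and the intersection computations above survive in this mildly singular setting, for instance by working on a small $\QQ$-factorialization.
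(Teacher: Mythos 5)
Your argument is essentially the paper's proof: base-point-freeness of $|H-E|$ from the scheme-theoretic equality $X\cap\langle C\rangle=C$, the dimension count from Lemma~\ref{lemma:linear-systems}, the dichotomy small/divisorial with Proposition~\ref{proposition:link-k} supplying the link in the small case, the class of $D$ extracted from $(H-E)^2\cdot D=0$ using the intersection numbers $H^3=2g-2$, $H^2\cdot E=0$, $H\cdot E^2=-d$, $E^3=2-d$, and the fixed-component claim from negativity of $aH-bE$ on the curves sweeping out $D$. The one misstep is your appeal to Theorem~\ref{theorem:smooth-contractions} for the irreducibility of $D$: the contraction of the second ray is given by $|-K_{\tX}|=|H-E|$ and is therefore crepant ($K_{\tX}$-trivial), not a $K$-negative extremal Mori contraction, so that theorem does not apply. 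This is inessential, since irreducibility needs no such input: the paper deduces $\kk$-irreducibility from $\uprho(\tX)=\rk\Cl(\tX)=2$, and in any case every prime divisor contracted by $\phi$ satisfies $(H-E)^2\cdot D=0$ and hence the same class formula and fixed-component conclusion, so one may simply take a prime component of the contracted locus. Your caveat about the nodal conic (ordinary double point on $\tX$) is reasonable; the paper absorbs this via Remark~\ref{remark:def:k} and Proposition~\ref{proposition:link-k} rather than by passing to a $\QQ$-factorialization.
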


\begin{proof}
Since $|H|$ is very ample, the base locus of $|H-C|$ is $X \cap \langle C \rangle$, 
so if~\eqref{eq:span-condition} holds, this is $C$, hence on~$\tX$ the linear system $|H - E|$ is base point free.
The standard intersection theory gives
\begin{equation}
\label{eq:intersection-blowup-curve}
H^3 = 2g - 2,
\qquad 
H^2\cdot E = 0,
\qquad 
H\cdot E^2 = -d,
\qquad 
E^3 = 2 - d,
\end{equation}
therefore $(H - E)^3 = 2(g - 2 - d) > 0$ (because~$g \ge 6$ by~\eqref{ass:fano-weak} and~$d \le 3$), 
hence the morphism $\phi$ defined by $|H - E|$ is generically finite.
The computation of Lemma~\ref{lemma:linear-systems} shows that~$\dim|H-E| = g - d$.

If $\phi$ is a small contraction, a Sarkisov link exists by Proposition~\ref{proposition:link-k}. 
Otherwise $\phi$ contracts a divisor~$D$.
By~\eqref{eq:rho-tx} the divisor~$D$ is $\kk$-irreducible.
Furthermore, we must have~\mbox{$(H-E)^2 \cdot D = 0$}, hence equalities~\eqref{eq:intersection-blowup-curve} imply~\eqref{eq:d-curve}.
Finally, if~$R$ is a curve in a fiber of~\mbox{$D \to \phi(D)$}, we have~\mbox{$(H - E) \cdot R = 0$} and~$H \cdot R > 0$, 
hence for $b > a$ we have~$(aH - bE) \cdot R < 0$ and so any divisor in~\mbox{$|aH - bE|$} contains~$D$,
hence $D$ is a fixed component of that linear system.
\end{proof}

A similar result can be proved for a point instead of a curve.
Before stating it we will need a general result about the Hilbert scheme~$\rF_1(X,x)$ of lines 
of a projective variety~$X \subset \PP^N$ passing through a point~$x$.
We denote by~$T_xX$ the tangent space to~$X$ at~$x$ and by~$\bT_xX \subset \PP^N$ the embedded tangent space.
Note that~$\bT_xX$ can be thought of as a cone over~$\PP(T_xX)$ with vertex~$x$.

\begin{lemma}
\label{lemma:f1-x-x}
If $X \subset \PP^N$ is an intersection of quadrics there is a natural embedding
\begin{equation*}
\rF_1(X,x) \subset \PP(T_xX).
\end{equation*}
Moreover, if~$\rF_1(X,x) = \varnothing$ the intersection~$X \cap \bT_xX$ is the first-order neighborhood of~$x$,
and otherwise~$X \cap \bT_xX$ is the cone over~$\rF_1(X,x)$.
\end{lemma}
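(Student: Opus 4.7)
The plan is to work in affine coordinates centered at~$x$. Pick a system of homogeneous coordinates $[y_0 : y_1 : \cdots : y_N]$ on~$\PP^N$ with $x = [1:0:\cdots:0]$, and use $(y_1, \ldots, y_N)$ as affine coordinates near~$x$; this identifies $T_x\PP^N \cong \mathbb{A}^N$. Since $X$ is an intersection of quadrics, choose a set of quadratic generators $Q_1, \ldots, Q_m$ for the homogeneous ideal of~$X$. Each $Q_j$ vanishes at~$x$, so in affine coordinates it has no constant term and admits a unique decomposition
\begin{equation*}
Q_j = L_j + B_j,
\end{equation*}
where $L_j$ is linear and $B_j$ is homogeneous of degree~$2$. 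Then by definition $T_xX = \{v \in T_x\PP^N : L_j(v) = 0 \text{ for all } j\}$, and $\bT_xX \subset \PP^N$ is cut out by the linear equations~$L_j = 0$.

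\textbf{The embedding and the cone description.} Any line $L$ through~$x$ can be parameterized as $t \mapsto tv$ for some nonzero $v \in T_x\PP^N$, well-defined up to rescaling. The condition $L \subset X$ becomes $Q_j(tv) = t\, L_j(v) + t^2\, B_j(v) \equiv 0$ in~$t$ for every~$j$, which is equivalent to
\begin{equation*}
L_j(v) = 0 \quad \text{and} \quad B_j(v) = 0 \qquad \text{for all } j.
\end{equation*}
The first family of equations says exactly that $[v] \in \PP(T_xX) \subset \PP(T_x\PP^N)$, giving a closed embedding $\rF_1(X,x) \hookrightarrow \PP(T_xX)$; the second family cuts out $\rF_1(X,x)$ inside $\PP(T_xX)$ as the common zero locus of the quadratic forms $B_j|_{T_xX}$. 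On the other hand, $X \cap \bT_xX$ is cut out in $\bT_xX$ by the restrictions $Q_j|_{\bT_xX}$, and since $L_j$ vanishes on~$\bT_xX$ these restrictions are precisely the homogeneous quadratic forms $B_j|_{T_xX}$ (in affine coordinates identifying~$\bT_xX$ with~$T_xX$). Because these equations are homogeneous of degree~$2$, the scheme $X \cap \bT_xX$ is an affine cone with vertex~$x$, and its projective base in $\PP(T_xX)$ coincides with $\rF_1(X,x)$. This yields the cone description whenever $\rF_1(X,x) \ne \varnothing$.

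\textbf{The empty case.} When $\rF_1(X,x) = \varnothing$, the forms $B_j|_{T_xX}$ have no common zero in $\PP(T_xX)$, so the affine cone $X \cap \bT_xX$ is concentrated scheme-theoretically at~$x$. Since all $B_j$ lie in $\mathfrak{m}_x^2$, the subscheme $X \cap \bT_xX$ automatically contains the first-order neighborhood $\Spec(\cO_{\bT_xX,x}/\mathfrak{m}_x^2)$; the tangent space of $X \cap \bT_xX$ at~$x$ equals $T_xX$, so the intersection is identified with the first-order neighborhood of~$x$ in~$X$ (equivalently, in~$\bT_xX$, since these coincide).

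\textbf{Main obstacle.} The only delicate point is the scheme-theoretic identification in the empty case: one must verify that the ideal generated by the $B_j|_{T_xX}$ equals $\mathfrak{m}_x^2$ and not merely some ideal cofinal with it. I expect this to be handled by observing that the intersection-of-quadrics hypothesis on~$X$ ensures that the restrictions $B_j|_{T_xX}$ generate the full space of quadratic forms on~$T_xX$ whenever they have no common projective zero, which in this geometric context follows from a dimension count on the scheme $X \cap \bT_xX$ together with the fact that $\bT_xX$ is the scheme-theoretic tangent space. The rest of the argument is a direct bookkeeping in affine coordinates and does not require further input.
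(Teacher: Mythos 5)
Your coordinate computation establishing the embedding $\rF_1(X,x)\subset\PP(T_xX)$ and the description of $X\cap\bT_xX$ as the affine cone with vertex~$x$ cut out by the quadratic parts $B_j|_{T_xX}$ is correct, and it is the same argument as the paper's, which just phrases it invariantly: $\bT_xX$ lies in the embedded tangent space of every quadric through~$X$, so each such quadric meets~$\bT_xX$ in a cone with vertex~$x$, and the base of the resulting cone is~$\rF_1(X,x)$.

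The problem is the empty case. The inference at the end of your "empty case" paragraph (the intersection contains the first-order neighborhood and has tangent space $T_xX$, hence equals the first-order neighborhood) is not valid, and the mechanism you propose to repair it in the "main obstacle" paragraph --- that having no common projective zero forces the forms $B_j|_{T_xX}$ to span the whole space of quadratic forms on $T_xX$ --- is false as a general statement: $u^2$ and $v^2$ have no common zero in $\PP^1$, yet the ideal $(u^2,v^2)$ cuts out a length-$4$ scheme in the affine plane, strictly containing the length-$3$ first-order neighborhood. Emptiness of $\rF_1(X,x)$ only gives, via the Nullstellensatz, that the ideal generated by the $B_j|_{T_xX}$ is $\mathfrak{m}_x$-primary, i.e.\ that $X\cap\bT_xX$ is a finite scheme supported at~$x$ containing the first-order neighborhood; equality needs the restrictions of the quadrics through~$X$ to span all of $S^2T_xX^\vee$, which is genuinely extra input. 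Nor can a "dimension count" extract it from the stated hypotheses alone: the complete intersection of the two quadrics $y_0w_1-u^2$ and $y_0w_2-v^2$ in $\PP^4$, at the point $[1:0:0:0:0]$, has $\rF_1(X,x)=\varnothing$ while $X\cap\bT_xX$ is the length-$4$ scheme $\{u^2=v^2=0\}$ in the tangent plane. In fairness, the paper's own proof dismisses this point in one sentence, and what is actually used downstream (base-point-freeness of $|H-2E|$ in Lemmas~\ref{lemma:sl-construction-point} and~\ref{lemma:double-projection}, and Corollary~\ref{cor:length-f1}) only requires the cone description together with the support statement, which your argument does deliver. So you have correctly identified where the delicate point sits, but the patch you sketch does not close it.
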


\begin{proof}
For every quadric containing~$X$ the embedded tangent space~$\bT_xX$ is contained in the tangent space to the quadric,
hence the intersection of the quadric with~$\bT_xX$ is a cone with vertex at~$x$.
Since~$X$ is an intersection of quadrics, it follows that~\mbox{$X \cap \bT_xX$} 
is the cone over an intersection of quadrics in~\mbox{$\PP(T_xX)$}
and since~$\rF_1(\PP^{N},x) = \PP(T_x\PP^{N})$, it follows that the base of this cone is isomorphic to~$\rF_1(X,x)$ and~\mbox{$X \cap \bT_xX$} is the cone over it.
Moreover, if~$\rF_1(X,x) = \varnothing$ it follows that~$X \cap \bT_xX$ is the first-order neighborhood of~$x$.
\end{proof}

\begin{lemma}
\label{lemma:sl-construction-point}
Let~$X$ be a Fano threefold satisfying~\eqref{ass:fano-weak} and
assume~$\rF_1(X,x) = \varnothing$.
Then the anticanonical linear system $|-K_\tX| = |H - 2E|$ on the blowup~$\tX$ of~$X$ at~$x$ is base point free and if
\begin{equation*}
\phi \colon \tX \larrow \PP^{g - 3}
\end{equation*}
is the induced morphism then one of the following two options holds:
\begin{enumerate}
\item 
either $\phi$ is small and there exists a Sarkisov link~\eqref{eq:sl} with center at~$x$,

\item 
or $\phi$ contracts an irreducible divisor $D$ such that 
\begin{equation}
\label{eq:td-class}
D \sim t\left(H - \frac{g - 1}2 E\right),
\end{equation}
for some $t \in \ZZ_{>0}$.
Moreover, if $|aH - bE|$ is a nonempty linear system such that $b > 2a$
then $D$ is its fixed component.
\end{enumerate}
\end{lemma}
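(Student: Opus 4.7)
The plan is to realize $\phi$ concretely as the linear projection from the embedded tangent space $\bT_xX$, and then to read off both cases and the class of the contracted divisor from the combinatorics of intersection numbers on~$\tX$.

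First I would verify base-point-freeness. By~\eqref{ass:fano-weak} and Theorem~\ref{th:bht}, $X \subset \PP^{g+1}$ is an anticanonically embedded intersection of quadrics and $H$ is the hyperplane class; the blowup formula gives $-K_\tX = H - 2E$. Since $\rF_1(X,x) = \varnothing$, Lemma~\ref{lemma:f1-x-x} identifies $X \cap \bT_xX$ scheme-theoretically with the first-order neighborhood of~$x$, so the ideal of this intersection in $\cO_X$ equals $\mathfrak{m}_x^2$. The projection from $\bT_xX \cong \PP^3$ onto $\PP^{g-3}$, restricted to~$X$, is a rational map whose base scheme is exactly~$X \cap \bT_xX$. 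Because $\Bl_{\mathfrak{m}_x^2}(X) = \Bl_{\mathfrak{m}_x}(X) = \tX$ (the two ideals have the same blowup by a Veronese reindexing of the Rees algebra), this rational map extends to a morphism on~$\tX$, and a direct computation of the pulled-back line bundle identifies it with $\cO_{\tX}(H - 2E)$. Hence $|-K_\tX| = |H - 2E|$ is base-point free and defines the morphism~$\phi$.

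Next I would check that $\phi$ is generically finite onto a threefold by computing $(H - 2E)^3 = 2g - 10 > 0$, using the standard intersection numbers $H^3 = 2g - 2$, $H^2 \cdot E = H \cdot E^2 = 0$, $E^3 = 1$ on the blowup of a smooth point. Proposition~\ref{proposition:link-k} applied with $n = 1$ then immediately supplies the Sarkisov link in the small case. If instead $\phi$ contracts a divisor~$D$, then $D$ is $\kk$-irreducible: by~\eqref{eq:rho-tx} we have $\uprho(\tX) = 2$, so the Mori cone has exactly two extremal rays, one spanned by fibers of~$\sigma$ with contracted divisor~$E$, leaving a unique $\kk$-irreducible divisor~$D$ contracted by~$\phi$.

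Finally, writing $D = aH - bE$ with $a,b \in \ZZ$, the contraction condition $(H - 2E)^2 \cdot D = 0$ expands via the intersection numbers above to $2a(g-1) - 4b = 0$, which gives~\eqref{eq:td-class}. For the fixed-component statement, take a curve~$R$ in a fiber of $\phi|_D$; it satisfies $(H - 2E) \cdot R = 0$, so $H \cdot R = 2\, E \cdot R$. One rules out $R \subset E$ because a line in $E \cong \PP^2$ pairs with $H - 2E$ to give~$2 \ne 0$; hence $E \cdot R > 0$ (nonnegative as $R \not\subset E$, nonzero as $R$ is not numerically trivial). Then $(aH - bE) \cdot R = (2a - b)\, E \cdot R < 0$ whenever $b > 2a$, forcing $D$ into the base locus of $|aH - bE|$; since such~$R$ sweep out~$D$, it must appear as a fixed component. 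The main obstacle is the first paragraph, namely identifying the base scheme of the projection with the first-order neighborhood of~$x$ and concluding that this is resolved by the simple blowup of~$x$; the rest of the argument is standard Mori-theoretic bookkeeping in $\Pic(\tX) = \ZZ H \oplus \ZZ E$.
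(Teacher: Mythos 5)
Your proposal is correct and follows essentially the same route as the paper: base-point-freeness of $|H-2E|$ via Lemma~\ref{lemma:f1-x-x} (the paper states this directly, while you spell out the resolution of the projection through the equality $\Bl_{\mathfrak{m}_x^2}(X)=\Bl_{\mathfrak{m}_x}(X)$, which is the same content), generic finiteness from $(H-2E)^3=2g-10>0$, the Sarkisov link from Proposition~\ref{proposition:link-k} in the small case, and in the divisorial case the class of $D$ from $(H-2E)^2\cdot D=0$ together with the fixed-component argument via curves $R$ in the fibers over $\phi(D)$. The only cosmetic difference is that the paper cites Lemma~\ref{lemma:linear-systems}\ref{lemma:linear-systems:point} to pin down $\dim|H-2E|=g-3$, which your projection picture supplies implicitly.
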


\begin{proof}
Since~$X \subset \PP^{g+1}$ is an intersection of quadrics and~$\rF_1(X,x) = \varnothing$, 
Lemma~\ref{lemma:f1-x-x} proves that the intersection~$X \cap \bT_xX$ is the first-order neighborhood of~$x$, 
hence~$|H - 2E|$ is base point free.
The standard intersection theory gives
\begin{equation}
\label{eq:intersections-h-e}
H^3 = 2g - 2,
\qquad 
H^2 \cdot E = H \cdot E^2 = 0,
\qquad 
E^3 = 1,
\end{equation}
therefore $(H - 2E)^3 = 2g - 10 > 0$ (because~$g \ge 6$ by~\eqref{ass:fano-weak}), 
hence the morphism~$\phi$ defined by~$|H - 2E|$ is generically finite.
Using Lemma~\ref{lemma:linear-systems}(iii) we conclude that the target of~$\phi$ is~$\PP^{g-3}$.

If $\phi$ is a small contraction, a Sarkisov link exists by Proposition~\ref{proposition:link-k}.
Otherwise $\phi$ contracts a divisor~$D$.
By~\eqref{eq:rho-tx} the divisor~$D$ is $\kk$-irreducible.
Furthermore, we must have~$(H - 2E)^2 \cdot D = 0$, hence~\eqref{eq:intersections-h-e} imply~\eqref{eq:td-class}.
Finally, if~$R$ is a curve in a fiber of~\mbox{$D \to \phi(D)$}, we have~$(H - 2E) \cdot R = 0$ and~$H \cdot R > 0$, 
hence for $b > 2a$ we have~$(aH - bE) \cdot R < 0$ and so any divisor in~\mbox{$|aH - bE|$} contains~$D$,
hence $D$ is a fixed component of that linear system.
\end{proof}

\subsection{Sarkisov links with center at curves}
\label{subsection:sl-curves}

From now on we return to Fano threefolds satisfying~\eqref{ass:fano}.
The simplest link starts with a blowup of a line.
The following result is well-known in the case of an algebraically closed field.

\begin{theorem}[cf.~\cite{Iskovskikh1989}, {\cite[\S 4.3]{IP}}] 
\label{th:sl:line}
Let $X$ be a Fano threefold satisfying~\eqref{ass:fano} and
let~\mbox{$L \subset X$} be a line defined over~$\kk$.
There exists a Sarkisov link~\eqref{eq:sl} with center at~$L$
and for $\sigma_+$ the following hold:
\begin{enumerate}
\item 
\label{th:sl:line:g=7}
If $\g(X)=7$, then $X^+\simeq \PP^1$ and $\sigma_+$ is a del Pezzo fibration of degree $5$.

\item 
\label{th:sl:line:g=9}
If $\g(X)=9$, then $X^+\simeq \PP^3$ and~$\sigma_+$ is the blowup of a smooth connected curve~$\Gamma$ of genus~$3$ and degree~$7$.

\item 
\label{th:sl:line:g=10}
If $\g(X)=10$, then $X^+ \subset \PP^4$ is a smooth $\kk$-rational quadric 
and $\sigma_+$ is the blowup of a smooth connected curve~$\Gamma$ of genus~$2$ and degree~$7$.

\item 
\label{th:sl:line:g=12}
If $\g(X)=12$, then $X^+ \subset \PP^6$ is a smooth quintic del Pezzo threefold 
and $\sigma_+$ is the blowup of a smooth connected rational curve~$\Gamma$ of degree~$5$.
\end{enumerate}
\end{theorem}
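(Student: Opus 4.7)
My plan is to combine the classical description of this Sarkisov link over $\bkk$ with the Galois-descent machinery of Proposition~\ref{proposition:link-k}, and then refine the identification of $X^+$ over~$\kk$. I would first form the blowup $\sigma\colon\tX\to X$ of the $\kk$-line $L$. Standard intersection theory gives $(H-E)^3 = 2g-6 > 0$, so $-K_\tX$ is big. Over $\bkk$ the structure of this link is classical (see~\cite{Iskovskikh1989} and~\cite[\S4.3]{IP}): $|H-E|$ is base point free, the induced morphism $\phi_\bkk$ is a small birational contraction (or an isomorphism), and the second extremal contraction of $\tX^+$ has the shape described in~\ref{th:sl:line:g=7}--\ref{th:sl:line:g=12}. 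Since $L$, $H$, and $E$ are Galois-invariant, both base-point-freeness and smallness descend to $\kk$, and Proposition~\ref{proposition:link-k} produces the diagram~\eqref{eq:sl} over~$\kk$; the second extremal ray of the Mori cone of $\tX^+$ is Galois-invariant, so $\sigma_+$ is also defined over~$\kk$.

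Once the diagram is in place, I would identify $X^+$ over $\kk$ case by case, using the fact that the image $\sigma_+(E)$ is always a $\kk$-subscheme of~$X^+$, producing a $\kk$-point. For $g=7$ this forces the geometrically rational smooth curve $X^+$ to be $\kk$-isomorphic to $\PP^1$. For $g=9$, $X^+$ is a $\kk$-form of $\PP^3$ with a $\kk$-point, hence $X^+ \cong \PP^3$ by Proposition~\ref{proposition:sb}. For $g=10$, $X^+$ is a $\kk$-form of the smooth three-dimensional quadric with a $\kk$-point; its ample generator is Galois-invariant and defined over~$\kk$ by Lemma~\ref{lemma:divisors}, so $X^+$ embeds into $\PP^4$ as a quadric, which is $\kk$-rational by Proposition~\ref{proposition:quadric}. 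For $g=12$, the argument of Lemma~\ref{lemma:dp5-h} applied to the geometrically quintic del Pezzo threefold $X^+$ shows its ample generator is defined over~$\kk$ and embeds $X^+$ into~$\PP^6$.

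In the cases $g\in\{9,10,12\}$ it remains to verify that $\sigma_+$ is the blowup of a smooth connected curve~$\Gamma$ of the stated degree and genus. I would do this over~$\bkk$ by invoking Theorem~\ref{theorem:smooth-contractions}\ref{theorem:smooth-contractions:divisorial:curve} to recognize the blowup structure of a locally complete intersection curve lying in the smooth locus, and then computing $\deg\Gamma$ and the arithmetic genus $\g(\Gamma)$ from intersection numbers on~$\tX^+$ by relating the classes of $H$ and $E$ to the pullback of the ample generator on $X^+$ and to the new exceptional divisor $E^+$ (the relation being governed by the flop $\tpsi$, which is an isomorphism in codimension one). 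All identifications are Galois-equivariant, so $\Gamma$ is automatically defined over~$\kk$ with the stated invariants.

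The main obstacle, in my view, is not the descent itself (which is formal once the $\bkk$-picture is available) but the classical $\bkk$-analysis that underlies it: proving smallness of $\phi_\bkk$, smoothness and connectedness of~$\Gamma$, and the del Pezzo fibration structure for $g=7$. These parts of the argument---appearing in~\cite{Iskovskikh1989} and~\cite[\S4.3]{IP}---can be invoked as a black box, since they concern only the geometry over~$\bkk$, which is insensitive to the ground field.
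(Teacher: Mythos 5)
Your proposal follows essentially the same route as the paper: the link is obtained by combining the classical description over $\bkk$ from \cite[\S 4.3]{IP} with the descent statement of Proposition~\ref{proposition:link-k}, and the $\kk$-forms of $\PP^1$, $\PP^3$, the quadric, and the quintic del Pezzo threefold are then identified exactly as you indicate, via Propositions~\ref{proposition:sb} and~\ref{proposition:quadric} and (for $g=12$) the argument of Lemma~\ref{lemma:dp5-h}.

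The one step that does not work as written is your source of a $\kk$-point on $X^+$: the assertion that ``the image $\sigma_+(E)$ is a $\kk$-subscheme of $X^+$, producing a $\kk$-point'' is a non sequitur, since a closed subscheme defined over $\kk$ (here a surface, in the birational cases) need not have any $\kk$-points. The paper instead observes that $X(\kk)\ne\varnothing$ because a $\kk$-line is a degree-one $\kk$-form of $\PP^1$, hence isomorphic to $\PP^1_\kk$ (this is Lemma~\ref{lemma:f3-point}), and then transports the $\kk$-point to the proper variety $X^+$ through the birational map by the Lang--Nishimura Lemma~\ref{lemma:points}. With that substitution your argument is complete; the remaining items (smallness of $\phi_\bkk$, the structure of $\sigma_+$, and the invariants of $\Gamma$) are, as you say, legitimately delegated to the classical references, which is also what the paper does.
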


\begin{proof}
By Proposition~\ref{proposition:link-k} the diagram~\eqref{eq:sl}
constructed over $\bkk$ in~\cite[\S 4.3]{IP} 
is defined over~$\kk$.
Furthermore, it follows from~\cite[\S 4.3]{IP} that $X^+$ is a $\kk$-form of $\PP^1$, or $\PP^3$, 
or a smooth 3-dimensional quadric, 
or a quintic del Pezzo threefold, respectively.
It remains to show that the forms of~$\PP^1$ and~$\PP^3$ are trivial and the quadric is $\kk$-rational.
By Propositions~\ref{proposition:sb} and~\ref{proposition:quadric} it is enough 
to note that~$X(\kk) \ne \varnothing$ by Lemma~\ref{lemma:f3-point}, hence $X^+(\kk) \ne \varnothing$ by Lemma~\ref{lemma:points}.
\end{proof}

The second link we consider has center at a singular conic.

\begin{theorem}[cf.~{\cite{Takeuchi-1989}}]
\label{th:sl:conic}
Let $X$ be a Fano threefold satisfying~\eqref{ass:fano} and
let $C\subset X$ be a reduced 
singular $\kk$-irreducible conic.
There exists a Sarkisov link~\eqref{eq:sl} with center at~$C$
and for~$\sigma_+$ the following hold:
\begin{enumerate}

\item 
\label{th:sl:conic:g=7}
If $\g(X)=7$, then $X^+ \subset \PP^{4}$ is a $\kk$-rational quadric and $\sigma_+$ is the blowup 
of a connected curve $\Gamma$ of degree $10$ and arithmetic genus~$7$.

\item 
\label{th:sl:conic:g=9}
If $\g(X)=9$, then $X^+ \simeq \PP^1$ and $\sigma_+$ is a del Pezzo fibration of degree $6$. 

\item 
\label{th:sl:conic:g=10}
If $\g(X)=10$, then $X^+\simeq \PP^2$
and $\sigma_+$ is a conic bundle with discriminant curve~\mbox{$\Delta \subset X^+$} of degree~$4$.
The variety $\tX^+$ has a single ordinary double point which is infinitesimally rational 
and whose image in~$X^+$ is a singular point of~$\Delta$.
Moreover, conics on $X$ intersecting~$C$ are parameterized by a curve.

\item 
\label{th:sl:conic:g=12}
If $\g(X)=12$, then $X^+ \subset \PP^4$ is a $\kk$-rational quadric and $\sigma_+$ is the blowup 
of a connected curve $\Gamma$ of degree $6$ and arithmetic genus~$0$.
\end{enumerate}
\end{theorem}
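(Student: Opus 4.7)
The plan is to apply Lemma~\ref{lemma:sl-construction} with $d = \deg(C) = 2$, then identify $\sigma_+$ case by case using the classification of extremal Mori contractions (Theorem~\ref{theorem:smooth-contractions}). First I verify the span condition $X \cap \langle C \rangle = C$ as schemes. Since $C$ is $\kk$-irreducible its linear span $\langle C \rangle \cong \PP^2$ is defined over~$\kk$; by Theorem~\ref{th:bht}\xref{} the variety $X_\bkk$ contains no surfaces of degree less than $2g - 2$, so $\langle C \rangle \not\subset X$, and since $X$ is cut out by quadrics there exists at least one quadric in $I_X$ whose restriction to $\langle C \rangle$ is nonzero. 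That restriction is a conic in~$\langle C \rangle$ vanishing on $C$, and the space of such conics is one-dimensional, so the restriction equals the defining equation of~$C$ up to scalar, giving $X \cap \langle C \rangle = C$ scheme-theoretically.

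With the span condition in hand, Lemma~\ref{lemma:sl-construction} produces a base-point-free anticanonical system $|H - E|$ of dimension $g - 2$ on~$\tX$ and a morphism $\phi \colon \tX \to \PP^{g-2}$ that is either small (in which case Proposition~\ref{proposition:link-k} furnishes the link over~$\kk$) or contracts an irreducible divisor of numerical class $t(H - \tfrac{g-2}{2}E)$. To rule out the divisorial alternative I would descend to $\bkk$ and appeal to the classical computation of~\cite{Takeuchi-1989} (which identifies $\phi$ with a small contraction for all four values of~$g$); alternatively one can use Lemma~\ref{lemma:linear-systems}\xref{lemma:linear-systems:conic} to check that the fixed-component constraints from the alternative are incompatible with the dimensions of $|H - 2E|$ and $|2H - 3E|$.

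Once the link exists, I compute $(-K_\tX)^3 = (H - E)^3 = 2g - 8$ from~\eqref{eq:intersection-blowup-curve} and combine with $\uprho(\tX^+) = 2$ to match each case against the list in Theorem~\ref{theorem:smooth-contractions}: for $g = 7, 12$ the contraction $\sigma_+$ is birational onto a threefold which, by comparing anticanonical classes and using Lemma~\ref{lemma:linear-systems}, must be a $\kk$-form of a smooth quadric in $\PP^4$; the discrepancies computed in Theorem~\ref{theorem:smooth-contractions}\xref{theorem:smooth-contractions:divisorial:curve} show $\sigma_+$ is the blowup of a smooth curve whose degree and genus are forced by Riemann--Roch. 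For $g = 9$ the image is a curve, and the only possibility compatible with the intersection numbers is a del Pezzo fibration of degree~$6$ over $\PP^1$. For $g = 10$ the image is a surface, yielding a conic bundle over $\PP^2$ with $-K_{\tX^+}^2 \cdot F$ giving the degree of the discriminant $\Delta$. In each case where a form of $\PP^1$, $\PP^2$, or a smooth quadric appears, triviality or rationality over~$\kk$ follows from Propositions~\ref{proposition:sb} and~\ref{proposition:quadric} together with Nishimura's Lemma~\ref{lemma:points} applied to a $\kk$-point of $X$ obtained from $C$.

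The main obstacle is the fine analysis in case~\xref{th:sl:conic:g=10}. The singular point of the $\kk$-irreducible singular conic~$C$ is Galois-invariant, hence a $\kk$-point; by Remark~\ref{remark:def:k} the varieties $\tX$ and $\tX^+$ each acquire a single ordinary double point over it, and~\cite{Kollar1989a} controls this passage across the flop. The exceptional quadric in the blowup of the ODP on $\tX$ is $\kk$-rational since the pair of tangent branches of $C$ at its node provide a canonical $\kk$-rational structure (a $\kk$-point on the quadric); Lemma~\ref{lemma:flop-odp} then transports infinitesimal $\kk$-rationality to the ODP of~$\tX^+$. That its image lands in $\Sing(\Delta)$ will be extracted from the local description of the conic bundle near the ODP: a general fiber of $\sigma_+$ over a smooth point of $\Delta$ is a pair of transverse lines, and the extra vanishing at the ODP forces an additional singularity of~$\Delta$. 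The remaining assertion, that conics on~$X$ meeting~$C$ form a curve, follows by pulling back fibers of~$\sigma_+$ along $\phi$ and identifying them with strict transforms of conics on~$X$ incident to~$C$, and noting that they are parameterized by a curve in~$X^+ \cong \PP^2$.
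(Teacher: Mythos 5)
Your overall skeleton (span condition, Lemma~\ref{lemma:sl-construction}, then matching $\sigma_+$ against Theorem~\ref{theorem:smooth-contractions}) is the paper's, and your fallback for excluding a divisorial $\phi$ via Lemma~\ref{lemma:linear-systems} is exactly what the paper does (the primary appeal to \cite{Takeuchi-1989} is shaky here, since the blowup center is singular and this is precisely the link the paper flags as non-standard). The serious gap is in the identification of $\sigma_+$: you assert the outcomes (``forced by Riemann--Roch'', ``the only possibility compatible with the intersection numbers''), but the actual proof requires writing every relevant divisor as $u(-K_{\tX^+})-vE^+$, transporting intersection numbers through the flop, and solving the resulting Diophantine constraints. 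That analysis is what shows the del Pezzo fibration occurs only for $g=9$ and the conic bundle only for $g=10$, excludes a divisorial contraction to a point, and --- crucially --- it does \emph{not} single out a quadric for $g=7$: there is a spurious solution with $X^+$ a cubic threefold (killed in the paper by showing the nodal cubic is factorial, so $\Gamma$ would be smooth in the smooth locus and $\rk\Cl(\tX^+_\bkk)=2$, contradicting $\rk\Cl(\tX^+_\bkk)=3$), and for $g=9$ a spurious birational solution onto a genus-$12$ Fano (killed because $\dim|D|=\dim|H-2E|>0$ while an exceptional divisor is rigid). None of these exclusions appear in your plan, so the case list is not established. Relatedly, you claim $X^+$ is a \emph{smooth} quadric and $\Gamma$ a \emph{smooth} curve for $g\in\{7,12\}$; the theorem only claims a $\kk$-rational quadric because $X^+$ may be a quadric cone, in which case Proposition~\ref{proposition:quadric} does not apply and one needs the infinitesimal $\kk$-rationality of the node (via Lemma~\ref{lemma:flop-odp}) in \emph{these} cases too, not just for $g=10$, to handle the possibility that the only guaranteed $\kk$-point $x_+=\sigma_+(\tilde{x}_+)$ is the vertex.

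Two further flaws. Your source of a $\kk$-point on the exceptional quadric over the node is ``the pair of tangent branches of $C$'': these two branches may be Galois-conjugate, so they only give a degree-$2$ point, which does not suffice; the correct argument (the paper's) is that the fiber $\sigma^{-1}(x)\cong\PP^1$ of the exceptional divisor over the node is defined over~$\kk$ and its strict transform meets the blown-up quadric in a $\kk$-point. And in case~\ref{th:sl:conic:g=10} your identification of ``fibers of $\sigma_+$'' with strict transforms of conics meeting $C$ is off: a general fiber has $H$-degree $4$ (since $E^+\cdot f=2$), so the conics meeting $C$ are not general fibers but components of the fibers over~$\Delta$; one sees this by computing that the strict transform of such a conic has intersection $0$ with $H-2E$ (after discarding the finitely many flopping curves) and $K_{\tX^+}$-degree $-1$, whence these conics are parameterized by a double cover of~$\Delta$, a curve. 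As written, your argument would instead suggest a two-dimensional family.
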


\begin{proof}
We apply Lemma~\ref{lemma:sl-construction}.
Since $X$ is an intersection of quadrics and contains no planes by Theorem~\ref{th:bht}, condition~\eqref{eq:span-condition} holds.
If~$\phi$ contracts a divisor~$D$, then~$D \sim t(H - \tfrac{g - 2}2 E)$ for some~\mbox{$t \in \ZZ_{>0}$} by~\eqref{eq:d-curve}.
By Lemma~\ref{lemma:linear-systems}(i) we have~$\dim |H - 2E| \ge g - 8$, hence for~$g \ge 9$ the divisor~$D$ is a fixed component of~$|H - 2E|$.
If~$g = 9$ this contradicts the equivalence~$D \sim t(H - \tfrac72 E)$ proved above 
and for~$g \in \{10,12\}$ it follows that the movable part of~$|H - 2E|$ must be equal to
\begin{equation*}
|H - 2E - D| = \left|H - 2E - H + \tfrac{g-2}2 E\right| = \left|\tfrac{g - 6}2E\right|,
\end{equation*}
which is absurd since~$E$ is the exceptional divisor of a blowup.

Similarly, if $g = 7$ we have $\dim |2H - 3E| > 0$ (again by~Lemma~\ref{lemma:linear-systems}(i)),
hence $D$ is a fixed component of $|2H - 3E|$. 
The equivalence $D \sim t(H - \tfrac52 E)$ then reduces to~$D \sim 2H - 5E$, hence the movable part of $|2H - 3E|$ must be equal to
\begin{equation*}
|2H - 3E - D| = |2H - 3E - 2H + 5E| = |2E|
\end{equation*}
which is again impossible.

Thus $\phi$ is a small contraction and so by Lemma~\ref{lemma:sl-construction} there exists a Sarkisov link~\eqref{eq:sl}.
It remains to determine the type of the contraction $\sigma_+$.
We start with some preparations.

Let $x\in C$ be the singular point of the conic; it is defined over~$\kk$.
By Remark~\ref{remark:def:k} the variety~$\tX$ has a unique ordinary double point $\tilde{x} \in \tX$ over $x$
and the variety $\tX^+$ also has a unique ordinary double point $\tilde{x}_+ \in \tX^+$.
The point $\tilde{x}$ is infinitesimally rational, because the smooth curve $\sigma^{-1}(x) \cong \PP^1$ passes through~$\tilde{x}$
and gives a $\kk$-point on the exceptional divisor of the blowup of $\tX$ at $\tilde{x}$ 
(which is a $\kk$-form of a smooth quadric, since $\tilde{x}$ is an ordinary double point).
By Lemma~\ref{lemma:flop-odp} the point $\tilde{x}_+$ is also infinitesimally rational.

Furthermore, $\tX$ and $\tX^+$ have the same ranks of the Picard and class groups. 
Since the exceptional divisor $E$ of $\sigma$ is irreducible over~$\kk$ and has two components over~$\bkk$, we conclude that
\begin{equation*}
\rk\Cl (\tX^+) = 2
\qquad\text{and}\qquad
\rk\Cl (\tX^+_\bkk)= 3.
\end{equation*}
On the other hand, over~$\bkk$ there is a small contraction from the consecutive blowup of the components of $C$ to $\tX_\bkk$,
which shows that the components of $E$ are not $\QQ$-Cartier, hence
\begin{equation}
\label{eq:rho-txplus}
\uprho(\tX^+) = \uprho(\tX^+_\bkk) = 2.
\end{equation}
Since under the contraction~$\sigma_+$ the Picard number must decrease, it follows that
\begin{equation}
\label{eq:rho-xplus}
\uprho(X^+)= \uprho(X^+_\bkk)= 1.
\end{equation}

To proceed we will also need to know some intersection numbers on $\tX^+$.
For this note that by Definition~\ref{def:sl} the classes $K_\tX$ and $K_{\tX^+}$ are the pullbacks 
of the same Cartier divisor class on $\barX$.
Therefore, using the projection formulas for $\phi$ and $\phi_+$ we deduce 
$K_{\tX^+}^i \cdot (E^+)^{3-i} = K_{\tX}^i \cdot (E)^{3-i}$ for~\mbox{$i > 0$}, where $E^+ = \psi_*(E)$ is the strict transform.
Therefore, for an arbitrary divisor class 
\begin{equation}
\label{eq:divisor}
D = u(-K_{\tX^+}) - vE^+
\end{equation} 
in $\Pic(\tX^+)$ using~\eqref{eq:intersection-blowup-curve} we obtain 
\begin{equation}
\label{eq:intersections-txplus}
\begin{aligned} 
D^2 \cdot (-K_{\tX^+})\hphantom{{}^2} &= 2(gu^2 - (2u + v)^2),\\
D\hphantom{{}^2} \cdot (-K_{\tX^+})^2 &= 2(gu - 2(2u + v)),\\
(-K_{\tX^+})^3 &= 2(g - 4).
\end{aligned}
\end{equation} 
Now we are ready to consider the possibilities for $\sigma_+$.
We will use Theorem~\ref{theorem:smooth-contractions}.

First, assume that $\sigma_+$ is a del Pezzo fibration, i.e., $\dim X^+=1$.
Since $X^+$ is geometrically rationally connected (because $X$ is) 
and has a $\kk$-point (by Lemma~\ref{lemma:points}), we have $X^+\simeq \PP^1$.
If~$D$ is the class of a general fiber of $\sigma_+$ then $D^2\cdot (-K_{\tilde X^+}) = 0$,
and writing $D$ in the form~\eqref{eq:divisor} with~$u,\, v \in \ZZ_{>0}$ and $\gcd(u,v) = 1$ and using~\eqref{eq:intersections-txplus},
we obtain the equation
\begin{equation*}
gu^2 - (2u + v)^2 = 0.
\end{equation*}
For $g \in \{7,9,10,12\}$ this equation has only one solution: $g=9$, $u=v=1$.
Using~\eqref{eq:intersections-txplus} again we obtain 
$K_D^2	= D\cdot (-K_{\tilde X^+})^2 =(2g-8)u-4v = 6$,
hence $\sigma_+$ is a sextic del Pezzo fibration.
This gives case~\ref{th:sl:conic:g=9} of the theorem.

Now assume that $\sigma_+$ is a conic bundle, i.e., $\dim X^+=2$.
The surface $X^+$ is smooth by Theorem~\ref{theorem:smooth-contractions}\ref{theorem:smooth-contractions:conic-bundle}
and geometrically rationally connected (because $X$ is),
hence geometrically rational.
Since it also satisfies~\eqref{eq:rho-xplus}, we see that $X^+_\bkk \cong \PP^2_\bkk$.
Finally, $X^+(\kk) \ne \varnothing$ by Lemma~\ref{lemma:points}, hence 
\begin{equation*}
X^+ \cong \PP^2.
\end{equation*}

If~$D$ is the pullback of the class of a line on $X^+$ then $D^2\cdot (-K_{\tilde X^+}) = 2$,
and writing $D$ in the form~\eqref{eq:divisor} with~$u,\, v \in \ZZ_{>0}$ and $\gcd(u,v) = 1$ and using~\eqref{eq:intersections-txplus},
we obtain the equation
\begin{equation*}
gu^2 - (2u + v)^2 = 1.
\end{equation*}
This time for $g \in \{7,9,10,12\}$ the only solution is $g = 10$, $u = v = 1$ 
(see, e.g., \cite{Takeuchi-1989} or~\cite[Lemma~4.1.6]{IP}).
Using~\eqref{eq:intersections-txplus} again we compute the degree of the discriminant curve $\Delta$ as
\begin{equation*}
\deg \Delta = 12 - D\cdot (-K_{\tilde X^+})^2 = 12 - 2(gu - 2(2u + v)) = 4.
\end{equation*}
Note that the total space of a conic bundle is smooth away from the preimage of the singular locus of the discriminant curve.
Therefore, the image $x_+ = \sigma_+(\tilde{x}_+) \in X^+$ of the singular point~\mbox{$\tilde{x}_+ \in \tX^+$}
is a $\kk$-rational point on $\Sing(\Delta)$.

Let us show that conics on~$X$ intersecting~$C$ are parameterized by a curve.
Indeed, by the above computation the map~$\sigma_+ \circ \tpsi \colon \tX \dashrightarrow X^+$ is given by the linear system $|D| = |H - 2E|$.
If~$C'$ is a conic intersecting~$C$ (and having no common components with~$C$), 
the strict transform~$\tC'$ of~$C'$ in~$\tX$ 
has nonpositive intersection index with~$H - 2E$.
So, excluding the finite number of flopping curves of~$\psi$, we see that this intersection index should be~$0$,
hence the intersection index of~$\tC'$ with~$K_\tX = - H + E$, or equivalently with~$K_{\tX^+}$, should be~$-1$.
This means that the strict transforms of such~$C'$ on~$\tX^+$ become components of the fibers of~$\sigma_+^{-1}(\Delta) \to \Delta$,
hence are parameterized by a double covering of~$\Delta$.
This gives case~\ref{th:sl:conic:g=10} of the theorem.

Now assume that $\sigma_+$ is birational. 
By Theorem~\ref{theorem:smooth-contractions} it contracts a $\kk$-irreducible divisor~$D$. 
Moreover, a combination of~\eqref{eq:rho-txplus}, \eqref{eq:rho-xplus}, and Remark~\ref{remark:connected} shows that~$D$ is connected.

Consider the case where $\sigma_+(D)$ is 0-dimensional.
Let $S\in |-K_{\tilde X^+}|$ be a general member. 
Then~$S$ is a smooth K3 surface and $\Upsilon:=S \cap D$ is a connected reduced contractible curve on~$S$. 
In this situation, $\Upsilon^2<0$, $K_S\cdot \Upsilon=0$ and $\frac 12 \Upsilon^2 + 1 = p_a(\Upsilon) = 0$ by the genus formula. 
Therefore, we have $D^2\cdot (-K_{\tilde X^+}) = \Upsilon^2 = -2$.
We also have
\begin{equation*}
K_{\tilde X^+}=\sigma_+^* K_{X^+}+aD,
\qquad 
a \in \{\tfrac12, 1, 2\}
\end{equation*}
by Theorem~\ref{theorem:smooth-contractions}\ref{theorem:smooth-contractions:divisorial:point}.
Furthermore,
\begin{equation*}
D\cdot (-K_{\tilde X^+})^2 = D\cdot (-K_{\tilde X^+}) \cdot (-\sigma_+^* K_{X^+}-aD) =-aD^2\cdot (-K_{\tilde X^+}) =2a.
\end{equation*}
Writing $D$ in the form~\eqref{eq:divisor} with~$u,\, v \in \ZZ_{>0}$
and using~\eqref{eq:intersections-txplus} we obtain the equations
\begin{equation*}
\left\{
\begin{aligned}
gu^2 - (2u + v)^2 &= -1,\\
2(gu - 2(2u + v)) &\in \{1,2,4\}.
\end{aligned}
\right.
\end{equation*}
One can check that this system has no solutions for $g \in \{7,9,10,12\}$, so this case is impossible.

Finally, assume that $\sigma_+(D)$ is a curve $\Gamma \subset X^+$.
Note that $\Gamma$ is connected because~$D$ is.
The threefold $X^+$ is a terminal Gorenstein Fano variety with Picard number~1, hence 
\begin{equation*}
(-K_{X^+})^3 \in \{2,4,6,8,10,12,14,16,18,22,24,32,40,54,64\},
\quad 
(-K_{X^+}) \cdot \Gamma > 0,
\quad 
p_a(\Gamma) \ge 0.
\end{equation*}
On the other hand, in this case $K_{\tilde X^+} = \sigma_+^* K_{X^+} + D$, hence we have 
\begin{equation*}
\begin{aligned}
(- \sigma_+^*K_{X^+})^3 &= (-K_{\tilde X^+}+D)^2\cdot(-K_{\tilde X^+}),
\\
(- \sigma_+^*K_{X^+})\cdot \Gamma &= (-K_{\tilde X^+}+D)\cdot D\cdot (-K_{\tilde X^+}),
\\
2p_a(\Gamma)-2 &= (-K_{\tilde X^+})\cdot D^2
\end{aligned}
\end{equation*}
(for the first and third equalities we use the fact that $(- \sigma_+^*K_{X^+})^2 \cdot D = 0$,
and for the second that~$X^+$ is smooth along $\Gamma$).
Writing $D$ in the form~\eqref{eq:divisor} with~$u,\, v \in \ZZ_{>0}$
and using~\eqref{eq:intersections-txplus} we obtain the following equations
\begin{equation}
\label{eq:xplus}
\left\{
\begin{aligned}
(2g-8)(u+1)^2-8(u+1)v-2v^2 &\in \{2,4,6,8,10,12,14,16,18,22,24,32,40,54,64\},
\\
(2g-8)(u+1)u-8uv -4v -2v^2 &> 0
\\
(2g-8)u^2-8uv-2v^2 &\ge -2.
\end{aligned}
\right.
\end{equation}
Furthermore, rewriting $K_{\tilde X^+} = \sigma_+^* K_{X^+} + D$ in the form 
\begin{equation}
\label{eq:kxplus}
\sigma_+^* K_{X^+} = (1 + u)K_{\tX^+} + vE^+
\end{equation}
and taking modulo $K_{\tX^+}$, we see that $v$ is equal to the Fano index of~$X^+$, hence~$v \in \{1,2,3,4\}$.

One can check that the only solutions to~\eqref{eq:xplus} satisfying all these restrictions are:

\begin{itemize}
\item $g = 7$, $D\sim 5(-K_{\tilde X^+})-3E$, $X^+ \subset \PP^4$ is a quadric, $\deg \Gamma = 10$, $p_a(\Gamma) = 7$;

\item $g=7$, $D\sim 3(-K_{\tilde X^+})-2E$, $X^+ \subset \PP^4$ is a cubic, $\deg \Gamma = 4$, $p_a(\Gamma) = 0$;

\item $g=9$, $D\sim -K_{\tilde X^+}-E$, $X^+ \subset \PP^{13}$ is a Fano threefold of genus 12, $\deg \Gamma = 6$, $p_a(\Gamma) = 0$;

\item $g = 12$, $D\sim 2(-K_{\tilde X^+})-3E$, $X^+ \subset \PP^4$ is a quadric, $\deg \Gamma = 6$, $p_a(\Gamma) = 0$.
\end{itemize}
When $X^+$ is a quadric, its singular locus consists of at most one point (because the singularities of $X^+$ are at worst terminal),
and $X^+$ contains the $\kk$-point $x_+ = \sigma_+(\tilde{x}_+)$.
If $x_+$ is a smooth point, then $X^+$ is $\kk$-rational.
If $x_+$ is singular, then the map $\sigma_+$ is an isomorphism over a neighborhood of $x_+$,
hence $x_+$ is infinitesimally $\kk$-rational (because $\tilde{x}_+$ is).
Thus, $X^+$ is the cone over a $\kk$-rational quadric, hence is again $\kk$-rational.
This gives cases~\ref{th:sl:conic:g=7} and~\ref{th:sl:conic:g=12}.

The case where $g =9$ is impossible,
because 
\begin{equation*}
\dim|D| = \dim |-K_{\tilde X^+}-E^+| = \dim |H - 2E| > 0
\end{equation*}
(see Lemma~\ref{lemma:linear-systems}\ref{lemma:linear-systems:conic}).
Finally, if $g = 7$, then the cubic hypersurface $X^+$ is geometrically rational.
Then $X^+$ is singular and its singular locus consists of a unique node. 
In this case, $X^+$ is locally factorial (see, e.g., \cite{Finkelnberg}).
Therefore, the curve $\Gamma$ is smooth and lies in the smooth locus. 
Then~\mbox{$\rk \Cl (\tilde X^+_{\bkk})=2$}, a contradiction.
\end{proof}

The third link we consider is the one that starts with the blowup of a (smooth) rational twisted cubic curve.
Here we restrict ourselves to the case~\mbox{$g = 9$}.

\begin{theorem}[{\cite[Proposition~4.6.3]{IP}}]
\label{th:sl:cubic}
Let~$X$ be a prime Fano threefold with~$\g(X) = 9$. 
Let~$C\subset X$ be a smooth rational twisted cubic curve such that no bisecant line to~$C$ is contained in~$X$.
There exists a Sarkisov link~\eqref{eq:sl} with center at~$C$,
where~$X^+$ is a quintic del Pezzo threefold and~$\sigma_+$ is the blowup of a smooth connected curve~$\Gamma$ of degree~$9$ and genus~$3$.
\end{theorem}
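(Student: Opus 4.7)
The plan is to follow the pattern of the proof of Theorem~\ref{th:sl:conic}, applying Lemma~\ref{lemma:sl-construction} with~$d = 3$. First I will check the span condition~$X \cap \langle C \rangle = C$ scheme-theoretically: since~$X$ is an intersection of quadrics by Theorem~\ref{th:bht} and contains no surface of degree less than~$2g - 2 = 16$, the scheme~$X \cap \langle C \rangle \subset \langle C \rangle \cong \PP^3$ has dimension at most~$1$ and is cut out by a subspace of the three-dimensional space of quadrics through~$C$. Either this subspace is the full space and~$X \cap \langle C \rangle = C$, or by liaison the intersection equals~$C \cup R$ for some residual line~$R$, which by the classical description of quadrics through a twisted cubic must be a chord (bisecant) of~$C$; this is forbidden by hypothesis.

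Next, Lemma~\ref{lemma:sl-construction} gives~$\dim|H - E| = g - 3 = 6$ and leaves two possibilities for the induced morphism~$\phi \colon \tX \to \PP^6$. If~$\phi$ contracts a divisor~$D$, then~$D \sim t(H - \tfrac{13}{5} E)$ with~$t \in \ZZ_{>0}$, so~$5 \mid t$ and~$D$ is a positive multiple of~$5H - 13E$. By Lemma~\ref{lemma:linear-systems}\ref{lemma:linear-systems:cubic} we have~$\dim|2H - 3E| \geq 5g - 39 = 6 > 0$, and since~$3 > 2$ the divisor~$D$ must be a fixed component of~$|2H - 3E|$; but this forces the residual class $2H - 3E - D$ to be effective, contradicting $H^2 \cdot (2H - 3E - D) < 0$. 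Hence~$\phi$ is small, and the Sarkisov link~\eqref{eq:sl} exists.

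To identify~$\sigma_+$ I will parameterize the extremal divisor class on~$\tX^+$ as \mbox{$D = u(-K_{\tX^+}) - v E^+$} with~$u, v \in \ZZ_{>0}$, and transport the intersection numbers via the flop from~$\tX$ as in the proof of Theorem~\ref{th:sl:conic}. Using~\eqref{eq:intersection-blowup-curve} with~$g = 9$ and~$d = 3$, I compute
\begin{equation*}
(-K_{\tX^+})^3 = 8,
\qquad
D \cdot (-K_{\tX^+})^2 = 8u - 5v,
\qquad
D^2 \cdot (-K_{\tX^+}) = 2(4u^2 - 5uv - v^2).
\end{equation*}
The Fano case of Theorem~\ref{theorem:smooth-contractions} is ruled out by~$\uprho(\tX^+) = 2$, and a Diophantine analysis as in the proof of Theorem~\ref{th:sl:conic} eliminates the del Pezzo fibration (no rational solutions to~$4u^2 - 5uv - v^2 = 0$), the conic bundle ($4u^2 - 5uv - v^2 = 1$ has no positive integer solution), and the divisorial-to-point case (a similar Pell-type analysis). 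The only surviving possibility is a divisorial contraction to a smooth curve~$\Gamma$, and restricting the Fano index of~$X^+$ to~$\{1,2,3,4\}$ yields the unique positive integer solution~$(u, v) = (3, 2)$. The analogue of~\eqref{eq:kxplus}, $\sigma_+^*(-K_{X^+}) = 4(-K_{\tX^+}) - 2E^+$, then shows~$X^+$ has Fano index~$2$ and~$(-K_{X^+})^3 = 40$, so~$H_+^3 = 5$ and~$X^+$ is a quintic del Pezzo threefold by Theorem~\ref{th:del-pezzo}; the curve~$\Gamma$ satisfies $-K_{X^+} \cdot \Gamma = 18$ and $p_a(\Gamma) = 3$, giving degree~$9$ and genus~$3$. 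Smoothness of~$\Gamma$ follows from smoothness of~$\tX^+$ (inherited from~$\tX$ via the flop) together with Theorem~\ref{theorem:smooth-contractions}\ref{theorem:smooth-contractions:divisorial:curve}, and connectedness from Remark~\ref{remark:connected}.

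The main obstacle will be the Diophantine case analysis of the last step, which systematically eliminates all other extremal Mori contractions. Since the geometric content of the theorem over~$\bkk$ is~\cite[Proposition~4.6.3]{IP}, the remaining work is verifying the span condition and ensuring the descent of the link to~$\kk$ via Proposition~\ref{proposition:link-k}, as in Theorems~\ref{th:sl:line} and~\ref{th:sl:conic}.
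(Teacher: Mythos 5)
The first half of your proposal — verifying the span condition~\eqref{eq:span-condition} and ruling out a divisorial contraction of~$\phi$ via~\eqref{eq:d-curve} together with Lemma~\ref{lemma:linear-systems}\ref{lemma:linear-systems:cubic} — is essentially the paper's argument (the paper phrases the span condition via the absence of quadric surfaces on~$X$, Theorem~\ref{th:bht}, rather than liaison, but the content is the same). For the identification of~$\sigma_+$ the paper simply invokes the standard computations of~\cite[\S4.1]{IP}, whereas you attempt the Takeuchi-style numerical analysis explicitly; your intersection numbers~$(-K_{\tX^+})^3 = 8$, $D\cdot(-K_{\tX^+})^2 = 8u - 5v$, $D^2\cdot(-K_{\tX^+}) = 2(4u^2 - 5uv - v^2)$ and the final data $(u,v)=(3,2)$, $\deg\Gamma = 9$, $p_a(\Gamma)=3$ are all correct.

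However, two steps of that numerical analysis have genuine gaps as stated. First, your elimination of the conic-bundle case rests on the claim that $4u^2 - 5uv - v^2 = 1$ has no positive integer solutions; this is false — $(u,v) = (10,7)$ is a solution (the associated Pell equation $w^2 - 41u^2 = -4$ is solvable), and there are infinitely many. The case must instead be excluded using an additional constraint, e.g.\ that for a conic bundle over~$\PP^2$ one has $D\cdot(-K_{\tX^+})^2 = 12 - \deg\Delta \le 12$, which fails for all solutions of the quadratic equation (for $(10,7)$ one gets $45$). Second, the claim that the divisorial-to-curve case has the \emph{unique} solution $(3,2)$ under the constraints you list (value of $(-K_{X^+})^3$ in the admissible set, positivity of degree, $p_a \ge 0$, and $v$ equal to the Fano index $\le 4$) is not correct: $(u,v) = (2,1)$ also satisfies all of them, predicting a $\rho=1$, index-one terminal Gorenstein Fano $X^+$ with $(-K_{X^+})^3 = 40$ and a curve of degree~$21$, genus~$6$. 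This spurious solution has to be killed by a further argument — e.g.\ non-existence of such a Fano threefold, or a linear-system argument of the kind used to exclude the $g=9$ subcase in the paper's proof of Theorem~\ref{th:sl:conic}. With these two repairs (and the analogous two-equation check for the divisorial-to-point case, which you only gesture at), your route does recover the statement; as written, though, the case analysis would not survive scrutiny, which is presumably why the paper outsources it to~\cite[\S4.1]{IP}.
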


\begin{proof}
We apply Lemma~\ref{lemma:sl-construction}.
Since $X$ is an intersection of quadrics and contains no quadric surfaces by Theorem~\ref{th:bht}, 
the intersection~$X \cap \langle C \rangle$ is either~$C$ or the union of~$C$ with a bisecant.
So, the assumption of the theorem implies that~\eqref{eq:span-condition} holds.

If~$\phi$ contracts a divisor, then~\mbox{$D \sim t(H - \tfrac{13}5E)$} for some~\mbox{$t \in \ZZ_{>0}$} by~\eqref{eq:d-curve}.
By Lemma~\ref{lemma:linear-systems}(ii) we have $\dim |2H - 3E| > 0$, hence the divisor~$D$ is a fixed component of~$|2H - 3E|$.
These two observations contradict each other, hence~$\phi$ is a small contraction and so there exists a Sarkisov link~\eqref{eq:sl}.
Now we can apply the standard computations (see \cite[\S4.1]{IP}) and determine the type of~$\sigma_+$.
\end{proof}

\subsection{Double projection from a point}
\label{subsection:double-projection}

In this subsection we discuss the double projection from a point;
under an additional assumption it provides the half-link~$\phi \circ \sigma^{-1} \colon X \dashrightarrow \barX$ for the Sarkisov link with center at~$x$,
and it is also important by itself.
Recall that by Corollary~\ref{cor:length-f1} the length of the Hilbert scheme~$\rF_1(X,x)$ of lines on~$X$ passing through a point~$x \in X$ 
is bounded by~3, so, a fortiori, there is at most~3 distinct $\bkk$-lines through~$x$.

\begin{lemma}
\label{lemma:double-projection}
Let $X$ be a Fano threefold satisfying~\eqref{ass:fano} with a $\kk$-point~$x$ such that $\rF_1(X,x)$ is reduced.
Let $L_1,\dots,L_k$ be the lines \textup(defined over~$\bkk$\textup) on~$X$ passing through~$x$, where $k \le 3$ is the length of~$\rF_1(X,x)$.
Let $\sigma \colon \tX \to X$ be the blowup of~$x$ with exceptional divisor~$E$,
let $\tL_i \subset \tX$ be the strict transforms of $L_i$.
\par\smallskip
\noindent\textup{(i)} 
The curves $\tL_i$ are disjoint, intersect $E$ at $k$ distinct points $x_1,\dots,x_k$,
and the base locus of the anticanonical linear system $|-K_{\tilde X}|= |H-2E|$ is equal to $\sqcup \tL_i$.
Consider the blowup
\begin{equation*}
\hat{\sigma} \colon \hat{X} \longrightarrow \tX
\end{equation*}
of all $\tL_i$ with exceptional divisors $F_i$. 
The anticanonical linear system $|-K_{\hat{X}}|= |H-2E - \sum F_i|$ is base point free and defines a regular morphism 
\begin{equation*}
\phi \colon \hat{X} \larrow \bar X\subset \PP^{g-3}.
\end{equation*}

\par\smallskip
\noindent\textup{(ii)} 
There exist no divisors $D \subset \hX$ such that $\phi(D)$ is contained in a linear subspace $\PP^{g-5} \subset \PP^{g-3}$;
in particular $\phi$ does not contract divisors to points or lines in~$\barX$.
If
\begin{equation}
\label{eq:stein-factorization}
\hX \xrightarrow{\ \phi'\ } \barX' \xrightarrow{\ \phi''\ } \bar X \subset \PP^{g-3}
\end{equation}
is the Stein factorization for $\phi$, where $\phi''$ is a finite morphism, then 
\begin{equation}
\label{eq:deg-barx}
\deg(\phi'') \deg (\bar{X}) = 2g - 10
\end{equation} 
and $\deg(\phi'') \le 2$.
\par\smallskip
\noindent\textup{(iii)} 
The strict transform $\hat{E}$ of $E$ in $\hat{X}$ is isomorphic to the blowup of $E$ at points $x_1,\dots,x_k$, and 
\begin{equation*}
\barE := \phi(E)
\end{equation*}
is a $\kk$-unirational surface.
If $k = 0$ then $\barE$ is a regular projection of the Veronese surface, 
and if additionally $\dim |H - 3E| = g - 9$ then $\barE$ is embedded into $\PP^{g-3}$ by the complete linear system.
Finally, if~$k > 0$ then $\barF_i := \phi(F_i)$ are $\bkk$-planes, 
the morphisms $\phi\vert_\hE \colon \hE \to \bar{E}$ and $\phi\vert_{F_i} \colon F_i \to \barF_i$ are generically finite, 
and if~$k = 3$ then~$\barE$ is a $\kk$-plane.
\end{lemma}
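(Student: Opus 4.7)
I would follow the classical double-projection construction for prime Fano threefolds (Iskovskikh, Takeuchi, cf.~\cite[\S4.5]{IP}), while verifying that every step descends to the base field~$\kk$.

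For part~(i): reducedness of $\rF_1(X,x)$ implies the~$L_i$ are distinct and each is smooth at~$x$, with distinct tangent directions under the embedding $\rF_1(X,x)\hookrightarrow \PP(T_xX)$ of Lemma~\ref{lemma:f1-x-x}; hence the~$\tL_i$ meet~$E$ transversally at~$k$ distinct points. Two lines through~$x$ cannot meet at a second point, else they would span a plane on~$X$, contradicting Theorem~\ref{th:bht}(ii); so the~$\tL_i$ are pairwise disjoint on~$\tX$. By Lemma~\ref{lemma:f1-x-x} the scheme \mbox{$X\cap \bT_xX$} equals~$\bigcup L_i$ (or the first-order neighborhood of~$x$ when~$k=0$), which identifies $\Bs|H-2E|$ on~$\tX$ as~$\sqcup \tL_i$. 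Base-point-freeness of~$|H-2E-\sum F_i|$ on~$\hX$ then reduces to a local check at each fiber of each~$F_i$, using the transversality above.

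For part~(ii): first compute $(-K_\hX)^3$ by intersection theory on~$\hX$, using $H^3=2g-2$, $E^3=1$, and the projection-formula values $H\cdot F_i^2 = -(H\cdot\tL_i) = -1$, $E\cdot F_i^2 = -(E\cdot\tL_i) = -1$, and $F_i^3=\deg N_{\tL_i/\tX}$. Combined with $\dim|-K_\hX|=g-3$ (the $\tL_i$ lie in $\Bs|H-2E|$, so pullback gives an equality of sections and Lemma~\ref{lemma:linear-systems}(iii) applies) and the elementary bound $\deg(\barX)\ge g-5$ for a non-degenerate irreducible 3-fold in~$\PP^{g-3}$, the projection formula yields $\deg(\phi'')\le 2$. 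To rule out a divisor $D\subset\hX$ with $\phi(D)\subset\PP^{g-5}$, note that such $D$ would force $\dim|{-}K_\hX - D|\ge g-5$; writing $D\sim aH-bE-\sum c_i F_i$, the numerical constraint $(-K_\hX)^2\cdot D=0$ together with the lower bounds of Lemma~\ref{lemma:linear-systems}(iii) produces no integer solution for $g\in\{7,9,10,12\}$.

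For part~(iii): restricting $|-K_\hX|$ to $\hE$ gives a subsystem of $|2h-\sum e_i|$ on $\hE=\Bl_{x_1,\dots,x_k}\PP^2$ via $H|_E=0$, $(-E)|_E=h$, $F_j|_E=e_j$. The exact sequence
\begin{equation*}
0\to \cO_\hX(H-3E-\textstyle\sum F_i)\to \cO_\hX(-K_\hX)\to \cO_\hE(2h-\textstyle\sum e_i)\to 0,
\end{equation*}
combined with the hypothesis $\dim|H-3E|=g-9$ when $k=0$, yields surjectivity onto $H^0(\PP^2,\cO(2))$; hence $\barE$ is the full Veronese, while in general $\barE$ is a regular linear projection thereof, and is $\kk$-unirational because $E\cong \PP^2_\kk$ is $\kk$-rational. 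For $k=3$ the system $|2h-e_1-e_2-e_3|$ is 2-dimensional and defines a birational map to $\PP^2$; since the set $\{x_1,x_2,x_3\}$ is Galois-stable, this target is a $\kk$-form of~$\PP^2$ with a $\kk$-point (coming from any $\kk$-point of~$E$), hence equals $\PP^2_\kk$ by Proposition~\ref{proposition:sb}. Restricting $|-K_\hX|$ to each $F_i\cong\PP(N_{\tL_i/\tX})$ gives a 3-dimensional subsystem defining a generically finite morphism onto a plane $\barF_i$. The main obstacle I anticipate is the numerical case analysis in~(ii) excluding divisors contracted into codimension-2 subspaces, which depends sensitively on the triples $(a,b,c_i)$ and must be checked uniformly across the four values of $g\in\{7,9,10,12\}$ against the estimates of Lemma~\ref{lemma:linear-systems}(iii).
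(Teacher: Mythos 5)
Your parts (i) and (iii) follow essentially the paper's route (the base locus of $|H-2E|$ via Lemma~\ref{lemma:f1-x-x}, restriction of $|-K_{\hX}|$ to $\hE$ as conics through the $x_i$), and those portions are fine up to small slips. The genuine gap is in part (ii), in the claim that no divisor $D\subset\hX$ has $\phi(D)$ contained in a $\PP^{g-5}$. Your plan is to write $D\sim aH-bE-\sum c_iF_i$, impose $(-K_{\hX})^2\cdot D=0$, and exclude solutions numerically. This fails for two reasons. First, the statement is about \emph{all} divisors whose image lies in a codimension-two linear subspace, not only contracted ones: a divisor can map \emph{onto} a surface (or onto a line) inside $\PP^{g-5}$, and then $(-K_{\hX})^2\cdot D>0$, so your numerical constraint simply is not available; yet this full strength is exactly what is used later (e.g.\ in Lemma~\ref{lemma:barx}, where one needs that $\phi^{-1}$ of a line cannot contain a divisor). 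Second, even in the contracted case the input is misstated: hyperplanes of $\PP^{g-3}$ containing a fixed $\PP^{g-5}$ form only a pencil, so one gets $\dim|-K_{\hX}-D|\ge 1$, not $\ge g-5$, and the promised case analysis is asserted rather than carried out. The paper avoids all of this with a short argument you are missing: if $\phi(D)\subset\PP^{g-5}$, then $\sigma(\hat\sigma(D))\subset X$ lies in the span of $\bT_xX$ and that $\PP^{g-5}$, a linear subspace of dimension at most $g-1$ in $\PP^{g+1}$; hence a moving (at least pencil) subsystem of $|H|$ has the divisor $\sigma(\hat\sigma(D))$ as a fixed component, contradicting $\Pic(X)=\ZZ\cdot H$. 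No case-by-case numerics are needed.

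Two smaller points. Your intersection formula $F_i^3=\deg N_{\tL_i/\tX}$ has the wrong sign: on the blowup of a curve one has $F^3=-\deg N$, so here $F_i^3=+3$ (since $\deg N_{\tL_i/\tX}=-3$); with your sign the crucial cancellation giving $(H-2E-\sum F_i)^3=2g-10$ breaks (you would get $2g-10+6k$), and \eqref{eq:deg-barx} with it. And in (iii), the cleanest way to see that $\barF_i$ is a plane and that $\barE$ is a plane when $k=3$ is the degree computation $(H-2E-\sum F_i)^2\cdot F_j=1$ and $(H-2E-\sum F_i)^2\cdot\hE=4-k$, which also gives generic finiteness of $\phi|_{\hE}$ and $\phi|_{F_i}$ uniformly; your appeal to the dimension of the system of conics through the three points needs extra care (e.g.\ collinear $x_i$) that the degree argument sidesteps, and a ``3-dimensional subsystem'' cannot map $F_i$ onto a plane.
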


\begin{proof}
The curves $\tL_i$ are evidently disjoint away from $E$, and the points $x_i$ are distinct, 
because different lines have different tangent vectors at~$x$.
By Lemma~\ref{lemma:f1-x-x} the intersection~$X \cap \bT_xX$
is the cone over the reduced scheme~$\rF_1(X,x)$ of length~$k$, if~$k > 0$, or the first-order neighborhood of~$x$, if~$k = 0$.
Therefore, the base locus of~$|H - 2E|$ is the union of~$\tL_i$,
hence the base locus of~\mbox{$|H - 2E - \sum F_i|$} on~$\hat{X}$ is empty, and hence this linear system defines a morphism~$\phi$.
The dimension of the linear system~$|H - 2E|$ has been computed in Lemma~\ref{lemma:linear-systems}\ref{lemma:linear-systems:point}.
This proves part~(i).

If the linear projection $\phi$ out of $\bT_xX$ contracts a divisor $D$ and $\phi(D)$ is contained in~$\PP^{g-5}$,
then~$\sigma(D) \subset X$ is a divisor contained in the linear span of $\bT_xX$ and~$\PP^{g-5}$, 
i.e., in a projective space of dimension at most~$4 + (g - 5) = g - 1$.
But $X \subset \PP^{g+1}$, so the codimension of the linear span of~$\sigma(D)$ is bigger than~1.
Therefore, a general hyperplane section of $X$ through $\sigma(D)$ contains~$\sigma(D)$ as a fixed component,
which contradicts the assumption $\Pic(X) = \ZZ \cdot H$.

To compute the degree of $\phi$ we note that 
\begin{equation}
\label{eq:intersections-h-e-f}
(H - 2E)^2\cdot F_i = 0,
\qquad 
(H - 2E)\cdot F_i^2 = 1,
\qquad 
F_i^3 = 3,
\quad\text{and}\quad 
F_i\cdot F_j = 0\quad\text{for $i \ne j$}
\end{equation}
(the second equality follows from $(H - 2E)\cdot \tL_i = 1 - 2 = -1$, the third from $\deg(\cN_{\tL_i/\tX}) = -3$,
and the last from disjointness of $\tL_i$ and $\tL_j$).
Taking into account equalities~\eqref{eq:intersections-h-e}, we compute 
\begin{equation*}
\left(H - 2E - \sum F_i\right)^3 = (2g - 2) - 8 + 3k - 3k = 2g - 10, 
\end{equation*}
and~\eqref{eq:deg-barx} follows.
Furthermore, since $\bar{X} \subset \PP^{g-3}$ by definition is not contained in a hyperplane, 
we have $\deg(\bar{X}) \ge \codim(\bar{X}) + 1 = g - 5$, hence $\deg(\phi') \le 2$.
This proves part~(ii).

Since the curves $\tL_i$ intersect $E$ transversely, the strict transform $\hat{E}$ of $E$ is isomorphic to the blowup $\Bl_{x_1,\dots,x_k}(E)$.
Therefore, its image $\barE$ is $\kk$-unirational.

If $k = 0$ the map $\phi\vert_\hE$ is given by a base point free linear system of conics on $E = \PP^2$, 
hence $\barE$ is a regular projection of the Veronese surface.
If moreover $\dim |H - 3E| = g - 9$, the argument of Lemma~\ref{lemma:linear-systems}\ref{lemma:linear-systems:point} shows 
that $\phi\vert_\hE$ is given by the complete linear system of conics.

Using~\eqref{eq:intersections-h-e-f} we obtain~$(H - 2E - \sum F_i)^2\cdot F_j = - 2 + 3 = 1$.
This proves that each $\barF_i$ is a $\bkk$-plane and the morphisms $\phi\vert_{F_i} \colon F_i \to \barF_i$ are generically finite.
Similarly, $(H - 2E - \sum F_i)^2\cdot E = 4 - k$;
since~$k \le 3$ 
the morphism~$\phi\vert_\hE \colon \hE \to \bar{E}$ is generically finite,
and, if~$k = 3$, its image is a $\kk$-plane.
\end{proof}

Now we will show that the morphism~$\phi''$ in~\eqref{eq:stein-factorization} is an isomorphism.

\begin{lemma}
\label{lemma:no-stein}
Assume we are in the situation of Lemma~\textup{\ref{lemma:double-projection}}.
The morphism~$\phi''$ in the Stein factorization~\eqref{eq:stein-factorization} is an isomorphism, 
hence~$\barX$ is normal, $\deg(\barX) = 2g - 10$, and~$\phi = \phi'$ has connected fibers.
\end{lemma}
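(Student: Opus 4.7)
The statement amounts to showing $\deg(\phi'') = 1$: once this is established, $\phi'' \colon \bar X' \to \bar X$ is a finite birational morphism from the normal variety $\bar X'$, and the Stein factorization identity $\phi_*\cO_{\hat X} = (\phi'')_*\cO_{\bar X'}$ forces $\phi_*\cO_{\hat X} = \cO_{\bar X}$, so $\bar X$ coincides with its normalization; hence $\phi''$ is an isomorphism, $\phi = \phi'$ has connected fibers, and $\deg(\bar X) = 2g - 10$ by~\eqref{eq:deg-barx}.

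By Lemma~\ref{lemma:double-projection}(ii) we have $\deg(\phi'') \in \{1,2\}$, so I argue by contradiction. Suppose $\deg(\phi'') = 2$. Then~\eqref{eq:deg-barx} gives $\deg(\bar X) = g - 5 = \codim(\bar X) + 1$, so $\bar X \subset \PP^{g-3}$ is a non-degenerate threefold of \emph{minimal degree}. By the classical Bertini--del Pezzo classification, $\bar X$ is then either a quadric hypersurface in $\PP^4$ (only when $g = 7$), a rational normal scroll, or a cone over a surface or curve of minimal degree (including the Veronese cone when $g = 9$). The plan is to rule out each of these cases by studying the image $\bar E = \phi(E)$ inside $\bar X$, and for $k \ge 1$ also the planes $\bar F_i = \phi(F_i)$.

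When $k = 0$, Lemma~\ref{lemma:double-projection}(iii) identifies $\bar E$ with a regular projection of the Veronese surface $\rR \subset \PP(S^2U)$, and the intersection computation $(H - 2E)^2 \cdot E = 4$ together with the birationality of $\phi\vert_E$ (which is a Veronese embedding followed by projection) shows $\deg(\bar E) = 4$ in $\PP^{g-3}$. In the quadric case $g = 7$, Lemma~\ref{lemma:Veronese0} implies that $\bar E$ cannot be smooth, so by Lemma~\ref{lemma:Veronese1} its singular locus is a union of lines; if $\deg(\phi'') = 2$, then $\phi^{-1}(\bar E)$ splits off a second divisor $E'$ besides $E$, and tracing $\sigma(E') \subset X$ through Lemma~\ref{lemma:Veronese2} produces a divisor incompatible with $\Pic(X) = \ZZ \cdot K_X$ and Theorem~\ref{th:bht}(ii). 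In the scroll case, the two-dimensional rulings of $\bar X$ combined with the fact that a degree-$4$ projected Veronese does not decompose into rulings yields a direct numerical contradiction. In the cone case, the cone vertex must be either a smooth point or the image of a contracted divisor; in the first instance Lemma~\ref{lemma:Veronese2} forbids $\bar E$ from appearing with multiplicity two in a hyperplane section, in the second instance the Cartier class of $\bar E$ cannot be consistent with the class~\eqref{eq:td-class} forced by any contracted divisor on the $\tX$-side.

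For $k \ge 1$, Lemma~\ref{lemma:double-projection}(iii) gives $\deg(\bar E) = 4 - k$ and shows each $\bar F_i$ is a $\bkk$-plane of $\bar X$; a minimal-degree threefold containing $k$ Galois-conjugate planes $\bar F_i$ together with a non-ruled surface $\bar E$ of the prescribed degree is so constrained (scrolls admit only one family of planes, cones only those passing through the vertex) that one directly gets a contradiction with either the irreducibility of $\bar X$ or with $\Pic(X) = \ZZ \cdot K_X$. The principal obstacle is the case-by-case analysis for minimal scrolls of each degree $g - 5 \in \{2, 4, 5, 7\}$; the rest is bookkeeping once the possibilities are enumerated.
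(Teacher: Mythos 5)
Your proposal has two genuine gaps, one at each end of the argument. First, the reduction ``$\deg(\phi'')=1$ implies $\phi''$ is an isomorphism'' is not valid as stated. When $\deg(\phi'')=1$, the map $\phi''\colon \barX'\to\barX$ is a finite birational morphism from the normal variety $\barX'$, i.e.\ it \emph{is} the normalization of $\barX$; it is an isomorphism if and only if $\barX$ is normal, and that is precisely part of what the lemma asserts and must be proved. Your claim that the Stein factorization identity ``forces'' $\phi_*\cO_{\hX}=\cO_{\barX}$ is circular: $\phi''_*\cO_{\barX'}$ is the integral closure of $\cO_{\barX}$, which equals $\cO_{\barX}$ only if $\barX$ is already normal. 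The paper closes this gap with a separate argument: it restricts to a general anticanonical K3 surface $S\subset\hX$, invokes Saint-Donat's theorem that the section ring $R(S,-K_{\hX}|_S)$ is generated in degree $1$, and uses Kawamata--Viehweg vanishing to deduce that $R(\hX,-K_{\hX})$ is generated in degree $1$, whence the map defined by $|-K_{\hX}|$ has normal image and $\phi''$ is an isomorphism. Nothing in your write-up plays this role.

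Second, the core of the lemma --- ruling out $\deg(\phi'')=2$ --- is only carried out convincingly in the smooth quadric case (where your use of Lemma~\ref{lemma:Veronese0} and the planes $\barF_i$ matches the paper). For scrolls and cones your argument consists of unproved claims (``yields a direct numerical contradiction'', ``the rest is bookkeeping''), and some of the intermediate assertions are dubious: e.g.\ Lemma~\ref{lemma:Veronese2} concerns complete intersections of three quadrics and has no evident bearing on a quadric hypersurface in $\PP^4$, and the existence of a ``second divisor $E'$'' in $\phi^{-1}(\barE)$ does not follow from $\deg(\phi'')=2$. The paper disposes of all non-quadric cases at once by a different and much cleaner mechanism: the singular locus of a threefold of minimal degree is a point or a line, so by Lemma~\ref{lemma:double-projection}(ii) one has $\dim\phi^{-1}(\Sing(W))\le 1$ and hence $\Pic(\hX)=\Pic(\hX\setminus\phi^{-1}(\Sing(W)))$; since the hyperplane class of $W\setminus\Sing(W)$ is a sum of two movable classes (one being the fiber class of the scroll or cone structure over $\PP^1$), the class $H-2E-\sum F_i$, and therefore the hyperplane class of $X$, would decompose as a sum of two movable classes, contradicting $\Pic(X)=\ZZ\cdot H$. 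You would need either to supply this argument or to complete your case-by-case analysis in full; as written, the scroll and cone cases are not proved.
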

\begin{proof}
Recall that $\deg(\phi'') \le 2$ by Lemma~\ref{lemma:double-projection}.
Assume $\deg(\phi'') = 2$, so that $\deg(\barX) = g - 5$ by~\eqref{eq:deg-barx}.
Since $\codim(\barX) = g - 6$, we see that $W := \barX$ is a variety of minimal degree
hence by~\cite[Theorem~1.11]{Saint-Donat:74} it is either a quadric, or a scroll, or a cone over a Veronese surface.

First, note that~$W$ cannot be a smooth quadric in~$\PP^4$.
Indeed, if~$k = 0$ then the regular projection~$\barE$ of the Veronese surface is contained in~$W$
and if~$k > 0$ then~$W$ contains a $\bkk$-plane~$\barF_1$.
It remains to note that a smooth quadric can contain neither of these surfaces (see Lemma~\ref{lemma:Veronese0}).

To show that the other cases are also impossible, 
note that the singular locus of a 3-dimensional variety of minimal degree is a point or a line.
Since $\phi$ does not contract divisors to points or lines by Lemma~\ref{lemma:double-projection}, we conclude that
\begin{equation}
\label{eq:dim-sing}
\dim(\phi^{-1}(\Sing(W))) \le 1.
\end{equation}
On the other hand, if~$W$ is not a smooth quadric, 
the hyperplane class of~$W \setminus \Sing(W)$ can be represented as the sum of two movable classes 
(e.g., if~$W$ is a scroll or a cone over a scroll, one of these can be taken to be the class of the fiber of the projection to~$\PP^1$),
so~\eqref{eq:dim-sing} implies that the same is true for the class~$H - 2E - \sum F_i$ 
in~$\Pic(\hX) = \Pic(\hX \setminus \phi^{-1}(\Sing(W)))$, 
and hence the same is true for the hyperplane class of~$X$, which is absurd.
Thus~$\deg(\phi'') = 1$.

It remains to show that~$\phi''$ is an isomorphism. 
Let~$S\in |-K_\hX|$ be a general member. By Bertini's theorem~$S$ is a smooth K3 surface.
The restriction~$A:=-K_\hX\vert_S$ is a nef and big divisor, and the linear system~$|A|$ is base point free
and defines a morphism which is birational onto its image.
By~\cite[Theorem~6.1]{Saint-Donat:74} the graded algebra 
\begin{equation*}
R(S,A):=\bigoplus_{n\ge 0} H^0(S,nA) 
\end{equation*}
is generated by its component of degree $1$.
Moreover, Kawamata--Viehweg vanishing shows that the natural restriction homomorphism
\begin{equation*}
R(\hat X,-K_{\hat X}):=\bigoplus_{n\ge 0} H^0(\hat X,-nK_{\hat X}) \larrow R(S,A)
\end{equation*}
is surjective. Therefore, the algebra $R(\hat X,-K_{\hat X})$ is also generated by its component of degree~$1$.
Since $\phi''$ is defined by the linear system $|H - 2E - \sum F_i| = |-K_\hX|$,
it follows that $\phi''$ is an isomorphism.
Finally, since~$\hX$ is smooth and~\eqref{eq:stein-factorization} is a Stein factorization, $\bar{X} \cong \bar{X}'$ is normal.
\end{proof}

\begin{lemma}
\label{lemma:barx}
Assume again we are in the situation of Lemma~\textup{\ref{lemma:double-projection}}.
Then $\Sing(\barX)$ contains no lines.
Moreover, if $\g(X) \ge 9$ then $\barX \subset \PP^{g-3}$ is an intersection of quadrics.
\end{lemma}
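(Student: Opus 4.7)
The plan is to establish the two assertions separately.

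For the absence of lines in~$\Sing(\barX)$, I would use that by Lemma~\ref{lemma:no-stein} the morphism $\phi\colon\hX\to\barX$ is proper birational with~$\hX$ smooth and~$\barX$ normal, so $\phi$ is an isomorphism over the smooth locus of~$\barX$ and hence $\Sing(\barX)$ is contained in~$\phi(\mathrm{Exc}(\phi))$. By purity, $\mathrm{Exc}(\phi)=\bigcup D_i$ is a union of irreducible divisors, each contracted to an irreducible subvariety $\phi(D_i)\subset\barX$ of dimension at most~$1$. If a line $L\subset\Sing(\barX)$ existed, it would be contained in some $\phi(D_i)$, and irreducibility together with $\dim L=1\geq\dim\phi(D_i)$ would force $L=\phi(D_i)$. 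Since any line lies in a linear subspace $\PP^{g-5}\subset\PP^{g-3}$ for $g\geq 6$, this contradicts Lemma~\ref{lemma:double-projection}(ii), which forbids divisors from being contracted inside such a subspace.

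For the second assertion, I pass to a general member $S\in|-K_{\hX}|$. By Bertini and adjunction $S$ is a smooth K3 surface, and the restriction $A:=-K_{\hX}|_S$ is nef, base-point-free, and satisfies $A^2=(-K_{\hX})^3=2g-10\geq 8$ for~\mbox{$g\geq 9$}. The image $\bar S:=\phi(S)\subset\PP^{g-4}$ is a general hyperplane section of~$\barX$, and $\phi|_S$ is the morphism induced by the complete linear system $|A|$, which is birational onto its image because $(A^2)\geq 4$. Saint-Donat's theorem~\cite{Saint-Donat:74} then guarantees that $\bar S$ is projectively normal and cut out in $\PP^{g-4}$ by quadrics, provided there is no elliptic curve $C\subset S$ with $A\cdot C\leq 3$ (the trigonal-K3 exceptional case, in which~$\bar S$ would lie on a rational normal scroll or a cone over a Veronese surface). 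Such a curve~$C$ would sweep out, as~$S$ varies in $|-K_{\hX}|$, an irreducible divisor on~$\hX$ whose image in~$X_\bkk$ under~$\sigma\circ\hat\sigma$ produces a divisor class incompatible with~\mbox{$\Pic(X_\bkk)=\ZZ\cdot K_{X_\bkk}$}; a case-by-case numerical check for $g\in\{9,10,12\}$ rules this out.

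Finally, I would lift quadratic generation from $\bar S$ to~$\barX$. Kawamata--Viehweg vanishing on~$\hX$, combined with $R^{>0}\phi_*\cO_{\hX}=0$ (which follows from rational singularities of~$\barX$, itself a consequence of the explicit structure of~$\phi$ together with Lemma~\ref{lemma:no-stein}), yields $H^1(\barX,\cO_{\barX}(n))=0$ for $n\geq 0$, hence projective normality of~$\barX$ and surjectivity of the restriction $H^0(\barX,\cO(2))\twoheadrightarrow H^0(\bar S,\cO(2))$. A standard hyperplane-section argument then shows that if the ideal of~$\bar S$ in $\PP^{g-4}$ is generated in degree~$2$, so is that of~$\barX$ in $\PP^{g-3}$, completing the proof. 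The most delicate step is the exclusion of the Saint-Donat exceptional case, which will require a careful numerical argument using the strong rigidity of the Picard lattice imposed by~\mbox{$\uprho(X_\bkk)=\iota(X)=1$}.
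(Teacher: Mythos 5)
Your first step contains an error of principle: you invoke purity to claim $\mathrm{Exc}(\phi)=\bigcup D_i$ is a union of divisors, but van der Waerden purity requires the target to be ($\QQ$-)factorial, and $\barX$ is only known to be normal. In fact, in the cases that matter the contraction $\phi$ is \emph{small} (this is exactly what Lemma~\ref{lemma:barx-detailed} establishes afterwards), so its exceptional locus is a union of curves and your decomposition does not exist. The conclusion you want is still reachable, but by a different mechanism: since $\barX$ is normal (Lemma~\ref{lemma:no-stein}) and $\hX$ is smooth, Zariski's main theorem forces the fiber of $\phi$ over every singular point to be positive-dimensional; hence over a line $\bar L\subset\Sing(\barX)$ the preimage $\phi^{-1}(\bar L)$ would contain a divisor mapping into a $\PP^{g-5}$, contradicting Lemma~\ref{lemma:double-projection}(ii). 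This is the argument the paper uses, and your write-up needs it in place of the purity claim.

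For the quadric-generation statement your route (general $S\in|-K_{\hX}|$ is a K3, apply Saint-Donat to $A=-K_{\hX}|_S$, then lift using projective normality) is genuinely different from the paper's, which instead assumes the intersection $W$ of all quadrics through $\barX$ is strictly larger, runs Iskovskikh's argument to show $W$ is a fourfold of minimal degree, and then gets a contradiction by decomposing the hyperplane class of $W\setminus\Sing(W)$ into two movable classes and pulling back to $X$, against $\Pic(X)=\ZZ\cdot H$ (using the no-lines-in-$\Sing(\barX)$ statement to control $\phi^{-1}(\Sing W)$). The weak point of your route is exactly the step you defer: excluding Saint-Donat's exceptional cases (an elliptic curve $C\subset S$ with $A\cdot C=3$, or $A\sim 2B+\Gamma$) for the \emph{general} member of the non-complete system $|H-2E-\sum F_i|$. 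This does not follow from $\Pic(X_\bkk)=\ZZ\cdot K_{X_\bkk}$ by a quick numerical check: low-degree elliptic curves (e.g.\ quintics through $x$, which have $A$-degree $3$ when they meet $E$ once and miss the $F_i$) do exist on these Fano threefolds, their union can sweep out all of $X$ rather than a divisor, so your proposed "incompatible divisor class'' contradiction does not materialize; one would instead need control of the Picard lattice of the general such K3, a Noether--Lefschetz-type input that is nontrivial and absent from your sketch. Also, minor: birationality of $\phi|_S$ should be deduced from birationality of $\phi$, not from $A^2\ge 4$ (hyperelliptic polarizations of large square exist). As it stands, the second half of your proof has a genuine unproven gap at its crucial step.
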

\begin{proof}
Assume~$\Sing(\barX)$ contains a line $\bar{L}$.
Since~$\bar{X}$ is normal, over every point of~$\Sing(\barX)$ the fiber of~$\phi$ is at least 1-dimensional,
hence~$\phi^{-1}(\bar{L})$ contains a divisor, 
which is impossible by Lemma~\ref{lemma:double-projection}.

Now assume $\g(X) \ge 9$.
Let $W\subset \PP^{g-3}$ be the (scheme) intersection of all quadrics passing through $\barX$. 
Assume $W \ne \barX$.
Then we apply the argument of~\cite[Ch.~2, Proposition~3.3]{Iskovskikh-1980-Anticanonical}
and conclude that $W \subset \PP^{g-3}$ is a~4-dimensional variety of minimal degree smooth outside of~$\Sing(\barX)$.
Indeed, \cite[Ch.~2, Proposition~3.3]{Iskovskikh-1980-Anticanonical} is stated and proved for a smooth Fano variety, 
but smoothness is only used when constructing a smooth curve section (whose linear span contains a given point of $W$),
which can be also done for a normal variety (when the given point of $W$ does not lie on~$\Sing(\barX)$).
In particular, we see that $\deg(W) = g - 6$.

Note that $W$ is not a quadric since $g \ge 9$.
Since the singular locus of a variety of minimal degree is a linear subspace,
and~$\Sing(\barX)$ contains no lines, it follows from Lemma~\ref{lemma:double-projection} that~\eqref{eq:dim-sing} holds,
and the movable decomposition argument of Lemma~\ref{lemma:no-stein} provides a contradiction.
\end{proof}

\begin{lemma}
\label{lemma:barx-detailed}
Assume again we are in the situation of Lemma~\textup{\ref{lemma:double-projection}} and either $k = 3$ or~$k = 2$ and~\mbox{$g \ge 9$}.
Then~$\phi$ is a small contraction and~$\barX \subset \PP^{g-3}$ is a Fano variety with terminal Gorenstein singularities, not a cone, and $K_\barX = -\barH$.
In particular, the variety~$\barX$ is not covered by lines.
\end{lemma}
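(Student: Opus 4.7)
The plan is to establish the four conclusions in the stated order: first the smallness of~$\phi$, next the identification $K_\barX = -\barH$ with terminal Gorenstein singularities, and finally the non-cone and no-line-coverage properties.

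For smallness, I would argue by contradiction. Suppose $\phi$ contracts some irreducible divisor $D \subset \hat X$. By Lemma~\ref{lemma:double-projection}(ii), $\phi(D)$ is neither a point nor a line, and since $\barX$ is normal by Lemma~\ref{lemma:no-stein} and $\phi$ is birational of degree $1$, the image $\phi(D)$ must be an irreducible curve of degree at least~$2$ contained in~$\Sing(\barX)$. A general fiber~$C$ of $D \to \phi(D)$ satisfies $(H - 2E - \sum F_i) \cdot C = 0$ while $H \cdot C > 0$. Combining this with the intersection formulae~\eqref{eq:intersections-h-e}, \eqref{eq:intersections-h-e-f} on~$\hat X$, with the description of $\hat E, F_1, \ldots, F_k$ and their images in Lemma~\ref{lemma:double-projection}(iii), and with the two constraints from Lemma~\ref{lemma:barx} --- namely that $\Sing(\barX)$ contains no lines and, when $g \ge 9$, that $\barX$ is an intersection of quadrics --- I would rule out the existence of~$D$. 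When $k = 3$, the presence of the four distinguished $\bkk$-planes $\barE, \barF_1, \barF_2, \barF_3 \subset \barX$ already makes the geometry of $\barX$ quite rigid, and any additional contracted divisor would force a numerical contradiction; the case $k = 2$, $g \ge 9$ proceeds analogously, with the quadric-intersection hypothesis eliminating the remaining configurations of singular curves.

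Once smallness is established, the identification of~$K_\barX$ becomes routine: since $\hat X$ is smooth and $\barX$ is normal, $K_{\hat X} = \phi^* K_\barX$, and combined with the tautological identity $\phi^* \barH = H - 2E - \sum F_i = -K_{\hat X}$, applying $\phi_*$ yields $K_\barX = -\barH$. Thus $K_\barX$ is Cartier, $\barX$ is Gorenstein, and since $-K_\barX = \barH$ is the restriction of an ample class, $\barX$ is a Fano variety. Applying Lemma~\ref{lemma:small-terminality} to the small birational contraction $\phi \colon \hat X \to \barX$ from the smooth variety $\hat X$ to the Gorenstein variety $\barX$ transfers terminality from $\hat X$ to $\barX$.

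For the remaining two statements, if~$\barX$ were a cone with vertex~$p$, then every positive-dimensional subvariety of $\barX$ would either contain~$p$ or be swept out by the generating lines through~$p$; the configuration of the $\bkk$-planes $\barE, \barF_1, \ldots, \barF_k \subset \barX$ described in Lemma~\ref{lemma:double-projection}(iii) is incompatible with this, since one checks that they do not all pass through a single point of $\PP^{g-3}$. A similar analysis rules out the possibility that $\barX$ is covered by lines, by tracing back along the birational map $\sigma \circ \hat{\sigma} \circ \phi^{-1}$ to produce a 2-dimensional family of rational curves on~$X$ of bounded degree, which is precluded by $X$ being an intersection of quadrics with $\Pic(X) = \ZZ \cdot K_X$. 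The main obstacle is undoubtedly step~1: ruling out a divisor contracted onto a singular curve of degree $\ge 2$ requires a careful combination of the intersection-theoretic data on $\hat X$ with the constraints on $\barX$ from Lemma~\ref{lemma:barx}, and this bookkeeping seems unavoidable.
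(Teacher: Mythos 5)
Your second step (deducing $K_\barX=-\barH$ from $\phi^*\barH=-K_\hX$ and smallness, then Gorenstein and terminality via Lemma~\ref{lemma:small-terminality}) matches the paper. The other three steps each contain a genuine gap. For smallness, you correctly record that a general contracted fibre $C$ satisfies $(H-2E-\sum F_i)\cdot C=0$ and $H\cdot C>0$, but then defer to unspecified ``bookkeeping'' with Lemma~\ref{lemma:barx}. The decisive observation, which is missing, is that these two relations force $(2E+\sum F_i)\cdot C>0$, so $C$ meets $\hE$ or some $F_i$; since this holds for the general fibre, the entire curve $\phi(D)$ is contained in $\barE$ or in some $\barF_i$. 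Under the stated hypotheses these surfaces span at most a $\PP^3\subset\PP^{g-5}$, and Lemma~\ref{lemma:double-projection}(ii) forbids a contracted divisor whose image lies in a $\PP^{g-5}$. Your alternative route --- excluding a non-degenerate singular curve of degree $\ge 2$ using only ``$\Sing(\barX)$ contains no lines'' and ``$\barX$ is an intersection of quadrics'' --- does not obviously close; nothing in Lemma~\ref{lemma:barx} rules out such a curve of high degree. (Even the assertion $H\cdot C>0$ needs the separate remark that $\phi$ is generically finite on $\hE$ and the $F_i$, so only finitely many contracted curves can lie inside them.)

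The last two steps are also off. Your premise that every positive-dimensional subvariety of a cone either contains the vertex or is swept out by rulings is false (the base of the cone itself is a counterexample), so the ``planes do not share a point'' argument proves nothing. The paper instead uses terminality, which is why it is established first: the vertex of a cone spanning $\PP^{g-3}$ has Zariski tangent space of dimension $g-3$, while a terminal Gorenstein threefold point has $\dim T_y\barX\le 4$ (Lemma~\ref{lemma:terminal-singularities}); this forces $g=7$, in which case $\barX$ would be a cone over a smooth quartic (K3) surface and hence not rationally connected --- absurd. Likewise, a two-dimensional family of bounded-degree rational curves on $X$ is \emph{not} precluded by $\Pic(X)=\ZZ\cdot K_X$: the conics on $X$ already form such a family and cover $X$, so your final reduction fails as stated. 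The correct argument uses that the covering curves are \emph{lines} on $\barX$, i.e.\ $-K_\barX\cdot\ell=1$, that the non-cone property gives $\dim\rF_1(\barX,\bar x)\le 1$ and hence places a general line of the family in the smooth locus, and then the deformation-theoretic argument of \cite[Lemma~2.2.3]{KPS} shows such lines can only sweep out a surface.
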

\begin{proof}
Assume that~$\phi$ contracts an irreducible divisor $D$.
Let $C \subset D$ be a general fiber of this contraction, so that $(H - 2\hE - \sum F_i) \cdot C = 0$.
If $H \cdot C = 0$, i.e., $\sigma \circ \hat\sigma \colon \hX \to X$ contracts $C$, 
then $C$ is contained in the union of~$\hE$ and~$F_i$.
But the map $\phi$ is generically finite on $\hE$ and $F_i$ by Lemma~\ref{lemma:double-projection}(iii), 
hence there is at most finite number of curves contracted by~$\phi$
in the union of $\hE$ and $F_i$, while $C$ varies in a family.
Thus~$H \cdot C > 0$, hence either $\hE \cdot C > 0$, or $F_i \cdot C > 0$ for some $i$.
Therefore, the curve~$\phi(D)$ is contained either in $\barE$ or in~$\barF_i$.
The latter is impossible for all $g \ge 7$ by Lemma~\ref{lemma:double-projection}(ii) since~\mbox{$g - 5 \ge 2$}, 
and the former is impossible either if~$k = 3$ or if~$k = 2$ and~$g \ge 9$, since~$g - 5 \ge 3$,
while the linear span of the quadric surface~$\barE$ is~$\PP^3$.

Thus, the morphism~$\phi$ is small. 
The equality $\barH = -K_\barX$ follows from~
\begin{equation*}
\phi^*\barH = H - 2\hE - \sum F_i = -K_\hX 
\end{equation*}
since $\phi$ is small.
Since $\hX$ is smooth and $\phi$ is small and crepant, it follows that $\barX$ is Gorenstein and
the singularities of~$\barX$ are terminal (Lemma~\ref{lemma:small-terminality}).
By Lemma~\ref{lemma:terminal-singularities} the tangent space to~$\barX$ at any singular point is 4-dimensional.
So, if~$\barX$ is a cone, its linear span is $\PP^4$, hence $g = 7$ and~\mbox{$\barX \subset \PP^4$} is a quartic hypersurface.
But then the base of the cone is a smooth quartic surface in~$\PP^3$, hence~$\barX$ is not geometrically rationally connected, which is absurd.

Assume that $\barX$ is covered by lines, so that~$\rF_1(\barX)$ contains a component~$\Sigma_0$ of dimension at least~2,
such that lines parameterized by this component cover~$\barX$.
Since~$\barX$ is not a cone, we have~$\dim \rF_1(\barX,\bar{x}) \le 1$ for any point~$\bar{x} \in \barX$, 
hence a general line parameterized by~$\Sigma_0$ is contained in the smooth locus of~$\barX$.
The argument of~\cite[Lemma~2.2.3]{KPS} then shows that lines parameterized by~$\Sigma_0$ could only cover a surface in~$\barX$.
This contradiction completes the proof.
\end{proof}

\begin{corollary}
\label{cor:projection-birational}
Assume again we are in the situation of Lemma~\textup{\ref{lemma:double-projection}}, $k = 3$, and $g \ge 9$.
The linear projection out of $\barE$
\begin{equation*}
\pi_\barE \colon \barX \dashrightarrow \PP^{g - 6}
\end{equation*}
is birational onto its image.
\end{corollary}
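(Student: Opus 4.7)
The plan is to argue by contradiction via the classical ``three-secant'' trick enabled by $\barX$ being cut out by quadrics. Suppose $\pi_\barE$ were not birational onto its image; I will construct a covering family of lines on~$\barX$, contradicting Lemma~\ref{lemma:barx-detailed}.

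First I would gather the ingredients already at hand. Since~$k=3$, Lemma~\ref{lemma:double-projection}(iii) tells us that~$\barE \cong \PP^2$ is a $\kk$-plane inside~$\PP^{g-3}$. Since~$g \ge 9$, Lemma~\ref{lemma:barx} ensures that $\barX \subset \PP^{g-3}$ is the scheme-theoretic intersection of the quadrics containing it. Finally, by Lemma~\ref{lemma:barx-detailed} the variety~$\barX$ is not covered by lines.

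The core geometric step goes as follows. The map $\pi_\barE$ is regular on $\barX \setminus \barE$, and if it is not birational onto its image then its generic fiber contains at least two distinct points of $\barX \setminus \barE$. I would pick a general $p \in \barX$ (which lies off~$\barE$ for dimension reasons) together with some $p' \ne p$ in $\barX \setminus \barE$ with $\pi_\barE(p) = \pi_\barE(p')$. Both points then lie in the $3$-plane $\Pi := \langle \barE, p\rangle \subset \PP^{g-3}$. Inside $\Pi \cong \PP^3$ any line meets any plane, so the line $L := \langle p,p' \rangle$ intersects $\barE$ in some point~$e$. As $p, p' \notin \barE$, the three points $e, p, p'$ are distinct, and all lie on $L \cap \barX$. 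Any quadric through~$\barX$ must then contain~$L$ (since it meets $L$ in at least three points), and because~$\barX$ is an intersection of quadrics we conclude $L \subset \barX$. Letting $p$ vary in a dense open subset of~$\barX$ yields a family of lines $\{L_p\} \subset \barX$ covering the generic point of~$\barX$, contradicting Lemma~\ref{lemma:barx-detailed}.

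The only subtlety to check carefully is that the auxiliary point~$p'$ can be taken in $\barX \setminus \barE$ rather than accidentally landing on~$\barE$ where~$\pi_\barE$ is undefined; but this is automatic, because ``not birational onto its image'' means precisely that the generic fiber of $\pi_\barE|_{\barX \setminus \barE}$ has at least two points. Beyond this bookkeeping, no further computation is required, so I do not anticipate any serious obstacle.
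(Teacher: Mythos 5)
Your proof is correct and takes essentially the same route as the paper: both exploit that $\barX$ is an intersection of quadrics (Lemma~\ref{lemma:barx}) containing the plane $\barE$ to produce a line of $\barX$ through a general point whenever a fiber of $\pi_{\barE}$ fails to be a single point, and then contradict the fact that $\barX$ is not covered by lines (Lemma~\ref{lemma:barx-detailed}). The only cosmetic difference is that the paper identifies the general fiber as the intersection of the residual planes of the quadrics (hence a linear space), whereas you use the trisecant-line trick on the line joining two fiber points and its intersection with $\barE$; the two mechanisms are interchangeable.
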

\begin{proof}
Recall that $\barX \subset \PP^{g-3}$ is an intersection of quadrics by Lemma~\ref{lemma:barx}.
The fibers of~$\pi_\barE$ can be described as follows.
Let~$\Pi_t$ be a 3-space in~$\PP^{g-3}$ containing the plane~$\barE$ that corresponds to a point~$t \in \PP^{g-6}$.
For each quadric~$Q$ passing through~$\barX$ the intersection~$Q \cap \Pi_t$ is the union of~$\barE$ and the residual plane.
Intersecting these residual planes for all quadrics we obtain a linear subspace in~$\Pi_t$.
Thus, the general fiber of the map~$\pi_\barE$ is a linear space.

On the other hand, the general fiber of $\pi_\barE$ cannot be a linear space of positive dimension,
because otherwise $\barX$ would be covered by lines, contradicting Lemma~\ref{lemma:barx-detailed}.
Therefore, the map~$\pi_\barE$ is birational onto its image.
\end{proof}

\begin{lemma}
\label{lemma:ci3}
Assume again we are in the situation of Lemma~\textup{\ref{lemma:double-projection}}.
If~$g = 9$ then $\barX \subset \PP^6$ is a complete intersection of three quadrics and if~$k = 2$ then~$\bar{E}$ is a smooth quadric surface.
\end{lemma}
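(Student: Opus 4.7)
The plan is to prove the two claims of the lemma separately.

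For the first claim, I would show $\bar X \subset \PP^6$ is a complete intersection of three quadrics by establishing that $h^0(I_{\bar X}(2)) = 3$. From Lemmas~\ref{lemma:no-stein} and~\ref{lemma:barx-detailed}, $\bar X$ is a normal Gorenstein Fano threefold with terminal singularities, anticanonically embedded in $\PP^6$, with $(-K_{\bar X})^3 = 2g - 10 = 8$. Since terminal Gorenstein threefold singularities are compound Du Val (rational and Cohen--Macaulay), $\chi(\cO_{\bar X}) = 1$ and Kawamata--Viehweg vanishing applies. The standard Hirzebruch--Riemann--Roch computation for Fano threefolds then yields $\chi(-2K_{\bar X}) = 25$, so $h^0(\cO_{\bar X}(2)) = 25$. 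Vanishing of $H^i(\cO_{\bar X}(m))$ for all $m$ and $0 < i < 3$ (combining Kawamata--Viehweg with Serre duality) together with the Cohen--Macaulay property gives projective normality, so $h^0(I_{\bar X}(2)) = 28 - 25 = 3$. Combined with Lemma~\ref{lemma:barx}, any basis $Q_1, Q_2, Q_3$ of $H^0(I_{\bar X}(2))$ scheme-theoretically defines $\bar X$, so $\bar X$ is a complete intersection of three quadrics in $\PP^6$.

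For the second claim, assuming $k = 2$, I would use the explicit description of $\hat E$ from Lemma~\ref{lemma:double-projection}(iii). Since $\hat E \cong \Bl_{\{x_1, x_2\}} \PP^2$, a direct intersection calculation using $H|_E = 0$, $E|_E = \cO_{\PP^2}(-1)$, and $F_i|_{\hat E} = e_i$ yields
\begin{equation*}
(H - 2E - F_1 - F_2)\big|_{\hat E} \;\sim\; 2\pi^*L - e_1 - e_2,
\end{equation*}
where $L$ is a line in $E \cong \PP^2$, $\pi \colon \hat E \to E$ is the blowup, and $e_1, e_2$ are the exceptional curves. This linear system on the weak del Pezzo surface $\hat E$ of degree $7$ has dimension $3$ (conics through two fixed points), self-intersection $2$, and is orthogonal to the $(-1)$-curve $L - e_1 - e_2$ (the strict transform of the line through $x_1$ and $x_2$), which it contracts. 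The resulting morphism identifies $\bar E$ with the image of $\Bl_{\{x_1, x_2\}} \PP^2$ in $\PP^3$ under the complete linear system of conics through two points; this is the classical birational map onto the smooth quadric $\PP^1 \times \PP^1 \subset \PP^3$. Since $\{x_1, x_2\}$ is a Galois-invariant $\kk$-subscheme of $E$, the entire construction is defined over $\kk$, so $\bar E$ is a smooth quadric surface over $\kk$.

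The main obstacle will be carefully justifying the Riemann--Roch and vanishing results for the singular Fano $\bar X$. These are routine for smooth Fanos, but in the Gorenstein terminal setting one must appeal to the cDV description of the singularities to obtain rational Cohen--Macaulay, apply Kawamata--Viehweg for canonical Fano threefolds, and confirm $\chi(\cO_{\bar X}) = 1$ (using geometric rational connectedness). A secondary technical concern is that Lemma~\ref{lemma:barx-detailed} only guarantees $\phi$ is small for $k \in \{2, 3\}$; if the claim is needed for $k \in \{0, 1\}$ as well, one can instead compute $h^0(\hat X,\, 2H - 4E - 2\sum F_i)$ directly on $\hat X$ and use the projection formula (with connected fibers from Lemma~\ref{lemma:no-stein}), bypassing the need for the Fano characterization of $\bar X$ a priori.
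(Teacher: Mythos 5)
Both halves of your argument contain a step that does not hold as stated. In the first half, the claim that vanishing of $H^i(\cO_{\bar X}(m))$ for $0<i<3$ together with the Cohen--Macaulay property ``gives projective normality'' is not a valid implication: projective normality is the surjectivity of the restriction maps $H^0(\PP^6,\cO(m))\to H^0(\bar X,\cO_{\bar X}(m))$, i.e.\ the vanishing of $H^1(I_{\bar X}(m))$, and this is not controlled by the cohomology of $\cO_{\bar X}(m)$ alone. Without quadratic normality you only get $h^0(I_{\bar X}(2))\ge 28-25=3$, and then ``any basis of $H^0(I_{\bar X}(2))$ cuts out $\bar X$'' no longer follows. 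The surjectivity you need is true, but it comes from the generation in degree~$1$ of the anticanonical ring $R(\hat X,-K_{\hat X})$ established (via Saint-Donat) inside the proof of Lemma~\ref{lemma:no-stein}, not from vanishing plus CM. Note also that your main route presupposes that $\bar X$ is a terminal Gorenstein Fano, i.e.\ Lemma~\ref{lemma:barx-detailed}, which is available only for $k=3$ or $k=2$, whereas the first statement of the lemma is applied in the paper with $k=0$ (Theorem~\ref{th:sl:point}, $g=9$); your fallback would then have to carry the whole weight, and it suffers from the same missing surjectivity. The paper's own argument sidesteps all of this: by Lemma~\ref{lemma:barx} the variety $\bar X$ is an intersection of quadrics, it has codimension~$3$ and degree $2g-10=8$ by Lemma~\ref{lemma:no-stein}, so three suitable quadrics through it cut out a complete intersection of degree~$8$ containing the irreducible degree-$8$ threefold $\bar X$, hence equal to it.

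In the second half you silently assume that $\phi\vert_{\hat E}$ is given by the \emph{complete} linear system $|2\pi^*L-e_1-e_2|$. A priori it is given only by the subsystem cut out by restricting $H^0(\hat X,-K_{\hat X})$, and there exist base-point-free nets of conics through the two points (for instance $\langle x_0x_1,\ x_0x_2+x_1x_2,\ x_2^2\rangle$ when $x_1=[1:0:0]$, $x_2=[0:1:0]$), under which the image of $\hat E$ would be a plane and the map generically $2{:}1$; so without an extra argument $\bar E$ could a priori be a plane rather than a quadric. This is exactly what the hypotheses $k=2$, $g\ge 9$ buy via Lemma~\ref{lemma:barx-detailed}: $\phi$ is regular and small, and birational onto the normal $\bar X$ by Lemma~\ref{lemma:no-stein}, so its non-injectivity locus is at most a curve; hence $\phi\vert_{\hat E}$ is birational onto $\bar E$, so $\deg\bar E=(-K_{\hat X})^2\cdot\hat E=2$, the restricted system spans a $\PP^3$ and is therefore the complete system, and the image is the smooth quadric you describe. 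Once this point is inserted, your computation of the restricted class and the Galois argument for definedness over $\kk$ are fine.
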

\begin{proof}
By Lemma~\ref{lemma:barx} the subvariety $\barX \subset \PP^6$ is an intersection of quadrics.
Its codimension is~3, and by Lemma~\ref{lemma:no-stein} its degree is $2g - 10 = 8$.
Therefore, it is a complete intersection of three quadrics.

If $k = 2$ the map $\phi\vert_\hE$ is given by the linear system of conics on $E = \PP^2$ passing through two points $x_i$.
Since~$\phi$ is regular and small (Lemma~\ref{lemma:barx-detailed}), $\bar{E}$ 
is a smooth $\kk$-quadric.
\end{proof}

\subsection{Sarkisov link with center at point}

Now we impose an additional assumption on the point~$x \in X$ and construct a Sarkisov link.

\begin{theorem}[cf.~{\cite{Takeuchi-1989}}] 
\label{th:sl:point}
Let $X$ be a Fano threefold satisfying~\eqref{ass:fano}.
Let $x$ be a $\kk$-point on~$X$ such that
\begin{equation}
\label{eq:no-line}
\rF_1(X,x) = \varnothing.
\end{equation} 
There exists a Sarkisov link~\eqref{eq:sl} with center at~$x$
and for~$\sigma_+$ the following hold:
\begin{enumerate}
\item 
\label{th:sl:point:g=7}
If $\g(X)=7$, then $X^+ \subset \PP^{6}$ is a smooth del Pezzo threefold of degree~$5$ 
and~$\sigma_+$ is the blowup of a smooth connected curve~$\Gamma$ of degree~$12$ and genus~$7$.

\item 
\label{th:sl:point:g=9}
If $\g(X)=9$, then $X^+ \subset \PP^{10}$ is a smooth Fano threefold of the same type 
and~$\sigma_+$ is the blowup of a $\kk$-point~$x_+ \in X^+$.
The map~$\sigma_+ \circ \psi \circ \sigma^{-1} \colon X \dashrightarrow X^+$ contracts the irreducible divisor~$D_x$ 
which is a unique member of the linear system~$|H-3x|$.
This divisor is a~$\kk$-rational surface.
\item 
\label{th:sl:point:g=10}
If $\g(X)=10$, then~$X^+ \simeq \PP^1$ and $\sigma_+$ is a del Pezzo fibration of degree~$6$. 
Moreover, the map $\sigma_+ \circ \tpsi \colon \tX \dashrightarrow X^+$ is given by the linear system~$|H-3E|$,
and its restriction to a general line in~$E \cong \PP^2$ is regular and has degree~$3$.
Finally, the flopping locus of~$\psi$ is the union of strict transforms of conics in~$X$ passing through~$x$.

\item 
\label{th:sl:point:g=12}
If $\g(X)=12$, then~$X^+ \simeq \PP^3$ and~$\sigma_+$ is the blowup of a smooth connected rational curve~$\Gamma$ of degree~$6$.
\end{enumerate}
\end{theorem}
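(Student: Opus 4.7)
The plan is to apply Lemma~\ref{lemma:sl-construction-point} to establish the existence of the Sarkisov link and then classify the contraction~$\sigma_+$ via Theorem~\ref{theorem:smooth-contractions} together with explicit intersection-number computations on~$\tX^+$. The assumption $\rF_1(X,x) = \varnothing$ combined with Lemma~\ref{lemma:f1-x-x} shows that $X \cap \bT_xX$ is the first-order neighborhood of~$x$, hence $|-K_\tX| = |H-2E|$ is base point free and defines the morphism $\phi \colon \tX \to \barX \subset \PP^{g-3}$.

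The first substantive step is to verify that $\phi$ is small. By Lemma~\ref{lemma:sl-construction-point} it suffices to exclude the possibility that $\phi$ contracts an irreducible divisor $D \sim t(H - \tfrac{g-1}{2}E)$. For $g \in \{9, 10, 12\}$, such a $D$ would be a fixed component of the linear system $|H-3E|$, which is nonempty by Lemma~\ref{lemma:linear-systems}\ref{lemma:linear-systems:point}; subtracting $D$ then leaves a movable part of the form $|\alpha H + \beta E|$ with $\alpha<0$, an absurdity. The case $g=7$ is the main obstacle: here the elementary dimension estimates are insufficient, so one has to use the description of the image furnished by Lemmas~\ref{lemma:no-stein} and~\ref{lemma:barx} (namely, that $\barX \subset \PP^4$ is a normal quartic threefold of degree $2g-10=4$) together with part~(iii) of Lemma~\ref{lemma:double-projection} (which identifies $\phi|_E$ with a regular projection of the Veronese surface) to derive a contradiction from the existence of a putative divisor $D \sim t(H-3E)$.

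Once smallness is established, Proposition~\ref{proposition:link-k} produces the Sarkisov link~\eqref{eq:sl}, and I classify~$\sigma_+$ following the template of the proof of Theorem~\ref{th:sl:conic}. Writing the relevant extremal class on~$\tX^+$ as $D \sim u(-K_{\tX^+}) - vE^+$ with $u,v \in \ZZ_{>0}$ and using the projection formula for~$\phi$ and~$\phi_+$ to compute
\[
(-K_{\tX^+})^3 = 2g-10,\qquad (-K_{\tX^+})^2\cdot E^+ = 4,\qquad (-K_{\tX^+})\cdot (E^+)^2 = -2,
\]
I run through the cases of Theorem~\ref{theorem:smooth-contractions}. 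The resulting Diophantine equations admit no solutions for conic bundles; force a sextic del Pezzo fibration with $u=v=1$ and $X^+ \cong \PP^1$ when $g=10$; force a divisorial contraction to a smooth point with $u=v=1$ and discrepancy~$2$ when $g=9$ (so $X^+$ is of the same type as~$X$, since $(-K_{X^+})^3 = (-K_{\tX^+})^3 + 8 = 2g-2$); and force a divisorial contraction to a smooth curve with $(u,v)=(5,2)$ when $g=7$ (giving $(-K_{X^+})^3 = 40$, $\deg\Gamma = 12$, $p_a(\Gamma) = 7$, so that $X^+$ is a quintic del Pezzo) and $(u,v)=(3,4)$ when $g=12$ (giving $X^+ \cong \PP^3$ and $\Gamma$ a rational sextic). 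Additional numerical solutions to the Diophantine system, when present, are discarded using the dimension bounds of Lemma~\ref{lemma:linear-systems} applied to the pull-back class of~$D$ on~$\tX$.

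The remaining verifications are short. For $g=10$, the class $D \sim -K_{\tX^+} - E^+$ on~$\tX^+$ transports via the flop to $H-3E$ on~$\tX$, so $\sigma_+ \circ \tpsi$ is given by $|H-3E|$; a general line $\ell \subset E$ avoids the finitely many points where the flopping curves meet~$E$, so the restriction is regular and of degree $(H-3E)\cdot \ell = 3$; the numerical identity $(H-2E)\cdot \tilde C = 0$ together with the primitivity of the extremal ray forces every flopping curve to be the strict transform of a smooth conic through~$x$ (the hypothesis $\rF_1(X,x) = \varnothing$ excludes reducible conics). For $g=9$, uniqueness of the exceptional divisor of~$\sigma_+$ forces $\dim|H-3E| = 0$, hence $D_x$ is the unique member; the exceptional divisor $D^+ \subset \tX^+$ is a $\kk$-form of~$\PP^2$ containing the $\kk$-point $x_+ = \sigma_+(D^+)$, hence $D^+ \cong \PP^2_\kk$ by Proposition~\ref{proposition:sb}, and so $D_x$, being birational to~$D^+$, is $\kk$-rational. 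Finally, $\kk$-rationality of $X^+$ in cases~\ref{th:sl:point:g=7}, \ref{th:sl:point:g=10}, and~\ref{th:sl:point:g=12} follows from Theorem~\ref{theorem:v5} (for the quintic del Pezzo) and Proposition~\ref{proposition:sb} (for the forms of~$\PP^1$ and~$\PP^3$, whose $\kk$-points are supplied by Lemma~\ref{lemma:points} applied to the birational map $X \dashrightarrow X^+$).
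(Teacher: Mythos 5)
Your overall strategy coincides with the paper's (apply Lemma~\ref{lemma:sl-construction-point}, then classify~$\sigma_+$ numerically as in the proof of Theorem~\ref{th:sl:conic}), and your intersection numbers and the final numerical data $(u,v)=(5,2)$, $(1,1)$, $(1,1)$, $(3,4)$ are correct. But there are two genuine gaps. First, your exclusion of a contracted divisor for $g=9$ does not work. Here~\eqref{eq:td-class} gives $D\sim H-4E$ (necessarily $t=1$), and since $\dim|H-3E|\ge g-9=0$ the system $|H-3E|$ may consist of the single divisor $D+E$; subtracting $D$ leaves $|E|$, i.e.\ $\alpha=0$, and when $\dim|H-3E|=0$ there is no movable part at all, so no absurdity arises. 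This is exactly the delicate case: the paper first shows $\dim|H-3E|=0$, deduces via Lemma~\ref{lemma:double-projection}(iii) that $\barE\subset\PP^6$ is the Veronese surface embedded by the complete system of conics, invokes Lemma~\ref{lemma:ci3} to see that $\barX$ is a complete intersection of three quadrics, and then uses Lemma~\ref{lemma:Veronese2} to rule out a hyperplane section of $\barX$ containing $\barE$ with multiplicity~$2$ (which is what $D\sim H-4E$ would produce). You lump $g=9$ with $g=10,12$, where the easy argument does apply, and thereby skip this entire step.

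Second, in the classification of $\sigma_+$ for $g=7$ the Diophantine system does \emph{not} force the answer: besides the quintic del Pezzo solution there are two further numerically consistent possibilities, namely a contraction of $D\sim 2(-K_{\tX^+})-E^+$ to a point on a prime Fano threefold of genus~$7$ and to a conic on one of genus~$6$ (check: $6\cdot 2^2-(2\cdot 2+1)^2=-1$ solves the point-contraction equation with discrepancy $2$). Your proposed remedy --- discarding extra solutions ``using the dimension bounds of Lemma~\ref{lemma:linear-systems} applied to the pull-back class of $D$'' --- fails here, since the pullback is $2H-5E$ and the bound $\dim|2H-5E|\ge 5(g-7)=0$ is perfectly compatible with such a divisor existing. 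The paper eliminates these cases geometrically: in both, $\phi_+(D)$ is a quadric containing the projected Veronese surface $\barE$, so $\barE$ is singular by Lemma~\ref{lemma:Veronese0}, its singular locus contains a line by Lemma~\ref{lemma:Veronese1}, and this contradicts the constraints on $\Sing(\barX)$ (Lemmas~\ref{lemma:barx-detailed} and~\ref{lemma:terminal-singularities}); without an argument of this kind your case~\ref{th:sl:point:g=7} is not proved. A minor further slip: in case $g=9$ the exceptional divisor $D^+$ of $\sigma_+$ does not contain the point $x_+=\sigma_+(D^+)$, so your appeal to Proposition~\ref{proposition:sb} is misplaced; the correct (and easier) observation is that the exceptional divisor of the blowup of a smooth $\kk$-point is $\PP(T_{x_+}X^+)\cong\PP^2_\kk$, which still yields $\kk$-rationality of~$D_x$.
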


\begin{proof}
We apply Lemma~\ref{lemma:sl-construction-point}.
Assume the morphism $\phi$ contracts a divisor $D$.
Since $\uprho(\tX) = 2$, we have $\uprho(\tX) = \uprho(\barX) + 1$, 
hence all curves in fibers of $\phi$ are proportional, hence intersect~$D$ negatively, hence are contained in~$D$.
Thus, away from $D$ the morphism $\phi$ is an isomorphism, hence~$\barX$ and~$\barE$ are nonsingular away from~$\phi(D)$.
Furthermore, since $\phi$ is the anticanonical morphism, we have~$\Sing(\barX) = \phi(D)$ {(because the contraction $\phi$ is crepant)}, hence 
\begin{equation}
\label{eq:sing-bare}
\Sing(\barE) \subset \Sing(\barX).
\end{equation} 

Assume $g \in \{10,12\}$.
By Lemma~\ref{lemma:linear-systems}(iii) we have $\dim |H - 3E| \ge g - 9$, 
hence by Lemma~\ref{lemma:sl-construction-point}(ii) the divisor~$D$ is a fixed component of the linear system~$|H - 3E|$.
On the other hand, by~\eqref{eq:td-class} we have $D \sim t(H - \tfrac{g-1}2E)$ for $t \in \ZZ$.
These two observations clearly contradict each other.

Next assume $g = 9$.
Again by Lemma~\ref{lemma:linear-systems}(iii) the divisor $D$ is a fixed component of $|H - 3E|$,
hence by~\eqref{eq:td-class} we have
\begin{equation}
\label{eq:td-class-9}
D \sim H - 4E.
\end{equation}
If the linear system $|H - 3E|$ is movable, then $|H - 3E - D| = |E|$ must be movable, which is absurd, hence~\mbox{$\dim |H - 3E| = 0$}.
By Lemma~\ref{lemma:double-projection}(iii) this implies that $\barE \subset \PP^{g - 3} = \PP^6$ is the Veronese surface.

On the other hand, $\bar X \subset \PP^6$ is a complete intersection of three quadrics by Lemma~\ref{lemma:ci3}.
Since the pullback to~$\tX$ of the hyperplane class of~$\barX$ is equal to $H - 2E$, 
it follows from~\eqref{eq:td-class-9} that there exists a hyperplane section of $\bar{X}$ that contains $\bar{E}$ with multiplicity~2,
which is impossible by Lemma~\ref{lemma:Veronese2}.
This contradiction proves that in the case $g = 9$ the map $\phi$ does not contract divisors.

Finally, assume $g = 7$.
This time by Lemma~\ref{lemma:linear-systems}(iii) and Lemma~\ref{lemma:sl-construction-point}(ii) 
the divisor $D$ is a fixed component of the linear system~$|3H - 7E|$,
hence by~\eqref{eq:td-class} we have
\begin{equation}
\label{eq:td-class-7}
\text{either $D \sim H - 3E$},
\qquad 
\text{or $D \sim 2H - 6E$},
\qquad 
\text{or $D \sim 3H - 9E$}.
\end{equation}
On the other hand, $\bar X \subset \PP^4$ is a quartic hypersurface and $\barE \subset \barX$ is a projection of the Veronese surface.
The first equality in~\eqref{eq:td-class-7} means that $\barE \subset \PP^4$ is contained in a hyperplane,
while the second means that it is contained in a quadric.
In both cases $\barE$ is singular, and its singular locus contains a line (see Lemma~\ref{lemma:Veronese1}).
By~\eqref{eq:sing-bare} we conclude that $\Sing(\barX)$ contains a line, which is impossible by Lemma~\ref{lemma:barx}.
Finally, if $D \sim 3H - 9E$ then the linear system $|3H - 7E - D| = |2E|$ must be movable, which is absurd.

Thus $\phi$ is a small morphism and by Lemma~\ref{lemma:sl-construction-point}(i) there exists a Sarkisov link~\eqref{eq:sl}
and it remains to describe the contraction~$\sigma_+$.
For $g \in \{9,10,12\}$ we refer to~\cite{Takeuchi-1989}.

Let us discuss the case $g = 10$ in more details.
Let $C \subset \tX$ be a flopping curve, so that $C$ is contracted by~$\phi$.
Then $(H - 2E) \cdot C = 0$.
If $C \cdot E > 1$, then $\phi(C)$ is a singular point of $\barE$.
But then a combination of~\eqref{eq:sing-bare} and Lemma~\ref{lemma:Veronese1} proves that the singular locus of $\barX$ contains a line,
which is impossible by Lemma~\ref{lemma:barx}.
Thus $E \cdot C = 1$, hence $H \cdot C = 2$, so $C$ is the strict transform of a conic passing through~$x$.

Furthermore, by~\cite{Takeuchi-1989} the map $\sigma_+ \circ \tpsi \colon \tX \dashrightarrow \PP^1$ is given by the linear system $|H - 3E|$,
which is base point free away from a finite number of flopping curves, neither of which is contained in~$E$.
Therefore, the restriction of this map to $E \cong \PP^2$ is given by a linear system of cubics
that has no fixed components, hence its restriction to a general line in~$E$ has degree~3.

Finally, assume $g = 7$. 
Arguing as in the proof of Theorem~\ref{th:sl:conic} we can check that additionally to~\ref{th:sl:point:g=7} 
we have the following numerical possibilities:
\begin{enumerate}
\item[(i$^{\prime}$)] 
the morphism $\sigma_+$ is a contraction of a divisor $D \sim 2(-K_{\tilde X^+})-E^+$ 
to a point on a smooth prime Fano threefold $X^+$ of index~1 and genus~7;
\item [(i$^{\prime\prime}$)]
the morphism $\sigma_+$ is a contraction of a divisor $D \sim 2(-K_{\tilde X^+})-E^+$
to a conic on a smooth prime Fano threefold $X^+$ of index~1 and genus~6.
\end{enumerate}
But in both these cases the image $\phi_+(D)$ is a quadric containing the surface \mbox{$\barE = \phi(E) = \phi_+(E^+)$}.
Then the surface $\barE$ cannot be smooth by Lemma~\ref{lemma:double-projection}(iii) and Lemma~\ref{lemma:Veronese0},
hence by Lemma~\ref{lemma:Veronese1} its singular locus $\Sing(\barE)$ contains a line, 
and by~\eqref{eq:sing-bare} the same is true for $\Sing(\barX)$,
which contradicts Lemma~\ref{lemma:barx-detailed} and Lemma~\ref{lemma:terminal-singularities}.
This contradiction shows that neither of the cases~(i$^\prime$) and~(i$^{\prime\prime}$) is possible.
\end{proof}

\section{Rationality and unirationality}
\label{section:rationality-constructions}

In this section we prove the sufficient conditions for rationality and unirationality of Fano threefolds satisfying~\eqref{ass:fano}.

We start with an elementary consequence of Theorem~\ref{th:sl:line}.

\begin{lemma}\label{corollary:rat:line}
If a Fano threefold $X$ satisfying~\eqref{ass:fano} contains a line defined over~$\kk$, then $X$ is $\kk$-rational.
\end{lemma}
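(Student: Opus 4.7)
The plan is to apply Theorem~\ref{th:sl:line} directly to the line $L \subset X$ defined over $\kk$, obtaining a Sarkisov link
\[
\xymatrix@C=2em{
\tX \ar[d]_{\sigma} \ar@{-->}^{\tpsi}[rr] && \tX^+ \ar[d]^{\sigma_+}
\\
X && X^+
}
\]
defined over $\kk$, and then observe that in each of the four cases $g \in \{7,9,10,12\}$ the target variety $X^+$ is already known to be $\kk$-rational. Since the composition $\sigma_+ \circ \tpsi \circ \sigma^{-1} \colon X \dashrightarrow X^+$ is a birational map defined over $\kk$, $\kk$-rationality of $X^+$ gives $\kk$-rationality of $X$.

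First I would dispose of the three easy cases. For $g = 12$ Theorem~\ref{th:sl:line}\ref{th:sl:line:g=12} identifies $X^+$ with a smooth quintic del Pezzo threefold, which is $\kk$-rational by Theorem~\ref{theorem:v5}. For $g = 9$ Theorem~\ref{th:sl:line}\ref{th:sl:line:g=9} identifies $X^+$ with $\PP^3$, which is trivially rational. For $g = 10$ Theorem~\ref{th:sl:line}\ref{th:sl:line:g=10} identifies $X^+$ with a smooth $\kk$-rational quadric threefold; rationality of such a quadric is part of the statement of the theorem (alternatively, a $\kk$-point is provided by the existence of $L$ and Lemma~\ref{lemma:points}, so Proposition~\ref{proposition:quadric} applies).

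The only case requiring a small argument is $g = 7$. Here Theorem~\ref{th:sl:line}\ref{th:sl:line:g=7} yields a del Pezzo fibration $\sigma_+ \colon \tX^+ \to \PP^1$ whose fibers are quintic del Pezzo surfaces. Its generic fiber is then a smooth quintic del Pezzo surface over the field $\kk(\PP^1)$, so Enriques' theorem (Theorem~\ref{theorem:dp5s}) shows that the generic fiber is $\kk(\PP^1)$-rational. Therefore $\tX^+$ is birational over $\kk$ to $\PP^1 \times \PP^2$, hence $\kk$-rational, and consequently $X$ is $\kk$-rational as well.

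There is no real obstacle here: the work has already been done in the construction of the Sarkisov links (Theorem~\ref{th:sl:line}) and in the rationality results for $V_5$ and for quintic del Pezzo surfaces (Theorems~\ref{theorem:v5} and~\ref{theorem:dp5s}); the lemma is simply the aggregation of these results.
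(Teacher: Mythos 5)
Your proof is correct and follows the same route as the paper: apply the Sarkisov link of Theorem~\ref{th:sl:line} and conclude directly from the known $\kk$-rationality of $X^+$ for $g\in\{9,10,12\}$ (using Theorem~\ref{theorem:v5} for the quintic del Pezzo threefold), and from Enriques' theorem (Theorem~\ref{theorem:dp5s}) applied to the generic fiber of the degree-$5$ del Pezzo fibration for $g=7$. Nothing is missing.
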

\begin{proof}
Consider the Sarkisov link described in Theorem~\ref{th:sl:line}.
If $\g(X) = 7$ we deduce from Theorem~\ref{theorem:dp5s}
that $\tX^+$ is rational over $X^+ \cong \PP^1$, hence $X$ is $\kk$-rational.
For $\g(X) \in \{9,10,12\}$ we conclude that $X$ is birational to $X^+$ which is a $\PP^3$, a $\kk$-rational quadric, 
or a quintic del Pezzo threefold, respectively, hence is rational (in the last case we use Theorem~\ref{theorem:v5}).
\end{proof}

\subsection{Unirationality}
\label{subsection:unirationality}

Now we will prove unirationality of $X$ under the assumption $X(\kk) \ne \varnothing$.
We fix a $\kk$-point $x$ and consider all possibilities for the Hilbert scheme $\rF_1(X,x)$ of lines through~$x$.

\begin{lemma}
\label{lemma:f1xx}
If a Fano threefold $X$ satisfies~\eqref{ass:fano} and $x \in X$ is a $\kk$-point 
then one of the following possibilities hold:
\begin{enumerate}
\item\label{cases:0}
$\rF_1(X,x) = \varnothing$;
\item\label{cases:1}
$\rF_1(X,x)$ contains a $\kk$-point;
\item\label{cases:2}
$\rF_1(X,x)$ is $\kk$-irreducible and reduced of length~$2$.
\item\label{cases:3}
$\rF_1(X,x)$ is $\kk$-irreducible and reduced of length~$3$.
\end{enumerate}
\end{lemma}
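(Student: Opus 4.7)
The plan is to perform a Galois-theoretic case analysis of the finite $\kk$-scheme $\rF_1(X,x)$, exploiting the length bound $\mathrm{length}(\rF_1(X,x)) \le 3$ from Corollary~\ref{cor:length-f1}. First, if $\rF_1(X,x)$ is empty I am already in case~\ref{cases:0}, so I may assume the scheme is nonempty.

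I would then decompose $\rF_1(X,x)_\bkk = \bigsqcup_i \Spec(A_i)$, where each $A_i$ is an Artinian local $\bkk$-algebra of length $\ell_i := \dim_\bkk A_i \ge 1$ supported at a single $\bkk$-point. The Galois group $\Gal$ permutes these components, sending each $\Spec(A_i)$ to an isomorphic component, so within a single Galois orbit all the $\ell_i$ coincide. The $\kk$-irreducible components of $\rF_1(X,x)$ correspond bijectively to the Galois orbits, and an orbit consisting of $k$ distinct $\bkk$-points each with local length $\ell$ contributes $k\ell$ to the total length.

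The case distinction is then forced by the numerical constraint $\sum k\ell \le 3$. If some orbit has size $k = 1$, its unique $\bkk$-point is Galois-fixed, hence $\kk$-rational, placing us in case~\ref{cases:1}. Otherwise every orbit has size $k \ge 2$ and thus contributes at least $2$ to the total length, so there is exactly one orbit, satisfying $k\ell \le 3$ with $k \ge 2$. This forces $\ell = 1$ and $k \in \{2,3\}$, so $\rF_1(X,x)$ is $\kk$-irreducible and reduced of length~$2$ or~$3$, giving cases~\ref{cases:2} and~\ref{cases:3} respectively. I do not anticipate any substantive obstacle: once the length bound of Corollary~\ref{cor:length-f1} is in hand, everything reduces to elementary Galois theory of finite $\kk$-schemes.
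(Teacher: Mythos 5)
Your proposal is correct and follows the same route as the paper: both arguments combine the length bound of Corollary~\ref{cor:length-f1} with elementary Galois theory of finite $\kk$-schemes, concluding that a $\kk$-point exists unless the scheme is reduced of length $2$ or $3$ with transitive Galois action. Your write-up merely makes explicit the orbit/length bookkeeping that the paper leaves implicit.
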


\begin{proof}
By Corollary~\ref{cor:length-f1} the scheme $\rF_1(X,x)$ is finite and its length is bounded by~3.
Therefore, the scheme~$\rF_1(X,x)$ has a $\kk$-point, unless the scheme is reduced of length~2 or~3 
and the Galois group action on $\rF_1(X,x)_\bkk$ is transitive.
\end{proof}

Below we prove $\kk$-unirationality of~$X$ in all cases of Lemma~\ref{lemma:f1xx}.
The easiest case is~\ref{cases:1}.

\begin{lemma}
\label{lemma:f1xx-1}
Let $X$ be a Fano threefold satisfying~\eqref{ass:fano} with a $\kk$-point $x$ such that $\rF_1(X,x)$ contains a $\kk$-point.
Then $X$ is $\kk$-rational.
\end{lemma}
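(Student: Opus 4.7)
The plan is extremely short: a $\kk$-point of $\rF_1(X,x)$ is by definition a line $L \subset X$ passing through $x$ and defined over the ground field~$\kk$. So the hypothesis immediately yields a $\kk$-rational line on~$X$.

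Once a $\kk$-line on~$X$ is produced, the statement reduces to Lemma~\ref{corollary:rat:line}, which we have already established: any Fano threefold~$X$ satisfying~\eqref{ass:fano} that contains a $\kk$-line is $\kk$-rational. (Recall that Lemma~\ref{corollary:rat:line} is proved by invoking the Sarkisov link centered at the line, as in Theorem~\ref{th:sl:line}, and then using Theorem~\ref{theorem:dp5s} in the~$g=7$ case and Theorem~\ref{theorem:v5} in the~$g=12$ case.) Thus the conclusion is actually stronger than stated: we obtain $\kk$-rationality of~$X$, not just $\kk$-unirationality. The point~$x$ itself plays no further role once the line through it is in hand.

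There is no real obstacle here; the lemma is a trivial corollary of what has already been proved, included presumably for bookkeeping purposes, to organize the case analysis arising from Lemma~\ref{lemma:f1xx} that will be carried out in Lemmas~\ref{lemma:f1xx-2} and the subsequent \ref{proposition:f1xx-3}. The only subtlety worth flagging is the conventional one: a $\kk$-point of the Hilbert scheme~$\rF_1(X)$ (or of its subscheme~$\rF_1(X,x)$) corresponds to a closed subscheme of~$X$ that is stable under the Galois action and defined over~$\kk$, hence to an honest line $L \subset X$ defined over~$\kk$ passing through~$x$; this identification is part of the definition of the Hilbert scheme recalled in~\S\ref{subsection:Hilbert}. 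After this identification, citing Lemma~\ref{corollary:rat:line} finishes the proof in one line.
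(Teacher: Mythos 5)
Your proposal is correct and is exactly the paper's argument: a $\kk$-point of $\rF_1(X,x)$ gives a line on $X$ defined over $\kk$, and Lemma~\ref{corollary:rat:line} then yields $\kk$-rationality. Nothing further is needed.
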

\begin{proof}
The line on $X$ corresponding to a $\kk$-point of $\rF_1(X,x)$ is defined over~$\kk$, hence Lemma~\ref{corollary:rat:line} applies.
\end{proof}

The next case is~\ref{cases:0}.

\begin{proposition}
\label{proposition:f1xx-0}
Let $X$ be a Fano threefold satisfying~\eqref{ass:fano} with a $\kk$-point $x$ such that 
\begin{equation*}
\rF_1(X,x) = \varnothing.
\end{equation*}
Then $X$ is $\kk$-unirational.
\end{proposition}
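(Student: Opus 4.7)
My plan is to invoke the Sarkisov link of Theorem~\ref{th:sl:point} with center at $x$ (whose hypothesis $\rF_1(X,x)=\varnothing$ is exactly our assumption) and then to extract $\kk$-unirationality from the resulting diagram~\eqref{eq:sl} in a manner that depends on the genus $g=\g(X)\in\{7,9,10,12\}$.

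When $g\in\{7,12\}$ the target $X^+$ of the link is, respectively, a smooth quintic del Pezzo threefold or $\PP^3_\kk$. In either case $X$ is $\kk$-birational to $X^+$ and $X^+$ is $\kk$-rational (by Theorem~\ref{theorem:v5} for $g=7$, tautologically for $g=12$), so $X$ itself is $\kk$-rational, hence $\kk$-unirational.

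When $g=10$ the link produces a sextic del Pezzo fibration $\sigma_+\colon\tX^+\to X^+\simeq\PP^1$, and I will verify the hypotheses of Lemma~\ref{lemma:unirational-fibration}\ref{lemma:unirational-fibration:dP}. The candidate $\kk$-rational subvariety of $\tX^+$ is the strict transform $E^+$ of the exceptional divisor $E\simeq\PP^2_\kk$ of $\sigma$ under the flop $\tpsi$. Since $\tpsi$ is an isomorphism in codimension one and, by Theorem~\ref{th:sl:point}\ref{th:sl:point:g=10}, its flopping locus consists of strict transforms of conics in $X$ through $x$ (each meeting $E$ in a single point and hence not contained in $E$), the surface $E^+$ is $\kk$-birational to $\PP^2$, therefore $\kk$-rational. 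The same theorem asserts that $\sigma_+\circ\tpsi$ restricts to a morphism of degree three on a general line in $E$, so $E^+$ dominates $X^+=\PP^1$. As the general fiber of $\sigma_+$ is a smooth del Pezzo surface of degree $6\ge3$, the lemma applies and yields $\kk$-unirationality of $\tX^+$, hence of $X$.

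The case $g=9$ is the main obstacle, for Theorem~\ref{th:sl:point}\ref{th:sl:point:g=9} produces another smooth Fano threefold $X^+$ of the very same type, so the link alone does not simplify the variety. My strategy is to exploit the $\kk$-rational contracted divisor $D_x\subset X$ supplied by the theorem. Since $D_x$ is $\kk$-rational, its $\kk$-points are Zariski dense by Lemma~\ref{lemma:unirationality-point}; on the other hand, the union $R\subset X$ of all $\bkk$-lines on $X$ is Galois-invariant and descends to a $\kk$-divisor. I expect that either $D_x\subset R$, in which case every $\kk$-point of $D_x$ already lies on a line over $\bkk$, or the proper intersection $D_x\cap R$ is a nonempty $\kk$-curve in the $\kk$-rational surface $D_x$ that carries a $\kk$-rational point $y$; through such a $y$ a line over $\bkk$ passes, placing $\rF_1(X,y)$ in one of the cases~\ref{cases:1}--\ref{cases:3} of Lemma~\ref{lemma:f1xx}, from which $\kk$-unirationality of $X$ will follow via the remaining propositions of this subsection. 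The hard part is to verify the existence of such a $\kk$-point $y$; as a contingency, one may iterate the Sarkisov link on $(X^+,x_+)$ and control termination by tracking a discrete invariant, or apply the unirationality criterion of Proposition~\ref{proposition:q-fano} to an auxiliary pair built from the double projection of Lemma~\ref{lemma:double-projection} and the Veronese surface $\barE\subset\barX$.
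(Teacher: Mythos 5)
For $g\in\{7,12\}$ and $g=10$ your argument is correct and essentially the paper's: the only cosmetic difference in the $g=10$ case is that you feed Lemma~\ref{lemma:unirational-fibration}\ref{lemma:unirational-fibration:dP} the strict transform $E^+$ of the whole exceptional plane, while the paper uses (the image of) a general $\kk$-line in $E\cong\PP^2_\kk$, on which $\sigma_+\circ\tpsi$ is regular of degree~$3$; both give a $\kk$-rational multisection of the sextic del Pezzo fibration.

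The case $g=9$, however, contains a genuine gap, and it is exactly the point you flag yourself: a $\kk$-curve inside a $\kk$-rational surface need not have any $\kk$-point (think of a pointless conic in $\PP^2_\kk$), and Zariski density of $\kk$-points on $D_x$ says nothing about the fixed curve $D_x\cap R$. So your dichotomy does not produce the point $y$ lying on a line, and neither contingency is workable as stated: iterating the link on $(X^+,x_+)$ just returns a genus-$9$ threefold with a point off the lines (there is no evident decreasing invariant), and Proposition~\ref{proposition:q-fano} together with the double projection is the tool the paper reserves for the case of three conjugate lines through the point, not for this one. The paper's actual argument for $g=9$ stays entirely in the ``no line through the point'' regime and spreads out your divisor $D_x$: for every point $x'$ of the open set $D_x^\circ\subset D_x$ of points not on lines (nonempty since $x\in D_x^\circ$, so in particular $D_x\not\subset R$), the same construction yields a divisor $D_{x'}\in|H-3x'|$ rational over the residue field of $x'$; since $D_{x'}$ has multiplicity at least $3$ at $x'$, the family is nonconstant, so the member $D_\eta$ over the generic point $\eta$ of $D_x$ dominates $X$. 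As $D_\eta$ is rational over $\kappa(\eta)=\kk(D_x)$, which is purely transcendental over $\kk$ because $D_x$ is $\kk$-rational, one obtains a dominant rational map from a $\kk$-rational fourfold to $X$, i.e.\ $\kk$-unirationality. You would need to replace your $g=9$ argument by this (or an equivalent) mechanism for producing new points in general position rather than points on lines.
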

\begin{proof}
First, assume $\g(X) \in \{7,12\}$.
It follows from Theorem~\ref{th:sl:point}\ref{th:sl:point:g=7} and~\ref{th:sl:point:g=12} 
that $X$ is birational to a quintic del Pezzo threefold, or to $\PP^3$, hence is $\kk$-rational 
(by Theorem~\ref{theorem:v5} in the former case).

Next, assume $\g(X) = 10$.
Then Theorem~\ref{th:sl:point}\ref{th:sl:point:g=10} shows that $X$ is birational to a sextic del Pezzo fibration $\sigma_+ \colon \tX^+ \to X^+$ 
and a general line in $E \cong \PP^2$ gives a $\kk$-rational curve in~$\tX^+$ that dominates~$X^+ \cong \PP^1$.
The general fiber of $\sigma_+$ is smooth because $\tX^+$ is smooth.
We conclude that~$\tX^+$ is $\kk$-unirational by Lemma~\ref{lemma:unirational-fibration}.

Finally, assume $\g(X) = 9$.
In this case Theorem~\ref{th:sl:point}\ref{th:sl:point:g=9} provides an irreducible $\kk$-rational divisor $D_x \in |H - 3x|$.
This provides many new $\kk$-points on $X$, and we complete the proof of unirationality by a ``spreading out'' argument.

Let $D_x^\circ \subset D_x$ be the open subset of points of $D_x$ not lying on lines (it contains~$x$, hence is non-empty).
Applying to each point~\mbox{$x' \in D_x^\circ$} the above construction, 
we obtain a family of $\kk$-rational surfaces $\{D_{x'}\}_{x' \in D_x^\circ}$ in~$X$.
The surface $D_{x'}$ has multiplicity at least~3 at point $x'$, hence this is not a constant family of surfaces.
This means that the surface $D_\eta$ corresponding to the general point $\eta$ of $D_x^\circ$ dominates~$X$.
Since $D_\eta$ is rational over the residue field of $\eta$, 
which is a pure transcendental extension of~$\kk$ (since $D_x$ is $\kk$-rational),
we conclude that $D_\eta$ is rational over~$\kk$, hence $X$ is $\kk$-unirational.
\end{proof}

Now we discuss case~\ref{cases:2} of Lemma~\ref{lemma:f1xx}.

\begin{proposition}
\label{proposition:f1xx-2}
Let $X$ be a Fano threefold satisfying~\eqref{ass:fano} with a $\kk$-point $x$ 
such that~$\rF_1(X,x)$ is reduced and $\kk$-irreducible of length~$2$.
Then $X$ is $\kk$-unirational.
\end{proposition}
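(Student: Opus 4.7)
The two $\bkk$-lines $L_1, L_2$ parameterized by $\rF_1(X,x)(\bkk)$ both pass through~$x$ and are interchanged by~$\Gal$, so their reduced union $C := L_1 \cup L_2$ is a Galois-invariant, $\kk$-irreducible, reduced curve of degree~$2$ with a node at~$x$: a $\kk$-irreducible singular conic on~$X$ defined over~$\kk$ (the two tangent directions to the $L_i$ at~$x$ are the distinct points of $\rF_1(X,x) \subset \PP(T_xX)$ of Lemma~\ref{lemma:f1-x-x}). The plan is to apply Theorem~\ref{th:sl:conic} with center at~$C$ and analyze each of the four possible genera in turn.

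If $g := \g(X) \in \{7, 12\}$, Theorem~\ref{th:sl:conic} shows that $X$ is $\kk$-birational to a $\kk$-rational three-dimensional quadric, so $X$ is $\kk$-rational and, in particular, $\kk$-unirational. If $g = 9$, the Sarkisov link produces a sextic del Pezzo fibration $\sigma_+ \colon \tX^+ \to \PP^1$; by Lemma~\ref{lemma:unirational-fibration}\ref{lemma:unirational-fibration:dP} it suffices to exhibit a $\kk$-unirational subvariety of $\tX^+$ that dominates~$\PP^1$. By Remark~\ref{remark:def:k} the blowup $\tX$ has a unique ordinary double point $\tilde x$ over $x \in C$, and it is infinitesimally $\kk$-rational (Definition~\ref{def:inf-rat}) since the smooth $\kk$-rational curve $\sigma^{-1}(x) \cong \PP^1_\kk$ passes through it. By Lemma~\ref{lemma:flop-odp} the flopped node $\tilde x_+ \in \tX^+$ is also infinitesimally $\kk$-rational, so the exceptional quadric~$Q$ of $\Bl_{\tilde x_+}\tX^+ \to \tX^+$ is $\kk$-rational. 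Since $E$ lies in the base locus of the linear system $|H - 2E|$ which (by the proof of Theorem~\ref{th:sl:conic}) defines the fibration $\sigma_+ \circ \tpsi$, a local analysis at the flopped node shows that $Q$ maps dominantly onto~$\PP^1$ via the resolved morphism, supplying the required $\kk$-unirational subvariety.

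If $g = 10$, the link produces a conic bundle $\sigma_+ \colon \tX^+ \to \PP^2$ with quartic discriminant curve~$\Delta$ and a unique, infinitesimally $\kk$-rational, ordinary double point~$\tilde x_+$ lying above a singular $\kk$-point $x_+ \in \Delta$. I compose $\sigma_+$ with the projection $\PP^2 \dashrightarrow \PP^1$ from~$x_+$; blowing up~$\tilde x_+$ resolves the indeterminacy and yields a morphism $\tilde Y := \Bl_{\tilde x_+}\tX^+ \to \Bl_{x_+}\PP^2 \to \PP^1$. A general fiber of $\tilde Y \to \PP^1$ is the pullback of the conic bundle $\sigma_+$ over a line through~$x_+$ in~$\PP^2$; such a line meets~$\Delta$ at~$x_+$ with multiplicity~$2$ (because $x_+$ is a node of~$\Delta$) and at two further simple points, so this fiber is a conic bundle over~$\PP^1$ with exactly three singular fibers. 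As in the previous case, the $\kk$-rational exceptional quadric~$Q \subset \tilde Y$ surjects onto~$\PP^1$, and Lemma~\ref{lemma:unirational-fibration}\ref{lemma:unirational-fibration:conic-bundle} applied to $\tilde Y \to \PP^1$ yields $\kk$-unirationality of~$\tilde Y$, hence of~$X$. The main obstacle in both cases $g \in \{9, 10\}$ is the verification that the $\kk$-rational quadric~$Q$ dominates the fibration base, which requires a careful local analysis of the defining linear systems in a neighborhood of the flopped node.
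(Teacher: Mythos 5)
Your reduction to a $\kk$-irreducible singular conic and the cases $\g(X)\in\{7,12\}$ are exactly as in the paper. The trouble starts with $g=9$: the multisection you propose cannot work. In the link of Theorem~\ref{th:sl:conic}\ref{th:sl:conic:g=9} the map $\sigma_+\colon\tX^+\to\PP^1$ is a \emph{morphism} (an extremal Mori contraction), so the whole exceptional quadric $Q$ of $\Bl_{\tilde x_+}\tX^+\to\tX^+$ is sent to the single point $\sigma_+(\tilde x_+)\in\PP^1$; there is no indeterminacy to resolve and no ``local analysis'' that can make $Q$ dominate the base. So your $g=9$ argument has a genuine hole, not just an unchecked step. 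The paper avoids the conic link altogether in this case: since $k=2$ and $g=9$, it uses the double projection from the point $x$ (\S\ref{subsection:double-projection}), which maps $X$ birationally onto a complete intersection of three quadrics $\barX\subset\PP^6$ containing the smooth $\kk$-rational quadric surface $\barE$ (Lemma~\ref{lemma:ci3}); projecting from the span $\langle\barE\rangle$ exhibits $\Bl_\barE(\barX)$ as a conic fibration over $\PP^2$ whose exceptional divisor, birational to $\barE$ and hence $\kk$-rational, dominates the base, and Lemma~\ref{lemma:unirational-fibration} concludes. (If you want to salvage a multisection argument for the sextic del Pezzo fibration, it would have to come from something not contracted by $\sigma_+$, e.g.\ the $\kk$-rational curve $\sigma^{-1}(x)$, and even that requires an intersection-number check you have not made.)

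For $g=10$ your route is essentially the paper's, but the point you defer -- that $Q$ dominates $\PP^1$ -- is precisely the nontrivial content. The composite $\tX^+\dashrightarrow\PP^1$ is undefined along the entire fiber $\sigma_+^{-1}(x_+)$, not just at $\tilde x_+$, and $Q$ maps into the point $x_+\in\PP^2$, so blowing up only $\tilde x_+$ neither resolves the map nor makes the dominance of $Q$ over $\PP^1$ evident; likewise your claim that $\tilde Y\to\Bl_{x_+}\PP^2$ is a conic bundle with exactly three singular fibers over a general line is unjustified. The paper (following Avilov) blows up $\tilde x_+$ and runs a relative MMP over $\PP^2$ to reach a \emph{flat} conic bundle $\rho_\natural\colon\hX^\natural\to X^\natural=\Bl_{x_+}\PP^2$; flatness forces the strict transform of the $\kk$-rational divisor $D^+=Q$ to dominate the exceptional line $L$ (hence $\PP^1$), and the discriminant computation gives at most \emph{two} singular fibers over a general line through $x_+$, after which Lemma~\ref{lemma:unirational-fibration}\ref{lemma:unirational-fibration:conic-bundle} applies. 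So for $g=10$ your outline is repairable along these lines, but as written the decisive step is missing, and for $g=9$ the chosen approach fails outright.
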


\begin{proof}
We consider the union of the two lines corresponding to $\bkk$-points of $\rF_1(X,x)_{\mathrm{red}}$ 
as a $\kk$-irreducible singular conic and apply the Sarkisov link of Theorem~\ref{th:sl:conic}.
If $\g(X) \in \{7,12\}$, it follows that~$X$ is birational to a $\kk$-rational quadric, hence is $\kk$-rational.

Now assume $\g(X) = 9$.
Then, instead of the Sarkisov link we consider the double projection from the point $x$.
We obtain a birational isomorphism of ~$X$ 
onto a complete intersection of three quadrics $\barX \subset \PP^6$ (Lemma~\ref{lemma:ci3}) which
contains the smooth $\kk$-rational quadric surface $\barE \subset \barX$ 
defined over~$\kk$.
Note that the linear span $\langle \barE \rangle = \PP^3$ of $\barE$ is not contained in $\barX$ (because~$\barX$ is irreducible),
but in the net of quadrics defining~$\barX$ there is a pencil of quadrics containing~$\langle \barE \rangle$.
In other words, we can choose three quadrics $Q_0,Q_1,Q_2 \subset \PP^6$, such that 
\begin{equation*}
\barX = Q_0 \cap Q_1 \cap Q_2,
\qquad 
\langle \barE \rangle \subset Y := Q_1 \cap Q_2,
\quad\text{and}\quad 
Q_0 \cap \langle \barE \rangle = \barE.
\end{equation*}

We consider the linear projections
\begin{equation*}
\pi_Y \colon \Bl_{\langle \barE \rangle}(Y) \larrow \PP^2
\quad\text{and}\quad
\pi_\barX \colon \Bl_\barE(\barX) \larrow \PP^2
\end{equation*}
out of the linear span of $\barE$
and check that the general fiber of~$\pi_Y$ is $\PP^2$, and that of~$\pi_\barX$ is a conic.

Indeed, a general fiber of~$\pi_Y$ can be described as follows.
For a point~$t \in \PP^2$ let~$\Pi_t \subset \PP^6$ be the corresponding 4-space containing the linear span of~$\barE$.
The quadrics~$Q_1$ and~$Q_2$ intersects~$\Pi_t$ along quadrics, containing the hyperplane~$\langle \barE \rangle \subset \Pi_t$, 
hence we can write
\begin{equation*}
Q_1 \cap \Pi_t = \langle \barE \rangle \cup D_{1t},
\qquad 
Q_2 \cap \Pi_t = \langle \barE \rangle \cup D_{2t},
\end{equation*}
where $D_{it} \subset \Pi_t$ are hyperplanes.
Therefore, every non-empty fiber $Y_t$ of $\pi_Y$ can be described as
\begin{equation*}
Y_t = D_{1t} \cap D_{2t},
\end{equation*}
i.e., $Y_t$ is a linear subspace in $\Pi_t$ of codimension at most~2.
Thus, if the map $\pi_Y$ is dominant, its general fiber is a $\PP^2$, and otherwise its general fiber is $\PP^n$ with $n \ge 3$.

Now, the general fiber $X_t$ of $\pi_\barX$ is the intersection
\begin{equation*}
X_t = Y_t \cap Q_0.
\end{equation*}
If the map $\pi_Y$ is not dominant, $X_t$ is a quadric of dimension $n - 1 \ge 2$.
Note that any such quadric is covered by lines, hence $\barX$ is covered by lines, which is impossible by Lemma~\ref{lemma:barx-detailed}.
Therefore, $\pi_Y$ is dominant, its general fiber is $\PP^2$, and the general fiber of $\pi_\barX$ is a conic in it.

Furthermore, $Y_t \cap \langle \barE \rangle$ is a hyperplane section of $Y_t$, i.e., a line, 
and $X_t \cap \barE$ is the intersection of this line with the quadric $Q_0$, hence is nonempty.
This means that the exceptional divisor of the blowup~$\Bl_\barE(\barX)$ dominates the base $\PP^2$ of the conic bundle~$\pi_\barX$.
It remains to note that since~$\barE \subset \barX$ is a Weil divisor, 
it is birational to the exceptional divisor of~$\Bl_\barE(\barX)$,
hence the latter
is a $\kk$-rational surface.
So, applying Lemma~\ref{lemma:unirational-fibration}\ref{lemma:unirational-fibration:conic-bundle}, 
we conclude that $\Bl_\barE(\barX)$ is $\kk$-unirational, and hence so is $X$.

Finally, assume $\g(X) = 10$.
Then $X$ is birational to a conic bundle $\tX^+ \to X^+$ over~\mbox{$X^+ \cong \PP^2$},
Moreover, $\tX^+$ has a single ordinary double point $\tilde{x}_+$ and $x_+ = \sigma_+(\tilde{x}_+) \subset X^+$
is a singular point of the discriminant curve $\Delta \subset X^+$ of this conic bundle.
Let $\hX^+ \to \tX^+$ be the blowup of~$\tilde{x}_+$.
Following the argument of~\cite[Lemma~8]{Avilov:cb}, we apply the relative MMP to $\hX^+$ over $X^+$.
Since~$-K_{\hX^+}$ is nef over $X^+$ (\cite[Lemma~7]{Avilov:cb}),
it follows that there is a commutative diagram 
\begin{equation*}
\xymatrix@R=3ex{
\hX^+ \ar@{<-->}[rr]^{\hat\psi} \ar[d]_\rho &&
\hX^\natural \ar[d]^{\rho_\natural}
\\
\tX^+ \ar[dr]^{\sigma_+} &&
X^\natural \ar[dl]_{\sigma_\natural} \ar[dr]^\pi
\\
& X^+ \ar@{-->}[rr] &&
\PP^1,
}
\end{equation*}
where $\rho_\natural$ is a flat conic bundle and~$\sigma_\natural$ is a divisorial contraction.
The surface~$X^\natural$ is smooth by~Theorem~\ref{theorem:smooth-contractions}\ref{theorem:smooth-contractions:conic-bundle}
and the map~$\sigma_\natural$
is an isomorphism over the complement of~$x_+$, hence it is the blowup of~$x_+$.
Finally, the strict transform~$D^\natural := \hat\psi_*(D^+)$ of the exceptional divisor $D^+ \subset \hX^+$ of the blowup~$\rho$ 
dominates the exceptional line~$L = \sigma_\natural^{-1}(x_+)$ (because~$\rho_\natural$ is flat),
and the discriminant curve~$\Delta^\natural \subset X^\natural$ of the conic bundle~$\rho_\natural$ 
does not contain~$L$ (cf.~\cite[Lemma~10.10]{Prokhorov:UMN}), hence it is equal to the strict transform of~$\Delta$ under~$\sigma_\natural$.

Now we consider the morphism~$\pi \colon X^\natural \to \PP^1$ induced by the linear projection of~$X^+ = \PP^2$ out of~$x_+$,
and the composition
\begin{equation*}
\pi \circ \rho_\natural \colon \hX^\natural \to \PP^1.
\end{equation*}
Note that~$L$ is dominant over~$\PP^1$, hence so is~$D^\natural$.
Note also that the point~$\tilde{x}_+$ is infinitesimally rational (Theorem~\ref{th:sl:conic}\ref{th:sl:conic:g=10}), 
i.e., $D^+$ is a $\kk$-rational surface.
Thus~$\PP^1$ is dominated by a rational surface in~$\hX^\natural$.

On the other hand, the fiber of $\pi \circ \rho_\natural$ over a point $t \in \PP^1$ is a conic bundle over $\pi^{-1}(t) = \PP^1$. 
Since $x_+$ is a singular point of $\Delta$, we have
\begin{equation*}
\Delta^\natural \cdot \pi^{-1}(t) = (\sigma_{\natural})^{-1}_*(\Delta) \cdot \pi^{-1}(t) \le \Delta \cdot \sigma_\natural(\pi^{-1}(t)) - 2 = 4 - 2 = 2,
\end{equation*}
so this conic bundle has at most two singular fibers.
Now we apply Lemma~\ref{lemma:unirational-fibration}\ref{lemma:unirational-fibration:conic-bundle} and conclude that $\hX^\natural$ is $\kk$-unirational.
By construction $X$ is birational to $\hX^\natural$, hence it is also $\kk$-unirational.
\end{proof}

Finally, we consider the case~\ref{cases:3} of Lemma~\ref{lemma:f1xx}.

\begin{proposition}
\label{proposition:f1xx-3}
Let $X$ be a Fano threefold satisfying~\eqref{ass:fano} with a $\kk$-point $x$ 
such that~$\rF_1(X,x)$ is reduced
and $\kk$-irreducible of length~$3$.
Then $X$ is $\kk$-unirational.
\end{proposition}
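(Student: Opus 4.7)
The plan is to employ the double projection from $x$ established in Lemmas~\ref{lemma:double-projection}--\ref{lemma:barx-detailed}, and then either invoke Corollary~\ref{cor:projection-birational} directly or apply the unirationality criterion Proposition~\ref{proposition:q-fano}. With $k=3$, the three lines through $x$ form a reduced $\kk$-irreducible curve, and the anticanonical morphism $\phi\colon \hX \to \barX \subset \PP^{g-3}$ from the blowup $\hX$ of $x$ and the three lines is a small birational contraction onto a Fano threefold $\barX$ with terminal Gorenstein singularities, not a cone, $K_\barX = -\barH$, containing a distinguished $\kk$-plane $\barE := \phi(\hat E)$.

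For $g=9$, Corollary~\ref{cor:projection-birational} yields a $\kk$-birational map $\barX \dashrightarrow \PP^{g-6} = \PP^3$ whose image is $3$-dimensional and hence all of $\PP^3$; composed with the double projection, this gives a $\kk$-birational isomorphism $X \dashrightarrow \PP^3$, so in this case $X$ is in fact $\kk$-rational, which is stronger than the asserted unirationality.

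For $g \in \{10, 12\}$, I would apply Proposition~\ref{proposition:q-fano} to $Y$ a $\QQ$-factorialization of $\barX$, with $S$ the strict transform of $\barE \cong \PP^2$ (hence $\kk$-rational) and $\cM = |{-}K_Y - S| = |\barH - \barE|$. The relation $K_Y + \cM + S \sim 0$ is automatic; terminality of $(Y, \cM)$ reduces to the Gorenstein terminal singularities of $\barX$ via Lemmas~\ref{lemma:small-terminality} and~\ref{lemma:terminal-pair}, after checking that a general member of $\cM$ avoids $\Sing(\barX)$; and generic finiteness of the map given by $\cM$ holds because this map coincides with the linear projection $\pi_\barE\colon \barX \dashrightarrow \PP^{g-6}$ from the $\kk$-plane $\barE$, which by Corollary~\ref{cor:projection-birational} is birational onto its image.

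The hardest part will be the case $g=7$, where $\barX \subset \PP^4$ is a quartic threefold with $\dim|\barH - \barE| = 1$: here the natural linear system $\cM$ is merely a pencil of cubic surfaces and fails to be generically finite, so the Proposition~\ref{proposition:q-fano} setup does not apply directly with this natural choice. For this case I would instead apply Lemma~\ref{lemma:unirational-fibration}\ref{lemma:unirational-fibration:cubic} to a resolution of the cubic surface pencil $\barX \dashrightarrow \PP^1$ determined by $|\barH - \barE|$, taking the $\kk$-unirational subvariety $Z$ to be the strict transform of $\barE$: the plane $\barE \cong \PP^2$ is $\kk$-rational and dominates $\PP^1$ via the induced pencil of plane cubic curves $\{\barE \cap Q_t\}$ on $\barE$, so the lemma yields $\kk$-unirationality of $\barX$ and hence of $X$.
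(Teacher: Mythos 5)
Your overall strategy is the paper's: double projection from $x$, then Proposition~\ref{proposition:q-fano} with $S$ the ($\kk$-rational) transform of the plane $\barE$ and $\cM=|-K_Y-S|$ for the larger genera, and the pencil of residual cubic surfaces together with Lemma~\ref{lemma:unirational-fibration}\ref{lemma:unirational-fibration:cubic} for $g=7$ (your direct use of Corollary~\ref{cor:projection-birational} for $g=9$ is also fine and even gives rationality, as the paper does elsewhere). However, two verifications are not adequately handled as written. First, for $g\in\{10,12\}$ your reduction of terminality of the pair $(Y,\cM)$ to ``a general member of $\cM$ avoids $\Sing(\barX)$'' is insufficient: by Lemma~\ref{lemma:terminal-pair}\ref{lemma:terminal-pair2} a terminal pair must have \emph{finite} base locus, and on an arbitrary $\QQ$-factorialization of $\barX$ the system $|-K_Y-S|$ may well acquire base curves (curves on which it is negative), even through smooth points, so avoiding $\Sing(\barX)$ proves nothing. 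The paper's fix is the choice of model: take $Y$ to be the blowup of $\barX$ along the Weil (non-Cartier) divisor $\barE$, which is a small map, so $Y$ is terminal Gorenstein by Lemma~\ref{lemma:small-terminality}, and on this $Y$ the system $|-K_Y-S|$ is literally the pullback of $\cO_{\PP^{g-6}}(1)$, hence base point free, so terminality of the pair follows at once from Lemma~\ref{lemma:terminal-pair}\ref{lemma:terminal-pair3}; generic finiteness then comes from Corollary~\ref{cor:projection-birational} exactly as you say.

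Second, in the case $g=7$, Lemma~\ref{lemma:unirational-fibration}\ref{lemma:unirational-fibration:cubic} requires the general fiber to be a cubic hypersurface \emph{which is not a cone} (a cone over a smooth plane cubic is not unirational), and you do not check this. The paper deduces it from the fact that $\barX$ is not covered by lines (Lemma~\ref{lemma:barx-detailed}), which applies here since $k=3$. With the model chosen as above and these two points supplied, your argument closes and coincides with the paper's proof.
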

\begin{proof}
Consider the image $\barX \subset \PP^{g-3}$ 
of the double projection $\phi \colon \hX \to \barX$ described in~\S\ref{subsection:double-projection}.
By Lemma~\ref{lemma:double-projection} the variety~$\bar X$ contains a $\kk$-plane $\barE \cong \PP^2$ 
and the anticanonical class of~$\bar X$ is the hyperplane class (Lemma~\ref{lemma:barx-detailed}).
Let~$Y$ be the blowup of~$\barX$ along~$\barE$ and let~$S$ be the strict transform of~$\bar E$.
Then we have a commutative diagram
\begin{equation}
\label{eq:barx-y}
\vcenter{\xymatrix{
\barE \ar[r] &
\barX \ar@{-->}[dr] &&
Y \ar[ll] \ar[dl]^\pi &
S \ar[l]
\\
&& \PP^{g-6},
}}
\end{equation}
where the middle horizontal arrow is the blowup map 
and the dashed arrow is the restriction of the linear projection~$\PP^{g-3} \dashrightarrow \PP^{g-6}$ with center at~$\barE$.
Since~$\barE$ is a Weil divisor on~$\barX$, the blowup map~$Y \to \barX$ is small.
Moreover, the exceptional divisor~$S$ of the blowup is a Cartier divisor, 
which is birational to~$\barE$, hence is a $\kk$-rational surface.

First, assume $\g(X) \ge 9$. 
We check below that the assumptions of Proposition~\ref{proposition:q-fano} are satisfied by the pair $(Y,S)$.
Indeed, $Y$ has terminal singularities by Lemma~\ref{lemma:small-terminality}, 
because $\barX$ has terminal singularities (Lemma~\ref{lemma:barx-detailed}), 
and the morphism $Y \to \barX$ is small.
Furthermore, the linear system~\mbox{$|- K_Y - S|$} is the pullback of $|\cO(1)|$ from $\PP^{g-6}$, hence it is base point free and Cartier,
hence the pair~$(Y,|-K_Y - S|)$ is terminal by Lemma~\ref{lemma:terminal-pair}\ref{lemma:terminal-pair3}.
Finally, the morphism $\pi \colon Y \to \PP^{g-6}$ given by this linear system is generically finite, 
because so is the map $\barX \dashrightarrow \PP^{g-6}$ (Corollary~\ref{cor:projection-birational}).
Thus, Proposition~\ref{proposition:q-fano} implies that $Y$ is $\kk$-unirational.
Since by construction~$Y$ is birational to~$X$, the latter is $\kk$-unirational as well.

Now assume $\g(X) = 7$, so that $\barX$ is a quartic threefold in~$\PP^4$, and $\barE$ is a $\kk$-plane on it.
Again, we consider the diagram~\eqref{eq:barx-y}.
In this case the projective space at the bottom of the diagram is just a~$\PP^1$.
Let $t \in \PP^1$ and let $\barH_t \subset \PP^4$ be the corresponding hyperplane containing $\barE$.
Then 
\begin{equation*}
\barX \cap \barH_t = \barE \cup Y_t,
\end{equation*}
where $Y_t$ is a cubic surface and the fiber of the map~$\pi \colon Y \to \PP^1$ over~$t$ is~$Y_t$, 
so that~$\pi$ is a family of cubic surfaces.
Note that a general~$Y_t$ is not a cone, because~$\barX$ is not covered by lines by Lemma~\ref{lemma:barx-detailed}.
On the other hand, the surface~$S$ dominates~$\PP^1$, 
because the divisor~$\barE$ dominates~$\PP^1$ under the map~$\barX \dashrightarrow \PP^1$
(indeed, its fiber over a point~$t \in \PP^1$ is the cubic curve~\mbox{$\barE \cap Y_t$}).
Therefore, Lemma~\ref{lemma:unirational-fibration}\ref{lemma:unirational-fibration:cubic} proves that~$Y$ is $\kk$-unirational,
and again, $X$ is $\kk$-unirational as well.
\end{proof}

A combination of Lemmas~\ref{lemma:f1xx} and~\ref{lemma:f1xx-1} 
with Propositions~\ref{proposition:f1xx-0}, \ref{proposition:f1xx-2}, and~\ref{proposition:f1xx-3}
finally proves $\kk$-unirationality of a Fano threefold satisfying~\eqref{ass:fano} which has a $\kk$-point:

\begin{corollary}
\label{corollary:unirat}
Let $X$ a Fano threefold satisfying~\eqref{ass:fano}.
If $X(\kk)\neq \emptyset$ then $X$ is $\kk$-unirational.
\end{corollary}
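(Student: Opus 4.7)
The plan is essentially a case analysis driven by Lemma~\ref{lemma:f1xx}. Since $X(\kk) \neq \varnothing$, I would fix any $\kk$-point~$x \in X$ and examine the Hilbert scheme~$\rF_1(X,x)$ of lines through~$x$. By Lemma~\ref{lemma:f1xx} exactly one of the following four situations occurs: (i)~$\rF_1(X,x) = \varnothing$; (ii)~$\rF_1(X,x)$ contains a $\kk$-point; (iii)~$\rF_1(X,x)$ is $\kk$-irreducible and reduced of length~$2$; or (iv)~$\rF_1(X,x)$ is $\kk$-irreducible and reduced of length~$3$.

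In each case I would invoke the corresponding result already proved in this section: case~(i) is Proposition~\ref{proposition:f1xx-0}, case~(ii) is Lemma~\ref{lemma:f1xx-1} (which actually yields $\kk$-rationality via the Sarkisov link with center at the $\kk$-line through~$x$ furnished by Lemma~\ref{corollary:rat:line}), case~(iii) is Proposition~\ref{proposition:f1xx-2}, and case~(iv) is Proposition~\ref{proposition:f1xx-3}. Each of these concludes $\kk$-unirationality of~$X$, so the disjunction covers every possibility.

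There is no real obstacle at this stage of the paper: all of the substantive work (constructing the Sarkisov links, analyzing the double projection from~$x$, verifying the hypotheses of the unirationality criterion of Proposition~\ref{proposition:q-fano}, and invoking the various classical fibration unirationality results of Lemma~\ref{lemma:unirational-fibration}) has already been carried out in Propositions~\ref{proposition:f1xx-0}, \ref{proposition:f1xx-2}, and~\ref{proposition:f1xx-3}. The only thing to verify is that the case analysis of Lemma~\ref{lemma:f1xx} is indeed exhaustive, which is immediate from the length bound~$\le 3$ for $\rF_1(X,x)$ cited there. Thus the proof of the corollary reduces to a single sentence assembling the four previous results.
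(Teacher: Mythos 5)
Your proposal is correct and follows exactly the paper's own argument: the corollary is stated in the paper as a direct combination of Lemma~\ref{lemma:f1xx} (the exhaustive case analysis on $\rF_1(X,x)$) with Lemma~\ref{lemma:f1xx-1} and Propositions~\ref{proposition:f1xx-0}, \ref{proposition:f1xx-2}, and~\ref{proposition:f1xx-3}. Nothing further is needed.
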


\subsection{Rationality}

Now we are ready to prove the sufficient conditions for rationality of Fano threefolds satisfying~\eqref{ass:fano}.

For $\g(X) \in \{7,12\}$ the result was essentially proved in Proposition~\ref{proposition:f1xx-0}.

\begin{proposition}
\label{proposition:rationality-7-12}
Let $X$ be a prime Fano threefold of genus~$\g(X) \in \{7,12\}$.
If $X(\kk) \ne \varnothing$ then~$X$ is $\kk$-rational.
\end{proposition}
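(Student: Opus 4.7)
The plan is to reduce the statement to Proposition~\ref{proposition:f1xx-0}, which already establishes $\kk$-rationality for $\g(X) \in \{7,12\}$ whenever there exists a $\kk$-point $x \in X$ with $\rF_1(X,x) = \varnothing$. Thus the whole task is to produce such a point starting from the bare hypothesis that $X(\kk) \ne \varnothing$.

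First I would invoke Corollary~\ref{corollary:unirat} to upgrade the existence of a single $\kk$-point into $\kk$-unirationality of $X$, and then Lemma~\ref{lemma:unirationality-point} to conclude that $X(\kk)$ is Zariski dense in~$X$. The next step is a dimension count: for a prime Fano threefold with $\uprho(X_\bkk) = 1$ and $\g(X) \in \{7,12\}$, the Hilbert scheme of lines $\rF_1(X_\bkk)$ is at most one-dimensional, as follows from the classical description of $X_{12}$ and $X_{22}$ (see for instance~\cite{Mukai92}, or~\cite[\S4.2]{IP}). Consequently the locus $\Sigma_\bkk \subset X_\bkk$ swept out by lines on $X_\bkk$ is a closed subset of dimension at most two. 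Being Galois-invariant, it descends to a proper closed $\kk$-subscheme $\Sigma \subset X$. By Zariski density, $X(\kk)$ meets the nonempty open complement $X \setminus \Sigma$, giving a $\kk$-point $x$ that by construction satisfies $\rF_1(X,x) = \varnothing$.

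With such $x$ in hand, Theorem~\ref{th:sl:point}\ref{th:sl:point:g=7} when $\g(X)=7$, or Theorem~\ref{th:sl:point}\ref{th:sl:point:g=12} when $\g(X)=12$, produces a Sarkisov link~\eqref{eq:sl} with center at~$x$ whose target $X^+$ is a smooth quintic del Pezzo threefold or is isomorphic to~$\PP^3$, respectively. In the first case $X^+$ is $\kk$-rational by Theorem~\ref{theorem:v5}; in the second this is tautological. Since $X$ is birational to~$X^+$ over~$\kk$, the conclusion follows. The only step where one might anticipate a subtlety is the dimension bound on $\rF_1(X_\bkk)$ — equivalently, the assertion that $X_\bkk$ is not covered by lines — but for the two Mukai families $X_{12}$ and~$X_{22}$ this is classical and requires no fresh input.
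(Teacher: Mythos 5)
Your proposal is correct and follows essentially the same route as the paper: unirationality via Corollary~\ref{corollary:unirat} gives Zariski density of $\kk$-points, a general $\kk$-point avoids the surface swept out by lines (the paper leaves this dimension count implicit here but makes it explicit in the genus~10 case), and then Theorem~\ref{th:sl:point} together with Theorem~\ref{theorem:v5} finishes the argument. No further comment is needed.
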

\begin{proof}
As we already know, $X$ is $\kk$-unirational (see Corollary~\ref{corollary:unirat}),
hence the set of $\kk$-points on~$X$ is Zariski dense.
In particular, $X$ has a $\kk$-point $x$ such that $\rF_1(X,x) = \varnothing$.
We choose such a point, apply Theorem~\ref{th:sl:point},
and conclude that $X$ is birational to a quintic del Pezzo threefold, or to $\PP^3$, 
hence is $\kk$-rational (by Theorem~\ref{theorem:v5} in the former case).
\end{proof}

Next, we consider the genus~9 case.

\begin{proposition}
\label{proposition:rationality-9}
Let $X$ be a prime Fano threefold of genus~$\g(X) = 9$.
If $\rF_3(X)(\kk) \ne \varnothing$ then~$X$ is $\kk$-rational.
\end{proposition}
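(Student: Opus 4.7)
The plan is to proceed by a case analysis on the geometric type of the cubic curve $C$ corresponding to a given $\kk$-point of $\rF_3(X)$, using the classification in Lemma~\ref{lemma:f3} (together with the plane-cubic-plus-embedded-point case noted in the proof of Lemma~\ref{lemma:f3-point}). In each case I would exhibit either a line on $X$ defined over $\kk$ --- reducing to Theorem~\ref{th:sl:line}\ref{th:sl:line:g=9}, which directly yields a birational map $X \dashrightarrow \PP^3$ --- or apply the Sarkisov link at $C$ (Theorem~\ref{th:sl:cubic}), or else the double projection from a triple point (Corollary~\ref{cor:projection-birational}).

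First, if $C_\bkk$ is a smooth twisted cubic, I would inspect $X_\bkk \cap \langle C_\bkk \rangle$. Since $X$ is an intersection of quadrics and contains no quadric surface by Theorem~\ref{th:bht}, this intersection is either $C_\bkk$ itself or $C_\bkk \cup L$ for a unique bisecant line $L$. In the former subcase the spanning condition~\eqref{eq:span-condition} of Lemma~\ref{lemma:sl-construction} holds and Theorem~\ref{th:sl:cubic} turns $X$ into a quintic del Pezzo threefold $X^+$, which is $\kk$-rational by Theorem~\ref{theorem:v5}. In the latter subcase, uniqueness forces $L$ to be Galois-stable, hence defined over $\kk$, and Theorem~\ref{th:sl:line}\ref{th:sl:line:g=9} applies.

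Second, if $C_\bkk$ is the union of three non-coplanar lines through a common point (case~\ref{case:f3:trident} of Lemma~\ref{lemma:f3}), the common point is Galois-invariant and gives a $\kk$-point $x$ with $\rF_1(X,x)$ reduced of length $3$. The double-projection analysis in Lemmas~\ref{lemma:double-projection}--\ref{lemma:ci3} then realises $X$ as birational to a normal complete intersection $\barX \subset \PP^6$ of three quadrics containing a $\kk$-plane $\barE$, and Corollary~\ref{cor:projection-birational} provides a birational isomorphism $\barX \dashrightarrow \PP^3$, giving $\kk$-rationality of $X$.

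In the remaining shapes of $C_\bkk$ --- conic plus line (case~\ref{case:f3:2-1}), chain of three lines (case~\ref{case:f3:chain}), a purely one-dimensional curve with a unique multiple line (case~\ref{case:f3:multiple}), and a plane cubic with an embedded point --- I would single out a Galois-invariant line component: the line in case~\ref{case:f3:2-1}, the middle line in case~\ref{case:f3:chain}, the multiple line in case~\ref{case:f3:multiple}, and for the plane-cubic-with-extra-point case a component uniquely characterised by Galois-invariant data. Such a line is defined over $\kk$, and Theorem~\ref{th:sl:line}\ref{th:sl:line:g=9} exhibits $X$ as birational to $\PP^3$. The main technical obstacle I anticipate is the plane-cubic-with-embedded-point case: one must argue either that the plane cubic is reducible with a Galois-stable line component, or combine the Galois-invariant embedded $\kk$-point with a further geometric argument (for instance via Theorem~\ref{th:sl:point}\ref{th:sl:point:g=9}) to locate a $\kk$-line on $X$.
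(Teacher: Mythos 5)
Your proposal is correct and follows essentially the same route as the paper's proof: the identical case analysis via Lemma~\ref{lemma:f3}, with the bisecant and other Galois-stable line cases reduced to the line link, the smooth bisecant-free case handled by Theorem~\ref{th:sl:cubic} plus Theorem~\ref{theorem:v5}, and the trident case by the double projection through Lemma~\ref{lemma:ci3} and Corollary~\ref{cor:projection-birational}. The one ``obstacle'' you anticipate is in fact vacuous: since $X$ is an intersection of quadrics containing no planes (Theorem~\ref{th:bht}), it contains no plane cubics, so Lemma~\ref{lemma:f3} already exhausts all points of $\rF_3(X)$ and the plane-cubic-with-embedded-point case never arises.
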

\begin{proof}
Let $C \subset X$ be a cubic curve of arithmetic genus~0 defined over~$\kk$.
Then~$C$ is one of the curves listed in Lemma~\ref{lemma:f3}.
Below we consider all of its possible types.

First, assume that $C$ is a smooth curve (type~\ref{case:f3:smooth}).
Then the intersection of $X$ with the linear span of $C$ is either $C$ itself, or the union of $C$ and a bisecant line.
In the latter case, the line is defined over~$\kk$, hence $X$ is $\kk$-rational by Lemma~\ref{corollary:rat:line}.

So, assume that $C$ is smooth and has no bisecant lines in~$X$.
We then apply the Sarkisov link of Theorem~\ref{th:sl:cubic}.
It shows that $X$ is birational to a quintic del Pezzo threefold, hence is $\kk$-rational by Theorem~\ref{theorem:v5}.

Next, assume that $C$ is of types~\ref{case:f3:2-1}, \ref{case:f3:chain}, or~\ref{case:f3:multiple}.
Then one of its irreducible components (the middle one in the case~\ref{case:f3:chain}) is a Galois-invariant line, 
hence it is defined over $\kk$, hence $X$ is $\kk$-rational by Lemma~\ref{corollary:rat:line}.

Finally, assume that $C$ is of type~\ref{case:f3:trident}, i.e., is the union of three Galois-conjugate lines meeting at a point $x$. 
The point~$x$ is Galois-invariant, hence is defined over $\kk$, and $\rF_1(X,x)$ is reduced of length~3 (Corollary~\ref{cor:length-f1}).
By Lemma~\ref{lemma:ci3} the double projection $X \dashrightarrow \barX$ out of~$x$ is a birational isomorphism
onto a complete intersection $\barX \subset \PP^6$ of three quadrics which contains the plane~\mbox{$\barE \subset \barX$}.
By Corollary~\ref{cor:projection-birational} the projection 
\begin{equation*}
\pi_\barE \colon \barX \dashrightarrow \PP^3
\end{equation*}
is birational onto its image.
Therefore, $\barX$ is birational to $\PP^3$, hence it is $\kk$-rational and $X$ is $\kk$-rational as well.
\end{proof}

Finally, we consider the genus~10 case.

\begin{proposition}
\label{proposition:rationality-10}
Let $X$ be a prime Fano threefold of genus~$\g(X) = 10$.
If $X(\kk) \ne \varnothing$ and~$\rF_2(X)(\kk) \ne \varnothing$ then~$X$ is $\kk$-rational.
\end{proposition}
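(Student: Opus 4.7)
The plan is to follow the strategy outlined in the introduction: apply the Sarkisov link of Theorem~\ref{th:sl:point}\ref{th:sl:point:g=10} at a carefully chosen $\kk$-point $x \in X$, which transforms $X$ birationally into a sextic del Pezzo fibration $\sigma_+ \colon \tX^+ \to \PP^1$, and then invoke the rationality criterion \cite[Proposition~8]{Add-Has-Tsch-VA}, which demands a $\kk$-rational $3$-section and a $2$-section of such a fibration.

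Let $C \subset X$ denote the conic corresponding to the given $\kk$-point of $\rF_2(X)$. First I would dispose of the easy cases: if $C$ is non-reduced or has an irreducible component defined over $\kk$, then its underlying reduced scheme contains a $\kk$-line, and $\kk$-rationality follows from Lemma~\ref{corollary:rat:line}. So I may assume $C$ is reduced and its geometric components are permuted transitively by the Galois group (so $C_\bkk$ is either a smooth irreducible conic or the union of two Galois-conjugate lines meeting at a $\kk$-point). By Corollary~\ref{corollary:unirat} and Lemma~\ref{lemma:unirationality-point}, $X(\kk)$ is Zariski dense; since the set of points of $X$ lying on some line is the image of the incidence variety over the one-dimensional scheme $\rF_1(X)$ and hence is contained in a surface, and $C$ is a curve, I can pick a $\kk$-point $x \in X$ with $\rF_1(X,x) = \varnothing$ and $x \notin C$. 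The first of these conditions lets me apply Theorem~\ref{th:sl:point}\ref{th:sl:point:g=10} at $x$.

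With $x$ chosen, I extract two multi-sections of $\sigma_+$ defined over $\kk$. For the $3$-section I pick a general $\kk$-line $\ell \subset E \cong \PP^2_\kk$: by Theorem~\ref{th:sl:point}\ref{th:sl:point:g=10}, the rational map $\sigma_+ \circ \tpsi \colon \tX \dashrightarrow \PP^1$ is given by $|H - 3E|$ and is regular of degree $3$ on a general line in $E$, and since the flopping locus of $\tpsi$ (strict transforms of conics through $x$) meets $E$ only in finitely many points, $\tpsi|_\ell$ is an isomorphism onto a $\kk$-rational curve $\Sigma \subset \tX^+$ with $\sigma_+|_\Sigma$ of degree $3$. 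For the $2$-section, let $\tilde{C} \subset \tX$ be the strict transform of $C$; since $x \notin C$, the curve $\tilde{C}$ is disjoint from $E$ and is not among the flopping curves of $\tpsi$, so it is mapped isomorphically by $\tpsi$ to a $\kk$-subscheme $\tilde{C}^+ \subset \tX^+$ with
\[
\deg\bigl(\tilde{C}^+ \to \PP^1\bigr) \,=\, (H - 3E) \cdot \tilde{C} \,=\, H \cdot C - 3\, E \cdot \tilde{C} \,=\, 2 - 0 \,=\, 2,
\]
so $\tilde{C}^+$ is a $2$-section.

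Finally, applying \cite[Proposition~8]{Add-Has-Tsch-VA} to $\sigma_+$ with the data $(\Sigma, \tilde{C}^+)$ yields $\kk$-rationality of $\tX^+$, and hence of $X$. The main obstacle I expect is to match precisely the hypotheses of the Addington--Hassett--Tschinkel--V\'arilly-Alvarado criterion --- in particular verifying that $\sigma_+$ near its singular fibers and the sections $\Sigma$ and $\tilde{C}^+$ satisfy whatever regularity assumptions that result imposes (the rest of the argument being a density argument combined with a numerical intersection computation).
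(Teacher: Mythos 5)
Your overall strategy is the paper's: handle the degenerate conics via a $\kk$-line, choose a good $\kk$-point $x$ using density of $\kk$-points (Corollary~\ref{corollary:unirat}), apply Theorem~\ref{th:sl:point}\ref{th:sl:point:g=10}, and feed a $3$-section (a general line in $E$) and a $2$-section (the conic $C$) into \cite[Proposition~8]{Add-Has-Tsch-VA}. However, there is a genuine gap in how you produce the $2$-section. The condition $x \notin C$ only guarantees that $\tilde{C}$ is not itself a flopping curve; it does not guarantee that $\tilde{C}$ is \emph{disjoint} from the flopping locus. The flopping curves of $\tpsi$ are the strict transforms of conics through $x$, and they satisfy $(H-3E)\cdot C_{\mathrm{flop}} = -1$, so they lie in the base locus of the pencil $|H-3E|$ defining $\sigma_+\circ\tpsi$. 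If $C$ meets a conic passing through $x$, then $\tilde{C}$ meets the base locus, the restriction of the pencil to $\tilde{C}$ acquires base points, and the degree of $\tilde{C}^+$ over $\PP^1$ is strictly less than the intersection number $(H-3E)\cdot\tilde{C}=2$ (intersection numbers are not preserved through the flop for curves meeting the flopping locus). A drop to degree $1$ would still be harmless, but a drop to $0$ --- i.e.\ $\tilde{C}^+$ lying inside a single fiber --- destroys the $2$-section, so your degree computation is not justified as stated.

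The fix is exactly the extra condition imposed in the paper: choose $x$ so that, in addition to $\rF_1(X,x)=\varnothing$, the point $x$ lies on \emph{no conic intersecting $C$}. One must then justify that such $\kk$-points exist: conics meeting $C$ are parameterized by a curve --- by \cite[Lemma~4.2.6(i)]{IP} when $C$ is smooth, and by the last assertion of Theorem~\ref{th:sl:conic}\ref{th:sl:conic:g=10} when $C$ is the union of two conjugate lines (this is precisely why that statement is proved there) --- hence they sweep out only a surface, so the condition is open and dense, and Zariski density of $X(\kk)$ supplies the point. With this choice $\tilde{C}$ avoids the flopping locus entirely, the map given by $|H-3E|$ is regular of degree $2$ on $\tilde{C}$, and the rest of your argument goes through as in the paper.
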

\begin{proof}
Let $C$ be a conic on $X$ defined over~$\kk$.
If $C$ is a double line, or is reducible over~$\kk$, a component of $C$ is a $\kk$-line on~$X$, 
hence $X$ is $\kk$-rational by Lemma~\ref{corollary:rat:line}.

Assume now that $C$ is reduced and $\kk$-irreducible. 
Let us check that $X$ has a $\kk$-point $x$ such that
\begin{enumerate}
\item 
\mbox{$\rF_1(X,x) = \varnothing$}, 
\item 
$x$ does not lie on any conic intersecting~$C$.
\end{enumerate}
Indeed, (i) holds for a general point of~$X$ because $\dim (\rF_1(X)) = 1$.
Similarly, conics intersecting~$C$ are parameterized by a curve 
(see~\cite[Lemma~4.2.6(i)]{IP} for the case where $C$ is smooth 
and Theorem~\ref{th:sl:conic}\ref{th:sl:conic:g=10} for the case where $C$ is singular),
hence sweep a surface, hence~(ii) also holds for a general~$x$.
But $X$ is $\kk$-unirational by Corollary~\ref{corollary:unirat} since $X(\kk) \ne \varnothing$, 
therefore the set of $\kk$-points on~$X$ is Zariski dense, 
and so there is a $\kk$-point for which the conditions~(i) and~(ii) hold.

We choose such a point and apply Theorem~\ref{th:sl:point}\ref{th:sl:point:g=10}.
We conclude that $X$ is birational to a sextic del Pezzo fibration~\mbox{$\tX^+ \to X^+ = \PP^1$}.

By Theorem~\ref{th:sl:point}\ref{th:sl:point:g=10} the flopping locus of the flop~$\tpsi$ 
is the union of conics passing through~$x$, and none of these intersects $C$ by the condition~(ii) above.
Therefore, the conic $C$ does not intersect the flopping locus of~$\tpsi$.
Since the map $\tX^+ \dashrightarrow X^+$ is given by the linear system $|H - 3E|$ 
it follows that its restriction to the strict transform of~$C$ in~$\tX$ is regular and has degree~2.

Thus, $C$ provides a 2-section for the del Pezzo fibration $\sigma_+ \colon \tX^+ \to \PP^1$.
On the other hand, by Theorem~\ref{th:sl:point}\ref{th:sl:point:g=10} a general line in $E$ provides a 3-section for $\sigma_+$.
Therefore, $\tX^+$ is rational over~$\PP^1$ 
by~\cite[Proposition~8]{Add-Has-Tsch-VA}, 
hence it is $\kk$-rational, and hence $X$ is $\kk$-rational as well.
\end{proof}

\section{Obstructions to rationality}
\label{section:obstructions}

In the previous sections we showed that the criteria of Theorem~\ref{theorem:main}\ref{main:v4x16x18} 
for rationality of threefolds~$V_4$, $X_{16}$, and~$X_{18}$ are sufficient.
Starting from this section we prove they are also necessary.
Since the existence of a $\kk$-point is a necessary condition by Lemma~\ref{lemma:unirationality-point}, 
we only have to show that the existence of a line, a conic, and a cubic curve 
on the threefolds $V_4$, $X_{18}$, and $X_{16}$, respectively, are necessary conditions for rationality.
In this section we prove a general result of this sort, and in the remaining part of the paper 
we check that it applies to the varieties of our interest.

\subsection{Rationality criterion}
\label{subsection:obstruction}

In this section we state a general criterion of rationality.
We assume that $X$ is a smooth projective geometrically rationally connected threefold
and $H \in \Pic(X)$ is a very ample divisor class (defined over~$\kk$).
To state the criterion we need some notation.

Let $\Jac(X)$ denote the intermediate Jacobian of $X$ (we refer to~\cite[\S2]{BW} for details).
Recall that~$\Jac(X)$ is an abelian variety defined over~$\kk$ endowed with a natural principal polarization and with the Abel--Jacobi map 
\begin{equation*}
\AJ \colon \CH^2_{\mathrm{alg}}(X_\bkk) \larrow \Jac(X_\bkk) \cong \Jac(X)_\bkk
\end{equation*}
from the group of algebraically trivial codimension~2 cycles on $X_\bkk$ 
to the group of~$\bkk$-points of~$\Jac(X)$.

As before we denote by~$\rF_d(X)$ the Hilbert scheme of curves of degree~$d$ with respect to~$H$ 
and arithmetic genus~$0$ (see~\S\ref{subsection:Hilbert}). 
If~$\rF_d(X_\bkk)$ is connected, the cycles associated with all curves parameterized by~$\rF_d(X_\bkk)$ are algebraically equivalent,
hence for each curve $[C_0] \in \rF_d(X_{\bkk})$ the map~$\AJ$ induces the (shifted) Abel--Jacobi map 
\begin{equation*}
\AJ = \AJ_{C_0} \colon \rF_d(X_\bkk) \larrow \Jac(X_\bkk),
\qquad 
[C] \longmapsto \AJ([C] - [C_0]).
\end{equation*}
Changing the curve $C_0$ results in a composition of this map with a translation in $\Jac(X_\bkk)$.
Furthermore, by the universal property of the Albanese variety, the map $\AJ_{C_0}$ factors as the composition
\begin{equation}
\label{eq:alpha-d}
\rF_d(X_\bkk) \xrightarrow{\ \alb_{C_0}\ } \Alb(\rF_d(X_\bkk)) \xrightarrow{\ \alpha_d\ } \Jac(X_\bkk),
\end{equation}
where $\alpha_d$ is a morphism of abelian varieties that does not depend on the choice of $C_0$ and is Galois-invariant.
In particular, $\alpha_d$ is defined over~$\kk$ as a morphism $\alpha_d \colon \Alb(\rF_d(X)) \to \Jac(X)$.

\begin{definition}
\label{def:nice-degenerations}
We will say that the Hilbert scheme $\rF_d(X_\bkk)$ {\sf admits nice degenerations} if there is a point $[C] \in \rF_d(X_\bkk)$
such that every irreducible component $C' \subset C$ taken with its reduced scheme structure 
corresponds to a point of $\rF_{d'}(X_{\bkk})$ for some $d' < d$, i.e., $[C'_{\mathrm{red}}] \in \rF_{d'}(X_{\bkk})$.
\end{definition}

Now we can formulate the main result of this section.

\begin{theorem}
\label{theorem:obstruction-general}
Let $X$ be a smooth projective geometrically rationally connected $\kk$-variety with a very ample divisor class~$H \in \Pic(X)$.
Assume that the following conditions are satisfied for~$X_\bkk$:
\begin{itemize}
\item[(T)] 
There exists $\dtor \in \ZZ_{>0}$ such that the Abel--Jacobi map 
\begin{equation*}
\AJ \colon \rF_{\dtor}(X_\bkk) \larrow \Jac(X_\bkk)
\end{equation*}
is an isomorphism.
\item[(C)] 
There exists $\dc \in \ZZ_{>0}$ such that the Hilbert scheme $\rF_{\dc}(X_\bkk)$ 
admits a map~$\rF_{\dc}(X_\bkk) \to \Gamma_\bkk$ with rationally connected fibers 
to a curve $\Gamma_\bkk$ of genus $g(\Gamma_\bkk) > 1$, 
such that the induced map 
\begin{equation*}
\Pic^0(\Gamma_\bkk) \cong \Alb(\rF_{\dc}(X_\bkk)) \xrightarrow{\ \alpha_{\dc}\ } \Jac(X_\bkk)
\end{equation*}
is an isomorphism of principally polarized abelian varieties.
\item[(H)]
For each $1 \le d \le \max(\dc,\dtor)$ the Hilbert scheme $\rF_d(X_\bkk)$ is connected and when $d > 1$
it admits nice degenerations.
\item[(N)] 
The maximal divisor of $2g(\Gamma_\bkk) - 2$ coprime to $\dc$ divides $\dtor$.
\end{itemize}
If $X$ is $\kk$-rational then $\rF_{\dtor}(X)(\kk) \neq \varnothing$, 
i.e., $X$ contains a curve of degree~$\dtor$ and arithmetic genus~$0$ defined over~$\kk$.
\end{theorem}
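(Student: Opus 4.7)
The plan is to use the Benoist--Wittenberg theory to translate the hypothesis of $\kk$-rationality into the triviality of the $\Jac(X)$-torsor associated with $\rF_{\dtor}(X)$ via a congruence condition on the class of $\Pic^1(\Gamma)$ in $H^1(\kk,\Jac(X))$.

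First, for every $d$ with $1 \le d \le \max(\dc,\dtor)$ I would assign a class $[T_d] \in H^1(\kk,\Jac(X))$: by the connectedness in~(H), the Abel--Jacobi map identifies $\rF_d(X_\bkk)$ with a $\Jac(X)_\bkk$-torsor (up to translation), and Galois descent produces a $\Jac(X)$-torsor $T_d$ whose class is $[T_d]$. The nice degeneration hypothesis in~(H) permits, whenever $d = d_1 + d_2 \le \max(\dc,\dtor)$, a curve of degree $d$ to specialize to a union of curves of degrees~$d_1$ and~$d_2$; the additivity of Abel--Jacobi on cycle classes then forces
\begin{equation*}
[T_d] = [T_{d_1}] + [T_{d_2}] \quad\text{in } H^1(\kk,\Jac(X)),
\end{equation*}
and in particular $[T_d] = d \cdot [T_1]$ for every admissible~$d$.

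Next, I would invoke the main result of~\cite{BW2}: since $X$ is $\kk$-rational, there exists a smooth projective $\kk$-curve $\Gamma$ (a $\kk$-form of $\Gamma_\bkk$) together with an isomorphism $\Jac(X) \cong \Pic^0(\Gamma)$ of principally polarized abelian varieties (the principal polarization supplied by~(C) pins down $\Gamma$), such that every $\Jac(X)$-torsor $T_d$ is isomorphic to some $\Pic^{n_d}(\Gamma)$. Writing $[T_1] = [\Pic^m(\Gamma)]$ and using additivity gives $[T_d] = [\Pic^{md}(\Gamma)]$ for all admissible~$d$. On the other hand, the fibration $\rF_{\dc}(X_\bkk) \to \Gamma_\bkk$ from~(C) is Galois-equivariant, hence descends to a $\kk$-morphism $\rF_{\dc}(X) \to \Gamma$; composing with the Abel--Jacobi embedding $\Gamma \hookrightarrow \Pic^1(\Gamma)$ and comparing with the canonical torsor structure of $T_{\dc}$ yields $T_{\dc} \cong \Pic^1(\Gamma)$, whence $(m\dc - 1)\cdot[\Pic^1(\Gamma)] = 0$.

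Combining this with the classical relation $(2g(\Gamma) - 2)\cdot[\Pic^1(\Gamma)] = 0$ (coming from the canonical class, which trivializes the torsor $\Pic^{2g(\Gamma)-2}(\Gamma)$), the order of $[\Pic^1(\Gamma)]$ in $H^1(\kk,\Jac(X))$ divides
\begin{equation*}
D := \gcd\bigl(m\dc - 1,\ 2g(\Gamma) - 2\bigr).
\end{equation*}
Since $\gcd(m\dc - 1,\dc) = 1$, the integer $D$ is coprime to~$\dc$; as a divisor of $2g(\Gamma) - 2$ coprime to~$\dc$, hypothesis~(N) forces $D \mid \dtor$, hence $D \mid m\dtor$. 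Therefore $[T_{\dtor}] = [\Pic^{m\dtor}(\Gamma)] = 0$, and using~(T) to identify $\rF_{\dtor}(X)$ with the $\Jac(X)$-torsor $T_{\dtor}$ we conclude $\rF_{\dtor}(X)(\kk) \ne \varnothing$. The main obstacles I anticipate are the careful Galois-equivariant construction of $[T_d]$ and the proof of its additivity under nice degenerations (requiring a rigorous analysis of how Abel--Jacobi specializes on reducible cycles and of how torsor structures are inherited by limits), together with the descent of the fibration~$\rF_{\dc}(X_\bkk) \to \Gamma_\bkk$ to~$\kk$ and the identification $T_{\dc} \cong \Pic^1(\Gamma)$ over~$\kk$ through the principally polarized isomorphism in~(C).
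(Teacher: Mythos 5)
Your overall architecture is the same as the paper's: attach a $\Jac(X)$-torsor class to each $\rF_d$, prove $[T_d]=d[T_1]$ from the nice degenerations in~(H), get $[T_{\dc}]=[\Pic^1(\Gamma)]$ from the fibration in~(C), combine the Benoist--Wittenberg rationality theorem with the relations $(m\dc-1)[\Pic^1(\Gamma)]=0$ and $(2g-2)[\Pic^1(\Gamma)]=0$ and condition~(N) to kill $[T_{\dtor}]$, and finish with~(T). The numerical endgame is exactly the paper's argument.

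The genuine gap is in how you build the torsors. Your construction --- ``the Abel--Jacobi map identifies $\rF_d(X_\bkk)$ with a $\Jac(X)_\bkk$-torsor (up to translation), and Galois descent produces $T_d$'' --- is false as stated for $d\ne\dtor$: e.g.\ $\rF_{\dc}(X_\bkk)$ is a fibration with rationally connected fibers over a curve, not a form of $\Jac(X_\bkk)$, so there is nothing to descend on the Hilbert scheme itself. The paper sidesteps this entirely by using the Benoist--Wittenberg torsors $\Jac_\gamma(X)$ attached to Galois-invariant classes $\gamma\in\NS^2(X_\bkk)^{\Gal}$ (so the torsor lives on cycle classes, with the Hilbert scheme merely mapping to it), and the additivity you need is then formal from the universal property, see~\eqref{eq:jac-gamma-sum} and Lemma~\ref{lemma:jac-d}; note also that~(H) allows degenerations with multiple components, $[C]=\sum m_i[C_i]$, not just $d=d_1+d_2$. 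A second, smaller issue is your descent of the fibration: the map $\rF_{\dc}(X_\bkk)\to\Gamma_\bkk$ given by~(C) is not Galois-equivariant by fiat; the paper (Proposition~\ref{proposition:jac-dc}) descends it by observing it is the maximal rationally connected fibration, which is canonical and defined over~$\kk$, and this produces the $\kk$-curve $\Gamma$ with $\Jac(X)\cong\Pic^0(\Gamma)$ \emph{before} Theorem~\ref{theorem:bw} is invoked. Your ordering --- taking $\Gamma$ from the Benoist--Wittenberg theorem and then claiming the polarization ``pins down'' the same curve as the fibration base --- requires an extra Torelli-over-$\kk$ matching argument that you do not supply; running the MRC descent first, and feeding that curve into Theorem~\ref{theorem:bw} (which in the form used here takes the curve as input), removes the issue.
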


Note that all the conditions of the theorem are \emph{geometric}, i.e., they only depend on $X_\bkk$.

\subsection{Rationality criterion of Benoist--Wittenberg}

The proof of Theorem~\ref{theorem:obstruction-general} is based on the results of~\cite{BW2};
we briefly remind these results in this subsection.

Let $Y$ be a smooth projective $\kk$-variety of dimension~$n$.
For each $1 \le k \le n$ we consider the N\'eron--Severi group $\NS^k(Y_\bkk) = \CH^k(Y_\bkk) / \sim_{\mathrm{alg}}$
of algebraic equivalence classes of codimension~$k$ cycles on $Y_\bkk$ and the group~$\NS^k(Y_\bkk)^\Gal$
of Galois-invariant classes in $\NS^k(Y_\bkk)$.

If~$Y$ is geometrically connected
the degree map gives an isomorphism $\NS^n(Y_\bkk)^\Gal \cong \ZZ$,
and for each $d \in \ZZ$ there is a torsor $\Alb_d(Y)$ 
over the Albanese variety $\Alb(Y)$ and a Galois-invariant Albanese morphism 
\begin{equation*}
\alb_d \colon \CH^n_d(Y_\bkk) \larrow \Alb_d(Y)_\bkk
\end{equation*}
from the coset of 0-cycles of degree~$d$
which is universal among morphisms to torsors over abelian varieties.
In particular, for $d = 1$ we obtain the morphism
\begin{equation*}
\alb_1 \colon Y \larrow \Alb_1(Y).
\end{equation*}
Note also that when $Y = \Gamma$ is a geometrically connected curve, 
we have
\begin{equation}
\label{eq:alb-pic}
\Alb(\Gamma) \cong \Pic^0(\Gamma),
\qquad 
\Alb_d(\Gamma) \cong \Pic^d(\Gamma).
\end{equation} 

On the other hand, if~$X$ is a threefold then
for each class $\gamma \in \NS^2(X_\bkk)^\Gal$ Benoist and Wittenberg define 
a torsor $\Jac_\gamma(X)$ over the intermediate Jacobian $\Jac(X)$ and a Galois-invariant morphism 
\begin{equation*}
\AJ_\gamma \colon \CH^2_\gamma(X_\bkk) \larrow \Jac_\gamma(X)_\bkk
\end{equation*}
from the coset of codimension~2 cycles algebraically equivalent to $\gamma$,
so that the morphism $\AJ_\gamma$ is universal among morphisms to torsors over abelian varieties.

Note that by definition $\Jac_0(X) = \Jac(X)$, and for any $\gamma_1,\gamma_2 \in \NS^2(X_\bkk)^\Gal$ 
the universal property implies the equality
\begin{equation}
\label{eq:jac-gamma-sum}
[\Jac_{\gamma_1 + \gamma_2}(X)] = [\Jac_{\gamma_1}(X)] + [\Jac_{\gamma_2}(X)],
\end{equation} 
of $\Jac(X)$-torsor classes. 
Here for a torsor~$T$ over an abelian $\kk$-variety~$A$ we denote by~$[T]$ its class in the Weil--Ch\^atelet group~$H^1(\Gal,A)$ of~$A$-torsors.

The following result was proved by Benoist and Wittenberg.

\begin{theorem}[{\cite[Theorem~3.10]{BW2}}]
\label{theorem:bw}
Let $X$ be a smooth projective threefold over~$\kk$.
If~$X$ is~$\kk$-rational and there is an isomorphism~$\Jac(X) \cong \Pic^0(\Gamma)$ of principally polarized abelian varieties, 
where~$\Gamma$ is a curve of genus~$g > 1$ then for any~$\gamma \in \NS^2(X_\bkk)^\Gal$, 
there is an integer~$m$ such that there is an isomorphism
\begin{equation*}
\Jac_\gamma(X) \cong \Pic^{m}(\Gamma)
\end{equation*}
of torsors over $\Jac(X) \cong \Pic^0(\Gamma)$.
\end{theorem}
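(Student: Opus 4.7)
The plan is to compare~$X$ with~$\PP^3$ via weak factorization and to track the intermediate Jacobian and its torsors through a sequence of blowups with smooth centers, reducing everything to the trivial case of~$\PP^3$ and then matching~$\Gamma$ with one of the curves blown up along the way.

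First, invoke weak factorization (Abramovich-Karu-Matsuki-Wlodarczyk, applicable over any characteristic zero field) to write the given birational map~$X \dashrightarrow \PP^3$ as a zig-zag of blowups of smooth $\kk$-subvarieties and their inverses, passing through auxiliary smooth projective threefolds. In dimension three the centers are either $\kk$-points or smooth $\kk$-curves; by interposing further blowups if necessary, one may assume that each such curve is geometrically connected. Next, establish the blowup formula: for the blowup~\mbox{$\pi \colon Y \to X$} of a smooth geometrically connected $\kk$-curve~$C$ with exceptional divisor~$E$, there is a canonical decomposition \mbox{$\NS^2(Y_\bkk)^\Gal \cong \pi^* \NS^2(X_\bkk)^\Gal \oplus \ZZ \cdot [f]$}, where~$f$ is the class of a fiber of~$E \to C$, together with a $\kk$-isomorphism of principally polarized abelian varieties
\begin{equation*}
\Jac(Y) \cong \Jac(X) \times \Pic^0(C)
\end{equation*}
and an identification of torsors
\begin{equation*}
\Jac_{\pi^*\gamma + n[f]}(Y) \cong \Jac_\gamma(X) \times \Pic^n(C).
\end{equation*}
The analogous geometric statement is classical (Beauville-Murre); the $\kk$-level version follows from Galois-equivariance and the universal property characterizing $\Jac_\gamma$. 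Blowups of $\kk$-points leave~$\Jac$ and all its torsors unchanged.

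Iterating from~$\PP^3$, where~$\Jac(\PP^3) = 0$ and every torsor is trivial, one obtains an isomorphism of principally polarized abelian $\kk$-varieties, and of their torsors,
\begin{equation*}
\Jac(X) \cong \prod_j \Pic^0(C_j),
\qquad
\Jac_\gamma(X) \cong \prod_j \Pic^{m_j(\gamma)}(C_j),
\end{equation*}
where~$C_j$ ranges over the smooth geometrically connected $\kk$-curves blown up during the factorization and the integers~$m_j(\gamma)$ depend linearly on~$\gamma$. Since $g(\Gamma) > 1$, the principally polarized abelian variety~$\Pic^0(\Gamma)$ is indecomposable; by the Krull-Schmidt theorem for principally polarized abelian varieties applied to the hypothesized identification~$\Jac(X) \cong \Pic^0(\Gamma)$, there is a unique index~$j_0$ with~$\Pic^0(C_{j_0}) \cong \Pic^0(\Gamma)$ and the remaining factors contribute trivially (the other~$C_j$ being geometrically rational). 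Torelli's theorem --- functorial in the ground field and applicable to a $\kk$-rational identification --- promotes this to a $\kk$-isomorphism~$C_{j_0} \cong \Gamma$ up to hyperelliptic involution. Under this match the torsor~$\Jac_\gamma(X)$ corresponds to~$\Pic^{m_{j_0}(\gamma)}(\Gamma)$, giving the conclusion with $m := m_{j_0}(\gamma)$.

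The main obstacle is the $\kk$-descent of the blowup formula for torsors: while the decomposition of~$\Jac(Y)$ together with its principal polarization behavior is standard over~$\bkk$, promoting the torsor identification to~$\kk$ requires careful use of the universal property characterizing~$\Jac_\gamma$ and of its functoriality under blowup morphisms, and a check of Galois-equivariance at every step; in particular one must verify that the projection formulas relating $\NS^2(Y_\bkk)$ to $\NS^2(X_\bkk)$ and the fiber-class generator are~$\Gal$-equivariant. A secondary subtlety is applying Torelli in a way that descends the identification of~$\Gamma$ with~$C_{j_0}$ from~$\bkk$ to~$\kk$; this uses that the given isomorphism $\Jac(X) \cong \Pic^0(\Gamma)$ is already defined over~$\kk$, so the Torelli map over~$\kk$ automatically delivers a $\kk$-isomorphism of curves.
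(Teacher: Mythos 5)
The paper does not prove this statement: it is imported verbatim from Benoist--Wittenberg \cite[Theorem~3.10]{BW2}, so there is no internal proof to compare against. What you have written is essentially a reconstruction of their argument --- the Clemens--Griffiths method upgraded to non-closed fields and to the torsors $\Jac_\gamma$ --- and the skeleton (blowup formula for $\Jac$ and its torsors, unique decomposition of principally polarized abelian varieties, indecomposability of $\Pic^0(\Gamma)$ for $g>1$, Torelli over $\kk$) is the right one. The organizational difference is that you invoke weak factorization, whereas \cite{BW2} resolve the map $X \dashrightarrow \PP^3$ by Hironaka into a tower of smooth blowups $Y \to X$ with $Y \to \PP^3$ a birational \emph{morphism}, and prove a direct-factor statement for birational morphisms of smooth threefolds. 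Your zig-zag forces you to handle blow-\emph{down} steps, where you must cancel a known factor $\Pic^0(Z)$ from a product of principally polarized abelian varieties and simultaneously cancel the corresponding torsor factor $\Pic^n(Z)$; this is doable (match indecomposable factors, then push the torsor forward along the projection onto the complementary factor), but you do not address it, and as written ``iterating from $\PP^3$'' only accounts for the upward steps.

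Two further soft spots. First, ``interposing further blowups'' cannot make a center geometrically connected; but the reduction is unnecessary, since for a smooth $\kk$-irreducible center that is not geometrically connected $\Pic^0(C)$ is still a principally polarized abelian variety (a Weil restriction), is geometrically decomposable, and hence cannot be the factor matching the geometrically indecomposable $\Pic^0(\Gamma)$, so the argument proceeds unchanged. Second, and more seriously if this is meant to be self-contained: the torsor-level blowup formula $\Jac_{\pi^*\gamma + n[f]}(Y) \cong \Jac_\gamma(X) \times \Pic^n(C)$ over $\kk$ is the technical heart of the theorem and does not follow ``formally'' from the universal property --- it is precisely what Benoist and Wittenberg spend the bulk of their paper establishing (existence of $\Jac_\gamma$ as a $\kk$-torsor, functoriality under pushforward and pullback, compatibility with the geometric Abel--Jacobi map). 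As a proof sketch reducing to those functoriality results your proposal is sound; as a proof from scratch it asserts the hardest step.
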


In the next subsection we use this result to prove Theorem~\ref{theorem:obstruction-general}.

\subsection{Proof of Theorem~\ref{theorem:obstruction-general}}

To prove the theorem we need some preparations.

First, we derive some consequences of condition~(H).
Assume the Hilbert scheme $\rF_d(X)$ is geometrically connected.
Then all curves in $X$ parameterized by the scheme $\rF_d(X)$ are algebraically equivalent
and the class of these curves in $\NS^2(X_\bkk)$ is Galois-invariant.
We denote the corresponding torsor over $\Jac(X)$ simply by $\Jac_d(X)$.
The Abel--Jacobi map then restricts to the map 
\begin{equation*}
\AJ_d \colon \rF_d(X) \larrow \Jac_d(X).
\end{equation*}

\begin{lemma}
\label{lemma:jac-d}
Assume condition~\textnormal{(H)} of Theorem~\textup{\ref{theorem:obstruction-general}} is satisfied.
Then we have an equality of torsors over $\Jac(X)$
\begin{equation}
\label{eq:jac-d-mult:a}
[\Jac_d(X)] = d[\Jac_1(X)],
\end{equation}
for all $1 < d \le \max\{\dc,\dtor\}$.
\end{lemma}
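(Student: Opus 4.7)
The plan is to exploit the additivity~\eqref{eq:jac-gamma-sum}, which makes the assignment $\gamma \mapsto [\Jac_\gamma(X)]$ a group homomorphism from $\NS^2(X_\bkk)^\Gal$ to the Weil--Ch\^atelet group of $\Jac(X)$. Since $\rF_d(X_\bkk)$ is connected for every $1 \le d \le \max(\dc,\dtor)$ by condition~(H), all curves parameterized by this scheme have a common cycle class in $\NS^2(X_\bkk)$; denote it by $\gamma_d$. This class is Galois-invariant because $\rF_d(X)$ is defined over $\kk$, and by definition $\Jac_d(X) = \Jac_{\gamma_d}(X)$. It therefore suffices to prove the identity $\gamma_d = d \gamma_1$ in $\NS^2(X_\bkk)$ for every $1 \le d \le \max(\dc,\dtor)$, after which~\eqref{eq:jac-gamma-sum} applied $d-1$ times yields~\eqref{eq:jac-d-mult:a}.

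I would establish $\gamma_d = d\gamma_1$ by induction on $d$, the base case $d=1$ being tautological. For the inductive step $d > 1$ I would invoke the nice degeneration provided by condition~(H) and Definition~\ref{def:nice-degenerations}: there exists a point $[C] \in \rF_d(X_\bkk)$ whose reduced irreducible components $C'_i$ all correspond to points of $\rF_{d_i}(X_\bkk)$ for some $d_i < d$. Writing the associated cycle of the scheme $C$ as
\begin{equation*}
[C] = \sum_i m_i [C'_i],
\end{equation*}
where $m_i$ is the length of the local ring $\cO_{C,\eta_i}$ at the generic point $\eta_i$ of $C'_i$ (any embedded components of $C$ are $0$-dimensional and hence do not contribute to a codimension-$2$ cycle on the threefold $X$), the Hilbert-polynomial computation of $C$ in terms of its components forces $d = \sum_i m_i d_i$. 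Passing to $\NS^2(X_\bkk)$ and using that $\gamma_{d_i} = d_i \gamma_1$ by the inductive hypothesis (applicable since $d_i < d$), we obtain
\begin{equation*}
\gamma_d = \sum_i m_i \gamma_{d_i} = \sum_i m_i d_i \gamma_1 = d \gamma_1,
\end{equation*}
which closes the induction.

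The only subtle point is the identification of the scheme-theoretic cycle class of the nice degeneration as a nonnegative integral combination of the classes of its reduced components, but this is standard and hinges on working in codimension~$2$ on a threefold so that possible embedded components (which are $0$-dimensional) do not contribute. Everything else is purely formal, combining the Benoist--Wittenberg additivity~\eqref{eq:jac-gamma-sum} with the existence of sufficiently degenerate curves guaranteed by condition~(H); in particular no step requires geometric input beyond the hypotheses of Theorem~\ref{theorem:obstruction-general}.
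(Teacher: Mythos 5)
Your proof is correct and follows essentially the same route as the paper: both use the nice degenerations from condition~(H) to write the cycle class of a degenerate curve as $\sum m_i[C_i]$ with $\sum m_i d_i = d$, and then apply the additivity~\eqref{eq:jac-gamma-sum} repeatedly (the paper phrases the recursion as ``iterating this argument,'' which you have simply organized as an induction on $d$ at the level of $\NS^2$ classes). The extra care you take with multiplicities and embedded components is a harmless elaboration of what the paper leaves implicit.
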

\begin{proof}
If $d > 1$ and $\rF_d(X_{\bkk})$ admits nice degenerations, there is a point $[C] \in \rF_d(X_{\bkk})$ such that 
\begin{equation*}
C = C_1 \cup \dots \cup C_k,
\end{equation*}
where $C_i$ has multiplicity $m_i$ and $[(C_i)_{\mathrm{red}}] \in \rF_{d_i}(X_{\bkk})$ with $1 \le d_i < d$ and $\sum m_id_i = d$.
It follows that we have an equality of classes $[C] = \sum m_i[C_i]$ in $\NS^2(X_\bkk)^{\Gal}$, therefore 
\begin{equation*}
[\Jac_d(X)] = \sum m_i[\Jac_{d_i}(X)]
\end{equation*}
by~\eqref{eq:jac-gamma-sum}.
Iterating this argument, we eventually obtain~\eqref{eq:jac-d-mult:a}.
\end{proof}

On the other hand, we can use condition~(C) of Theorem~\ref{theorem:obstruction-general} 
to get a description of the torsor~$\Jac_{\dc}(X)$.

\begin{proposition}
\label{proposition:jac-dc}
Assume condition~\textnormal{(C)} of Theorem~\textup{\ref{theorem:obstruction-general}} is satisfied 
and the scheme~$\rF_{\dc}(X_\bkk)$ is connected.
Then there is a geometrically connected $\kk$-curve $\Gamma$ and isomorphisms
\begin{equation}
\label{eq:jac-pic-iso}
\Jac(X) \cong \Pic^0(\Gamma)
\qquad\text{and}\qquad
[\Jac_{\dc}(X)] = [\Pic^1(\Gamma)].
\end{equation} 
of principally polarized abelian varieties and torsors over them.
\end{proposition}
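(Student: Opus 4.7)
The fibration $f_\bkk\colon \rF_{\dc}(X_\bkk)\to \Gamma_\bkk$ provided by condition~(C) coincides with the maximal rationally connected fibration of $\rF_{\dc}(X_\bkk)$: its fibers are rationally connected by hypothesis and its base, being a curve of genus~$>1$, is not uniruled. By uniqueness of the MRC fibration up to birational equivalence together with its Galois equivariance, $f_\bkk$ descends to a $\kk$-morphism $f\colon \rF_{\dc}(X)\to \Gamma$, where $\Gamma$ is a smooth projective geometrically connected $\kk$-curve whose base change is identified with $\Gamma_\bkk$. Equivalently, one may take the Stein factorization of the morphism to the Albanese image, using that the Abel--Jacobi morphism $\Gamma_\bkk \hookrightarrow \Pic^0(\Gamma_\bkk)$ is a closed immersion since $g(\Gamma_\bkk)\ge 1$.

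\textbf{Step 2 (Principally polarized isomorphism).} The morphism $\alpha_{\dc}$ in \eqref{eq:alpha-d} is defined over~$\kk$ by its universal construction. Applying the Albanese functor to $f$ yields a $\kk$-morphism $\Alb(\rF_{\dc}(X))\to \Pic^0(\Gamma)$, which is an isomorphism over~$\bkk$ by condition~(C), hence an isomorphism over~$\kk$. Composing its inverse with $\alpha_{\dc}$ produces a $\kk$-morphism $\Pic^0(\Gamma)\to \Jac(X)$ which is an isomorphism of principally polarized abelian varieties over~$\bkk$; since both polarizations are themselves defined over~$\kk$, the same conclusion holds over~$\kk$, giving the first assertion of \eqref{eq:jac-pic-iso}.

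\textbf{Step 3 (Torsor identification).} The Abel--Jacobi morphism $\AJ_{\dc}\colon \rF_{\dc}(X)\to \Jac_{\dc}(X)$ is a $\kk$-morphism to a $\Jac(X)$-torsor. Since the fibers of $f$ are rationally connected, they are contracted by any morphism to a torsor under an abelian variety, so $\AJ_{\dc}$ factors as $\rF_{\dc}(X)\xrightarrow{f}\Gamma\xrightarrow{g}\Jac_{\dc}(X)$ for a unique $\kk$-morphism~$g$. By the universal property of Abel--Jacobi for the curve~$\Gamma$, the morphism $g$ factors as $\Gamma\xrightarrow{\AJ_1}\Pic^1(\Gamma)\xrightarrow{h}\Jac_{\dc}(X)$, where $h$ is a torsor morphism over the group isomorphism of Step~2; the degree index $1$ is forced because, over~$\bkk$ and up to translation, the map $g_\bkk$ equals the Abel--Jacobi embedding of $\Gamma_\bkk$ composed with the isomorphism $\Pic^1(\Gamma_\bkk)\cong \Jac_{\dc}(X_\bkk)$ induced by $\alpha_{\dc}$ (and $\alpha_{\dc}$ is an isomorphism, not a nontrivial isogeny). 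Any torsor morphism over a group isomorphism is itself an isomorphism, yielding $[\Jac_{\dc}(X)] = [\Pic^1(\Gamma)]$.

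\textbf{Main obstacle.} The most delicate step is Step~1: one has to ensure that the fibration really descends from $\bkk$ to $\kk$, which relies on the Galois-equivariance of the MRC quotient (or, alternatively, on the functoriality of Stein factorization combined with the fact that Abel--Jacobi embeds the curve into its Jacobian). Steps~2 and~3 are essentially formal once $\Gamma$ is constructed; a minor point in Step~3 is pinning down the degree index as~$1$, but this is immediate from the fact that $\alpha_{\dc}$ is the chosen identification of Albanese and intermediate Jacobian.
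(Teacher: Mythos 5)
Your proposal is correct and follows essentially the same route as the paper: descend the fibration via the Galois-equivariant uniqueness of the MRC quotient (using $g(\Gamma_\bkk)>1$ so the base is not uniruled and the normal curve $Z_\bkk\cong\Gamma_\bkk$), then factor $\AJ_{\dc}$ through $\Gamma\to\Pic^1(\Gamma)$ by the universal properties, concluding from the hypothesis that $\alpha_{\dc}$ is an isomorphism. The only cosmetic difference is that you spell out the degree-index bookkeeping in Step~3 and mention a Stein-factorization alternative, neither of which changes the argument.
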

\begin{proof}
Let $\xi \colon \rF_{\dc}(X) \to Z$ be a maximal rationally connected fibration of~$\rF_{\dc}(X)$,
which recall is defined over~$\kk$ and is unique up to birational equivalence~\cite[Theorem~IV.5.4]{Kollar96:curves}; 
in particular, one can assume~$Z$ to be normal.
Then~$\xi_\bkk \colon \rF_{\dc}(X_\bkk) \to Z_\bkk$ is a maximal rationally connected fibration for~$\rF_{\dc}(X_\bkk)$,
condition~\textnormal{(C)} implies that~$Z_\bkk$ is birational to~$\Gamma_\bkk$,
and since~$\Gamma_\bkk$ is a smooth curve, it follows that~$Z_\bkk \cong \Gamma_\bkk$.
Thus, the curve~$\Gamma_\bkk$ is actually defined over~$\kk$ and there is a morphism
\begin{equation*}
\xi \colon \rF_{\dc}(X) \larrow \Gamma
\end{equation*}
such that $\xi_\bkk$ coincides with the morphism in condition~(C).
The curve $\Gamma$ is geometrically connected since~$\Gamma_\bkk$ is connected.

Furthermore, $\rF_{\dc}(X)$ is geometrically connected, so $\Jac_{\dc}(X)$ is well-defined.
Consider the Abel--Jacobi map $\AJ_{\dc} \colon \rF_{\dc}(X) \to \Jac_{\dc}(X)$.
Since the fibers of~$\xi$ are geometrically rationally connected, it factors as a composition
\begin{equation*}
\rF_{\dc}(X) \xrightarrow{\ \xi\ } \Gamma \larrow \Jac_{\dc}(X).
\end{equation*}
By the universal property of the Albanese map and~\eqref{eq:alb-pic}, the second map factors as 
\begin{equation*}
\Gamma \xrightarrow{\ \alb_1\ } \Pic^1(\Gamma) \larrow \Jac_{\dc}(X).
\end{equation*}
Over $\bkk$ the second map induces the morphism~$\alpha_{\dc}$ of the corresponding abelian varieties.
So, if~$\alpha_{\dc}$ is an isomorphism, 
the corresponding abelian varieties $\Pic^0(\Gamma)$ and $\Jac(X)$ are isomorphic
and the above map $\Pic^1(\Gamma) \to \Jac_{\dc}(X)$ is an isomorphism of torsors.
\end{proof}

Finally, we rewrite the numerical condition~(N).

\begin{lemma}
\label{lemma:condition-n}
Assume the condition~\textnormal{(N)} of Theorem~\textup{\ref{theorem:obstruction-general}} is satisfied.
Then for any integer $m \in \ZZ$ the product $m\dtor$ is divisible by $\gcd(m\dc - 1, 2g(\Gamma_\bkk) - 2)$.
\end{lemma}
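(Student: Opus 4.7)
The plan is to reduce the divisibility claim to a clean number-theoretic observation about the prime factorization of $2g(\Gamma_\bkk)-2$ relative to~$\dc$. Set $g := g(\Gamma_\bkk)$ and $D := \gcd(m\dc - 1,\, 2g - 2)$. The goal is to show $D \mid m\dtor$; in fact I expect to prove the stronger statement~$D \mid \dtor$, and then $D \mid m\dtor$ follows for free.

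First I would verify that $D$ is coprime to~$\dc$: if a prime~$p$ divided both~$D$ and~$\dc$, then from $p \mid D \mid m\dc - 1$ and $p \mid m\dc$ one obtains $p \mid 1$, a contradiction. Second, since $D$ divides $2g - 2$ and is coprime to $\dc$, it divides the maximal divisor of $2g - 2$ that is coprime to $\dc$ (namely the part of $2g - 2$ supported on primes not dividing~$\dc$). Third, by condition~(N) this maximal coprime divisor divides~$\dtor$, hence $D \mid \dtor$, and therefore $D \mid m\dtor$.

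The argument is entirely elementary number theory, so there is no serious obstacle; the only point that requires a moment's care is the first step (coprimality of~$D$ and~$\dc$), which uses that $m\dc - 1 \equiv -1 \pmod{p}$ for any prime $p \mid \dc$ to exclude common prime factors. Everything else is formal, and the factorization $2g - 2 = N_1 \cdot N_2$, with $N_1$ built from primes dividing~$\dc$ and $N_2$ coprime to~$\dc$, makes condition~(N) read exactly as $N_2 \mid \dtor$, from which the divisibility $D \mid \dtor$ is immediate.
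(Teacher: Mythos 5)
Your proposal is correct and follows essentially the same route as the paper: observe that $D=\gcd(m\dc-1,\,2g(\Gamma_\bkk)-2)$ is coprime to $\dc$ because it divides $m\dc-1$, hence it divides the maximal divisor of $2g(\Gamma_\bkk)-2$ coprime to $\dc$, which by condition~(N) divides $\dtor$, so $D\mid\dtor\mid m\dtor$. The paper's proof is just a terser version of exactly this argument.
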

\begin{proof}
Denote $h = \gcd(m\dc - 1, 2g(\Gamma_\bkk) - 2)$. 
Since $h$ divides $m\dc - 1$, it is coprime to $\dc$.
By condition~(N) then $h$ divides $\dtor$, and a fortiori divides $m\dtor$.
\end{proof}

A simple exercise left to the reader is to show that the statement of Lemma~\ref{lemma:condition-n} 
is equivalent to condition~(N) of Theorem~\ref{theorem:obstruction-general}.

Now we are ready to prove Theorem~\ref{theorem:obstruction-general}.

\begin{proof}[Proof of Theorem~\textup{\ref{theorem:obstruction-general}}]
By Proposition~\ref{proposition:jac-dc} we have an isomorphism $\Jac(X) \cong \Pic^0(\Gamma)$ 
of principally polarized abelian varieties.
So if $X$ is rational over~$\kk$ then by Theorem~\ref{theorem:bw} we have an isomorphism of torsors
\begin{equation*}
[\Jac_1(X)] = [\Pic^m(\Gamma)] = m[\Pic^1(\Gamma)]
\end{equation*}
for some integer~$m$. 
By Lemma~\ref{lemma:jac-d} we deduce 
\begin{equation}
\label{eq:jac-d}
[\Jac_d(X)] = md[\Pic^1(\Gamma)]
\end{equation}
for any $d \le \max(\dtor,\dc)$.
Furthermore, by Proposition~\ref{proposition:jac-dc} we have isomorphisms~\eqref{eq:jac-pic-iso}.
Combining the second of them with~\eqref{eq:jac-d} for $d = \dc$, we deduce
\begin{equation*}
(m\dc - 1)[\Pic^1(\Gamma)] = 0.
\end{equation*}
On the other hand, the canonical class of the curve $\Gamma$ is defined over $\kk$, 
hence $\Pic^{2g(\Gamma_\bkk) - 2}(\Gamma)$ has $\kk$-rational points, and so
\begin{equation*}
(2g(\Gamma_\bkk) - 2)[\Pic^1(\Gamma)] = 0.
\end{equation*}
Therefore, the greatest common divisor of $m\dc - 1$ and $2g(\Gamma_\bkk) - 2$ kills the class of the torsor~$\Pic^1(\Gamma)$.
By Lemma~\ref{lemma:condition-n} we conclude that
\begin{equation*}
m\dtor [\Pic^1(\Gamma)] = 0.
\end{equation*}
Using~\eqref{eq:jac-d} for $d = \dtor$ we deduce that $\Jac_{\dtor}(X)$ is a trivial $\Jac(X)$-torsor, 
hence has a $\kk$-rational point.
Finally, condition~(T) implies that the Abel--Jacobi map $\rF_{\dtor}(X) \to \Jac_{\dtor}(X)$ is an isomorphism,
hence $\rF_{\dtor}(X)(\kk) \ne \varnothing$.
\end{proof}

\subsection{Rationality criteria for Fano threefolds}

Below, as promised, we apply Theorem~\ref{theorem:obstruction-general} to Fano threefolds $V_4$, $X_{18}$, and $X_{16}$.
The following lemma verifies the main part of condition~(H).
Definitely, the same is true in a much larger generality, but we restrict to the cases we really need.

\begin{lemma}
\label{lemma:condition-h}
If~$X = V_4$ is a quartic del Pezzo threefold then~$\rF_1(X)$ is connected and~$\rF_2(X)$ admits nice degenerations.
If~$X$ is a prime Fano threefold of genus~$9$ or~$10$ then~$\rF_1(X)$ is connected while~$\rF_2(X)$ and~$\rF_3(X)$ admit nice degenerations.
\end{lemma}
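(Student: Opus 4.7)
The plan is to verify each assertion of the lemma by invoking the classical geometric descriptions of the Hilbert schemes of lines on these Fano threefolds, and by exhibiting explicit reducible curves witnessing the nice-degeneration property.

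For connectedness of $\rF_1(X)$, I would appeal to the standard descriptions. For $X = V_4$, the scheme $\rF_1(X_\bkk)$ is isomorphic to an abelian surface, namely the Jacobian of the genus~$2$ hyperelliptic curve parameterizing singular quadrics in the pencil defining~$X$. For the prime Fano threefold of genus~$9$, $\rF_1(X_\bkk)$ is a smooth connected curve of genus~$3$; for the prime Fano threefold of genus~$10$, $\rF_1(X_\bkk)$ is a smooth connected curve of genus~$2$. In all three cases connectedness is part of the classical description, see e.g.~\cite[\S4]{IP} and~\cite{KPS}.

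For the nice-degeneration property of $\rF_2(X_\bkk)$, the plan is to exhibit a reducible conic of the form $C = L_1 \cup L_2$, where $L_1, L_2 \subset X_\bkk$ are distinct lines meeting at one point. Such a pair exists because in each of the three cases lines on $X_\bkk$ sweep out at least a divisor, so one can find a point lying on two distinct lines. The scheme-theoretic union $L_1 \cup L_2$ has Hilbert polynomial $2t+1$ and hence defines a point of $\rF_2(X_\bkk)$; its two reduced irreducible components are lines, i.e.\ points of $\rF_1(X_\bkk)$, which verifies Definition~\ref{def:nice-degenerations}. For the nice-degeneration property of $\rF_3(X_\bkk)$ in the genus $9$ and $10$ cases, I would similarly exhibit a cubic of the form $C \cup L$, where $C$ is a smooth conic in $X_\bkk$ and $L$ is a line meeting $C$ transversely at one point; such a curve has Hilbert polynomial $3t+1$ and its reduced components lie in $\rF_1(X_\bkk) \cup \rF_2(X_\bkk)$, as required.

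The main obstacle is the verification that the required intersecting configurations of lines and conics actually exist on $X_\bkk$. In all three cases this will follow from the standard facts that lines (respectively, conics) on these Fano threefolds sweep out at least a divisor (respectively, the whole threefold), together with a straightforward dimension count in the universal family of lines or the incidence variety of lines and conics. The only mildly delicate point is ensuring that for $\rF_3$ one can choose the line $L$ so that it meets the chosen conic $C$ in exactly one point and not along a component; this is automatic because a conic on $X_\bkk$ is irreducible of degree~$2$ while a line has degree~$1$, so they cannot share a component, and a transverse meeting at a single point is achieved by any generic pair with non-empty intersection.
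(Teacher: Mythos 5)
Your treatment of $V_4$ and of the nice degenerations of $\rF_3$ is essentially the paper's argument, but the two remaining points contain genuine gaps. First, connectedness of $\rF_1(X)$ for genus $9$ and $10$ is not ``part of the classical description,'' and the descriptions you invoke are not correct: $\rF_1(X_\bkk)$ need not be smooth (it is singular exactly when $X$ carries a special line with $\cN_{L/X}\cong\cO(1)\oplus\cO(-2)$), and the curves of genus $3$ and $2$ you mention are the curves $\Gamma$ governing the Hilbert schemes of \emph{conics} (Theorems~\ref{theorem:x16-f2} and~\ref{theorem:x18-f2}), not the Hilbert schemes of lines. For genus $9$ connectedness of $\rF_1$ is a theorem of Iliev that must be cited as such, and for genus $10$ no reference exists: the paper proves it by a nontrivial chain of arguments --- the isomorphism $\rF_2(X)\cong\Jac(X)$ (Corollary~\ref{corollary:x18-conics}), ampleness of every effective divisor on the (general) abelian surface $\rF_2(X)$ applied to the locus $\Delta$ of singular conics, a count of connected components of the double cover $\tilde\Delta\to\Delta$ which would be at least four if $\rF_1$ were disconnected, and a specialization argument to pass from general to arbitrary $X$. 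None of this is replaced by your appeal to a classical description, so this part of your proof is missing.

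Second, your existence argument for singular conics on $X_{16}$ and $X_{18}$ does not go through as stated: knowing that lines sweep out a surface does not imply that some point lies on two distinct lines (a ruled surface swept by a one-parameter family of lines generally has exactly one line of the family through each of its points), and in the degenerate situation where $\rF_1(X)$ is singular it may be that the only ``singular'' conics available are non-reduced double lines. The paper's argument is precisely designed around this: if $\rF_1(X)$ is singular, a special line carries a non-reduced conic structure (which is itself a nice degeneration); if $\rF_1(X)$ is smooth, the map $p\colon\cC_1(X)\to X$ from the ruled universal family cannot be a closed embedding because a smooth divisor in $X$ has nonnegative Kodaira dimension, and since there are no special lines this forces two distinct lines of the family to meet. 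You need an argument of this kind (or some substitute) to produce the reducible or non-reduced conic; the dimension count you propose does not suffice. A minor additional point: for $\rF_3$, transversality of $C\cap L$ is not ``automatic from not sharing a component''; what one actually uses is that $C\cup L$ is not contained in a plane (since $X$ is an intersection of quadrics containing no planes), which forces the intersection to have length $1$ and hence $p_a(C\cup L)=0$.
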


\begin{proof}
We may assume that the base field is algebraically closed.

If~$X = V_4$ then~$\rF_1(X)$ is an abelian surface (Theorem~\ref{theorem:v4-f1}), in particular it is connected.
Moreover, $\rF_1(X,x)$ is a finite scheme of length~4 (\cite[Remark~2.2.7]{KPS}) for each point~$x \in X$; 
choosing in it a subscheme of length~2 we obtain a singular conic on~$X$ (with singularity at~$x$).
This shows that~$\rF_2(X)$ admits nice degenerations.

From now on let~$X = X_{16}$ or~$X = X_{18}$ and let us show that~$\rF_2(X)$ admits nice degenerations, 
i.e., that there are singular or non-reduced conics on~$X$.

If the Hilbert scheme of lines $\rF_1(X)$ is singular, there is a \emph{special line}, i.e., a line~\mbox{$L \subset X$} 
such that $\cN_{L/X} \cong \cO(1) \oplus \cO(-2)$ (\cite[Corollary~2.1.6, proof of Proposition~2.2.8]{KPS}).
Then~$L$ admits a structure of a non-reduced conic (see~\cite[Remark~2.1.2]{KPS}).

So, assume that~$\rF_1(X)$ is smooth.
Let us show that for any connected component of~$\rF_1(X)$ there are pairs of distinct lines parameterized by it which intersect 
(hence form a singular conic).
Indeed, the natural morphism~$p \colon \cC_1(X) \to X$ from the corresponding component of the universal line cannot be a closed embedding, 
because~$\cC_1(X)$ is a smooth ruled surface, while any smooth divisor in~$X$ has nonnegative Kodaira dimension.
Therefore, either there is a pair of meeting lines parameterized by the component of~$\rF_1(X)$, or there is a special line in~$X$.
As we assumed~$\rF_1(X)$ to be smooth, the latter case is impossible, hence the former case takes place.

Now let us show that~$\rF_3(X)$ admits nice degenerations.
Let~$C$ be a general conic on~$X$.
Then~$C$ is irreducible by~\cite[Lemma~2.3.3]{KPS}.
Let~$L$ be a line on~$X$ intersecting~$C$ 
(it exists because lines on~$X$ sweep a surface, which is ample, because~\mbox{$\uprho(X) = 1$}).
The union~$C \cup L$ is a degenerate cubic curve.
It is not contained in a plane (because~$X$ is an intersection of quadrics and contains no planes) 
hence its arithmetic genus is~$0$, 
and so it corresponds to a point of~$\rF_3(X)$.

It remains to prove that~$\rF_1(X)$ is connected.
If~$X = X_{16}$ connectedness of~$\rF_1(X)$ is proved in~\cite[Corollary~5.1.b]{I03} and for~$X = X_{18}$ we argue as follows. 

First, $\rF_2(X) \cong \Jac(X)$, see Corollary~\ref{corollary:x18-conics}.
The intermediate Jacobian~$\Jac(X)$ is an abelian surface, which is general when~$X$ is general
(in fact, \cite[Theorem~4.3.7(ii) and Remark~4.3.9]{IP} show that for each curve~$\Gamma$ of genus~2 there is~$X$ with~$\Jac(X) \cong \Pic^0(\Gamma)$),
in particular, its N\'eron--Severi group has rank~1, hence any effective divisor on~$\rF_2(X)$ is ample, hence connected.

We apply this to the divisor~$\Delta \subset \rF_2(X)$ that parameterizes singular conics 
(it is nonempty, because~$\rF_2(X)$ admits nice degenerations).
Let~$\tilde\Delta \to \Delta$ be the 
double covering that parameterizes pairs~$(L,C)$ of a line and a conic on~$X$ such that~$L \subset C$.
Since~$\Delta$ is connected, $\tilde\Delta$ has at most two connected components.

On the other hand, consider the natural morphism
\begin{equation*}
\tilde\Delta \longrightarrow \rF_1(X) \times \rF_1(X),
\qquad 
(L,C) \longmapsto (L,C \setminus L).
\end{equation*}
If~$\rF_1(X)$ has at least two connected components~$\rF'_1(X)$ and~$\rF''_1(X)$ 
then~$\tilde\Delta$ should have at least four connected components ---
the preimages of 
\begin{equation*}
\rF'_1(X) \times \rF'_1(X),
\qquad 
\rF''_1(X) \times \rF''_1(X),
\qquad 
\rF'_1(X) \times \rF''_1(X),
\qquad\text{and}\qquad 
\rF''_1(X) \times \rF'_1(X).
\qquad 
\end{equation*}
Indeed, the preimage of the first two of the above products is nonempty 
because by the argument above for each component of~$\rF_1(X)$ either there is a pair of meeting lines parameterized by this component
or a special line also parameterized by this component.
Similarly, the preimage of the last two of the above products is nonempty because any line from one component of~$\rF_1(X)$ 
intersects the other since~$\uprho(X) = 1$.
This contradiction finishes the proof for general~$X$.
Finally, for arbitrary~$X$ we conclude by the argument of~\cite[Lemma~2.13]{DK}.
\end{proof}

Now we are ready to deduce Theorem~\ref{theorem:main}\ref{main:v4}--\ref{main:x18} from Theorem~\ref{theorem:obstruction-general}.
For the reader's convenience we collect the parameters $\dtor$, $\dc$, and $2g(\Gamma_\bkk) - 2$ in a table.
\begin{equation*}
\begin{array}{c|cccc}
& \dtor & \dc & 2g(\Gamma_\bkk) - 2 \\
\hline
V_4 	& 1 & 2 & 2\\
X_{18} 	& 2 & 3 & 2\\
X_{16} 	& 3 & 2 & 4
\end{array}
\end{equation*}
This should facilitate verification of the numerical condition~(N) of Theorem~\ref{theorem:obstruction-general}.

\begin{corollary}
\label{corollary:v4-rat}
Let $X = V_4$ be a quartic del Pezzo threefold.
Then~$X$ is $\kk$-rational if and only if~$X(\kk) \ne \varnothing$ and~$X$ contains a line defined over~$\kk$, i.e., $\rF_1(X)(\kk) \ne \varnothing$.
\end{corollary}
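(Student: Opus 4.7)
The plan is to combine the sufficient direction, which is already available, with an application of the general rationality obstruction of Theorem~\ref{theorem:obstruction-general} for the necessary direction. Concretely, if $X(\kk) \ne \varnothing$ and $\rF_1(X)(\kk) \ne \varnothing$, then $X$ is $\kk$-rational by Theorem~\ref{theorem:v4-unirational}. This handles the ``if'' direction with essentially no work.

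For the converse, assume $X$ is $\kk$-rational. Then $X(\kk) \ne \varnothing$ by Lemma~\ref{lemma:unirationality-point}. To produce a $\kk$-line, I will apply Theorem~\ref{theorem:obstruction-general} with parameters $\dtor = 1$ (so that $\rF_{\dtor}(X) = \rF_1(X)$) and $\dc = 2$. The numerical condition~(N) is immediate: with $2g(\Gamma_\bkk) - 2 = 2$ and $\dc = 2$, the maximal divisor of $2$ coprime to $\dc$ is $1$, which clearly divides $\dtor = 1$. Condition~(H) (connectedness of~$\rF_1(X_\bkk)$ and nice degenerations of~$\rF_2(X_\bkk)$) is supplied by Lemma~\ref{lemma:condition-h}. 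The conclusion of the theorem then gives $\rF_1(X)(\kk) \ne \varnothing$.

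The remaining verifications are the geometric conditions~(T) and~(C), and I would defer these to the later Section~\ref{section:sods} (specifically~\S\ref{subsection:v4}), where they are the main content. For~(T), one uses the classical fact that $\rF_1(V_4)_{\bkk}$ is an abelian surface and that the Abel--Jacobi map identifies it with the intermediate Jacobian $\Jac(X_\bkk)$. For~(C), one uses the well-known description of $\rF_2(V_4)_\bkk$ as a $\PP^1$-bundle (up to the Abel--Jacobi structure) over a smooth genus~2 curve $\Gamma_\bkk$, together with the fact that the induced morphism $\Pic^0(\Gamma_\bkk) \to \Jac(X_\bkk)$ is an isomorphism \emph{as principally polarized abelian varieties}.

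The main obstacle is not in the short deduction above, but in securing the isomorphism of condition~(C) as an isomorphism of \emph{principally polarized} abelian varieties rather than merely as an isogeny or abstract isomorphism --- Theorem~\ref{theorem:bw} of Benoist--Wittenberg requires the polarization. Achieving this compatibility is the reason the paper develops the semiorthogonal-decomposition approach of Section~\ref{section:sod-aj}, via Proposition~\ref{proposition:aj-c2-e}, and I would simply cite those results here, leaving the detailed verification for~$V_4$ to the later section.
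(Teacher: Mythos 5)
Your proposal is correct and follows essentially the same route as the paper: the ``if'' direction via Theorem~\ref{theorem:v4-unirational}, and the ``only if'' direction by applying Theorem~\ref{theorem:obstruction-general} with $\dtor=1$, $\dc=2$, verifying (N) and (H) directly and deferring (T) and (C) to the results of \S\ref{subsection:v4} (Theorem~\ref{theorem:v4-f2}, Corollaries~\ref{corollary:v4-lines} and~\ref{corollary:v4-conics}), which indeed rest on Proposition~\ref{proposition:aj-c2-e} to get the isomorphism of \emph{principally polarized} abelian varieties. The only slip is that $\rF_2(V_4)_\bkk$ is a $\PP^3$-bundle over $\Gamma_\bkk$, not a $\PP^1$-bundle, but this is immaterial since condition~(C) only needs rationally connected fibers.
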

\begin{proof}
For the ``if'' direction see Theorem~\ref{theorem:v4-unirational}.
For the ``only if'' direction we apply Theorem~\ref{theorem:obstruction-general}, so we must verify all its conditions.

By Theorem~\ref{theorem:v4-f2} there is a~$\PP^3$-fibration~$\rF_2(X_\bkk) \to \Gamma_\bkk$ 
to a curve of genus~$g(\Gamma_\bkk) = 2$.
The induced Abel--Jacobi map is an isomorphism by Corollary~\ref{corollary:v4-conics},
so condition~(C) holds with~$\dc = 2$.
Furthermore, by Corollary~\ref{corollary:v4-lines} condition~(T) holds for $\dtor = 1$.
In particular, both schemes~$\rF_1(X_\bkk)$ and~$\rF_2(X_\bkk)$ are connected.
Moreover, $\rF_2(X_\bkk)$ admits nice degenerations by Lemma~\ref{lemma:condition-h}, hence condition~(H) holds.
Finally, condition~(N) is evident.
Applying Theorem~\ref{theorem:obstruction-general} we conclude that~$\rF_1(X)(\kk) \ne \varnothing$, 
hence~$X$ contains a line defined over~$\kk$.
Finally, we have~$X(\kk) \ne \varnothing$ by Lemma~\ref{lemma:unirationality-point}.
\end{proof}

\begin{corollary}
\label{corollary:x16-rat}
Let $X = X_{16}$ be a prime Fano threefold of genus~$9$.
Then~$X$ is $\kk$-rational if and only if~$\rF_3(X)(\kk) \ne \varnothing$.
\end{corollary}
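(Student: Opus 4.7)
The plan is to apply Theorem~\ref{theorem:obstruction-general} to $X = X_{16}$. The ``if'' direction is already Proposition~\ref{proposition:rationality-9}, so only the ``only if'' direction needs proof. The relevant parameters will be $\dtor = 3$, $\dc = 2$, and $\Gamma_\bkk$ a curve of genus $3$ (so $2\g(\Gamma_\bkk) - 2 = 4$).

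First I would verify condition~(C). Using the description of $\rF_2(X_{16,\bkk})$ from \S\ref{subsection:genus-9}, which is based on Mukai's realization of a genus~9 Fano threefold as a linear section of the symplectic Grassmannian and the associated semiorthogonal decomposition of $\Db(X_{16,\bkk})$, I would exhibit a $\PP^n$-bundle structure $\rF_2(X_{16,\bkk}) \to \Gamma_\bkk$ over a smooth curve of genus~$3$. The Abel--Jacobi map then factors through $\Pic^0(\Gamma_\bkk) \to \Jac(X_{16,\bkk})$, and I would invoke Proposition~\ref{proposition:aj-c2-e} from Section~\ref{section:sod-aj} to conclude that this induced morphism is an isomorphism of principally polarized abelian varieties. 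Next, I would verify condition~(T) by citing Theorem~\ref{theorem:x16-f3}, which identifies $\rF_3(X_{16,\bkk})$ with $\Jac(X_{16,\bkk})$ via the Abel--Jacobi map.

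Condition~(H) is straightforward: connectedness of $\rF_1(X_{16,\bkk})$ is part of Lemma~\ref{lemma:condition-h}, connectedness of $\rF_2(X_{16,\bkk})$ follows from the fibration structure over the irreducible curve $\Gamma_\bkk$, and $\rF_3(X_{16,\bkk}) \cong \Jac(X_{16,\bkk})$ is an abelian variety, hence connected. Nice degenerations of $\rF_2(X_{16,\bkk})$ and $\rF_3(X_{16,\bkk})$ are provided by Lemma~\ref{lemma:condition-h}. For condition~(N), the maximal divisor of $2\g(\Gamma_\bkk) - 2 = 4$ coprime to $\dc = 2$ is~$1$, which trivially divides $\dtor = 3$.

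Once all four conditions are in place, Theorem~\ref{theorem:obstruction-general} yields $\rF_3(X)(\kk) \ne \varnothing$, completing the proof. The real work is not in this corollary itself but in the inputs it requires: the hard part will be establishing Theorem~\ref{theorem:x16-f3}, i.e., identifying $\rF_3(X_{16,\bkk})$ with the intermediate Jacobian via the Abel--Jacobi map. This is exactly the ``completely new'' description emphasized in the introduction, and it must be obtained via a careful analysis of moduli of sheaves coming from the relevant component of the semiorthogonal decomposition of $\Db(X_{16,\bkk})$; the fact that this identification is induced by the Abel--Jacobi map (and not merely an abstract isomorphism) is precisely what makes Theorem~\ref{theorem:obstruction-general} applicable.
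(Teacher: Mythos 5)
Your proposal is correct and follows essentially the same route as the paper: the ``if'' direction is Proposition~\ref{proposition:rationality-9}, and the ``only if'' direction verifies conditions (C), (T), (H), (N) of Theorem~\ref{theorem:obstruction-general} with $\dc = 2$, $\dtor = 3$, $\g(\Gamma_\bkk) = 3$, using the $\PP^1$-fibration $\rF_2(X_\bkk) \to \Gamma_\bkk$ of Theorem~\ref{theorem:x16-f2}, the identification of $\rF_3(X_\bkk)$ via Theorem~\ref{theorem:x16-f3}, and Lemma~\ref{lemma:condition-h}, exactly as in the paper. The only slight citation imprecision is that for conditions (C) and (T) the statements you actually need are Corollaries~\ref{corollary:x16-conics} and~\ref{corollary:x16-cubics}, obtained from Corollaries~\ref{corollary:fd-gamma} and~\ref{corollary:fd-pic} (via Proposition~\ref{proposition:aj-cd}), which upgrade the abstract isomorphism of Proposition~\ref{proposition:aj-c2-e} to one induced by the Abel--Jacobi map of the universal curve---a point you yourself flag as essential.
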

\begin{proof}
For the ``if'' direction see Proposition~\ref{proposition:rationality-9}.
For the ``only if'' direction we apply Theorem~\ref{theorem:obstruction-general}, so we must verify all its conditions.

By Theorem~\ref{theorem:x16-f2} there is a $\PP^1$-fibration $\rF_2(X_\bkk) \to \Gamma_\bkk$ to a curve of genus $g(\Gamma_\bkk) = 3$
and by Corollary~\ref{corollary:x16-conics} the induced Abel--Jacobi map is an isomorphism, so condition~(C) holds with~\mbox{$\dc = 2$}.
Furthermore, by Corollary~\ref{corollary:x16-cubics} condition~(T) holds for $\dtor = 3$.
In particular, both schemes~$\rF_2(X_\bkk)$ and $\rF_3(X_\bkk)$ are connected.
Furthermore, $\rF_1(X_\bkk)$ is connected and $\rF_2(X_\bkk)$ and $\rF_3(X_\bkk)$ admit nice degenerations 
by Lemma~\ref{lemma:condition-h}, hence condition~(H) holds.
Finally, condition~(N) is evident.
Applying Theorem~\ref{theorem:obstruction-general} we conclude that $\rF_3(X)(\kk) \ne \varnothing$.
\end{proof}

\begin{corollary}
\label{corollary:x18-rat}
Let $X = X_{18}$ be a prime Fano threefold of genus~$10$.
Then~$X$ is $\kk$-rational if and only if~$X(\kk) \ne \varnothing$ and~$X$ contains a conic defined over~$\kk$, i.e., $\rF_2(X)(\kk) \ne \varnothing$.
\end{corollary}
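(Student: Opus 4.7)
The plan is to follow the same template as in Corollaries~\ref{corollary:v4-rat} and~\ref{corollary:x16-rat}. The ``if'' direction is already done: it is precisely the content of Proposition~\ref{proposition:rationality-10}. So the real work lies in the ``only if'' direction, which we deduce by applying Theorem~\ref{theorem:obstruction-general} with the parameters $\dtor = 2$, $\dc = 3$, and target curve~$\Gamma_\bkk$ of genus~$g(\Gamma_\bkk) = 2$ as recorded in the table preceding Corollary~\ref{corollary:v4-rat}.

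Concretely, I will verify each of the four conditions in turn. For condition~(C), I will invoke Theorem~\ref{theorem:x18-f3}, which supplies a rationally connected fibration $\rF_3(X_\bkk) \to \Gamma_\bkk$ onto a curve of genus~$2$, together with the corresponding statement identifying the induced Abel--Jacobi map $\Pic^0(\Gamma_\bkk) \to \Jac(X_\bkk)$ as an isomorphism of principally polarized abelian varieties. For condition~(T), I will cite Corollary~\ref{corollary:x18-conics}, which states that $\rF_2(X_\bkk) \cong \Jac(X_\bkk)$ via the Abel--Jacobi map. In particular, both $\rF_2(X_\bkk)$ and $\rF_3(X_\bkk)$ are connected, and the connectedness of $\rF_1(X_\bkk)$ together with the fact that $\rF_2(X_\bkk)$ and $\rF_3(X_\bkk)$ admit nice degenerations is supplied by Lemma~\ref{lemma:condition-h}; this gives condition~(H). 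Finally, condition~(N) is trivial here: since $2g(\Gamma_\bkk) - 2 = 2$ is coprime to $\dc = 3$, its maximal such divisor equals~$2 = \dtor$.

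With all four conditions in place, Theorem~\ref{theorem:obstruction-general} produces a conic on $X$ defined over $\kk$, i.e., $\rF_2(X)(\kk) \ne \varnothing$. The existence of a $\kk$-point on $X$ then follows from Lemma~\ref{lemma:unirationality-point} applied to the $\kk$-rational variety~$X$. I do not expect any serious obstacle in this proof: all substantive inputs (the $\PP^1$-structure on $\rF_3(X_\bkk)$, the Abel--Jacobi identifications, the connectedness and nice degeneration properties) have been established in the referenced theorems, and the numerical condition is essentially automatic. The only point requiring minimal care is tracking that the fibration in condition~(C) for $X_{18}$ is indeed over a genus~$2$ curve (consistent with $\Jac(X)$ being an abelian surface for prime Fano threefolds of genus~$10$), which is precisely what Theorem~\ref{theorem:x18-f3} provides.
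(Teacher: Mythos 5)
Your proposal is correct and follows the paper's own proof essentially verbatim: ``if'' via Proposition~\ref{proposition:rationality-10}, ``only if'' via Theorem~\ref{theorem:obstruction-general} with conditions (C), (T), (H), (N) supplied by Theorem~\ref{theorem:x18-f3}, Corollary~\ref{corollary:x18-cubics}, Corollary~\ref{corollary:x18-conics}, Lemma~\ref{lemma:condition-h}, and the evident arithmetic, then Lemma~\ref{lemma:unirationality-point} for the $\kk$-point. Only two cosmetic slips: the Abel--Jacobi isomorphism for cubics is Corollary~\ref{corollary:x18-cubics} rather than part of Theorem~\ref{theorem:x18-f3}, and $\rF_3(X_\bkk)$ is a $\PP^2$-bundle (not $\PP^1$) over $\Gamma_\bkk$ --- neither affects the argument.
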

\begin{proof}
For the ``if'' direction see Proposition~\ref{proposition:rationality-10}.
For the ``only if'' direction we apply Theorem~\ref{theorem:obstruction-general}, so we must verify all its conditions.

By Theorem~\ref{theorem:x18-f3} there is a $\PP^2$-fibration $\rF_3(X_\bkk) \to \Gamma_\bkk$ to a curve of genus $g(\Gamma_\bkk) = 2$
and by Corollary~\ref{corollary:x18-cubics} the induced Abel--Jacobi map is an isomorphism, so condition~(C) holds with~\mbox{$\dc = 3$}.
Furthermore, by Corollary~\ref{corollary:x18-conics} condition~(T) holds for $\dtor = 2$.
In particular, both schemes $\rF_2(X_\bkk)$ and $\rF_3(X_\bkk)$ are connected.
Furthermore, $\rF_1(X_\bkk)$ is connected and $\rF_2(X_\bkk)$ and $\rF_3(X_\bkk)$ admit nice degenerations 
by Lemma~\ref{lemma:condition-h}, hence condition~(H) holds.
Finally, condition~(N) is evident.
Applying Theorem~\ref{theorem:obstruction-general} we conclude that $\rF_2(X)(\kk) \ne \varnothing$, 
i.e., $X$ contains a conic defined over~$\kk$.
Finally, we have~$X(\kk) \ne \varnothing$ by Lemma~\ref{lemma:unirationality-point}.
\end{proof}

\section{Semiorthogonal decompositions and Abel--Jacobi maps}
\label{section:sod-aj}

To prove the structural results for the Hilbert schemes $\rF_d(X)$ of Fano threefolds $X$ used
in Corollaries~\ref{corollary:v4-rat}, \ref{corollary:x16-rat} and~\ref{corollary:x18-rat} 
we will use the technique of derived categories.
In this section we prove several useful results that allow to deduce the required properties of Abel--Jacobi maps
from semiorthogonal decompositions.
In this section we work over an algebraically closed field~$\kk$ (of zero characteristic).

\subsection{Null-algebraic correspondences}

We will need some results about Chow groups.

\begin{definition}
\label{def:nil}
Let $X_1$ and $X_2$ be smooth projective varieties.
A cycle $\alpha \in \CH^p_\QQ(X_1 \times X_2)$ is a~{\sf null-algebraic correspondence}
if for all $i$ it induces zero maps
\begin{equation*}
\CH^i_{\alg,\QQ}(X_1) \xrightarrow{\ \alpha\ } \CH^{i + p - \dim X_1}_{\alg,\QQ}(X_2)
\qquad\text{and}\qquad 
\CH^i_{\alg,\QQ}(X_2) \xrightarrow{\ \alpha\ } \CH^{i + p - \dim X_2}_{\alg,\QQ}(X_1)
\end{equation*}
on algebraically trivial cycles with rational coefficients.
\end{definition}

We denote by 
\begin{equation}
\label{eq:nil}
\CH^\bullet_{\nil,\QQ}(X_1 \times X_2) \subset \CH^\bullet_\QQ(X_1 \times X_2)
\end{equation}
the graded subgroup of null-algebraic correspondences.
The following property is evident.

\begin{lemma}
\label{lemma:ch-nil}
Let $X_i$, $X'_i$ be smooth projective varieties.
\begin{enumerate}
\item 
\label{nil:ideal}
Let $\alpha_{12} \in \CH^\bullet_\QQ(X_1 \times X_2)$ and $\alpha_{23} \in \CH^\bullet_\QQ(X_2 \times X_3)$.
If either $\alpha_{12} \in \CH^\bullet_{\nil,\QQ}(X_1 \times X_2)$ or~$\alpha_{23} \in \CH^\bullet_{\nil,\QQ}(X_2 \times X_3)$ then
\begin{equation*}
p_{13*}(p_{12}^*\alpha_{12} \cdot p_{23}^*\alpha_{23}) \in \CH^\bullet_{\nil,\QQ}(X_1 \times X_3),
\end{equation*}
where $p_{ij} \colon X_1 \times X_2 \times X_3 \to X_i \times X_j$ are the projections.
\item 
\label{nil:pb-pf}
Let $\alpha \in \CH^\bullet_{\nil,\QQ}(X_1 \times X_2)$, $\alpha' \in \CH^\bullet_{\nil,\QQ}(X'_1 \times X'_2)$;
then for any morphisms $f_i \colon X'_i \to X_i$ 
\begin{equation*}
(f_1 \times f_2)^*\alpha \in \CH^\bullet_{\nil,\QQ}(X'_1 \times X'_2)
\qquad\text{and}\qquad 
(f_1 \times f_2)_*\alpha' \in \CH^\bullet_{\nil,\QQ}(X_1 \times X_2).
\end{equation*}
\item 
\label{nil:tp}
The map $\CH^\bullet_\QQ(X_1) \otimes \CH^\bullet_\QQ(X_2) \to \CH^\bullet_\QQ(X_1 \times X_2)$ 
factors through $\CH^\bullet_{\nil,\QQ}(X_1 \times X_2)$.
\end{enumerate}
In other words, the subgroups $\CH^\bullet_{\nil,\QQ} \subset \CH^\bullet_\QQ$ form an ideal with respect to the convolution product 
which is closed under pullbacks and pushforwards with respect to morphisms of factors and contains all decomposable classes. 
\end{lemma}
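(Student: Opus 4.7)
The plan is to reduce all three items to two standard facts about rational Chow groups of smooth projective varieties: (a) the subgroup $\CH^\bullet_{\alg,\QQ}$ is preserved by pushforward along projective morphisms, by pullback, and by intersection with arbitrary cycle classes; (b) every algebraically trivial zero-cycle has degree zero, since algebraic equivalence refines numerical equivalence. Combined with the projection formula and the identification of convolution with composition of correspondences, these are enough to handle all three statements.

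For part~\ref{nil:tp}, I would unwind the action of a decomposable class $\alpha = p_1^*\beta_1 \cdot p_2^*\beta_2$ directly. The projection formula gives
\[
\alpha_*\gamma = p_{2*}\bigl(p_1^*(\gamma \cdot \beta_1)\bigr) \cdot \beta_2,
\]
and the pushforward $p_{2*}p_1^*$ vanishes unless $\gamma \cdot \beta_1$ is a zero-cycle, in which case it returns $\deg(\gamma \cdot \beta_1)\,[X_2]$. For $\gamma \in \CH^\bullet_{\alg,\QQ}(X_1)$ the product $\gamma \cdot \beta_1$ is algebraically trivial by~(a), so its degree vanishes by~(b) and hence $\alpha_*\gamma = 0$. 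The symmetric argument handles the action from $X_2$ to $X_1$.

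For part~\ref{nil:ideal} I would invoke the classical identification $\bigl(p_{13*}(p_{12}^*\alpha_{12} \cdot p_{23}^*\alpha_{23})\bigr)_* = (\alpha_{23})_* \circ (\alpha_{12})_*$, together with the analogous formula for pullback actions: if $\alpha_{12}$ is null-algebraic the composition dies after the first step, while if only $\alpha_{23}$ is null-algebraic, fact~(a) ensures that $(\alpha_{12})_*$ still carries algebraically trivial cycles into algebraically trivial cycles, which are then killed by $(\alpha_{23})_*$. For part~\ref{nil:pb-pf}, the commutative squares $p_j \circ (f_1 \times f_2) = f_j \circ p'_j$ and the projection formula yield
\[
\bigl((f_1 \times f_2)_*\alpha'\bigr)_* = f_{2*} \circ \alpha'_* \circ f_1^*,
\qquad
\bigl((f_1 \times f_2)^*\alpha\bigr)_* = f_2^* \circ \alpha_* \circ f_{1*},
\]
and the same principle applies, since each individual pullback and pushforward preserves $\CH^\bullet_{\alg,\QQ}$. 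The whole argument is formal and no step is genuinely hard; the only care needed is in bookkeeping the dimensions in the degree-zero step of part~\ref{nil:tp} and in verifying the two correspondence-composition identities cleanly.
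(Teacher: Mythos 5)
Your proposal is correct and follows essentially the same route as the paper: convolution is composition of correspondences, any correspondence preserves algebraically trivial classes, and pullback/pushforward along $f_1\times f_2$ amounts to pre- and post-convolution with graphs (your two displayed identities are exactly Lieberman's lemma, which is what the paper invokes implicitly). The only cosmetic difference is in part \ref{nil:tp}, where you argue directly via the projection formula and the vanishing of degrees of algebraically trivial zero-cycles, whereas the paper deduces it from \ref{nil:ideal} by viewing $\beta_1\boxtimes\beta_2$ as a convolution of correspondences through $\Spec(\kk)$ — the two arguments are equivalent.
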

\begin{proof}
Convolution product of cycles corresponds to composition of maps induced by correspondences.
If one of the maps is zero on algebraically trivial cycles then so is the composition.
This proves~(i).
Furthermore, pullback and pushforward of correspondences with respect to morphisms of factors
can be rewritten as their convolutions with the graphs of the morphisms, so~(i) implies~(ii).
Finally, the tensor product map can be rewritten as the convolution of correspondences to and from $\Spec(\kk)$,
so~(i) also implies~(iii).
\end{proof}

We will also need the following simple observation.

\begin{lemma}
\label{lemma:nil-x-gamma}
Let $\Gamma$ be a smooth projective curve and let $X$ be a smooth projective rationally connected threefold.
Then
\begin{enumerate}
\item 
\label{nil:gamma-gamma}
$\CH^p_{\nil,\QQ}(\Gamma \times \Gamma) = \CH^p_{\QQ}(\Gamma \times \Gamma)$ for $p \ne 1$;
\item 
\label{nil:gamma-x}
$\CH^p_{\nil,\QQ}(\Gamma \times X) = \CH^p_{\QQ}(\Gamma \times X)$ for $p \ne 2$;
\item 
\label{nil:x-x}
$\CH^p_{\nil,\QQ}(X \times X) = \CH^p_{\QQ}(X \times X)$ for $p \ne 3$.
\end{enumerate}
\end{lemma}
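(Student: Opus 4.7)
The plan is to reduce the lemma to a dimension count, using the fact that for each of the two types of factors in question, the groups $\CH^\bullet_{\alg,\QQ}$ are concentrated in a single codimension, so that for degree reasons a correspondence of the ``wrong'' codimension cannot act nontrivially on algebraically trivial cycles.

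First I would record the two concentration statements. For the curve $\Gamma$ it is standard that $\CH^i_{\alg,\QQ}(\Gamma)$ vanishes for $i\ne 1$, while $\CH^1_{\alg,\QQ}(\Gamma) = \Jac(\Gamma)_\QQ$. For the rationally connected threefold $X$ over an algebraically closed field of characteristic zero I would check that $\CH^i_{\alg,\QQ}(X) = 0$ for $i\ne 2$: the cases $i=0$ is trivial; $\CH^1_{\alg,\QQ}(X) = \Pic^0(X)_\QQ$ vanishes because $H^1(X,\cO_X)=0$ for a rationally connected variety; and $\CH^3_{\alg,\QQ}(X)=0$ because any two points of~$X$ are connected by a chain of rational curves, hence are rationally equivalent. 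This is the only non-formal input, and it is essentially the single place I would need to cite the literature (e.g.\ general results on $\CH_0$ of rationally connected varieties).

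Next I would plug these into the definition~\eqref{eq:nil}. A cycle $\alpha \in \CH^p_\QQ(X_1\times X_2)$ induces maps
\begin{equation*}
\CH^i_{\alg,\QQ}(X_1) \larrow \CH^{i+p-\dim X_1}_{\alg,\QQ}(X_2),
\qquad
\CH^i_{\alg,\QQ}(X_2) \larrow \CH^{i+p-\dim X_2}_{\alg,\QQ}(X_1),
\end{equation*}
and $\alpha$ can fail to be null-algebraic only when, for some~$i$, both source and target are allowed to be nonzero by the concentration statements above. In case~\ref{nil:gamma-gamma} this forces $i=1$ and $i+p-1=1$, i.e.\ $p=1$. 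In case~\ref{nil:gamma-x} both admissible actions force $p=2$ (either $i=1$ from the curve with target in codimension~$2$ on~$X$, or $i=2$ from~$X$ with target in codimension~$1$ on the curve). In case~\ref{nil:x-x} the action between codimension-$2$ cycles forces $p=3$. Hence for all other values of~$p$ the induced maps are automatically zero, so $\alpha$ lies in $\CH^p_{\nil,\QQ}$, proving the three equalities.

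There is no real obstacle here: once the concentration of $\CH^\bullet_{\alg,\QQ}$ in a single degree on each factor is in place, the lemma is a bookkeeping exercise in the definition of null-algebraicity. The only step I would have to be a bit careful about is justifying $\CH^3_{\alg,\QQ}(X) = 0$ for a rationally connected threefold, but this is classical.
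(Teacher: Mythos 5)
Your proposal is correct and follows exactly the paper's argument: the paper's proof likewise notes that $\CH^i_{\alg,\QQ}(\Gamma)=0$ for $i\ne 1$ and $\CH^i_{\alg,\QQ}(X)=0$ for $i\ne 2$ (by rational connectedness) and then concludes by degree reasons. You simply spell out the concentration statements and the bookkeeping in a little more detail than the paper does.
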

\begin{proof}
Clearly, $\CH^i_{\alg,\QQ}(\Gamma) = 0$ for $i \ne 1$.
Furthermore, since $X$ is rationally connected, we have~$\CH^i_{\alg,\QQ}(X) = 0$ for $i \ne 2$.
Now all equalities follow by degree reasons.
\end{proof}

\subsection{From semiorthogonal decompositions to intermediate Jacobians}

Now let $X$ be a smooth projective threefold.
Assume there is a semiorthogonal decomposition
\begin{equation}
\label{eq:sod-general}
\Db(X) = \langle \Phi_\cE(\Db(\Gamma)), E_1, \dots, E_n \rangle,
\end{equation} 
where 
$\Gamma$ is a smooth projective curve, 
$E_1,\dots,E_n$ is an exceptional collection in $\Db(X)$,
and
\begin{equation*}
\Phi_\cE \colon \Db(\Gamma) \larrow \Db(X)
\end{equation*}
is a fully faithful Fourier--Mukai functor with kernel $\cE \in \Db(X \times \Gamma)$.
We consider the Chern classes of $\cE$ as elements of the Chow ring $\CH^\bullet(X \times \Gamma)$,
and the Chern character as an element of~$\CH^\bullet_\QQ(X \times \Gamma)$.
Note that $\rc_2(\cE)$ gives maps $\CH_\alg^i(X) \to \CH_\alg^{i-1}(\Gamma)$ and $\CH_\alg^j(\Gamma) \to \CH_\alg^{j+1}(X)$.

\begin{proposition}
\label{proposition:aj-c2-e}
Let $X$ be a smooth projective rationally connected threefold.
If there is a semiorthogonal decomposition~\eqref{eq:sod-general} then the maps 
\begin{equation}
\xymatrix@1@C=5em{\CH_\alg^2(X)\ \ar@<.3ex>[r]^{\rc_2(\cE)} & 
\ \CH_\alg^1(\Gamma) \ar@<.3ex>[l]^{\rc_2(\cE)} }
\end{equation} 
induce an isomorphism $\Jac(X) \cong \Pic^0(\Gamma)$ of principally polarized abelian varieties.
\end{proposition}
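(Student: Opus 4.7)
The strategy is to exploit the semiorthogonal decomposition~\eqref{eq:sod-general} to obtain a decomposition of the diagonal class $[\Delta_X] \in \CH^3_\QQ(X \times X)$ into projectors corresponding to each component of the decomposition, and then to show that modulo null-algebraic correspondences only the projector for $\Phi_\cE(\Db(\Gamma))$ survives, and this projector is expressed through the class $\rc_2(\cE)$.

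More precisely, the plan is as follows. First, the decomposition~\eqref{eq:sod-general} yields a decomposition of the identity kernel
\begin{equation*}
[\cO_{\Delta_X}] = [\cE * \cE^R] + \sum_{i=1}^{n} [E_i \boxtimes E_i^{R}]
\end{equation*}
in $\Db(X \times X)$, where $\cE^R$ is the kernel of the right adjoint to $\Phi_\cE$ and $E_i^R$ is the right dual. Applying the Chern character and Grothendieck--Riemann--Roch one obtains the corresponding identity
\begin{equation*}
[\Delta_X] = \rch(\cE * \cE^R)\cdot \rtd(X) + \sum_{i=1}^{n} \rch(E_i)\boxtimes \rch(E_i^R)\cdot \rtd(X)
\end{equation*}
in $\CH^\bullet_\QQ(X \times X)$. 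Each exceptional summand is decomposable (a product of pullbacks from the two factors), hence lies in $\CH^\bullet_{\nil,\QQ}(X \times X)$ by Lemma~\ref{lemma:ch-nil}\ref{nil:tp}. Thus $[\Delta_X] \equiv \rch(\cE * \cE^R)\cdot \rtd(X) \pmod{\CH^\bullet_{\nil,\QQ}(X \times X)}$.

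Second, one unpacks the convolution $\cE * \cE^R$ via the projection formula on $X \times \Gamma \times X$ and uses Lemma~\ref{lemma:nil-x-gamma}\ref{nil:gamma-x} to argue that modulo $\CH^\bullet_{\nil,\QQ}(X \times \Gamma)$ only the codimension-$2$ part of $\rch(\cE)$ contributes, and this part equals $-\rc_2(\cE)$ modulo further decomposable (hence null-algebraic) terms involving $\rc_1(\cE)^2$. The Todd class corrections in GRR are all pullbacks from individual factors and therefore enter as decomposable classes; combined with Lemma~\ref{lemma:ch-nil}\ref{nil:ideal}, \ref{nil:pb-pf} they do not affect the identity modulo null-algebraic correspondences. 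A parallel analysis for $\rch(\cE^R)$, which equals $\rch(\cE^\vee)$ up to pullback and shift, gives the same reduction to $\rc_2(\cE)$. Putting everything together:
\begin{equation*}
[\Delta_X]_{\CH^3} \equiv \rc_2(\cE) \circ \rc_2(\cE)^t \pmod{\CH^\bullet_{\nil,\QQ}(X \times X)},
\end{equation*}
where the right-hand side denotes the convolution of $\rc_2(\cE) \in \CH^2(X \times \Gamma)$ with its transpose $\rc_2(\cE)^t \in \CH^2(\Gamma \times X)$. This means that the composition $\CH^2_\alg(X) \xrightarrow{\rc_2(\cE)} \CH^1_\alg(\Gamma) \xrightarrow{\rc_2(\cE)} \CH^2_\alg(X)$ acts as the identity on algebraically trivial cycles, hence on $\Jac(X)$. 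Applying the same argument to the identity $[\cE^R * \cE] = [\cO_{\Delta_\Gamma}]$ (which follows from full-faithfulness of $\Phi_\cE$) and Lemma~\ref{lemma:nil-x-gamma}\ref{nil:gamma-gamma} shows that the opposite composition is the identity on $\Pic^0(\Gamma)$. Thus $\rc_2(\cE)$ induces mutually inverse isomorphisms of abelian varieties between $\Jac(X)$ and $\Pic^0(\Gamma)$.

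Finally, for compatibility of the polarizations one observes that the principal polarization on $\Jac(X)$ is induced by the cup-product pairing on $H^3(X,\QQ)$ and the principal polarization on $\Pic^0(\Gamma)$ by the cup-product pairing on $H^1(\Gamma,\QQ)$. The identity $[\Delta_X] \equiv \rc_2(\cE) \circ \rc_2(\cE)^t$ modulo null-algebraic correspondences, when passed to cohomology, shows that the transcendental part of $H^3(X,\QQ)$ is identified with the transcendental part of $H^1(\Gamma,\QQ)$ via pullback along $\rc_2(\cE)$, and that this identification is an isometry with respect to the intersection pairings, since the diagonal realizes the self-intersection pairing and the convolution preserves it. I expect the main technical obstacle of the plan to be the careful bookkeeping in the second step, namely verifying that the GRR Todd corrections truly contribute only null-algebraic terms under the convolution on $X \times \Gamma \times X$, since one has to be precise about which pieces of $\rch(\cE)$ and $\rch(\cE^R)$ combine to produce the codimension-$3$ diagonal contribution modulo $\CH^\bullet_\nil$; the polarization compatibility then follows from this refined comparison.
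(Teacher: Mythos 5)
Your proposal follows essentially the same route as the paper: decompose the (Chern character of the) diagonal using the semiorthogonal decomposition, discard the exceptional summands and the Todd/lower-degree corrections as null-algebraic correspondences via Lemmas~\ref{lemma:ch-nil} and~\ref{lemma:nil-x-gamma}, use full faithfulness for the identity on~$\Gamma\times\Gamma$, and deduce polarization compatibility from the self-adjointness of the correspondence~$\rc_2(\cE)$ with respect to the intersection pairing. The only discrepancy is a sign: the shift~$[1]$ in the right adjoint kernel~$\cE^\vee(K_\Gamma)[1]$ together with~$\rch_2 \equiv -\rc_2$ makes the two compositions equal to~$-\id$ rather than~$\id$, which is immaterial for the conclusion.
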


\begin{remark}
Note that the same result for Chow groups with \emph{rational} coefficients 
(and as a consequence an \emph{isogeny} between the intermediate Jacobian of~$X$ and the Jacobian of~$\Gamma$) 
can be deduced from the theory of noncommutative motives \cite[Theorem~1.4]{BT16}.
However, it is crucial for our applications to have a result on integral level.
\end{remark}

\begin{proof}
As we already mentioned, the class $\rc_2(\cE) \in \CH^2(X \times \Gamma)$ 
defines maps between $\CH_\alg^1(\Gamma)$ and~$\CH_\alg^2(X)$ in both directions.
By the universal property of abelian varieties $\Jac(X)$ and $\Pic^0(\Gamma)$ these maps induce morphisms
\begin{equation*}
\xymatrix@1@C=5em{\Jac(X)\ \ar@<.3ex>[r]^{\rc_2(\cE)} & 
\ \Pic^0(\Gamma) \ar@<.3ex>[l]^{\rc_2(\cE)} }
\end{equation*} 
We will check that the compositions of these two are the minus identity maps, i.e., that the maps 
\begin{align*}
\rc_2(\cE) \circ \rc_2(\cE) + \id &\colon \CH^2_\alg(X) \to \CH^2_\alg(X)
&&\text{and} && 
\CH^1_\alg(\Gamma) \to \CH^1_\alg(\Gamma)
\intertext{factor through torsion, i.e., that the induced maps on Chow groups with \emph{rational coefficients}}
\rc_2(\cE) \circ \rc_2(\cE) + \id &\colon \CH^2_{\alg,\QQ}(X) \to \CH^2_{\alg,\QQ}(X)
&&\text{and} &&
\CH^1_{\alg,\QQ}(\Gamma) \to \CH^1_{\alg,\QQ}(\Gamma)
\end{align*}
are zero.

To prove this, note that full faithfulness of the functor $\Phi_\cE$ 
implies that $\Phi_\cE^! \circ \Phi_{\cE} \cong \Phi_{\cO_{\Delta_\Gamma}}$, 
where~$\Phi_\cE^! \cong \Phi_{\cE^\vee(K_\Gamma)[1]}$ is the right adjoint functor of $\Phi_\cE$.
Computing the composition of Fourier--Mukai functors in the left side of this isomorphism, we obtain an isomorphism
\begin{equation*}
p_{13*}(p_{12}^*(\cE^\vee(K_\Gamma)[1]) \otimes p_{23}^*(\cE)) \cong \cO_{\Delta_\Gamma}
\end{equation*}
in $\Db(\Gamma \times \Gamma)$, where the maps~$p_{ij}$ are the projections 
\begin{equation*}
\vcenter{\xymatrix{
& \Gamma \times X \times \Gamma \ar[dl]_{p_{12}} \ar[dr]^{p_{23}} \ar[d]^{p_{13}}
\\
\Gamma \times X &
\Gamma \times \Gamma &
X \times \Gamma
}}
\end{equation*}
and all functors (here and further on) are derived.
This implies equality of Chern characters of both sides.
Using Grothendieck--Riemann--Roch to compute the left side, we obtain
\begin{equation*}
\label{eq:left-sde:2}
- \rch(\cE) \circ (\Delta_*\rtd_X) \circ \rch(\cE^\vee(K_\Gamma)) \
= \rch(\cO_{\Delta_\Gamma}),
\end{equation*}
where the left-hand-side is the convolution of correspondences.
Using Lemma~\ref{lemma:nil-x-gamma} and Lemma~\ref{lemma:ch-nil}\ref{nil:tp}, we see that modulo null-algebraic correspondences we have
\begin{equation*}
\begin{aligned}
&\rch(\cE) \equiv \rch_2(\cE) \equiv -\rc_2(\cE),
&&\rch(\cE^\vee(K_\Gamma)) \equiv \rch(\cE^\vee(K_\Gamma))_2 \equiv -\rc_2(\cE),
\\
&\Delta_*\rtd_X \equiv (\Delta_*\rtd_X)_3 \equiv [\Delta_X],
&&\rch(\Delta_*\cO_\Gamma) \equiv \rch(\Delta_*\cO_\Gamma)_1 \equiv [\Delta_\Gamma].
\end{aligned}
\end{equation*}
Using Lemma~\ref{lemma:ch-nil}\ref{nil:ideal} we deduce the equality
\begin{equation*}
- \rc_2(\cE) \circ \rc_2(\cE) \equiv [\Delta_\Gamma]
\end{equation*}
modulo $\CH^\bullet_{\nil,\QQ}(\Gamma \times \Gamma)$, which implies that $\rc_2(\cE) \circ \rc_2(\cE) + \id$ 
acts trivially on $\CH^1_{\alg,\QQ}(\Gamma)$.

Similarly, the fact that the orthogonal of the image of $\Phi_\cE$ is generated by an exceptional collection implies 
that there is a morphism of functors 
\begin{equation*}
\Phi_\cE \circ \Phi_{\cE}^! \larrow \Phi_{\cO_{\Delta_X}} 
\end{equation*}
whose cone is the projection functor onto the subcategory of $\Db(X)$ generated by the exceptional collection.
Passing to Chern characters of the kernels, we deduce
\begin{equation*}
\label{eq:left-sde:1}
\rch(\cO_{\Delta_X}) + \rch(\cE) \circ (\Delta_*\rtd_\Gamma) \circ \rch(\cE^\vee(K_\Gamma)) + \sum \rch(E_i \boxtimes E'_i) = 0,
\end{equation*}
where $E'_i$ are appropriate exceptional objects on $X$.
Being computed modulo null-algebraic correspondences as above, this equality gives 
\begin{equation*}
[\Delta_X] + \rc_2(\cE) \circ \rc_2(\cE) \equiv 0,
\end{equation*}
which implies that $\rc_2(\cE) \circ \rc_2(\cE) + \id$ acts trivially on $\CH^1_{\alg,\QQ}(X)$.

The above arguments also show that the compositions of the maps 
\begin{equation*}
\xymatrix@1@C=5em{
H^3_{\textrm{\'et}}(X,\QQ_\ell/\ZZ_\ell(2))\ \ar@<.3ex>[r]^{\rc_2(\cE)} & 
\ H^1_{\textrm{\'et}}(\Gamma,\QQ_\ell/\ZZ_\ell(1)) \ar@<.3ex>[l]^{\rc_2(\cE)} 
}
\end{equation*}
are equal to $-1$ for any prime $\ell$. 
On the other hand, the morphisms~$\rc_2(\cE)$ above are adjoint with respect to intersection product, i.e.,
\begin{equation*}
\rc_2(\cE)(\alpha) \cdot \beta = \alpha \cdot \rc_2(\cE)(\beta)
\end{equation*}
for any~$\alpha \in H^3_{\textrm{\'et}}(X,\QQ_\ell/\ZZ_\ell(2))$ and~$\beta \in H^1_{\textrm{\'et}}(\Gamma,\QQ_\ell/\ZZ_\ell)$.
Combining this with the previous observation, we conclude that 
\begin{equation*}
\rc_2(\cE)(\alpha_1) \cdot \rc_2(\cE)(\alpha_2) = 
\alpha_1 \cdot \rc_2(\cE)(\rc_2(\cE)(\alpha_2)) = 
- \alpha_1 \cdot \alpha_2,
\end{equation*}
hence these maps are compatible up to sign with intersection products,
and therefore the isomorphism~$\Jac(X) \cong \Pic^0(\Gamma)$ is compatible with the natural principal polarizations.
\end{proof}

\subsection{Abel--Jacobi maps}

Consider again a semiorthogonal decomposition~\eqref{eq:sod-general}.
In this section we abbreviate the functor $\Phi_\cE$ to just~$\Phi$.
By base change (see~\cite{K11}) for any quasiprojective scheme $S$ it induces a semiorthogonal decomposition
\begin{equation}
\Db(X \times S) = \langle \Phi_S(\Db(\Gamma \times S)), E_1 \boxtimes \Db(S), \dots, E_n \boxtimes \Db(S) \rangle,
\end{equation} 
where $\Phi_S$ is the Fourier--Mukai functor with kernel $\cE \boxtimes \cO_{\Delta_S} \in \Db((X \times S) \times (\Gamma \times S))$.
This allows to apply the functor $\Phi_S$ and its adjoints to $S$-families of objects in~$X$.
Below we do this for ideals of curves, but the same procedure can be analogously applied to other families of objects.

Let $\cC_d(X) \subset X \times \rF_d(X)$ denote the universal curve.
Let $\Phi^*$, $\Phi^*_S$, $\Phi^!$, $\Phi^!_S$ be the left and right adjoint functors of~$\Phi$ and~$\Phi_S$ respectively.

\begin{lemma}
\label{lemma:moduli-map}
Let $X$ be a smooth projective variety with a semiorthogonal decomposition~\eqref{eq:sod-general}.
Fix some $d \in \ZZ_{>0}$.
Assume that there are integers $r,e,s \in \ZZ$ with $r \ge 0$ and $\gcd(r,e) = 1$ such that
for any curve $C \subset X$ from the Hilbert scheme $\rF_d(X)$ there is an isomorphism
\begin{equation*}
\Phi^*_\cE(I_C) \cong F[s],
\end{equation*}
for some stable sheaf $F$ of rank~$r$ and degree~$e$ on $\Gamma$.

Then there is a morphism $f \colon \rF_d(X) \to \rM_\Gamma(r,e)$ 
to the fine moduli space of stable sheaves of rank~$r$ and degree~$e$ on~$\Gamma$
such that there is a unique isomorphism
\begin{equation}
\label{eq:phi-i-f}
\Phi^*_{\rF_d(X)}(I_{\cC_d(X)}) \cong (\id_\Gamma \times f)^*(\cF) \otimes p_2^*\cL[s]
\end{equation} 
in $\Db(\Gamma \times \rF_d(X))$,
where $\cF$ is the universal sheaf on $\Gamma \times \rM_\Gamma(r,e)$ and $\cL$ is a line bundle on~$\rF_d(X)$.
The same result holds for the left adjoint functor $\Phi^*$ replaced by the right adjoint $\Phi^!$.
\end{lemma}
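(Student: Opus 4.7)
The plan is to apply the relative form of the functor $\Phi^*$ to the ideal sheaf of the universal curve and then invoke the universal property of the fine moduli space of stable sheaves. Set
\[
\cG := \Phi^*_{\rF_d(X)}(I_{\cC_d(X)}) \in \Db(\Gamma \times \rF_d(X)).
\]
First I would verify, using the base change property of the relative semiorthogonal decomposition (as in~\cite{K11}), that for each closed point $[C] \in \rF_d(X)$ the derived fiber $\cG|_{\Gamma \times \{[C]\}}$ is canonically identified with $\Phi^*_\cE(I_C)$, which by hypothesis is isomorphic to $F[s]$ for some stable sheaf $F$ of rank~$r$ and degree~$e$. Once this fiberwise identification is in place, all the subsequent steps will be applications of standard techniques.

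Next I would upgrade this pointwise statement to a statement about the total object~$\cG$. Since each (derived) fiber of $\cG$ is concentrated in a single cohomological degree equal to~$-s$, a standard cohomology-and-base-change argument applied to the projection $\Gamma \times \rF_d(X) \to \rF_d(X)$ implies that $\cG[-s]$ is quasi-isomorphic to a single coherent sheaf $\cG_0$ on $\Gamma \times \rF_d(X)$ which is flat over $\rF_d(X)$. (One can reduce to proving flatness in families of sheaves on~$\Gamma$ by passing to $\Spec$ of local rings and using that the relevant $\Tor$ sheaves vanish fiberwise.) Thus $\cG_0$ is a family of stable rank-$r$ degree-$e$ sheaves on~$\Gamma$ parameterised by~$\rF_d(X)$.

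The coprimality assumption $\gcd(r,e) = 1$ ensures that $\rM_\Gamma(r,e)$ is a fine moduli space carrying a universal sheaf~$\cF$. The family $\cG_0$ therefore induces a classifying morphism
\[
f \colon \rF_d(X) \larrow \rM_\Gamma(r,e)
\]
together with an isomorphism $\cG_0 \cong (\id_\Gamma \times f)^*(\cF) \otimes p_2^*\cL$ for a uniquely determined line bundle $\cL$ on $\rF_d(X)$, where the ambiguity in the choice of universal sheaf is absorbed into~$\cL$. Shifting by~$s$ gives~\eqref{eq:phi-i-f}. Uniqueness of the isomorphism (given the choices of $\cF$ and~$\cL$) follows from fiberwise stability: the endomorphism algebra of each fiber is $\kk$, so global endomorphisms of $\cG$ are pulled back from~$\rF_d(X)$ and an automorphism is determined by a nowhere vanishing section of~$\cO_{\rF_d(X)}$.

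For the analogous statement about the right adjoint $\Phi^!$, I would repeat the argument verbatim after observing that $\Phi^! \cong \Phi^* \otimes p_\Gamma^* K_\Gamma [1]$ (as used in the proof of Proposition~\ref{proposition:aj-c2-e}), so that $\Phi^!_\cE(I_C) \cong (F \otimes K_\Gamma)[s+1]$ is again a shift of a stable sheaf of rank~$r$ and degree $e + r(2g(\Gamma) - 2)$, still coprime to~$r$; the corresponding fine moduli space exists and the classifying morphism is produced in the same way. The main subtlety is the rigorous verification that~$\cG$ really is a shift of a sheaf flat over the base; once this is established, the remaining steps are formal consequences of the representability of $\rM_\Gamma(r,e)$ and of stability.
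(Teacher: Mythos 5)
Your argument is essentially the one the paper invokes: the published proof consists of the single line ``analogous to the proof of [K19, Theorem~4.5]'', and that reference runs exactly your scheme --- base change to identify the derived fibers of $\Phi^*_{\rF_d(X)}(I_{\cC_d(X)})$ with $\Phi^*_\cE(I_C)\cong F[s]$, the standard ``fibers concentrated in one degree $\Rightarrow$ shift of a sheaf flat over the base'' lemma, and then the classifying morphism to the fine moduli space guaranteed by $\gcd(r,e)=1$, with uniqueness coming from simplicity of stable sheaves. One small inaccuracy: the relation $\Phi^!\cong\Phi^*\otimes p_\Gamma^*K_\Gamma[1]$ is not what the paper uses --- the two adjoints have kernels $\cE^\vee(K_\Gamma)[1]$ and $\cE^\vee(K_X)[3]$, so they differ by a twist by $\omega_X$ on the $X$-side as well as by $\omega_\Gamma[-2]$ on the $\Gamma$-side --- but this is harmless, since the intended reading of the last assertion is simply that the hypothesis and the entire argument are repeated verbatim with $\Phi^!$ in place of $\Phi^*$, with no need to reduce one case to the other.
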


\begin{proof}
The proof is analogous to the proof of~\cite[Theorem~4.5]{K19}, so we skip it here.
\end{proof}

If the assumptions of Lemma~\ref{lemma:moduli-map} hold true, we will say that the morphism~$f$ is
{\sf induced by the functor $\Phi^*_\cE$} ({\sf functor $\Phi^!_\cE$}, respectively).
The next result can be used to prove that the Abel--Jacobi map associated with a universal curve is an isomorphism.
Recall the map~$\alpha_d$ defined in~\eqref{eq:alpha-d}.

\begin{proposition}
\label{proposition:aj-cd}
Let $X$ be a smooth projective rationally connected threefold with a semiorthogonal decomposition~\eqref{eq:sod-general}.
Assume that either of the functors $\Phi_\cE^*$ or $\Phi^!_\cE$ induces a morphism 
\begin{equation*}
f \colon \rF_d(X) \larrow \rM_\Gamma(r,e).
\end{equation*}
Then there is a commutative up to sign and torsion diagram
\begin{equation*}
\xymatrix@C=6em{
\CH_{\mathrm{alg}}^2(X) \ar[r]^-{{[\cC_d(X)]}} \ar[d]_{{\rc_2(\cE)}} &
\CH_{\mathrm{alg}}^1(\rF_d(X))
\\
\CH_{\mathrm{alg}}^1(\Gamma) \ar[r]^-{{\rc_1(\cF)}} &
\CH_{\mathrm{alg}}^1(\rM_\Gamma(r,e)) \ar[u]_{f^*} 
}
\end{equation*}
where $\cF$ is the universal sheaf on $\Gamma \times \rM_\Gamma(r,e)$ 
and $\rc_1(\cF) \in \CH^1(\Gamma \times \rM_\Gamma(r,e))$ is considered as a correspondence.
In other words, the morphism
\begin{equation*}
f^* \circ \rc_1(\cF) \circ \rc_2(\cE) \pm [\cC_d(X)] \colon \CH^2_\alg(X) \to \CH^1_\alg(\rF_d(X))
\end{equation*}
factors through torsion for an appropriate choice of sign.

In particular, if the maps ${\rc_1(\cF)}$ and $f^*$ in the diagram are isomorphisms then the map 
\begin{equation*}
\alpha_d \colon \Alb(\rF_d(X)) \to \Jac(X) 
\end{equation*}
defined in Section~\textup{\ref{subsection:obstruction}} is an isomorphism.
\end{proposition}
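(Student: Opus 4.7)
My plan is to follow the strategy of the proof of Proposition~\ref{proposition:aj-c2-e}: feed the isomorphism~\eqref{eq:phi-i-f} furnished by Lemma~\ref{lemma:moduli-map} into Grothendieck--Riemann--Roch, and reduce modulo the ideal of null-algebraic correspondences introduced in Lemma~\ref{lemma:ch-nil}. I will treat the case of the left adjoint $\Phi^*_\cE$, whose kernel on $X\times\Gamma$ is $K := \cE^\vee\otimes p_X^*\omega_X[3]$; the right adjoint $\Phi^!_\cE$ is handled analogously, with the only change being the kernel.

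Applying GRR to the relative functor $\Phi^*_{\rF_d(X)}$, the isomorphism~\eqref{eq:phi-i-f} yields an identity of Chern characters in $\CH^\bullet_\QQ(\Gamma \times \rF_d(X))$:
\begin{equation*}
p_{\Gamma\rF *}\bigl(p_{X\Gamma}^*\rch(K) \cdot p_{X\rF}^*\rch(I_{\cC_d(X)}) \cdot p_X^*\rtd_X\bigr) = (-1)^s\,(\id_\Gamma \times f)^*\rch(\cF) \cdot p_2^*\rch(\cL),
\end{equation*}
where $p_{X\Gamma}, p_{X\rF}, p_{\Gamma\rF}, p_X$ are the projections from $X \times \Gamma \times \rF_d(X)$ and $p_2 \colon \Gamma \times \rF_d(X) \to \rF_d(X)$. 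I would then read off the degree-$1$ components of both sides. The right-hand side contributes $(-1)^s\bigl[(\id \times f)^*\rc_1(\cF) + r\,p_2^*\rc_1(\cL)\bigr]$; the second summand, being pulled back from $\rF_d(X)$, is K\"unneth-decomposable, hence null-algebraic by Lemma~\ref{lemma:ch-nil}\ref{nil:tp}. On the left I would expand $\rch(I_{\cC_d(X)}) = 1 - [\cC_d(X)] + \rch_{\ge 3}(I_{\cC_d(X)})$, invoke Lemma~\ref{lemma:nil-x-gamma}\ref{nil:gamma-x} to see that $\rch(\cE^\vee)$ is null-algebraic in every codimension other than~$2$, where it reduces to $\rc_2(\cE)$, and observe that the extra factors $\rch(\omega_X) \cdot \rtd_X$ coming from $K$ and GRR are pullbacks from $X$ and therefore K\"unneth-decomposable on $X\times\Gamma$. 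The contributions indexed by $\rch_0(I_{\cC_d(X)}) = 1$, by $\rch_{\ge 3}(I_{\cC_d(X)})$, and by the non-trivial pieces of $\rch(\omega_X)\cdot\rtd_X$ are thus null-algebraic by Lemma~\ref{lemma:ch-nil}\ref{nil:ideal} and disappear after the pushforward $p_{\Gamma\rF *}$. Only the main term $-\rc_2(\cE)\cdot[\cC_d(X)]$ (the sign coming from the shift $[3]$) survives, producing, up to a universal sign, the congruence
\begin{equation*}
p_{\Gamma\rF *}\bigl(p_{X\Gamma}^*\rc_2(\cE) \cdot p_{X\rF}^*[\cC_d(X)]\bigr) \equiv \pm(\id_\Gamma\times f)^*\rc_1(\cF) \pmod{\CH^1_{\nil,\QQ}(\Gamma\times\rF_d(X))}.
\end{equation*}

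To conclude, I would view both sides as correspondences from $X$ to $\rF_d(X)$ and convolve with an algebraically trivial class $\alpha\in\CH^2_\alg(X)$: by Lemma~\ref{lemma:ch-nil}\ref{nil:ideal} the null-algebraic remainder vanishes, while the two surviving terms compute precisely $[\cC_d(X)]_*\alpha$ and $\pm f^*\rc_1(\cF)_*\rc_2(\cE)_*\alpha$. The resulting equality on $\CH^1_{\alg,\QQ}(\rF_d(X))$ is exactly the commutativity of the stated diagram modulo sign and torsion. The principal technical obstacle will be the GRR bookkeeping --- verifying that \emph{every} cross-term outside the main product $\rc_2(\cE)\cdot[\cC_d(X)]$ is null-algebraic. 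The decisive input is that on $X\times\Gamma$ only classes in codimension~$2$ can be non-null-algebraic (Lemma~\ref{lemma:nil-x-gamma}\ref{nil:gamma-x}), combined with the ideal and tensor-decomposability properties of Lemma~\ref{lemma:ch-nil}, which together absorb every ``wrong-degree'' or K\"unneth-decomposable contribution.
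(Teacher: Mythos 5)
Your GRR computation and the reduction modulo null-algebraic correspondences are sound, and the mechanism is the same as the paper's; the genuine difference is that you run it on the adjoint side, equating Chern characters of the two sides of~\eqref{eq:phi-i-f} on $\Gamma\times\rF_d(X)$, whereas the paper first uses the unit morphism $I_{\cC_d(X)}\to\Phi_{\rF_d(X)}((\id_\Gamma\times f)^*\cF\otimes p_2^*\cL)[s]$, whose cone lies in $\langle E_1\boxtimes\Db(\rF_d(X)),\dots,E_n\boxtimes\Db(\rF_d(X))\rangle$ and therefore contributes only decomposable (hence null-algebraic) classes, so that the whole comparison takes place on $X\times\rF_d(X)$. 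This difference becomes a genuine gap at your final step: the congruence you arrive at,
\begin{equation*}
p_{\Gamma\rF *}\bigl(p_{X\Gamma}^*\rc_2(\cE)\cdot p_{X\rF}^*[\cC_d(X)]\bigr)\ \equiv\ \pm(\id_\Gamma\times f)^*\rc_1(\cF)
\quad\text{modulo null-algebraic classes,}
\end{equation*}
is an identity on $\Gamma\times\rF_d(X)$, i.e.\ of correspondences between $\Gamma$ and $\rF_d(X)$; it cannot be ``viewed as correspondences from $X$ to $\rF_d(X)$'' and cannot be evaluated on $\alpha\in\CH^2_\alg(X)$, so its left-hand side does not ``compute $[\cC_d(X)]_*\alpha$'' as written. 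To conclude from your congruence you must precompose with $\rc_2(\cE)$ viewed as a correspondence from $X$ to $\Gamma$: the right-hand side then yields $\pm f^*\rc_1(\cF)_*\rc_2(\cE)_*\alpha$, but the left-hand side yields $[\cC_d(X)]_*\rc_2(\cE)_*\rc_2(\cE)_*\alpha$, and you need the identity $\rc_2(\cE)\circ\rc_2(\cE)\equiv-[\Delta_X]$ modulo null-algebraic correspondences --- established inside the proof of Proposition~\ref{proposition:aj-c2-e} --- to turn this into $\mp[\cC_d(X)]_*\alpha$ up to torsion. With that input added (or with the paper's unit-map detour, which avoids it altogether), your argument goes through. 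Note also that discarding the cross-terms involving $\rc_1(\cE)$ uses the decomposability of $\CH^1_\QQ(X\times\Gamma)$ (valid because $X$ is rationally connected), the same implicit reduction the paper makes, so this is not a gap but deserves a word.

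You also leave the final assertion of the proposition unaddressed: passing from the commutativity of the cycle-level diagram to the claim that $\alpha_d\colon\Alb(\rF_d(X))\to\Jac(X)$ is an isomorphism requires the universal properties of the intermediate Jacobian and of the Picard scheme, the fact from Proposition~\ref{proposition:aj-c2-e} that $\rc_2(\cE)$ induces an isomorphism $\Jac(X)\cong\Pic^0(\Gamma)$, and a transposition to Albanese varieties. This is a short step, but it is part of the statement and should be spelled out.
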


\begin{proof}
We discuss the case of the morphism $f$ induced by the functor~$\Phi^*_\cE$; the other case being analogous.
Since we aim at proving commutativity up to sign and torsion, it is enough to check commutativity up to sign with \emph{rational coefficients}.

Since the morphism $f$ is induced by the functor~$\Phi^*_\cE$, we have an isomorphism~\eqref{eq:phi-i-f} 
for some line bundle~$\cL$ on $\rF_d(X)$ and some integer~$s$.
It follows that there is a morphism
\begin{equation*}
I_{\cC_d(X)} \larrow \Phi_{\rF_d(X)}((\id_\Gamma \times f)^*(\cF) \otimes p_2^*\cL)[s]
\end{equation*}
whose cone is contained in the subcategory $\langle E_1 \boxtimes \Db(\rF_d(X)), \dots, E_n \boxtimes \Db(\rF_d(X)) \rangle$.
The Chern characters of decomposable objects are in $\CH^\bullet_{\nil,\QQ}(X \times \rF_d(X))$ by Lemma~\ref{lemma:ch-nil}\ref{nil:tp}, 
hence we obtain an equality modulo $\CH^\bullet_{\nil,\QQ}(X \times \rF_d(X))$:
\begin{equation}
\label{eq:rch-equality}
\rch(I_{\cC_d(X)}) \equiv 
(-1)^s \rch\left(\Phi_{\rF_d(X)}((\id_\Gamma \times f)^*(\cF) \otimes p_2^*\cL)\right). 
\end{equation}
By Grothendieck--Riemann--Roch the right-hand side 
can be written as convolution
\begin{equation*}
\rch(\cE) \circ (\Delta_*\rtd_\Gamma) \circ \rch((\id_\Gamma \times f)^*(\cF) \otimes p_2^*\cL).
\end{equation*}
Modulo null-algebraic correspondences this is equal to
\begin{equation*}
-\rc_2(\cE) \circ [\Delta_\Gamma] \circ (\id_\Gamma \times f)^*\rch(\cF).
\end{equation*}
So, taking the component of~\eqref{eq:rch-equality} in degree~2 modulo $\CH^2_{\nil,\QQ}(X \times \rF_d(X))$ we obtain
\begin{equation*}
-[\cC_d(X)] \equiv 
\rch_2(I_{\cC_d(X)}) \equiv
-(-1)^s \rc_2(\cE) \circ (\id_\Gamma \times f)^*\rch_1(\cF) \equiv
-(-1)^s \rc_2(\cE) \circ (\id_\Gamma \times f)^*\rc_1(\cF).
\end{equation*}
It follows that the morphism 
$f^* \circ \rc_1(\cF) \circ \rc_2(\cE) - (-1)^s [\cC_d(X)] \colon \CH^2_{\alg,\QQ}(X) \to \CH^2_{\alg,\QQ}(X)$
is zero, which implies the first claim of the proposition.

By the universal property of the intermediate Jacobian and the Picard scheme, we conclude that the composition of maps 
\begin{equation*}
\Jac(X) \xrightarrow{\ \rc_2(\cE)\ } 
\Pic^0(\Gamma) \xrightarrow{\ \rc_1(\cF)\ } 
\Pic^0(\rM_\Gamma(r,e)) \xrightarrow{\ f^*\ }
\Pic^0(\rF_d(X))
\end{equation*}
coincides up to sign with the map induced by $[\cC_d(X)]$.
Moreover, the first map is an isomorphism by Proposition~\ref{proposition:aj-c2-e}.
So if the second and the third maps are isomorphisms, then so is the composition.
Therefore, the transposed map $\Alb(\rF_d(X)) \to \Jac(X)$, again induced by $[\cC_d(X)]$, is also an isomorphism.
\end{proof}

We extract the following two corollaries.

\begin{corollary}
\label{corollary:fd-pic}
In the situation of Proposition~\textup{\ref{proposition:aj-cd}} assume that $r = 1$, 
and that the induced map $f \colon \rF_d(X) \to \rM_\Gamma(r,e) \cong \Pic^e(\Gamma) \cong \Pic^0(\Gamma)$ is an isomorphism.
Then the Abel--Jacobi map~$\AJ \colon \rF_d(X) \to \Jac(X)$ is an isomorphism.
\end{corollary}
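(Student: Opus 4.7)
The plan is to reduce the statement to Proposition~\ref{proposition:aj-cd} by verifying its two hypotheses in the present setup, and then to deduce the full Abel--Jacobi statement (rather than just its effect on Albanese varieties) from the fact that $\rF_d(X)$ is already an abelian variety. Since $r=1$, the moduli space $\rM_\Gamma(1,e)$ is $\Pic^e(\Gamma)$ and the universal sheaf $\cF$ is (up to pullback of a line bundle from the second factor, which does not affect the induced maps between $\Pic^0$'s) the Poincar\'e line bundle on $\Gamma \times \Pic^e(\Gamma)$. Thus the correspondence
\begin{equation*}
\rc_1(\cF) \colon \CH^1_{\mathrm{alg}}(\Gamma) \larrow \CH^1_{\mathrm{alg}}(\Pic^e(\Gamma))
\end{equation*}
induces precisely the classical map $\Pic^0(\Gamma) \to \Pic^0(\Pic^e(\Gamma))$ associated with the Poincar\'e bundle, which is an isomorphism by Jacobian autoduality.

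Next, since $f$ is an isomorphism by assumption, the pullback $f^* \colon \CH^1_{\mathrm{alg}}(\Pic^e(\Gamma)) \to \CH^1_{\mathrm{alg}}(\rF_d(X))$ induces an isomorphism $\Pic^0(\Pic^e(\Gamma)) \to \Pic^0(\rF_d(X))$. Therefore both conditions in the last sentence of Proposition~\ref{proposition:aj-cd} are satisfied, which yields that the morphism
\begin{equation*}
\alpha_d \colon \Alb(\rF_d(X)) \larrow \Jac(X)
\end{equation*}
of abelian varieties is an isomorphism.

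Finally, since $\rF_d(X) \cong \Pic^0(\Gamma)$ is itself an abelian variety, for any choice of base point~$[C_0]$ the Albanese morphism
\begin{equation*}
\alb_{C_0} \colon \rF_d(X) \larrow \Alb(\rF_d(X))
\end{equation*}
is an isomorphism (an abelian variety is its own Albanese, with the Albanese map being a translation of the identity). The Abel--Jacobi map factors as $\AJ = \alpha_d \circ \alb_{C_0}$ by the definition recalled in~\eqref{eq:alpha-d}, so it is a composition of two isomorphisms and hence an isomorphism itself.

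I do not foresee any serious obstacle; the only small point worth being careful about is that the universal sheaf $\cF$ on $\Gamma \times \Pic^e(\Gamma)$ is only defined up to twist by a line bundle pulled back from $\Pic^e(\Gamma)$, but such twists do not affect the correspondence $\rc_1(\cF)$ modulo decomposable classes in $\CH^2(\Gamma \times \Pic^e(\Gamma))$, hence do not affect the induced morphism between $\Pic^0$'s. Everything else is a formal consequence of Proposition~\ref{proposition:aj-cd} and the triviality of the Albanese map for an abelian variety.
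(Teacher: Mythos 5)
Your proposal is correct and follows essentially the same route as the paper: verify that $\rc_1(\cF)$ (the Poincar\'e correspondence, well-defined up to harmless twists) and $f^*$ give isomorphisms on $\Pic^0$, apply Proposition~\ref{proposition:aj-cd} to conclude that $\alpha_d$ is an isomorphism, and then use that $\rF_d(X)$ is an abelian variety so its Albanese map is an isomorphism, whence $\AJ = \alpha_d \circ \alb$ is an isomorphism. Nothing to add.
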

\begin{proof}
The map $f$ is an isomorphism by assumption and the map $\CH^1_\alg(\Gamma) \to \CH^1_\alg(\Pic^0(\Gamma))$
given by the Poincar\'e line bundle $\cF$ on $\Gamma \times \Pic^0(\Gamma)$ is an isomorphism, 
hence Proposition~\ref{proposition:aj-cd} implies that~$\alpha_d \colon \Alb(\rF_d(X)) \to \Jac(X)$ is an isomorphism.
Since $\rF_d(X)$ is isomorphic (via~$f$) to an abelian variety, the Albanese map $\alb \colon \rF_d(X) \to \Alb(\rF_d(X))$ is an isomorphism,
hence the composition $\AJ = \alpha_d \circ \alb$ is an isomorphism as well.
\end{proof}

\begin{corollary}
\label{corollary:fd-gamma}
In the situation of Proposition~\textup{\ref{proposition:aj-cd}} assume that $r = 0$, $e = 1$, so that~\mbox{$M_\Gamma(r,e) \cong \Gamma$},
and that the induced map~$f \colon \rF_d(X) \to \Gamma$ has rationally connected fibers.
Then the induced map
\begin{equation*}
\Pic^0(\Gamma) \cong \Alb(\rF_d(X)) \xrightarrow{\ \alpha_d\ } \Jac(X)
\end{equation*}
is an isomorphism of principally polarized abelian varieties.
\end{corollary}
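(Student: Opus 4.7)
The plan is to deduce this from Proposition~\ref{proposition:aj-cd} once the hypotheses are verified at the level of $\Pic^0$. First I identify the relevant objects: since $r = 0$ and $e = 1$, any stable sheaf of this type on $\Gamma$ is a skyscraper sheaf $\cO_p$, giving a canonical identification $\rM_\Gamma(0,1) \cong \Gamma$ with universal sheaf $\cF = \cO_{\Delta_\Gamma}$ on $\Gamma \times \Gamma$. Thus $\rc_1(\cF) = [\Delta_\Gamma]$, which acts as the identity correspondence on $\CH^1_{\alg}(\Gamma)$, and in particular induces the identity on $\Pic^0(\Gamma)$.

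Next I show that $f^* \colon \Pic^0(\Gamma) \to \Pic^0(\rF_d(X))$ is an isomorphism. The key point is that any morphism from a rationally connected variety to an abelian variety is constant. Applied to the restriction of the Albanese map $\alb \colon \rF_d(X) \to \Alb(\rF_d(X))$ to each fiber of $f$, this shows that $\alb$ factors through $f$, producing a morphism $\Gamma \to \Alb(\rF_d(X))$. By the universal property of the Albanese, this factorization yields a morphism $\Alb(\Gamma) \to \Alb(\rF_d(X))$ inverse to $f_*$; hence $f_*$, and dually $f^*$, are isomorphisms.

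With these two observations in hand, the commutative-up-to-sign diagram of Proposition~\ref{proposition:aj-cd} gives that the composition
\begin{equation*}
\Jac(X) \xrightarrow{\ \rc_2(\cE)\ } \Pic^0(\Gamma) \xrightarrow{\ \rc_1(\cF)\ } \Pic^0(\Gamma) \xrightarrow{\ f^*\ } \Pic^0(\rF_d(X))
\end{equation*}
coincides, up to sign, with the transpose of $\alpha_d$. The first factor is an isomorphism by Proposition~\ref{proposition:aj-c2-e} (in fact a PPAV-isomorphism), the middle factor is the identity by the first paragraph, and the last factor is an isomorphism by the second. Consequently $\alpha_d$ is an isomorphism of abelian varieties. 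The principally polarized refinement then follows for free: transporting the canonical principal polarization of $\Pic^0(\Gamma)$ to $\Alb(\rF_d(X))$ via the isomorphism $f_*$ makes the above chain a composition of PPAV-isomorphisms, so $\alpha_d$ is one as well.

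The only technical point worth flagging is that one must invoke Proposition~\ref{proposition:aj-cd} at the $\Pic^0$-level only, rather than at the full Chow-group level appearing in its stated hypotheses: the map $f^*$ on $\CH^1_{\alg}$ may well fail to be an isomorphism, but the factor it induces on $\Pic^0$ via the Albanese is an isomorphism, and inspection of the last paragraph of the proof of Proposition~\ref{proposition:aj-cd} confirms that this is all that is used there.
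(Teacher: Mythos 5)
Your proof is correct and follows essentially the same route as the paper's: identify $\cF \cong \cO_{\Delta_\Gamma}$ so that $\rc_1(\cF)$ acts as the identity, use rational connectedness of the fibers of~$f$ to see that $f^*$ is an isomorphism at the relevant level, and then conclude via Propositions~\ref{proposition:aj-cd} and~\ref{proposition:aj-c2-e}. The only deviation is that you justify the $f^*$ step by the Albanese factorization on $\Pic^0$ rather than asserting the isomorphism on $\CH^1_{\alg}$ as the paper does; this is a harmless (indeed slightly more careful) variant, since the last paragraph of the proof of Proposition~\ref{proposition:aj-cd} only uses the induced maps on $\Pic^0$.
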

\begin{proof}
Since $f$ has rationally connected fibers, the induced map $f^* \colon \CH^1_\alg(\Gamma) \to \CH^1_\alg(\rF_d(X))$ is an isomorphism.
Furthermore, the universal sheaf $\cF$ on $\Gamma \times \Gamma$ is the structure sheaf of the diagonal,
hence the map given by $\rc_1(\cF) = [\Delta_\Gamma]$ is also an isomorphism.
Using Proposition~\ref{proposition:aj-cd} we conclude that the map $\alpha_d$ is an isomorphism.
It is compatible with the principal polarizations by Proposition~\ref{proposition:aj-c2-e}.
\end{proof}

\section{Semiorthogonal decompositions for Fano threefolds}
\label{section:sods}

Throughout this section we work over an algebraically closed field~$\kk$ of characteristic zero.
Semiorthogonal decompositions for Fano threefolds $X$ of types $V_4$, $X_{16}$, and~$X_{18}$ were constructed in~\cite{K06}.
They take the form
\begin{equation*}
\Db(X) = \langle \Phi(\Db(\Gamma)), \cO, E \rangle
\end{equation*}
where $\Gamma$ is a curve of genus~2, 3, and~2, respectively, $E$ is an exceptional vector bundle, and
\begin{equation*}
\Phi \colon \Db(\Gamma) \larrow \Db(X)
\end{equation*}
is the embedding functor (we will provide below the details in each case).
We will show that its adjoint functors~$\Phi^*$ and~$\Phi^!$ induce morphisms from appropriate Hilbert schemes of~$X$
to the curve~$\Gamma$ or its Jacobian and then apply Corollaries~\ref{corollary:fd-pic} and~\ref{corollary:fd-gamma}
to deduce the results about Abel--Jacobi maps 
that were required in Section~\ref{section:obstructions} 
for the proofs of Lemma~\ref{lemma:condition-h} and Corollaries~\ref{corollary:v4-rat}, \ref{corollary:x16-rat} and~\ref{corollary:x18-rat}.

We start with the following general result.

\begin{lemma}
\label{lemma:ic-ox}
Assume that $X \subset \PP^n$ is an intersection of quadrics and contains no planes.
Then for any $d \le 3$ and any curve $C$ corresponding to a point of $\rF_d(X)$ we have $H^\bullet(C,\cO_C) = \kk$ and
the ideal $I_C$ is contained in the orthogonal $\cO^\perp$.
\end{lemma}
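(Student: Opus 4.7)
The plan is to derive both conclusions simultaneously from the ideal sheaf sequence
\[
0 \to I_C \to \cO_X \to \cO_C \to 0
\]
and its associated long exact sequence of cohomology on~$X$. In every application of this lemma $X$ will be a smooth Fano threefold, so $H^\bullet(X,\cO_X) = \kk$; granting this, once we know $H^\bullet(C,\cO_C) = \kk$, the long exact sequence collapses, because the restriction map $\kk = H^0(X,\cO_X) \to H^0(C,\cO_C) = \kk$ is the identity on constants, hence an isomorphism. This forces $H^i(X,I_C) = 0$ for every~$i$, i.e.\ $I_C \in \cO^\perp$. The proof therefore reduces to establishing $H^0(C,\cO_C) = \kk$; vanishing of $H^1(C,\cO_C)$ is then automatic from $\chi(\cO_C) = 1$, which follows from the Hilbert polynomial $h_d(t) = dt + 1$.

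I would prove $H^0(C,\cO_C) = \kk$ by a case-by-case inspection. For $d = 1$ the curve~$C$ is a line, so $C \cong \PP^1$, and for $d = 2$ any~$C$ is a plane conic by \cite[Lemma~2.1.1]{KPS}, namely a smooth conic, a union of two distinct lines through a point, or a double line; in each case $H^0(\cO_C) = \kk$ is immediate, the double line being handled by the short exact sequence $0 \to \cO_L(-1) \to \cO_{2L} \to \cO_L \to 0$. For $d = 3$ I would invoke the classification of Lemma~\ref{lemma:f3}, which is available precisely because the no-planes hypothesis on~$X$ rules out any curve with an irreducible plane cubic component: were $E \subset P$ such a component, every quadric cutting out~$X$ would restrict to a conic in~$P$ vanishing on~$E$, hence by comparison of degrees would restrict to zero, so~$P \subset X$, a contradiction.

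In each of the five types listed in Lemma~\ref{lemma:f3} the curve is purely one-dimensional and connected, so one obtains $H^0(\cO_C) = \kk$ by filtering~$\cO_C$ along its components and repeatedly applying short exact sequences of the same shape as in the double-line case; at each step the relative ideal is a line bundle of non-positive degree on~$\PP^1$, so nothing new appears in~$H^0$. The only slightly delicate case is~(v), where~$C$ has a multiple line as a component, but the explicit local description of such subschemes given in \cite[\S1]{LLSS} makes the required filtration standard. Once all subcases are handled, the reduction from the first paragraph completes the proof.
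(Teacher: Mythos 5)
Your proof is correct and takes essentially the same route as the paper's: both use $\chi(\cO_C)=1$ from the Hilbert polynomial together with the classification of the curves (plane conics for $d\le 2$, Lemma~\ref{lemma:f3} for $d=3$) to get $H^\bullet(C,\cO_C)=\kk$ — the paper checks $H^1(C,\cO_C)=0$ where you pin down $H^0(C,\cO_C)=\kk$, which is equivalent given $\chi=1$ — and then conclude $I_C\in\cO^\perp$ from the ideal-sheaf sequence. Your explicit remark that this last step needs $H^\bullet(X,\cO_X)=\kk$ (true in every application, since $X$ is a Fano threefold there) is the same input the paper uses tacitly, so there is no substantive difference.
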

\begin{proof}
The Hilbert polynomial of $C$ is equal to $dt + 1$, so to prove $H^\bullet(C,\cO_C) = \kk$ 
it is enough to show that the first cohomology vanishes.
For lines and conics this is easy (see, e.g., \cite[Lemma~2.1.1]{KPS})
and for cubic curves this follows from classification of Lemma~\ref{lemma:f3}.
Now the short exact sequence
\begin{equation*}
0 \longrightarrow I_C \longrightarrow \cO \longrightarrow \cO_C \longrightarrow 0
\end{equation*}
implies that $I_C \in \cO^\perp$.
\end{proof}

Note that the result of Lemma~\ref{lemma:ic-ox} is no longer true for $d > 3$.
For example, the union of an elliptic quartic curve on~$X$ with a point is an element in $\rF_4(X)$,
whose ideal is not contained in~$\cO^\perp$.

\subsection{Fano threefold of genus~9}
\label{subsection:genus-9}

Let $X$ be a smooth prime Fano threefold of genus~9 over an algebraically closed field of characteristic~0.
Mukai proved that $X$ admits a natural embedding into $\LGr(3,V_6)$, 
the Lagrangian Grassmannian of a 6-dimensional symplectic vector space $V_6$.
We denote by $\cU \subset V_6 \otimes \cO$ the restriction to $X$ of the rank~3 tautological subbundle on $\LGr(3,V_6)$.
By~\cite[\S6.3]{K06} we have a semiorthogonal decomposition
\begin{equation}
\label{eq:dbx16}
\Db(X) = \langle \Phi(\Db(\Gamma)), \cO, \cU^\vee \rangle,
\end{equation}
where $\Gamma$ is a curve of genus~3.
We will remind a description of the embedding functor 
\begin{equation*}
\Phi \colon \Db(\Gamma) \longrightarrow \Db(X) 
\end{equation*}
below (see~\eqref{eq:ce}).
Meanwhile, recall that $\cO$ and $\cU$ are exceptional bundles and that
\begin{equation}
\label{eq:coh-x16}
H^\bullet(X,\cU) = 0,
\qquad 
H^\bullet(X,\cO) = \kk,
\qquad 
H^\bullet(X,\cU^\vee) = V_6.
\end{equation} 

The following result is well-known.

\begin{theorem}[{\cite[Proposition~3.10]{BF}, cf.~\cite[Theorem~4.6]{I03}}]
\label{theorem:x16-f2}
The left adjoint functor 
\begin{equation*}
\Phi^* \colon \Db(X) \longrightarrow \Db(\Gamma)
\end{equation*}
induces a morphism $\rF_2(X) \to \Gamma$ which is a $\PP^1$-bundle.
\end{theorem}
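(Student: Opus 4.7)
The plan is to apply Lemma~\ref{lemma:moduli-map} to the left adjoint $\Phi^*$ and the universal conic~$\cC_2(X) \subset X \times \rF_2(X)$. I need to show that for every conic $C \in \rF_2(X)$ the object $\Phi^*(I_C) \in \Db(\Gamma)$ is (a shift of) a stable sheaf of fixed rank and degree on $\Gamma$, and then identify this rank and degree as $(r,e) = (0,1)$, so that $\rM_\Gamma(0,1) \cong \Gamma$ and the lemma produces the desired morphism $f \colon \rF_2(X) \to \Gamma$.

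First I would show that $I_C$ lies in the subcategory $\Phi(\Db(\Gamma)) = \langle \cO, \cU^\vee\rangle^\perp$. The vanishing $\Ext^\bullet(\cO, I_C) = 0$ is exactly Lemma~\ref{lemma:ic-ox} (applicable since $X \subset \PP^{10}$ is an intersection of quadrics containing no planes by Theorem~\ref{th:bht}). For the vanishing $\Ext^\bullet(\cU^\vee, I_C) = 0$, I would use the exact sequence
\begin{equation*}
0 \larrow I_C \larrow \cO \larrow \cO_C \larrow 0,
\end{equation*}
the vanishing $H^\bullet(X,\cU) = 0$ from \eqref{eq:coh-x16}, and the computation of $H^\bullet(C,\cU|_C)$ on the conic $C \cong \PP^1$: restricting the tautological symplectic subbundle $\cU$ of $\LGr(3,V_6) \supset X$ to $C$, one gets a rank-three bundle whose splitting type is forced by $\det(\cU|_C) = \cO_C(-1)$ (since $c_1(\cU) = -H/?$ — this is a standard computation using that $C$ has degree $2$ in the Pl\"ucker embedding of~$\LGr(3,6)$) to have vanishing cohomology.

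Once $I_C \in \Phi(\Db(\Gamma))$, so that $\Phi^*(I_C)$ is the unique object with $\Phi(\Phi^*(I_C)) \cong I_C$, I would compute its K-theory class via Grothendieck--Riemann--Roch applied to the kernel of $\Phi$: the Chern character of $I_C$ together with the Hilbert polynomial $2t+1$ of a conic pins down $\rch(\Phi^*(I_C))$ and shows it has rank~$0$ and Euler characteristic~$1$, hence rank $0$ and degree $1$ on $\Gamma$. Any nonzero torsion sheaf of length~$1$ is automatically simple, hence stable, so the hypothesis of Lemma~\ref{lemma:moduli-map} is satisfied with $(r,e) = (0,1)$ and $s = 0$ (the shift being determined by $H^\bullet(C,\cO_C) = \kk$). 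Lemma~\ref{lemma:moduli-map} then yields the morphism $f \colon \rF_2(X) \to \rM_\Gamma(0,1) = \Gamma$.

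The hard part is to show that~$f$ is a~$\PP^1$-bundle, not merely a morphism with rationally connected fibers. For this I would first establish that $\rF_2(X)$ is smooth of dimension~$2$, which follows from the standard deformation-theoretic computation $\Ext^1(I_C,I_C) = 2$, $\Ext^2(I_C,I_C) = 0$ based on the vanishing of $H^\bullet(C,\cU|_C)$ above and on $X$ containing no planes or quadric surfaces (Theorem~\ref{th:bht}). Then I would show that each geometric fiber of~$f$ is a $\PP^1$: by base change, for a point $\gamma \in \Gamma$, the fiber $f^{-1}(\gamma)$ parameterizes conics $C$ with $\Phi^*(I_C) \cong \cO_\gamma$, i.e., with $I_C$ isomorphic to a fixed object $\Phi(\cO_\gamma) \in \Db(X)$ twisted by a varying parameter; concretely, $I_C$ fits into a distinguished triangle connecting it to $\Phi(\cO_\gamma)$ and a summand generated by the exceptional pair, and the family of such $C$ forms a $\PP^1$. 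Equivalently — and this is the cleanest route — one identifies the direct image $f_*(\cL)$ for a suitable universal line bundle $\cL$ on $\rF_2(X)$ as a rank-2 vector bundle $\cF$ on $\Gamma$, and realizes $\rF_2(X) \cong \PP_\Gamma(\cF)$; this identification is precisely the geometric content of the description of conics on~$X_{16}$ in Brambilla--Faenzi \cite[Prop.~3.10]{BF} and Iliev \cite[Thm.~4.6]{I03}, and at this point I would cite these works to complete the proof.
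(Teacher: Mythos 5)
The paper offers no internal proof of this statement at all --- it is imported verbatim from \cite[Proposition~3.10]{BF} and \cite[Theorem~4.6]{I03} --- so there is no argument of the authors to compare yours against; what matters is whether your independent sketch stands, and it does not. The error is at your first substantive step: you claim $I_C \in \langle \cO, \cU^\vee\rangle^\perp$ for a conic $C$, supported by an alleged vanishing of $H^\bullet(C,\cU\vert_C)$. This is false. The bundle $\cU\vert_C$ has rank $3$ and degree $\rc_1(\cU)\cdot C = -H\cdot C = -2$, so $\chi(C,\cU\vert_C) = 3-2 = 1 \ne 0$ and in particular $H^0(C,\cU\vert_C)\ne 0$. (Compare the cubic case, Lemma~\ref{lemma:ic-ax}: there the same count reads $2h_C(0)+h_C(-1)$ with $h_C(t)=3t+1$, which vanishes; with $h_C(t)=2t+1$ it equals $1$. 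The orthogonality of $I_C$ to $\cU^\vee$ is special to twisted cubics and simply fails for conics.) Hence $\Ext^\bullet(\cU^\vee,I_C)\ne 0$, the ideal sheaf of a conic does \emph{not} lie in $\Phi(\Db(\Gamma))$, and $\Phi^*(I_C)$ cannot be computed as ``the unique object with $\Phi(\Phi^*(I_C))\cong I_C$''.

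The correct mechanism --- which is the actual content of \cite[Proposition~3.10]{BF} --- exploits precisely the nonvanishing you tried to kill: one shows $\Hom(\cU,I_C)\cong\kk$ (note $\chi(\cU,I_C)=\chi(\cU^\vee)-\chi(\cU^\vee\vert_C)=6-5=1$), i.e., there is a unique section of $\cU^\vee$ whose zero scheme is $C$, and the resulting exact sequence $0\to\cE_y\to\cU\to I_C\to 0$ has as kernel the rank-$2$ bundle $\cE_y=\Phi(\cO_y)$ for a unique $y\in\Gamma$. Passing to the cone one finds $\Phi^*(I_C)\cong\cO_y[1]$ (so $s=1$, not $s=0$), after which Lemma~\ref{lemma:moduli-map} with $(r,e)=(0,1)$ produces the morphism to $\Gamma$, and the fiber over $y$ is identified with $\PP(\Hom(\cE_y,\cU))\cong\PP^1$. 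Your K-theoretic remark that the class of $\Phi^*(I_C)$ is that of a length-one torsion sheaf is numerically consistent with this, but it does not substitute for the exact sequence, which is what delivers both sheafiness of the projection and the $\PP^1$-bundle structure. As written, the proposal relies on a false orthogonality and therefore does not go through; the part you defer to \cite{BF} and \cite{I03} is essentially the whole proof, which is indeed all the paper itself does.
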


Applying Corollary~\ref{corollary:fd-gamma}, we deduce 

\begin{corollary}
\label{corollary:x16-conics}
The Abel--Jacobi map $\AJ \colon \Pic^0(\Gamma) \cong \Alb(\rF_2(X)) \to \Jac(X)$ is an isomorphism
of principally polarized abelian varieties.
\end{corollary}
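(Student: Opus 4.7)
The plan is to deduce the corollary directly from Corollary~\ref{corollary:fd-gamma}, applied to the morphism $f \colon \rF_2(X) \to \Gamma$ supplied by Theorem~\ref{theorem:x16-f2}. The machinery of Section~\ref{section:sod-aj} has been set up precisely to produce statements of this kind from a $\PP^1$-fibration arising as the ``moduli map'' attached to an adjoint of a fully faithful functor.

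First I would identify the morphism $f$ with the one induced by the left adjoint functor $\Phi^*$ in the sense of Lemma~\ref{lemma:moduli-map}, with moduli parameters $(r,e) = (0,1)$. In this case the fine moduli space $M_\Gamma(0,1)$ is naturally identified with $\Gamma$ itself, and the universal sheaf $\cF$ is the structure sheaf $\cO_{\Delta_\Gamma}$ of the diagonal; so ``induced by $\Phi^*$'' here amounts to the statement that for every conic $C \in \rF_2(X)$ the object $\Phi^*(I_C) \in \Db(\Gamma)$ is, up to cohomological shift, a skyscraper sheaf of length one at the point $f([C])$. This rank-zero degree-one description of $\Phi^*(I_C)$ is exactly what underlies the identification of $\rF_2(X)$ with a $\PP^1$-bundle over~$\Gamma$ in Theorem~\ref{theorem:x16-f2} (following~\cite{BF}), so I would simply cite that theorem rather than reprove it. Note also that $I_C \in \cO^\perp$ by Lemma~\ref{lemma:ic-ox}, which is consistent with $\Phi^*(I_C)$ lying in the correct semiorthogonal component.

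With this identification in hand, the remaining hypothesis of Corollary~\ref{corollary:fd-gamma} is the rational connectedness of the fibers of $f$; but $f$ is a $\PP^1$-bundle by Theorem~\ref{theorem:x16-f2}, so this is immediate. Applying Corollary~\ref{corollary:fd-gamma} directly then yields that the induced map $\alpha_2 \colon \Alb(\rF_2(X)) \to \Jac(X)$ is an isomorphism of principally polarized abelian varieties, and the identification $\Alb(\rF_2(X)) \cong \Pic^0(\Gamma)$ comes from pullback along $f$, giving the Abel--Jacobi isomorphism as stated.

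I do not foresee any real obstacle here: the principal polarization compatibility is hidden in Proposition~\ref{proposition:aj-c2-e}, the general diagram-chase relating $[\cC_d(X)]$ to $f^* \circ \rc_1(\cF) \circ \rc_2(\cE)$ is handled in Proposition~\ref{proposition:aj-cd}, and the sheaf-theoretic characterization of $\Phi^*(I_C)$ is delivered by Theorem~\ref{theorem:x16-f2}. The corollary is genuinely a packaging step.
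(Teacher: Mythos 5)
Your argument is correct and coincides with the paper's own proof: the corollary is obtained by applying Corollary~\ref{corollary:fd-gamma} to the $\PP^1$-fibration $\rF_2(X)\to\Gamma$ of Theorem~\ref{theorem:x16-f2}, which is exactly the morphism induced by $\Phi^*$ in the sense of Lemma~\ref{lemma:moduli-map} with $(r,e)=(0,1)$. Your identification of the moduli space with $\Gamma$ and of the universal sheaf with $\cO_{\Delta_\Gamma}$ is precisely the intended reading, so nothing further is needed.
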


Below we prove an analogue of this result for rational cubic curves on $X$.

\begin{theorem}
\label{theorem:x16-f3}
The left adjoint functor $\Phi^* \colon \Db(X) \to \Db(\Gamma)$ induces a noncanonical isomorphism 
\begin{equation*}
\rF_3(X) \cong \Pic^0(\Gamma).
\end{equation*}
\end{theorem}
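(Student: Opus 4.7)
The plan is to apply Lemma~\ref{lemma:moduli-map} with parameters $r = 1$ and suitable $e, s \in \ZZ$, and then invoke Corollary~\ref{corollary:fd-pic}. The main task is to show that for every $[C] \in \rF_3(X)$ there is an isomorphism $\Phi^*(I_C) \simeq L_C[s]$ in $\Db(\Gamma)$, where $L_C$ is a line bundle on $\Gamma$ of some fixed degree $e$, and then to prove that the resulting morphism $f \colon \rF_3(X) \to \Pic^e(\Gamma) \cong \Pic^0(\Gamma)$ is an isomorphism.

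First I would reduce the computation of $\Phi^*(I_C)$ to a single mutation past $\cU^\vee$. By Theorem~\ref{th:bht}, $X_{16} \subset \PP^{10}$ is an intersection of quadrics containing no planes, so Lemma~\ref{lemma:ic-ox} gives $I_C \in \cO^\perp$; combined with~\eqref{eq:dbx16}, this means that $\Phi^*(I_C)$ is controlled entirely by the vector space $\Hom^\bullet(\cU^\vee, I_C) = H^\bullet(X, \cU \otimes I_C)$. The short exact sequence
\[
0 \to \cU \otimes I_C \to \cU \to \cU|_C \to 0
\]
together with the vanishing $H^\bullet(X,\cU) = 0$ from~\eqref{eq:coh-x16} identifies this with $H^{\bullet-1}(C, \cU|_C)$, reducing the whole calculation to an analysis of $\cU|_C$. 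Since $\det(\cU|_C) = \cO_C(-3)$, Riemann--Roch gives $\chi(C, \cU|_C) = 0$, consistent with the expected rank-one answer on $\Gamma$ after an appropriate cohomological shift.

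Next I would perform a case-by-case analysis of $\cU|_C$ for each of the five types of cubic curves listed in Lemma~\ref{lemma:f3}, verifying that $\Phi^*(I_C)$ is concentrated in a single cohomological degree and takes the form $L_C[s]$ with $L_C$ of constant degree~$e$; the computation of the Chern character of $\Phi^*(I_C)$ using the Chern character of the Fourier--Mukai kernel of $\Phi$ from~\cite{K06} will then pin down $e$. Having produced $f \colon \rF_3(X) \to \Pic^e(\Gamma)$ via Lemma~\ref{lemma:moduli-map}, I would show that $f$ is an isomorphism by constructing an explicit inverse: given $L \in \Pic^e(\Gamma)$, the object $\Phi(L) \in \Db(X)$ should be (a twist of) the ideal sheaf of a unique cubic curve $C_L \subset X$, yielding the inverse map. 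Alternatively, $f$ is a morphism to the proper smooth three-dimensional variety $\Pic^0(\Gamma)$, so combining bijectivity on $\kk$-points with the non-vanishing of the differential at a single point---e.g. at a general smooth rational cubic where $\cU|_C$ has the generic splitting type---and invoking Zariski's main theorem would suffice. The main obstacle will be the uniform treatment of the degenerate types from Lemma~\ref{lemma:f3}: for reducible, nodal, or non-reduced configurations, the splitting type of $\cU|_C$ and the local structure of $I_C$ can be subtle, so verifying that the mutation still produces a genuine line bundle on $\Gamma$---rather than a higher-rank sheaf or a complex with cohomology in several degrees---requires delicate local analysis in each case, and analogous delicacies arise when identifying the image of $\Phi$ in the reverse direction.
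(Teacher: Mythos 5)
Your overall strategy---project $I_C$ into the component $\Phi(\Db(\Gamma))$ of~\eqref{eq:dbx16}, show the result is a shift of a line bundle, then get a morphism from Lemma~\ref{lemma:moduli-map} and invert it via $\Phi$---is the same as the paper's, but the crucial middle step has a genuine gap. Knowing $\Hom^\bullet(\cU^\vee,I_C)\cong H^{\bullet-1}(C,\cU\vert_C)$ (your reduction is correct) only determines the projection of $I_C$ into $\Phi(\Db(\Gamma))$ as an object of $\Db(X)$; it does not identify the corresponding object of $\Db(\Gamma)$, and the Chern character of the kernel only pins down its rank and degree---it cannot rule out torsion summands or cohomology spread over several degrees, so ``concentrated in one degree and of the form $L_C[s]$'' does not follow from an analysis of $\cU\vert_C$ plus numerics. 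What is actually needed is a computation at the level of the kernel $\cE$ of~\eqref{eq:ce}: the paper (Proposition~\ref{proposition:phi-s-ic}) shows that $H^{>0}(X,\cE_y^\vee\otimes I_C)=0$ and $\dim H^0(X,\cE_y^\vee\otimes I_C)=\chi(\cE_y^\vee\otimes I_C)=1$ for \emph{every} $y\in\Gamma$, using the restriction isomorphism $H^0(X,\cU^\vee)\cong H^0(C,\cU^\vee\vert_C)$ of Corollary~\ref{corollary:huc}, and then invokes the fiberwise criterion of \cite[Lemma~4.4]{K19} to conclude that $\Phi^!(I_C)[-1]$ is a line bundle. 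Nothing in your proposal plays this role, and without it the hypothesis of Lemma~\ref{lemma:moduli-map} is not verified.

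Second, the case-by-case analysis of $\cU\vert_C$ over the five types of Lemma~\ref{lemma:f3}, which you correctly flag as the main obstacle, is both harder than you suggest (for the non-reduced type the language of splitting types is not even available) and unnecessary. What actually happens is that $H^\bullet(C,\cU\vert_C)=0$ uniformly, so $I_C\in\langle\cO,\cU^\vee\rangle^\perp=\Phi(\Db(\Gamma))$ and no mutation past $\cU^\vee$ occurs at all: the paper (Lemma~\ref{lemma:ic-ax}) gets $\chi(\cU\vert_C)=0$ by Riemann--Roch, exactly as you do, and kills $H^0(C,\cU\vert_C)$ uniformly by observing that a section would produce a vector $v\in V_6$ lying in $\cU$ along $C$, i.e.\ a global section of $\cU^\vee$ whose zero locus contains $C$, which is excluded by \cite[Lemma~3.6]{BF}; this uses the Lagrangian--Grassmannian geometry, not a curve-by-curve study. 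Your concluding step (inverse given by $\Phi$, or \'etaleness plus properness) does match the paper's final argument, and the appeal to Corollary~\ref{corollary:fd-pic} belongs to the subsequent Abel--Jacobi statement rather than to the theorem itself; but as written the proposal leaves unproved precisely the two points---uniform vanishing of $H^\bullet(C,\cU\vert_C)$ for degenerate cubics and the line-bundle property of the transform---on which the proof rests.
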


The proof will take the rest of the subsection.
We will show that the ideals of twisted cubic curves in $X$ belong to the component $\Db(\Gamma)$ 
of~\eqref{eq:dbx16} and correspond to line bundles on $\Gamma$.

Note that the restriction to~$X$ of the tautological exact sequence on~$\LGr(3,V_6)$
\begin{equation}
\label{eq:tautological}
0 \larrow \cU \larrow V_6 \otimes \cO \larrow \cU^\vee \larrow 0
\end{equation} 
shows that $\cU \in \langle \cO, \cU^\vee \rangle$ and $\cU^\vee \in \langle \cU, \cO \rangle$.

\begin{lemma}
\label{lemma:ic-ax}
If $C \subset X$ is a rational cubic curve then $I_C \in \langle \cO, \cU^\vee \rangle^\perp = \langle \cU, \cO \rangle^\perp$.
\end{lemma}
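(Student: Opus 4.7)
The strategy is to verify the two orthogonality conditions $\Hom^\bullet(\cO, I_C) = 0$ and $\Hom^\bullet(\cU^\vee, I_C) = 0$ separately; together with the tautological sequence~\eqref{eq:tautological}, which shows $\langle \cO, \cU^\vee \rangle = \langle \cU, \cO \rangle$, these are exactly equivalent to the claim. The first condition is immediate from Lemma~\ref{lemma:ic-ox}, since $X \subset \PP^{10}$ is an intersection of quadrics containing no planes by Theorem~\ref{th:bht}, and $C$ has arithmetic genus zero.

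For the second condition, $\Hom^\bullet(\cU^\vee, I_C) = H^\bullet(X, \cU \otimes I_C)$. Tensoring the structure sequence $0 \to I_C \to \cO_X \to \cO_C \to 0$ with $\cU$ and using the vanishing $H^\bullet(X, \cU) = 0$ from~\eqref{eq:coh-x16}, one reduces to showing
\begin{equation*}
H^\bullet(C, \cU|_C) = 0.
\end{equation*}
To establish this I would restrict the tautological sequence~\eqref{eq:tautological} to $C$, obtaining
\begin{equation*}
0 \longrightarrow \cU|_C \longrightarrow V_6 \otimes \cO_C \longrightarrow \cU^\vee|_C \longrightarrow 0.
\end{equation*}
Using $H^\bullet(C,\cO_C) = \kk$ (Lemma~\ref{lemma:ic-ox}) and Riemann--Roch, which gives $\chi(\cU^\vee|_C) = \deg(\cU^\vee|_C) + 3\chi(\cO_C) = 3 + 3 = 6$, the associated long exact sequence immediately yields $H^1(C, \cU^\vee|_C) = 0$ and reduces the remaining vanishing to injectivity of the evaluation map
\begin{equation*}
V_6 \longrightarrow H^0(C, \cU^\vee|_C).
\end{equation*}
Since both sides have dimension $6$, injectivity automatically forces bijectivity and hence $H^\bullet(C, \cU|_C) = 0$.

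The hard part is precisely this injectivity. A nonzero kernel vector corresponds to a nonzero $v \in V_6$ lying in every Lagrangian 3-subspace parameterized by $C$, equivalently to an inclusion $C \subset \LGr_v := \{U \in \LGr(3,V_6) : v \in U\}$. This locus is isomorphic to $\LGr(2,v^\perp/\langle v \rangle) \cong Q^3$, a smooth quadric threefold whose symplectic Pl\"ucker embedding spans only a $\PP^4 \subset \PP^{13}$. Thus $C$ would lie in $X \cap \LGr_v \subset \LGr_v \cap (\PP^4 \cap \PP^{10})$, a curve contained in a hyperplane (or lower) section of the quadric threefold $\LGr_v$. I would then rule this out by a case-by-case analysis of the possibilities for $C$ from Lemma~\ref{lemma:f3}, using that (i) for a smooth twisted cubic $\langle C \rangle = \PP^3$ would be forced inside $\PP^4_v \cap \PP^{10}$, producing unexpected linear sections of~$X$ incompatible with $\Pic(X_\bkk) = \ZZ \cdot K_{X_\bkk}$ and Theorem~\ref{th:bht}, and (ii) the degenerate cubics of Lemma~\ref{lemma:f3} all contain a line-component, reducing the argument to the same claim for a line (where $\cU|_L$ has degree~$-1$ and rank~$3$, and the analogous evaluation map is easily seen to be injective). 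This case analysis, rather than the cohomological reduction, is the technical core of the proof.
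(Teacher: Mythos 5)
Your overall skeleton coincides with the paper's: orthogonality to $\cO$ via Lemma~\ref{lemma:ic-ox}, reduction to $H^\bullet(C,\cU\vert_C)=0$ using \eqref{eq:coh-x16}, the Euler-characteristic count, and the translation of a kernel vector $v\in V_6$ into an inclusion $C\subset\LGr_v\cong Q^3$ spanning a $\PP^4$. At exactly this point the paper invokes \cite[Lemma~3.6]{BF} to exclude any cubic from the zero locus of a section of $\cU^\vee$, while you try to prove that input directly, and your argument has a genuine gap in the degenerate cases. The claim in your case~(ii) that the evaluation map $V_6\to H^0(L,\cU^\vee\vert_L)$ is injective for a line $L\subset X$ is false: every line of $\LGr(3,V_6)$ is the pencil $\{U:\ A\subset U\subset A^\perp\}$ for a fixed isotropic $2$-plane $A\subset V_6$, so $\cU\vert_L\cong\cO_L^{\oplus 2}\oplus\cO_L(-1)$ and the kernel of the evaluation map is $A$ itself, which is $2$-dimensional (rank $3$ and degree $-1$ do not force vanishing of sections). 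Hence restricting a kernel vector to a line component yields no contradiction, and types \ref{case:f3:2-1}--\ref{case:f3:multiple} of Lemma~\ref{lemma:f3} are left unproved; they cannot be discarded, since the lemma is later applied to every point of $\rF_3(X)$. Your case~(i) is also only gestured at: ``unexpected linear sections incompatible with Theorem~\ref{th:bht}'' is the right instinct, but the contradiction is not actually derived.

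The good news is that your span idea, made precise, treats all cases at once and repairs the proposal. Every curve parameterized by $\rF_3(X)$ is non-planar (a plane cubic has arithmetic genus $1$, cf.\ Lemma~\ref{lemma:f3}), so $\langle C\rangle$ contains a $\PP^3$. A kernel vector $v$ means the section $s_v$ of $\cU^\vee$ vanishes on $C$ as a scheme, i.e.\ $C\subset\LGr_v$, whence this $\PP^3$ lies in $\langle\LGr_v\rangle\cong\PP^4$ and also in $\PP^{10}$ (as $C\subset X\subset\PP^{10}$). Then $\LGr_v\cap\PP^3$ is a $2$-dimensional quadric contained in $\LGr(3,V_6)\cap\PP^{10}=X$, i.e.\ a surface of degree $2<2g-2$ in $X$, contradicting Theorem~\ref{th:bht}. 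Run this way the argument is uniform over all five types of Lemma~\ref{lemma:f3}, makes your case~(i) precise, and lets you drop the flawed reduction to lines entirely; alternatively, simply quote \cite[Lemma~3.6]{BF} at this point, as the paper does.
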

\begin{proof}
Let us check that $H^i(C,\cU\vert_C) = 0$ for all $i$.
Indeed, $C$ is a curve, hence the cohomology vanishes for $i \not\in \{0,1\}$.
Furthermore, the rank of $\cU$ is 3 and the first Chern class is~$-1$, 
hence up to codimension~2 classes $\cU$ is equivalent to $\cO \oplus \cO \oplus \cO(-1)$.
Therefore, 
\begin{equation*}
\chi(\cU\vert_C) = \chi(\cO_C) \oplus \chi(\cO_C) \oplus \chi(\cO(-1)\vert_C) = 2h_C(0) + h_C(-1) = 0,
\end{equation*}
where $h_C(t) = 3t + 1$ is the Hilbert polynomial of~$C$.
So, it only remains to prove $H^0(C,\cU\vert_C) = 0$.

Assume, to the contrary, that~$\cU\vert_C$ has a global section.
The short exact sequence
\begin{equation*}
0 \larrow \cU\vert_C \larrow V_6 \otimes \cO_C \larrow \cU^\vee\vert_C \larrow 0
\end{equation*}
then shows that there is a vector $v \in V_6$ such that the zero locus of the corresponding global section of $\cU^\vee$ vanishes on $C$.
But~\cite[Lemma~3.6]{BF} shows that this is impossible.

This proves that $\cO_C$ is in the orthogonal to~$\cU^\vee$.
By~\eqref{eq:coh-x16} the same is true for $I_C$.
Finally, orthogonality of~$I_C$ to $\cO$ is proved in Lemma~\ref{lemma:ic-ox}.
\end{proof}

\begin{corollary}
\label{corollary:huc}
If $C \subset X$ is a rational cubic curve then the restriction morphism
\begin{equation*}
H^0(X,\cU^\vee) \larrow H^0(C,\cU^\vee\vert_C)
\end{equation*}
is an isomorphism and $H^{>0}(C,\cU^\vee\vert_C) = 0$.
\end{corollary}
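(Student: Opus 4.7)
The plan is to deduce the corollary by combining the orthogonality established in Lemma~\ref{lemma:ic-ax} with the tautological exact sequence~\eqref{eq:tautological} on $\LGr(3,V_6)$ restricted to $X$. The key observation is that the orthogonality $I_C \in \langle \cO,\cU^\vee\rangle^\perp$ unpacks, via the definition of semiorthogonality, as the pair of cohomology vanishings $H^\bullet(X, I_C) = 0$ and $H^\bullet(X, I_C \otimes \cU) = 0$, the latter coming from $\Ext^\bullet(\cU^\vee, I_C) = H^\bullet(X, I_C \otimes \cU)$.

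First, I would tensor the restriction of~\eqref{eq:tautological} to $X$ by the ideal sheaf $I_C$ to obtain the short exact sequence
\begin{equation*}
0 \larrow I_C \otimes \cU \larrow V_6 \otimes I_C \larrow I_C \otimes \cU^\vee \larrow 0.
\end{equation*}
The associated long exact cohomology sequence together with the two vanishings above then forces $H^\bullet(X, I_C \otimes \cU^\vee) = 0$ as well.

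Next, I would consider the short exact sequence
\begin{equation*}
0 \larrow I_C \otimes \cU^\vee \larrow \cU^\vee \larrow \cU^\vee\vert_C \larrow 0.
\end{equation*}
The vanishing just established makes the restriction map $H^i(X,\cU^\vee) \to H^i(C,\cU^\vee\vert_C)$ an isomorphism for every $i$, and the conclusion follows from the computation $H^\bullet(X,\cU^\vee) = V_6$ recorded in~\eqref{eq:coh-x16}.

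There is no genuine obstacle here: Lemma~\ref{lemma:ic-ax}, which does the actual work of placing $I_C$ in the required orthogonal subcategory, has already been proved. The step that could potentially cause trouble, namely ensuring that $\cU^\vee\vert_C$ has no cohomology in positive degree, is automatic once the vanishing of all $H^i(X, I_C \otimes \cU^\vee)$ is in hand, because $C$ is a curve and only $H^1$ is at stake.
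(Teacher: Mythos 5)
Your proof is correct and follows essentially the same route as the paper: both arguments reduce the statement to the vanishing $H^\bullet(X,I_C\otimes\cU^\vee)=0$ supplied by Lemma~\ref{lemma:ic-ax}, and then conclude from the restriction sequence for $\cU^\vee$ together with $H^\bullet(X,\cU^\vee)=V_6$. The only cosmetic difference is that the paper reads this vanishing off directly as $\Ext^\bullet(\cU,I_C)=0$ (using the reformulation $\langle\cO,\cU^\vee\rangle^\perp=\langle\cU,\cO\rangle^\perp$ already recorded in that lemma), whereas you re-derive it cohomologically from the tautological sequence~\eqref{eq:tautological}.
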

\begin{proof}
Tensoring the standard exact sequence
\begin{equation*}
0 \larrow I_C \larrow \cO \larrow \cO_C \larrow 0
\end{equation*} 
with $\cU^\vee$, taking into account that
\begin{equation*}
H^i(X,\cU^\vee \otimes I_C) \cong \Ext^i(\cU,I_C) = 0,
\end{equation*}
by Lemma~\ref{lemma:ic-ax}, and using~\eqref{eq:coh-x16},
we deduce the claim.
\end{proof}

For the description of the functor~$\Phi \colon \Db(\Gamma) \to \Db(X)$ from~\eqref{eq:dbx16} we refer to~\cite[\S6.3]{KPS} and~\cite[\S3.2.1]{BF}
which show that~$\Phi$
is the Fourier--Mukai functor
with kernel a vector bundle~$\cE$ of rank~2 on~$X \times \Gamma$ such that there is an exact sequence
\begin{equation}
\label{eq:ce}
0 \larrow \cE \larrow \cO \boxtimes \cF_6 \larrow \cU^\vee \boxtimes \cF_2 \larrow \cE(H + K_\Gamma) \larrow 0,
\end{equation}
where $\cF_2$ and $\cF_6$ are vector bundles of ranks~2 and~6 on $\Gamma$.

\begin{proposition}
\label{proposition:phi-s-ic}
If $C \subset X$ is a rational cubic curve then $\Phi^!(I_C)[-1]$ is a line bundle on~$\Gamma$.
\end{proposition}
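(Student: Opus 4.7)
The plan is to identify $\Phi^!(I_C)$ through an explicit derived computation based on the four-term resolution~\eqref{eq:ce} of the kernel $\cE$. Lemma~\ref{lemma:ic-ax} guarantees that $I_C$ lies in $\langle \cO, \cU^\vee\rangle^\perp$, which is precisely the essential image of the fully faithful functor $\Phi$; hence the counit $\Phi\Phi^!(I_C) \to I_C$ is an isomorphism and $\Phi^!(I_C)$ is the unique $M \in \Db(\Gamma)$ with $\Phi(M) \cong I_C$. Since any object on the smooth curve $\Gamma$ splits as the direct sum of shifts of its cohomology sheaves, to prove that $M \cong L[1]$ for a line bundle $L$ on $\Gamma$ it suffices, by adjunction and the formula $\Hom^\bullet_X(\cE_\gamma, I_C) \cong \Hom^\bullet_\Gamma(\kk(\gamma), M)$, to show that for every closed point $\gamma \in \Gamma$ the graded vector space $\Hom^\bullet_X(\cE_\gamma, I_C)$ is one-dimensional and concentrated in degree~$0$. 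Here $\cE_\gamma := \cE\vert_{X \times \{\gamma\}} = \Phi(\kk(\gamma))$ is the rank-$2$ vector bundle on $X$ which, upon restricting~\eqref{eq:ce}, sits in the four-term resolution
\[
0 \longrightarrow \cE_\gamma \longrightarrow \cO_X^{\oplus 6} \longrightarrow (\cU^\vee)^{\oplus 2} \longrightarrow \cE_\gamma(H) \longrightarrow 0.
\]

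I would compute the graded $\Ext$-group by splitting the above resolution into two short exact sequences and applying $\Ext^\bullet_X(-, I_C)$. The two outer pieces contribute nothing, thanks to two cohomology vanishings. First, $H^\bullet(X, I_C) = 0$: this is immediate from $0 \to I_C \to \cO_X \to \cO_C \to 0$ together with $H^\bullet(X, \cO_X) = \kk$ and $H^\bullet(C, \cO_C) = \kk$ (Lemma~\ref{lemma:ic-ox}). Second, $\Ext^\bullet_X(\cU^\vee, I_C) = 0$: obtained by tensoring the same sequence with $\cU$ and invoking the vanishing $H^\bullet(X, \cU) = 0$ from~\eqref{eq:coh-x16} together with $H^\bullet(C, \cU\vert_C) = 0$ established in the proof of Lemma~\ref{lemma:ic-ax}. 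These collapse the long exact sequences to $\Ext^i_X(\cE_\gamma, I_C) \cong \Ext^{i+2}_X(\cE_\gamma(H), I_C)$; Serre duality on $X$ with $\omega_X = \cO(-H)$ turns the right-hand side into $\Ext^{1-i}_X(I_C, \cE_\gamma)^\vee$, and applying $\Ext^\bullet_X(-, \cE_\gamma)$ to the ideal sequence of $C$ together with the orthogonality $H^\bullet(X, \cE_\gamma) = 0$ (since $\cE_\gamma \in \Phi(\Db(\Gamma)) \subset \cO^\perp$) identifies this further with the cohomology $H^{i+1}(C, \cE_\gamma^\vee(-H)\vert_C)$.

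The remaining task is to verify, for every $\gamma \in \Gamma$ and every cubic curve $C \subset X$ parameterized by $\rF_3(X)$, that $H^0(C, \cE_\gamma^\vee(-H)\vert_C) = 0$ and $\dim H^1(C, \cE_\gamma^\vee(-H)\vert_C) = 1$. The Euler characteristic $\chi = -1$ is a direct Riemann--Roch computation on $C$ using the rank and first Chern class of $\cE_\gamma$ (read off from~\eqref{eq:ce}) together with $\chi(\cO_C) = 1$; the vanishing of $H^0$ for a smooth twisted cubic $C$ follows since a nonzero global section of $\cE_\gamma^\vee(-H)\vert_C$ would, via the dual of~\eqref{eq:tautological} restricted to $C$, exhibit a global section of $\cU^\vee$ vanishing on $C$, which is excluded by~\cite[Lemma~3.6]{BF}. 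The main obstacle is uniformity across the five types of cubic curves listed in Lemma~\ref{lemma:f3}: for the reducible and non-reduced configurations $\cE_\gamma^\vee(-H)\vert_C$ is no longer locally free, so $H^0$-vanishing must be verified componentwise by a Mayer--Vietoris argument using the known behavior of $\cE_\gamma$ on lines and on conics in $X$. Once this is settled in all cases, the cohomology-and-base-change theorem identifies $\Phi^!(I_C)[-1]$ as a torsion-free rank-one sheaf on the smooth curve $\Gamma$, and hence a line bundle.
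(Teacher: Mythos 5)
Your proposal is correct in substance, and its middle portion follows a genuinely different route from the paper's. Both arguments reduce the statement to showing that $\Ext^\bullet_X(\cE_\gamma,I_C)\cong H^\bullet(X,\cE_\gamma^\vee\otimes I_C)$ is one-dimensional and concentrated in degree~$0$ for every closed point $\gamma\in\Gamma$ (you then conclude via the splitting of objects of $\Db(\Gamma)$ into shifts of their cohomology sheaves, the paper via the base-change argument of \cite[Lemma~4.4]{K19}; these are interchangeable). The paper obtains the required vanishing from global generation of $\cE_\gamma^\vee$, the vanishing $H^{>0}(X,\cE_\gamma^\vee)=0$, and surjectivity of the restriction $H^0(X,\cE_\gamma^\vee)\to H^0(C,\cE_\gamma^\vee\vert_C)$, which rests on Corollary~\ref{corollary:huc}; you instead dimension-shift along the restricted resolution~\eqref{eq:cey} using the orthogonality $\Ext^\bullet(\cO,I_C)=\Ext^\bullet(\cU^\vee,I_C)=0$ of Lemma~\ref{lemma:ic-ax}, apply Serre duality twice together with $H^\bullet(X,\cE_\gamma)=0$ from~\eqref{eq:cey-x16}, and land on $H^{\bullet+1}(C,\cE_\gamma^\vee(-H)\vert_C)$. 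Your route thus bypasses Corollary~\ref{corollary:huc} and the global-generation step entirely, at the price of two Serre dualities; both are legitimate.

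The one step you single out as the ``main obstacle'' --- the vanishing $H^0(C,\cE_\gamma^\vee(-H)\vert_C)=0$ for all five types of curves in Lemma~\ref{lemma:f3} --- is not actually an obstacle: the chain of isomorphisms you establish holds in every degree (each step comes from a long exact sequence whose third terms vanish in all degrees, or from Serre duality), so evaluating $\Ext^i(\cE_\gamma,I_C)\cong H^{i+1}(C,\cE_\gamma^\vee(-H)\vert_C)$ at $i=-1$, where the left-hand side vanishes because $\cE_\gamma$ and $I_C$ are sheaves, gives the $H^0$-vanishing uniformly, with no case analysis and no appeal to \cite[Lemma~3.6]{BF}. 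The only extra input you then need is the Euler characteristic $\chi(\cE_\gamma^\vee(-H)\vert_C)=-1$, which indeed follows from $\rank\cE_\gamma=2$, $\rc_1(\cE_\gamma)=-H$ and $\chi(\cO_C)=1$ (Lemma~\ref{lemma:ic-ox}), exactly as in the $\chi$-computations of Lemma~\ref{lemma:ic-ax}. So the proposed Mayer--Vietoris discussion can simply be dropped. A small factual slip along the way: the restriction of the vector bundle $\cE_\gamma^\vee(-H)$ to any of the degenerate cubics is still a locally free sheaf on that curve (restriction of a locally free sheaf to a closed subscheme is locally free), so even the case analysis you envisaged would not have met non-locally-free sheaves; and your smooth-case sketch should be routed through~\eqref{eq:cey} rather than the dual of~\eqref{eq:tautological}, since the trivial factor there is $\cF_{6,\gamma}\otimes\cO$ and not $V_6\otimes\cO$ --- but, as explained, this step is superfluous.
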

\begin{proof}
The functor $\Phi^!(-)[-1]$ is the Fourier--Mukai functor with kernel
$\cE^\vee \otimes \omega_\Gamma$.
For $y \in \Gamma$ let $\cE_y$ be the rank~2 vector bundle on $X$ corresponding to the point $y$.
Note that 
\begin{equation}
\label{eq:cey-x16}
\cE_y \cong \Phi(\cO_y) \in \langle \cO, \cU^\vee \rangle^\perp
\end{equation}
and $\rc_1(\cE_y) = -H$.
Restricting~\eqref{eq:ce} to the fiber over the point $y$ 
and taking into account isomorphism $\cE_y^\vee \cong \cE_y(H)$ we obtain an exact sequence
\begin{equation}
\label{eq:cey}
0 \larrow \cE_y \larrow \cO^{\oplus 6} \larrow \cU^\vee \oplus \cU^\vee \larrow \cE_y^\vee \larrow 0.
\end{equation}
Using~\eqref{eq:coh-x16} and~\eqref{eq:cey-x16} we obtain
\begin{equation}
\label{eq:hpos-cey}
H^{>0}(X,\cE_y^\vee) = 0.
\end{equation}
Furthermore, it follows from~\eqref{eq:cey} and~\eqref{eq:tautological} that~$\cE_y^\vee$ is globally generated, hence
\begin{equation}
\label{eq:hpos-cey-c}
H^{>0}(C,\cE_y^\vee\vert_C) = 0.
\end{equation}
Let us also show that the restriction homomorphism $H^0(X,\cE_y^\vee) \to H^0(C,\cE_y^\vee\vert_C)$ is surjective.
Indeed, we have a commutative diagram
\begin{equation*}
\xymatrix{
H^0(X,\cU^\vee \oplus \cU^\vee) \ar[r] \ar[d] &
H^0(C,(\cU^\vee \oplus \cU^\vee)\vert_C) \ar[d]
\\
H^0(X,\cE_y^\vee) \ar[r] &
H^0(C,\cE_y^\vee\vert_C),
}
\end{equation*}
where the horizontal arrows are induced by restriction and the vertical arrows are induced by the morphism from~\eqref{eq:cey}.
The top horizontal arrow is an isomorphism by Corollary~\ref{corollary:huc}.
Furthermore, the restriction of~\eqref{eq:cey} to $C$ and the vanishing of $H^{>0}(C,\cO_C)$ (Lemma~\ref{lemma:ic-ox})
imply that the right vertical arrow is surjective.
Therefore, the bottom horizontal arrow is surjective as well.

Combining this surjectivity with~\eqref{eq:hpos-cey} and~\eqref{eq:hpos-cey-c} we deduce that 
\begin{equation*}
H^{>0}(X,\cE_y^\vee \otimes I_C) = 0.
\end{equation*}
On the other hand, computing the Euler characteristic $\chi(\cE_y^\vee) = 6$ 
(this follows from~\eqref{eq:cey}, \eqref{eq:cey-x16}, and~\eqref{eq:coh-x16})
and $\chi(\cE_y^\vee\vert_C) = 5$ (this follows from $\rank(\cE_y) = 2$ and $\rc_1(\cE_y) = -H$), we obtain 
\begin{equation*}
\dim H^{0}(X,\cE_y^\vee \otimes I_C) = \chi(\cE_y^\vee \otimes I_C) = 1.
\end{equation*}
These two facts imply (see~\cite[Lemma~4.4]{K19} for an argument) that the Fourier--Mukai functor with kernel~$\cE^\vee$ takes~$I_C$ to a line bundle,
hence the same is true for the functor~$\Phi^!(-)[-1]$.
\end{proof}

Now we can complete the proof of the theorem.

\begin{proof}[Proof of Theorem~\textup{\ref{theorem:x16-f3}}]
By Proposition~\ref{proposition:phi-s-ic} and Lemma~\ref{lemma:moduli-map} the functor~$\Phi^!$ induces a morphism
\begin{equation*}
\rF_3(X) \larrow \Pic(\Gamma).
\end{equation*}
Arguing as in~\cite[Proposition~B.5.5]{KPS} we check that it is \'etale, proper, and has inverse (given by the functor~$\Phi(-)[1])$.
Therefore, it is an isomorphism onto a connected component of~$\Pic(\Gamma)$.
In particular, $\rF_3(X) \cong \Pic^0(\Gamma)$.
\end{proof}

The particular component of $\Pic(\Gamma)$ onto which $\rF_3(X)$ is mapped by $\Phi^!$ depends on the choice of the bundle~$\cE$,
which is defined by~\eqref{eq:cey} only up to twist by a line bundle on~$\Gamma$.

Applying Corollary~\ref{corollary:fd-pic} we deduce

\begin{corollary}
\label{corollary:x16-cubics}
The Abel--Jacobi map $\AJ \colon \rF_3(X) \cong \Alb(\rF_3(X)) \to \Jac(X)$ is an isomorphism.
\end{corollary}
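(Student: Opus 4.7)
The plan is to deduce this corollary almost immediately by combining Theorem~\ref{theorem:x16-f3} with the general machinery developed in Section~\ref{section:sod-aj}, specifically Corollary~\ref{corollary:fd-pic}.

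First I would recall what has just been established. By Proposition~\ref{proposition:phi-s-ic}, for every rational cubic curve $C \subset X$ the object $\Phi^!(I_C)[-1]$ is a line bundle on $\Gamma$, i.e.\ a stable sheaf of rank $r = 1$ and some fixed degree $e$. Hence Lemma~\ref{lemma:moduli-map} applies with these parameters, and the functor $\Phi^!$ induces a morphism
\begin{equation*}
f \colon \rF_3(X) \longrightarrow \rM_\Gamma(1,e) \cong \Pic^e(\Gamma) \cong \Pic^0(\Gamma),
\end{equation*}
and Theorem~\ref{theorem:x16-f3} asserts that $f$ is an isomorphism.

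Next I would invoke Corollary~\ref{corollary:fd-pic}: its hypotheses are precisely that the semiorthogonal decomposition~\eqref{eq:dbx16} is in place, that the functor (here $\Phi^!$ rather than $\Phi^*$, which is permitted by the parenthetical remark in Proposition~\ref{proposition:aj-cd}) induces a morphism to a moduli space of stable sheaves with $r=1$, and that this morphism is an isomorphism. All three conditions have just been verified, so Corollary~\ref{corollary:fd-pic} yields immediately that the Abel--Jacobi map $\AJ \colon \rF_3(X) \to \Jac(X)$ is an isomorphism; since $\rF_3(X) \cong \Pic^0(\Gamma)$ is already an abelian variety, it coincides with its own Albanese, explaining the intermediate identification $\rF_3(X) \cong \Alb(\rF_3(X))$ in the statement.

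There is essentially no obstacle left at this stage: the hard work has been done in Proposition~\ref{proposition:phi-s-ic} (showing the kernel-level computation produces a line bundle) and in Theorem~\ref{theorem:x16-f3} (showing the induced map to $\Pic^0(\Gamma)$ is an isomorphism, via \'etaleness and properness with the inverse supplied by $\Phi(-)[1]$). The only conceptual point worth flagging is that Corollary~\ref{corollary:fd-pic} is stated for $\Phi^*$, but Proposition~\ref{proposition:aj-cd} (on which it rests) explicitly allows either adjoint; so no extra argument is required to pass from the left- to the right-adjoint formulation used here.
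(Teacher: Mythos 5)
Your proposal is correct and follows exactly the paper's route: the paper deduces Corollary~\ref{corollary:x16-cubics} by applying Corollary~\ref{corollary:fd-pic} to the isomorphism $\rF_3(X) \cong \Pic^0(\Gamma)$ of Theorem~\ref{theorem:x16-f3}, which is induced by $\Phi^!$ via Proposition~\ref{proposition:phi-s-ic} and Lemma~\ref{lemma:moduli-map}. Your observation that Proposition~\ref{proposition:aj-cd} explicitly permits the right adjoint, so no extra argument is needed, matches the paper's (implicit) treatment.
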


\subsection{Fano threefold of genus~10}
\label{subsection:genus-10}

Let $X$ be a smooth prime Fano threefold of genus~10 over an algebraically closed field of characteristic~0.
Mukai proved that $X$ admits a natural embedding into $\GTGr(2,V_7)$, 
the Grassmannian of the simple algebraic group of type $\rG_2$ that parameterizes 2-dimensional 
subspaces in a 7-dimensional vector space $V_7$ annihilated by a $\rG_2$-invariant 3-form~\mbox{$\lambda \in \wedge^3 V_7^\vee$}.
We denote by $\cU \subset V_7 \otimes \cO$ the restriction to $X$ of the rank~2 tautological subbundle on $\Gr(2,V_7)$.
By~\cite[\S6.4]{K06} we have a semiorthogonal decomposition
\begin{equation}
\label{eq:dbx18}
\Db(X) = \langle \Phi(\Db(\Gamma)), \cO, \cU^\vee \rangle,
\end{equation}
where $\Gamma$ is a curve of genus~2.
We will remind a description of the embedding functor 
\begin{equation*}
\Phi \colon \Db(\Gamma) \to \Db(X) 
\end{equation*}
below (see~\eqref{eq:cey-x18}).
Meanwhile, recall that $\cO$ and $\cU$ are exceptional bundles and that
\begin{equation}
\label{eq:coh-x18}
H^\bullet(X,\cU) = 0,
\qquad 
H^\bullet(X,\cO) = \kk,
\qquad 
H^\bullet(X,\cU^\vee) = V_7^\vee.
\end{equation} 
Note also that $\cU^\vee \cong \cU(1)$.
Recall the following result.

\begin{theorem}[{\cite[Proposition~B.5.5]{KPS}}]
\label{theorem:x18-f2}
The left adjoint functor $\Phi^* \colon \Db(X) \to \Db(\Gamma)$ induces a noncanonical isomorphism 
\begin{equation*}
\rF_2(X) \cong \Pic^0(\Gamma).
\end{equation*}
\end{theorem}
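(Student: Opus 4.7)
The plan is to mirror, almost verbatim, the proof of Theorem~\ref{theorem:x16-f3} from the preceding subsection, replacing cubic curves on~$X_{16}$ by conics on~$X_{18}$ and adjusting ranks and degrees accordingly. First I would show that for every conic $C \subset X$ the ideal sheaf $I_C$ lies in $\Phi(\Db(\Gamma)) = \langle \cO, \cU^\vee\rangle^\perp$. Orthogonality to $\cO$ is immediate from Lemma~\ref{lemma:ic-ox} since $X$ is an intersection of quadrics containing no planes (Theorem~\ref{th:bht}). For orthogonality to $\cU^\vee$, using $\cU^\vee \cong \cU(1)$ and the tautological sequence on $\GTGr(2,V_7)$ restricted to $X$, it suffices to prove $H^\bullet(X, \cU \otimes I_C) = 0$, which via the sequence $0 \to I_C \to \cO \to \cO_C \to 0$ and the vanishing $H^\bullet(X,\cU) = 0$ from~\eqref{eq:coh-x18} reduces to $H^\bullet(C, \cU|_C) = 0$.

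The key computation is that $\cU|_C$ has rank $2$ and degree $-2$, and must split as $\cO(-1) \oplus \cO(-1)$ rather than $\cO \oplus \cO(-2)$. The exclusion of the degenerate splitting is the one nontrivial geometric input: if $\cU|_C$ had a section, then by the embedding $\cU \hookrightarrow V_7 \otimes \cO$ this section would give a vector $v \in V_7$ vanishing on $C$, whose zero locus on $X$ would be a curve rather than the expected finite scheme; I would rule this out using the $\rG_2$-geometry (e.g., using the analogue of~\cite[Lemma~3.6]{BF} for $X_{18}$, where the zero locus of a vector in $V_7$ on $X$ is known to be zero-dimensional generically and easy to control in all cases). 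This step is where the bulk of the work lies.

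Next, I would compute the right adjoint $\Phi^!(I_C)[-1]$. Using the Fourier--Mukai kernel $\cE$ on $X \times \Gamma$ (of rank depending on the case; here $\cE_y$ of appropriate rank, sitting in an analogue of~\eqref{eq:ce}), I would show that for each $y \in \Gamma$ the object $\cE_y \in \langle \cO, \cU^\vee\rangle^\perp$ satisfies
\begin{equation*}
\dim \Hom(\cE_y, I_C) = 1, \qquad \Ext^{>0}(\cE_y, I_C) = 0.
\end{equation*}
The Euler characteristic computation uses $\chi(\cE_y^\vee)$ and $\chi(\cE_y^\vee|_C)$ obtained from the resolution of $\cE_y$ in terms of $\cO$ and $\cU^\vee$; the vanishing of higher Exts follows from global generation of $\cE_y^\vee$ and surjectivity of $H^0(X,\cE_y^\vee) \to H^0(C, \cE_y^\vee|_C)$ (verified diagrammatically as in the proof of Proposition~\ref{proposition:phi-s-ic}). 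By~\cite[Lemma~4.4]{K19}, this forces $\Phi^!(I_C)[-1]$ to be a line bundle on~$\Gamma$.

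Finally, by Lemma~\ref{lemma:moduli-map}, the functor $\Phi^!$ induces a morphism $f \colon \rF_2(X) \to \Pic(\Gamma)$. I would then verify that $f$ is étale (by identifying the tangent map $T_{[C]} \rF_2(X) = \Ext^1(I_C, I_C) / \Ext^1(\cO_X,\cO_X)$ with the tangent space to $\Pic(\Gamma)$ via the semiorthogonal decomposition and the Serre duality pairing, as in~\cite[Proposition~B.5.5]{KPS}), and proper (both source and target are proper), and has a natural inverse given by $\Phi(-)[1]$ (after checking that $\Phi(L)[1]$ is indeed the ideal sheaf of a conic for every line bundle $L$ in the appropriate Picard component). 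Since both sides are connected of dimension~$2$, the étale proper bijection $f$ onto its image is an isomorphism onto a connected component of $\Pic(\Gamma)$, which is noncanonically identified with $\Pic^0(\Gamma)$ by choice of normalization of $\cE$.
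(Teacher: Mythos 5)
The paper gives no in-text proof of this statement---it simply cites \cite[Proposition~B.5.5]{KPS}---and your reconstruction follows exactly the strategy of that cited proof and of the paper's own proofs of the analogous results (Theorems~\ref{theorem:x16-f3}, \ref{theorem:x18-f3}, \ref{theorem:v4-f2}): show $I_C\in\langle\cO,\cU^\vee\rangle^\perp$, compute $\Ext^\bullet(\cE_y,I_C)$ fiberwise to conclude the adjoint transform of $I_C$ is a line bundle, and finish via Lemma~\ref{lemma:moduli-map} together with the \'etale--proper--inverse argument. Your one deferred point, the vanishing $H^0(C,\cU\vert_C)=0$ for every conic, is indeed available exactly as you suggest (cf.~\cite[Lemma~B.3.3]{KPS}, whose argument this paper invokes for cubics in Lemma~\ref{lemma:fc-ax}; geometrically, for $0\neq v\in V_7$ the locus $\{x\in\GTGr(2,V_7): v\in\cU_x\}$ is empty or a single reduced line, hence cannot contain a conic), so apart from minor shift bookkeeping in $\Phi^!(I_C)[-1]$ there is no gap.
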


Applying Corollary~\ref{corollary:fd-pic}, we deduce 

\begin{corollary}
\label{corollary:x18-conics}
The Abel--Jacobi map $\AJ \colon 
\rF_2(X) \cong \Alb(\rF_2(X)) \to \Jac(X)$ is an isomorphism.
\end{corollary}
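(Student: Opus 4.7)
The plan is to deduce the statement as an essentially immediate consequence of Corollary \ref{corollary:fd-pic} applied to the situation of Theorem \ref{theorem:x18-f2}. Recall that Theorem \ref{theorem:x18-f2} asserts that the left adjoint functor $\Phi^* \colon \Db(X) \to \Db(\Gamma)$ of the Fourier--Mukai embedding appearing in the semiorthogonal decomposition \eqref{eq:dbx18} induces an isomorphism $f \colon \rF_2(X) \xrightarrow{\sim} \Pic^0(\Gamma)$. The key observation is that this fits precisely into the framework of Proposition \ref{proposition:aj-cd} and Corollary \ref{corollary:fd-pic} with $d = 2$ and $r = 1$.

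First I would identify the moduli interpretation: $\Pic^0(\Gamma) \cong \rM_\Gamma(1,0)$ is the fine moduli space of stable (= all, since rank one) line bundles of degree zero on $\Gamma$, so $r = 1$ and $e = 0$ satisfy the coprimality condition $\gcd(r,e) = 1$ required in Lemma \ref{lemma:moduli-map}. The content of Theorem \ref{theorem:x18-f2}, unpacked through Lemma \ref{lemma:moduli-map}, is precisely that the morphism $f$ constructed from $\Phi^*$ is induced in the sense of that lemma by line bundle kernels, so the hypotheses of Proposition \ref{proposition:aj-cd} are satisfied and Corollary \ref{corollary:fd-pic} applies. Its conclusion states exactly that the Abel--Jacobi map $\AJ \colon \rF_2(X) \to \Jac(X)$ is an isomorphism.

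Second, the identification $\rF_2(X) \cong \Alb(\rF_2(X))$ in the statement is automatic: since $\rF_2(X)$ is isomorphic (via $f$) to the abelian variety $\Pic^0(\Gamma)$, the Albanese map of $\rF_2(X)$ is itself an isomorphism (up to the choice of origin fixed by the basepoint $[C_0]$ used in \eqref{eq:alpha-d}), and the factorization $\AJ = \alpha_2 \circ \alb$ then reads as the composition of two isomorphisms.

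There is no real obstacle since all the heavy lifting has been done upstream—the derived-category methods of Section \ref{section:sod-aj} (in particular Proposition \ref{proposition:aj-c2-e} and Proposition \ref{proposition:aj-cd}) were designed precisely to reduce such Abel--Jacobi statements to the existence of a suitable moduli-theoretic map induced by the Fourier--Mukai component of a semiorthogonal decomposition. The only point requiring a brief check is that the isomorphism in Theorem \ref{theorem:x18-f2} is indeed realized via the mechanism of Lemma \ref{lemma:moduli-map} (i.e., that $\Phi^*(I_C)$ really is a shift of a line bundle on $\Gamma$ for each conic $C \subset X$, and that the universal family assembles as in \eqref{eq:phi-i-f}), which is implicit in the proof reference \cite[Proposition~B.5.5]{KPS} cited there.
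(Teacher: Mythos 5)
Your proposal is correct and coincides with the paper's argument: the corollary is obtained exactly by feeding Theorem~\ref{theorem:x18-f2} (via the mechanism of Lemma~\ref{lemma:moduli-map} with $r=1$) into Corollary~\ref{corollary:fd-pic}. The only cosmetic difference is that the degree $e$ need not be $0$ (the isomorphism onto $\Pic^0(\Gamma)$ is noncanonical, coming from some component $\Pic^e(\Gamma)$), but since $r=1$ the coprimality condition holds for any $e$, so nothing changes.
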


Below we prove an analogue of this result for rational cubics on $X$.

\begin{theorem}
\label{theorem:x18-f3}
The Hilbert scheme $\rF_3(X)$ of rational cubic curves on $X$ is a $\PP^2$-bundle over~$\Gamma$.
\end{theorem}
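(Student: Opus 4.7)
The plan is to imitate the argument used for Theorem~\ref{theorem:x16-f3} in the genus~9 case, analyzing ideal sheaves of rational cubic curves via the semiorthogonal decomposition~\eqref{eq:dbx18}. The decisive difference will be that the Chern class numerology forces $I_C$ to have a nontrivial component in $\langle \cU^\vee \rangle$ (rather than lying entirely in~$\Phi(\Db(\Gamma))$ as it did for~$X_{16}$), and this one-parameter freedom will account for the $\PP^2$-bundle structure.

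First, for any $C \in \rF_3(X)$, Lemma~\ref{lemma:ic-ox} gives $I_C \in \cO^\perp$. To measure how far $I_C$ is from also lying in $\cU^{\vee\perp}$, I would tensor the ideal sequence $0 \to I_C \to \cO_X \to \cO_C \to 0$ by $\cU$ and, using the vanishing $H^\bullet(X,\cU) = 0$ from~\eqref{eq:coh-x18}, obtain $\RHom_X(\cU^\vee, I_C) \cong H^{\bullet-1}(C,\cU|_C)[-1]$. Riemann--Roch gives $\chi(\cU|_C) = 2 - 3 = -1$, so the key technical step is to prove $H^0(C, \cU|_C) = 0$ for every cubic curve $C$: for smooth cubics this is a routine splitting check on $\PP^1$, while for the degenerate types listed in Lemma~\ref{lemma:f3} it should reduce, by an argument analogous to Lemma~\ref{lemma:ic-ax}, to the observation that no global section of $\cU^\vee$ on $X \subset \GTGr(2,V_7)$ can vanish along a length-$3$ subscheme supported on a curve. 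Granting this, one obtains $\RHom_X(\cU^\vee, I_C) \cong \kk[-2]$.

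The semiorthogonal decomposition~\eqref{eq:dbx18} then produces a canonical distinguished triangle
\begin{equation*}
\Phi\Phi^!(I_C) \larrow I_C \larrow \cU^\vee[-2],
\end{equation*}
so $\Phi^!(I_C)$ is determined by the nonzero morphism $I_C \to \cU^\vee[-2]$. A direct Mukai-vector computation on $\Gamma$---matching numerical invariants on the two sides of this triangle---will identify $\Phi^!(I_C)$, up to a universal shift $[-s]$, as the structure sheaf of a single point $y(C) \in \Gamma$. Applying Lemma~\ref{lemma:moduli-map} to the universal cubic over $\rF_3(X)$ then produces a morphism $f \colon \rF_3(X) \to \Gamma$ sending $C$ to $y(C)$.

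Finally, it remains to identify each fiber $f^{-1}(y)$ with a projective plane and to promote $f$ to a $\PP^2$-bundle. The cubics in $f^{-1}(y)$ are in bijection with nonzero classes, modulo scalars, in $\Ext^{3-s}_X\bigl(\cU^\vee, \cE_y\bigr)$ where $\cE_y = \Phi(\cO_y)$: each such class recovers $I_C$ as the shifted cone of the corresponding morphism $\cU^\vee[-3] \to \cE_y[-s]$. Using the orthogonality $\cE_y \in \langle\cO,\cU^\vee\rangle^\perp$ together with Serre duality on $X$, I expect a Riemann--Roch computation to give $\dim\Ext^{3-s}_X(\cU^\vee, \cE_y) = 3$ independently of $y$. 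The main obstacle will be the converse direction: to show that \emph{every} such extension class yields a genuine ideal sheaf of a rational cubic curve, and not merely an abstract two-term complex. This requires verifying that a generic such extension is torsion-free with the correct Hilbert polynomial, together with a careful case analysis handling the degenerate cubic types from Lemma~\ref{lemma:f3}. Once this bijection is established, carrying out the Ext-construction relative to $\Gamma$ produces a rank~$3$ vector bundle $\cV$ on $\Gamma$ and the desired isomorphism $\rF_3(X) \cong \PP_\Gamma(\cV)$.
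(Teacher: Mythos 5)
Your overall strategy is the one the paper follows: modify $I_C$ so that it lands in the component $\Phi(\Db(\Gamma))$ of~\eqref{eq:dbx18}, identify its image under $\Phi^!$ as a point sheaf $\cO_y$ to get the map to $\Gamma$, and describe the fiber over $y$ as a projective space of morphisms into $\cE_y = \Phi(\cO_y)$. Your opening computation is also correct: granting $H^0(C,\cU\vert_C)=0$ (which the paper takes from~\cite[Lemma~B.3.3]{KPS}), one gets $\Ext^\bullet(\cU^\vee,I_C)=\kk[-2]$, Serre-dual to the paper's $\Ext^1(I_C,\cU)\cong\kk$. But your bookkeeping of orthogonal complements is wrong, and the error is fatal where it matters most. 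The cone of the counit $\Phi\Phi^!(I_C)\to I_C$ lies in the \emph{right} orthogonal $(\Phi(\Db(\Gamma)))^\perp=\langle\cO(-1),\cU\rangle$, not in $\langle\cO,\cU^\vee\rangle$; since $\Ext^\bullet(I_C,\cO(-1))=0$ and $\Ext^\bullet(I_C,\cU)=\kk[-1]$, that cone is $\cU[1]$, so $\Phi\Phi^!(I_C)$ is precisely the nontrivial extension $0\to\cU\to F_C\to I_C\to 0$ --- an honest stable rank-$3$ sheaf, which is the paper's $F_C$. The object $\cU^\vee[-2]$ you write belongs to the \emph{left}-adjoint triangle $\cU^\vee[-2]\to I_C\to\Phi\Phi^*(I_C)$, where the projection is a genuine two-term complex with cohomology sheaves $I_C$ and $\cU^\vee$. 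Mixing the two adjoints then wrecks the fiber description: your parameter space $\Ext^{3-s}(\cU^\vee,\cE_y)$ is \emph{identically zero}, because $\cE_y\in\Phi(\Db(\Gamma))=\langle\cO,\cU^\vee\rangle^\perp$. The correct space is $\Hom(\cU,\cE_y)\cong\Hom(\Phi^*(\cU),\cO_y)$, which is $3$-dimensional because $\Phi^*(\cU)$ is a rank-$3$ bundle on $\Gamma$ (Lemma~\ref{lemma:phi-star-cu}); this is also what globalizes the fibers into the $\PP^2$-bundle $\PP_\Gamma(\Phi^*(\cU)^\vee)$.

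Two further gaps. A ``Mukai-vector computation'' cannot identify $\Phi^!$ of the projection with a point sheaf: it only fixes the class in $K$-theory and does not exclude complexes with the same class. The paper's argument is that $F_C$ and $\cE_y$ are both stable of rank $3$ with $\rc_1=-H$, that $\Ext^{\ge 2}(\cE_y,F_C)=0$ and $\chi(\cE_y,F_C)=0$, so for some $y$ there is a nonzero $\Hom$, which is an isomorphism by stability; proving stability of $F_C$ is a real step (Lemma~\ref{lemma:fc-ax}). Finally, the converse direction that you flag as the main obstacle needs no case analysis over the degenerate types of Lemma~\ref{lemma:f3}: any nonzero $\phi\colon\cU\to\cE_y$ is injective because both bundles are stable with slopes $-1/2<-1/3$; the kernel of the dual map $\cE_y^\vee\to\cU^\vee$ is a reflexive rank-$1$ sheaf with trivial $\rc_1$, hence $\cO$; and dualizing back gives $0\to\cU\to\cE_y\to\cO\to T'\to 0$ with $T'$ of Hilbert polynomial $3t+1$, i.e.\ $T'=\cO_{C'}$ for a point of $\rF_3(X)$. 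If you systematically replace $\cU^\vee$ by $\cU$ in your construction and supply the stability arguments, your proof becomes the paper's.
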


The proof will take the rest of the subsection.
We will show that appropriate extensions of ideals of rational cubic curves in $X$ belong to the component $\Db(\Gamma)$ 
of~\eqref{eq:dbx18} and correspond to structure sheaves of points in $\Gamma$.

\begin{lemma}
\label{lemma:fc-ax}
If $C \subset X$ is a rational cubic curve then 
\begin{equation*}
\Ext^1(I_C,\cU) \cong \kk.
\end{equation*}
Let $F_C$ be the corresponding nontrivial extension, so that we have an exact sequence
\begin{equation}
\label{eq:fc-sequence}
0 \larrow \cU \larrow F_C \larrow \cO \larrow \cO_C \larrow 0.
\end{equation}
Then $F_C$ is a stable vector bundle of rank~$3$ on~$X$ with $\rc_1(F_C) = -H$.
Moreover, $F_C \in \langle \cO, \cU^\vee \rangle^\perp$.
\end{lemma}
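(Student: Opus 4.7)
The plan is to identify $\Ext^1(I_C,\cU)$ by a Serre-duality computation on $X$, produce $F_C$ as the unique nontrivial extension, and then verify local freeness, the Chern class, stability, and semiorthogonality from the defining four-term sequence \eqref{eq:fc-sequence}. Applying $\Hom(-,\cU)$ to the ideal sequence $0 \to I_C \to \cO \to \cO_C \to 0$ and using $H^\bullet(X,\cU) = 0$ from \eqref{eq:coh-x18} gives $\Ext^1(I_C,\cU) \cong \Ext^2(\cO_C,\cU)$. Serre duality on $X$, combined with $\omega_X = \cO(-H)$ and the relation $\cU^\vee \cong \cU(H)$, converts the latter to $H^1(C,\cU\vert_C)^\vee$. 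Since $c_1(\cU)\cdot C = -3$ and $\rk(\cU) = 2$, Riemann--Roch on $C$ gives $\chi(\cU\vert_C) = -1$, so it suffices to establish the vanishing $H^0(C,\cU\vert_C) = 0$. I expect this to be the main obstacle: a nonzero global section of $\cU\vert_C$ would produce, via the tautological inclusion $\cU \subset V_7 \otimes \cO_X$ furnished by $X \hookrightarrow \GTGr(2,V_7)$, a nonzero vector $v \in V_7$ with $v \in \cU_x$ for all $x \in C$; equivalently, $C$ would lie in the Schubert-type locus $\Sigma_v = \{x \in X : v \in \cU_x\}$. A dimension count in $\GTGr(2,V_7)$ shows $\Sigma_v \cap X$ is at most one-dimensional (and only for special $v$), and one must rule out that it contains a curve of genus~$0$ and degree~$3$ of any of the types listed in Lemma \ref{lemma:f3}; this geometric step is the analogue for $\GTGr(2,V_7)$ of the argument used in the genus-$9$ case (Lemma \ref{lemma:ic-ax}).

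Granted $\Ext^1(I_C,\cU) = \kk$, let $F_C$ denote the unique nontrivial extension, which splices with the ideal sequence to yield \eqref{eq:fc-sequence}. The equalities $\rk(F_C) = 3$ and $c_1(F_C) = c_1(\cU) + c_1(I_C) = -H$ are immediate from $0 \to \cU \to F_C \to I_C \to 0$. For local freeness I plan to argue as in the Hartshorne--Serre construction: the extension class, viewed locally as a section of $\cExt^1(I_C,\cU)$ supported on $C$, must generate this local Ext sheaf (its global space of sections is one-dimensional and the compatibility with the Koszul resolution of $I_C$ forces the section to be non-vanishing along $C$), so the four-term complex is exact on stalks and $F_C$ is locally free of rank~$3$.

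For $F_C \in \langle \cO, \cU^\vee\rangle^\perp$, apply $R\Hom(\cO, -)$ to $0 \to \cU \to F_C \to I_C \to 0$ using $R\Gamma(X,\cU) = 0$ and $R\Gamma(X,I_C) = 0$ (the latter from $H^\bullet(C,\cO_C) = \kk$, Lemma \ref{lemma:ic-ox}) to get $R\Gamma(X, F_C) = 0$; then compute $R\Gamma(X,\cU\otimes\cU) = \kk[-3]$ from the splitting $\cU\otimes\cU = S^2\cU \oplus \det\cU$ together with $\det\cU = \omega_X$ and the exceptional property of $\cU$, and $R\Gamma(X,\cU\otimes I_C) = \kk[-2]$ from the vanishings $H^0(C,\cU\vert_C) = 0$, $H^1(C,\cU\vert_C) = 1$ just established. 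The connecting morphism in the long exact sequence from $0 \to \cU\otimes\cU \to F_C\otimes\cU \to I_C\otimes\cU \to 0$ is cup product with the extension class of $F_C$, which is an isomorphism $\kk[-2] \to \kk[-2]$ by construction, giving $R\Gamma(X,F_C\otimes\cU) = 0$. Stability follows formally: with $\mu(F_C) = -H^3/3 = -6$, any saturated destabilizing $F' \subset F_C$ of rank $r' \in \{1,2\}$ with $c_1(F') = aH$ would require $a \ge 0$; either the composition $F' \to F_C \to I_C$ is nontrivial, producing an embedding of a sheaf of nonnegative slope into $\cO$ which factors through $I_C$ (impossible unless $F' = \cO$, excluded since $H^0(X,F_C) = 0$), or else $F' \subset \cU$, contradicting the slope-stability of the exceptional bundle $\cU$.
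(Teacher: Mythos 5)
Your overall route is the same as the paper's: reduce $\Ext^1(I_C,\cU)$ to $H^1(C,\cU\vert_C)$ using $H^\bullet(X,\cU)=0$ and Serre duality together with $\cU^\vee\cong\cU(1)$, compute $\chi(\cU\vert_C)=-1$, define $F_C$, deduce orthogonality to $\cO$ from $R\Gamma(\cU)=R\Gamma(I_C)=0$, orthogonality to $\cU^\vee$ from nontriviality of the extension plus Serre duality, and then rank, $\rc_1$, local freeness and stability. The step you yourself flag as the main obstacle, the vanishing $H^0(C,\cU\vert_C)=0$, is the crux of the first assertion, and you leave it unfinished: you reduce it to showing that no curve of $\rF_3(X)$ lies in $\Sigma_v=\{x\in X: v\in\cU_x\}$, but offer only an unspecified dimension count and a ``one must rule out'' placeholder. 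This is precisely the step the paper settles by invoking the argument of \cite[Lemma~B.3.3]{KPS}, and your sketch can be completed as follows: a nonzero $v\in V_7$ lies in some $U\in\GTGr(2,V_7)$ only when the skew form $\iota_v\lambda$ on $V_7$ has corank~$3$ (this happens exactly for $[v]$ in the closed $\rG_2$-orbit), and then $\Sigma_v\cong\PP\bigl(\ker(\iota_v\lambda)/\langle v\rangle\bigr)\cong\PP^1$ is embedded in $\PP^{13}$ as a \emph{line}, since its points are $[v\wedge u]$ with $u$ varying linearly; a line cannot contain a curve of degree~$3$, of any of the types of Lemma~\ref{lemma:f3}, so no such section exists. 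Without this (or a citation), the first claim of the lemma, and hence everything downstream, is not established.

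Two smaller points. In the computation of $R\Gamma(\cU\otimes F_C)$, the assertion that the connecting map is an isomorphism ``by construction'' needs justification: it is composition with the extension class, and its nonvanishing follows from nondegeneracy of the Serre-duality pairing $\Ext^2(\cU^\vee,I_C)\times\Ext^1(I_C,\cU)\to\Ext^3(\cU^\vee,\cU)\cong\kk$; the paper's variant is more tautological --- apply $\Hom(-,\cU)$ to the extension, where the connecting map $\Hom(\cU,\cU)\to\Ext^1(I_C,\cU)$ is evaluation of the extension class, hence nonzero, giving $\Ext^\bullet(F_C,\cU)=0$ and then $\Ext^\bullet(\cU^\vee,F_C)=0$ by Serre duality. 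Your stability dichotomy is also incomplete for a rank-$2$ destabilizer $F'$ whose composition with $F_C\to I_C$ is nonzero: such an $F'$ does not embed into $\cO$, only its rank-$1$ image does, and to control the slope of that image you should take $F'$ to be the (semistable) maximal destabilizing subsheaf; then the image is an ideal sheaf with $\rc_1=0$, while the rank-$1$ kernel sits inside the stable bundle $\cU$ of slope $-9$, forcing $\rc_1(F')\le -H$ and contradicting $a\ge 0$. The paper avoids this case analysis altogether via Hoppe's criterion, which reduces stability to $H^0(F_C)=0$ and $H^0(F_C^\vee(-1))\cong H^3(F_C)^\vee=0$, both already contained in $F_C\in\cO^\perp$; your parenthetical justification of local freeness (one-dimensionality of the global $\Ext$ ``forcing'' nonvanishing of the local section) is likewise not an argument, though the paper is comparably brief at that point.
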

\begin{proof}
The argument of~\cite[Lemma~B.3.3]{KPS} shows that $H^0(C,\cU\vert_C) = 0$.
Since $C$ is a curve, the only non-vanishing cohomology of $\cU\vert_C$ is $H^1(C,\cU\vert_C)$.
Furthermore, the rank of $\cU$ is~2 and the first Chern class is~$-1$, 
hence up to codimension~2 classes $\cU$ is equivalent to $\cO \oplus \cO(-1)$.
Therefore, 
\begin{equation*}
\chi(\cU\vert_C) = \chi(\cO_C) \oplus \chi(\cO(-1)\vert_C) = h_C(0) + h_C(-1) = -1
\end{equation*}
where $h_C(t) = 3t + 1$ is the Hilbert polynomial of~$C$.
It follows that 
\begin{equation*}
\Ext^1(I_C,\cU) \cong \Ext^2(\cU^\vee,I_C)^\vee \cong H^2(X,\cU \otimes I_C)^\vee \cong 
H^1(X, \cU \otimes \cO_C)^\vee \cong H^1(C,\cU\vert_C)^\vee = \kk
\end{equation*}
(in the first isomorphism we used Serre duality, and in the third we used~\eqref{eq:coh-x18}).

This also proves that $\Ext^i(I_C,\cU) = 0$ for $i \ne 1$.
Therefore, if $F_C$ is defined as the corresponding extension, then $\Ext^\bullet(F_C,\cU) = 0$, 
hence by Serre duality $\Ext^\bullet(\cU^\vee,F_C) = 0$, hence $F_C \in {\cU^\vee}^\perp$.
Furthermore, as $\cU \in \cO^\perp$ by~\eqref{eq:coh-x18} and $I_C \in \cO^\perp$ by Lemma~\ref{lemma:ic-ox},
we deduce $F_C \in \cO^\perp$.

The rank and first Chern class computations for $F_C$ are straightforward, 
and local freeness can be proved by the same argument that proves local freeness of Serre's construction.
So, it remains to prove stability of $F_C$.
For this we use Hoppe's criterion.
According to it it is enough to check the vanishing of $H^0(X,F_C)$ and of $H^0(F_C^\vee(-1)) \cong H^3(X,F_C)^\vee$.
But both follow from the containment~$F_C \in \cO^\perp$ proved above.
\end{proof}

\begin{proposition}
\label{proposition:phi-s-fc}
If $C \subset X$ is a rational cubic curve then $\Phi^!(F_C) \cong \cO_y$, where $y \in \Gamma$.
\end{proposition}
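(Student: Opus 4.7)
Since Lemma~\ref{lemma:fc-ax} places $F_C$ in $\langle \cO, \cU^\vee \rangle^\perp = \Phi(\Db(\Gamma))$, the counit of adjunction gives an isomorphism $\Phi(\Phi^!(F_C)) \cong F_C$, so $G := \Phi^!(F_C)$ is a nonzero object of $\Db(\Gamma)$. The plan is to show that $G$ is a length-one torsion sheaf by analyzing, for every $z \in \Gamma$, the graded vector space $\Ext^\bullet_X(\cE_z, F_C)$, which by adjunction equals $\Ext^\bullet_\Gamma(\cO_z, G)$, where $\cE_z := \Phi(\cO_z)$. If this vanishes for all but one point $y = y(C) \in \Gamma$ and equals $\kk \oplus \kk[-1]$ for $z = y$, then the characterization of skyscraper sheaves on the smooth curve $\Gamma$ forces $G \cong \cO_y$.

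The strategy mirrors that of Proposition~\ref{proposition:phi-s-ic} from the genus~9 case, but uses the defining 4-term sequence~\eqref{eq:fc-sequence} of $F_C$ instead of the ideal sheaf $I_C$. First, I would obtain an explicit presentation of the Fourier--Mukai kernel $\cE$ on $X \times \Gamma$ analogous to~\eqref{eq:ce} — reflecting the $\rG_2$-Grassmannian geometry of $X_{18}$ and the universal conic structure underlying Theorem~\ref{theorem:x18-f2} — and use it to verify that $\chi(\cE_z, F_C) = 0$ for every $z$, matching $\chi(\cO_z, \cO_y) = 0$. Second, for a generic $z$ I would show that all $\Ext^i(\cE_z, F_C)$ vanish, exploiting global generation properties of $\cE_z$ and surjectivity of restriction maps $H^0(X, -) \to H^0(C, -|_C)$ in the spirit of the argument around~\eqref{eq:hpos-cey-c}. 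The locus where this vanishing fails should consist of a single point $y(C) \in \Gamma$, at which $\Hom(\cE_y, F_C)$ and $\Ext^1(\cE_y, F_C)$ are each one-dimensional.

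The main obstacle is thus the explicit cohomological analysis, which must simultaneously accommodate the four-term nature of $F_C$ and the degeneration types of $C$ classified in Lemma~\ref{lemma:f3}. As the genus~9 argument already illustrates, the needed vanishings do not follow from semiorthogonality in the direction required (semiorthogonality only yields $\Ext^\bullet(\cO, \cE_z) = 0$ and $\Ext^\bullet(\cU, \cE_z) = 0$, rather than the opposite-direction statements that would permit a quick reduction), so they must be extracted from the explicit presentation of $\cE_z$ on $X$ by careful cohomological bookkeeping; moreover here the interplay between the subbundle $\cU \subset F_C$ and the quotient $I_C$ must be tracked throughout, and one has to check separately that no contribution comes from degenerate components of $C$. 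The resulting assignment $[C] \mapsto y(C)$ then supplies the base map $\rF_3(X) \to \Gamma$ underlying the $\PP^2$-bundle structure claimed in Theorem~\ref{theorem:x18-f3}.
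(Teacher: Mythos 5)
Your outline identifies the right starting point ($F_C \in \langle \cO,\cU^\vee\rangle^\perp = \Phi(\Db(\Gamma))$ by Lemma~\ref{lemma:fc-ax}), but the core of the argument is left as a plan rather than carried out: you defer precisely the steps that would constitute the proof, namely the vanishing of $\Ext^\bullet(\cE_z,F_C)$ for all but one $z\in\Gamma$ and the computation of $\Ext^\bullet(\cE_y,F_C)$ at the remaining point. As stated this is a genuine gap, and the route you sketch is also considerably harder than necessary: it would require a pointwise analysis over all of $\Gamma$ and, as you note yourself, a case-by-case treatment of the degenerations of $C$ from Lemma~\ref{lemma:f3}, none of which is needed.

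The paper's argument is much shorter and hinges on two points you did not use. First, your claim that semiorthogonality only yields vanishing ``in the wrong direction'' is not accurate here: what one needs is $\Ext^\bullet(\cO,F_C)=0$ and $\Ext^\bullet(\cU^\vee,F_C)=0$, which is exactly the content of $F_C\in\langle\cO,\cU^\vee\rangle^\perp$ from Lemma~\ref{lemma:fc-ax}; applying $\Ext^\bullet(-,F_C)$ to the presentation~\eqref{eq:cey-x18} of $\cE_y$ by copies of $\cO$ and $\cU^\vee$ then gives $\Ext^{i}(\cE_y,F_C)=0$ for $i\ge 2$ and every $y$, while Riemann--Roch gives $\chi(\cE_y,F_C)=0$, so either $\Ext^\bullet(\cE_y,F_C)=0$ or $\Hom(\cE_y,F_C)\ne 0$. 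Since $\Phi^!(F_C)\ne 0$ (otherwise $F_C=0$), the total vanishing cannot hold for all $y$, so some $y$ admits a nonzero map $\cE_y\to F_C$. Second --- and this is the decisive shortcut --- both $\cE_y$ and $F_C$ are stable of rank~$3$ with $\rc_1=-H$ (stability of $F_C$ is part of Lemma~\ref{lemma:fc-ax}), so any nonzero morphism between them is an isomorphism; hence $F_C\cong\Phi(\cO_y)$ and, by full faithfulness of $\Phi$, $\Phi^!(F_C)\cong\cO_y$. No generic-point vanishing, no dimension count at the special point, and no separate discussion of reducible or non-reduced cubics is required.
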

\begin{proof}
Since~$\Phi$ is fully faithful, it is enough to show that $F_C \cong \Phi(\cO_y)$.
Recall from~\cite[\S6.4]{K06} and~\cite[\S B.5]{KPS} that~$\Phi$ is the Fourier--Mukai functor 
whose kernel is a rank~3 vector bundle~$\cE$ on~$X \times \Gamma$
such that for each point~\mbox{$y \in \Gamma$} the corresponding bundle~$\cE_y$ on~$X$ fits into an exact sequence~(see~\cite[(B.5.2)]{KPS})
\begin{equation}
\label{eq:cey-x18}
0 \larrow \cE_y \larrow \cO^{\oplus 6} \larrow \cU^\vee \oplus \cU^\vee \oplus \cU^\vee \larrow \cE_y(1) \larrow 0.
\end{equation}
Applying the functor $\Ext^\bullet(-,F_C)$ and taking into account 
that $F_C \in \langle \cO, \cU^\vee \rangle^\perp$ by Lemma~\ref{lemma:fc-ax},
we conclude that $\Ext^i(\cE_y,F_C) = 0$ for $i \ge 2$.
Using Riemann--Roch it is easy to check that 
\begin{equation*}
\chi(\cE_y,F_C) = 0,
\end{equation*}
hence either $\Ext^\bullet(\cE_y,F_C) = 0$ or $\Hom(\cE_y,F_C) \ne 0$.

If the first holds for all $y \in \Gamma$ then $\Ext^\bullet(\cO_y,\Phi^!(F_C)) = 0$ for all $y \in \Gamma$, 
hence $\Phi^!(F_C) = 0$, hence~$F_C = 0$, which is absurd.
Therefore, there is $y \in \Gamma$ and a nontrivial morphism $\cE_y \to F_C$.
Since both $\cE_y$ and $F_C$ are stable of the same slope, this is an isomorphism.
\end{proof}

This proposition already proves that the functor $\Phi^!$ induces a morphism $\rF_3(X) \to \Gamma$.
It remains to identify it with a $\PP^2$-bundle.

\begin{lemma}
\label{lemma:phi-star-cu}
The object $\Phi^*(\cU)$ is a vector bundle of rank~$3$ on $\Gamma$.
\end{lemma}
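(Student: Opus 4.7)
The plan is to show that $M := \Phi^*(\cU)$ is a locally free sheaf of rank $3$ on $\Gamma$ concentrated in cohomological degree $0$. The first reduction is via adjunction: for any $y \in \Gamma$,
\begin{equation*}
\Hom_\Gamma^\bullet(M, \cO_y) \cong \Hom_X^\bullet(\cU, \Phi(\cO_y)) = \Ext_X^\bullet(\cU, \cE_y),
\end{equation*}
so the aim is to show that $\Ext_X^\bullet(\cU, \cE_y)$ equals $\kk^3$ concentrated in degree $0$ for every $y \in \Gamma$.

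I would establish this by applying $\Hom_X(\cU, -)$ to the resolution~\eqref{eq:cey-x18} of $\cE_y$, split into two short exact sequences. The cohomology inputs needed are $\Ext^\bullet(\cU, \cO) = H^\bullet(X, \cU^\vee) = V_7^\vee$ concentrated in degree~$0$ (by~\eqref{eq:coh-x18}), the groups $\Ext^\bullet(\cU, \cU^\vee) = H^\bullet(X, \cU^\vee \otimes \cU^\vee)$, and $\Ext^\bullet(\cU, \cE_y(1))$. For the second, one uses the decomposition $\cU^\vee \otimes \cU^\vee = S^2\cU^\vee \oplus \wedge^2\cU^\vee = S^2\cU^\vee \oplus \cO(H)$ and computes via Bott--Borel--Weil on $\GTGr(2, V_7)$ together with the Koszul resolution of $X$ as a linear section. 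For the third, twist~\eqref{eq:cey-x18} by $\cO(-1)$ and exploit $\cU^\vee(-1) \cong \cU$ to reduce to the same kind of input. A Riemann--Roch calculation gives $\chi(\cU, \cE_y) = 3$, serving as a consistency check.

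The conclusion then follows from a standard spectral sequence argument on the smooth curve $\Gamma$. The spectral sequence
\begin{equation*}
E_2^{p,q} = \Ext^p_\Gamma(H^{-q}(M), \cO_y) \Longrightarrow \Ext^{p+q}_\Gamma(M, \cO_y)
\end{equation*}
degenerates at $E_2$ since $\Ext^{\geq 2}_\Gamma(-, \cO_y) = 0$. Combined with the pointwise computation, the vanishing of $\Ext^{\ne 0}_\Gamma(M, \cO_y)$ together with $\Ext^0_\Gamma(M, \cO_y) = \kk^3$ for every $y$ forces, by a short bootstrap on the cohomology sheaves $H^i(M)$, that $H^{\ne 0}(M) = 0$ and that $H^0(M)$ is locally free of rank $3$.

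The main obstacle will be the cohomology computations on $X$, particularly $H^\bullet(X, S^2\cU^\vee)$ and $\Ext^\bullet(\cU, \cE_y(1))$. Both reduce in principle to Bott--Borel--Weil on $\GTGr(2, V_7)$ combined with a Koszul resolution for $X$ as a codimension-two linear section, but require careful $G_2$-equivariant bookkeeping. An alternative route is to apply Grauert's theorem to the Fourier--Mukai representation $\Phi^*(\cU) = \rR p_{\Gamma*}(p_X^*\cU \otimes \cE^\vee) \otimes \omega_\Gamma[1]$; this reduces to the same pointwise vanishing $\Ext^{i \neq 1}(\cE_y, \cU) = 0$ (via Serre duality on $X$) but may allow a cleaner base-change argument.
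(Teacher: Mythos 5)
Your overall architecture is sound and your target is the right one: by adjunction it suffices to show that $\Ext^\bullet_X(\cU,\cE_y)\cong\kk^3$ concentrated in degree~$0$ for every $y\in\Gamma$, and your spectral-sequence endgame on the curve correctly converts this uniform pointwise statement into local freeness of rank~$3$. (This is Serre-dual to what the paper verifies, namely that $H^\bullet(X,\cU\otimes\cE_y^\vee(-1))\cong\kk^{\oplus 3}[-3]$, the fibrewise criterion for the kernel $\cE^\vee(K_X)[3]$ of $\Phi^*$.) However, your primary computational route has a genuine gap: applying $\Hom(\cU,-)$ to the untwisted sequence~\eqref{eq:cey-x18} produces long exact sequences in which $\Ext^\bullet(\cU,\cO)^{\oplus 6}$ and $\Ext^\bullet(\cU,\cU^\vee)^{\oplus 3}$ are large nonzero spaces and $\Ext^\bullet(\cU,\cE_y(1))$ is a second unknown; even after a full Bott--Borel--Weil computation of all the graded pieces, the sequence only pins down the Euler characteristic $\chi(\cU,\cE_y)$ unless you also control the ranks of the connecting maps, and nothing in your plan supplies that control. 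The Riemann--Roch ``consistency check'' cannot be promoted to the needed vanishing of $\Ext^{>0}(\cU,\cE_y)$ by this route.

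The fix is already hiding in your own remark about twisting by $\cO(-1)$, but you assign it the wrong role. Twisting~\eqref{eq:cey-x18} by $\cO(-1)$ and using $\cU^\vee(-1)\cong\cU$ gives
\begin{equation*}
0 \larrow \cE_y(-1) \larrow \cO(-1)^{\oplus 6} \larrow \cU^{\oplus 3} \larrow \cE_y \larrow 0,
\end{equation*}
a sequence \emph{terminating in $\cE_y$ itself}, not in $\cE_y(1)$. Now apply $\Hom(\cU,-)$: the first two terms are acyclic for soft reasons --- $\Ext^\bullet(\cU,\cO(-1))\cong H^\bullet(X,\cU)=0$ by~\eqref{eq:coh-x18}, and $\Ext^\bullet(\cU,\cE_y(-1))\cong\Ext^\bullet(\cU^\vee,\cE_y)=0$ because $\cE_y=\Phi(\cO_y)$ lies in $\langle\cO,\cU^\vee\rangle^\perp$ --- so the sequence immediately yields $\Ext^\bullet(\cU,\cE_y)\cong\Ext^\bullet(\cU,\cU)^{\oplus 3}=\kk^3$ in degree~$0$ by exceptionality of $\cU$. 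No Bott--Borel--Weil, no $S^2\cU^\vee$, no map-chasing. This is precisely the paper's argument in transposed form (the paper dualizes~\eqref{eq:cey-x18} and tensors with $\cU$, using $H^\bullet(\cU)=0$, $H^\bullet(\cU\otimes\cU)\cong\kk[-3]$, and $\Ext^\bullet(\cE_y,\cU)=0$). One last caution: the kernel formula in your ``alternative route'' ($\cE^\vee\otimes\omega_\Gamma[1]$, up to twist) is that of the \emph{right} adjoint $\Phi^!$, not $\Phi^*$; this distinction matters here, since $\Ext^\bullet(\cE_y,\cU)=0$ for all $y$ forces $\Phi^!(\cU)=0$, whereas $\Phi^*(\cU)$ is the nonzero rank-$3$ bundle you are after.
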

\begin{proof}
Recall that the functor $\Phi^*$ is given by the kernel $\cE^\vee(K_X)[3]$ on $X \times \Gamma$.
To check that it takes $\cU$ to a rank~3 bundle, it is enough (see~\cite[Lemma~4.4]{K19} for an argument) to show that
\begin{equation*}
H^\bullet(X,\cU \otimes \cE_y^\vee(-1)) \cong \kk^{\oplus 3}[-3]
\end{equation*}
for each $y \in \Gamma$.
For this we dualize~\eqref{eq:cey-x18} and tensor it by $\cU$:
\begin{equation*}
0 \larrow \cU \otimes \cE_y^\vee(-1) \larrow (\cU \otimes \cU)^{\oplus 3} \larrow \cU^{\oplus 6} \larrow \cU \otimes \cE_y^\vee \larrow 0.
\end{equation*}
Note that $\cU$ is acyclic by~\eqref{eq:coh-x18}.
Using Serre duality and exceptionality of $\cU$ we obtain
\begin{equation*}
H^\bullet(X,\cU \otimes \cU) \cong
\Ext^\bullet(\cU^\vee,\cU) \cong
\Ext^{3-\bullet}(\cU,\cU)^\vee \cong \kk[-3].
\end{equation*}
Finally, using Serre duality and the fact that~$\cE_y \in \Phi(\Db(\Gamma)) \subset {\cU^\vee}^\perp$, we obtain
\begin{equation*}
H^\bullet(X,\cU \otimes \cE_y^\vee) \cong
\Ext^\bullet(\cE_y,\cU) \cong
\Ext^{3 - \bullet}(\cU^\vee,\cE_y)^\vee = 0.
\end{equation*}
Combining all the above together, we deduce the lemma.
\end{proof}

Now we can complete the proof of the theorem.

\begin{proof}[Proof of Theorem~\textup{\ref{theorem:x18-f3}}]
We associate with a rational cubic curve $C$ the pair $(y,\phi)$, 
where $y \in \Gamma$ is the point such that $\Phi^!(F_C) \cong \cO_y$ and
\begin{equation*}
\phi \in \Hom(\cU,F_C) = \Hom(\cU,\Phi(\cO_y)) \cong \Hom(\Phi^*(\cU),\cO_y)
\end{equation*}
is the first map in~\eqref{eq:fc-sequence}, which is nonzero and determined by $C$ up to rescaling.
This defines a morphism
\begin{equation*}
\rF_3(X) \larrow \PP_\Gamma(\Phi^*(\cU)^\vee).
\end{equation*}
The target is a $\PP^2$-bundle over $\Gamma$, so it remains to check that the morphism is an isomorphism.
For this we will construct its inverse.

Let $(y,\phi)$ be a pair as above. 
Then~$\phi$ gives a nonzero morphism~$\cU \to \Phi(\cO_y) = \cE_y$.
Both~$\cU$ and~$\cE_y$ are stable vector bundles (see~\cite[Theorem~B.1.1 and Remark~B.5.3]{KPS}) 
of ranks~2 and~3 and~$\rc_1(\cU) = \rc_1(\cE_y) = -H$,
hence the morphism is injective.
The kernel of the dual map~$\cE_y^\vee \to \cU^\vee$ is a reflexive sheaf of rank~1 with~$\rc_1 = 0$, hence isomorphic to $\cO$.
Thus, we have an exact sequence
\begin{equation*}
0 \larrow\cO \larrow \cE_y^\vee \larrow \cU^\vee \larrow T \larrow 0
\end{equation*}
for some coherent sheaf $T$.
The rank and the first Chern class of~$T$ vanish, hence the support of~$T$ is 1-dimensional.
Therefore, after dualization we obtain an exact sequence
\begin{equation*}
0 \larrow \cU \larrow \cE_y \larrow \cO \larrow T' \larrow 0.
\end{equation*}
Now $T'$ is the structure sheaf of a subscheme of $X$, and Riemann--Roch shows that its Hilbert polynomial is $3t + 1$.
Thus, $T' \cong \cO_C$ for a rational cubic curve.
This defines a map 
\begin{equation*}
\PP_\Gamma(\Phi^*(\cU)^\vee) \larrow \rF_3(X).
\end{equation*}
It is easy to see that the two maps constructed above are mutually inverse.
\end{proof}

Applying Corollary~\ref{corollary:fd-gamma} we deduce

\begin{corollary}
\label{corollary:x18-cubics}
The Abel--Jacobi map $\AJ \colon 
\Pic^0(\Gamma) 
\cong \Alb(\rF_3(X)) \to \Jac(X)$ is an isomorphism
of principally polarized abelian varieties.
\end{corollary}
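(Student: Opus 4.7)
The plan is to apply Corollary~\ref{corollary:fd-gamma} with $r=0$, $e=1$ so that $M_\Gamma(r,e)\cong\Gamma$, using the $\PP^2$-bundle $f\colon \rF_3(X)\to\Gamma$ produced by Theorem~\ref{theorem:x18-f3}; its fibers are rationally connected, so the hypothesis of the corollary on $f$ is immediate. The one subtlety, in contrast with Corollary~\ref{corollary:x16-cubics}, is that $f$ was built by applying $\Phi^!$ not to the ideal sheaves $I_C$ but to the auxiliary rank-$3$ vector bundles $F_C$ of Lemma~\ref{lemma:fc-ax}, so the hypothesis of Proposition~\ref{proposition:aj-cd} requires a small modification before it can be invoked.

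First, I would globalize the construction of $F_C$ to the universal curve $\cC_3(X)\subset X\times\rF_3(X)$. Since $\Ext^1(I_C,\cU)\cong\kk$ and $\Ext^{\ne 1}(I_C,\cU)=0$ fiberwise by Lemma~\ref{lemma:fc-ax}, the relative Ext-sheaf $\mathcal{E}xt^1_{p_2}(I_{\cC_3(X)},\cU\boxtimes\cO)$ is a line bundle on $\rF_3(X)$, and the resulting universal extension produces a rank-$3$ vector bundle $\cF$ on $X\times\rF_3(X)$ with $\cF|_{X\times[C]}\cong F_C$. Applying a family version of Proposition~\ref{proposition:phi-s-fc} together with Lemma~\ref{lemma:moduli-map}, one obtains an isomorphism
\begin{equation*}
\Phi^!_{\rF_3(X)}(\cF)\cong (\id_\Gamma\times f)^*\cO_{\Delta_\Gamma}\otimes p_2^*\cL
\end{equation*}
for some line bundle $\cL$ on $\rF_3(X)$, so that $f$ is exactly the morphism induced by $\Phi^!$ in the sense of Lemma~\ref{lemma:moduli-map}, with input $\cF$ rather than $I_{\cC_3(X)}$.

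Next, I would re-run the Grothendieck–Riemann–Roch computation of Proposition~\ref{proposition:aj-cd} with $\cF$ replacing $I_{\cC_3(X)}$. The defining exact sequence gives
\begin{equation*}
\rch_2(\cF)-\rch_2(I_{\cC_3(X)})=\rch_2(\cU\boxtimes\cL'),
\end{equation*}
and the right-hand side is decomposable, hence lies in $\CH^2_{\nil,\QQ}(X\times\rF_3(X))$ by Lemma~\ref{lemma:ch-nil}\ref{nil:tp}. Therefore, modulo null-algebraic correspondences,
\begin{equation*}
-[\cC_3(X)]\equiv \rch_2(I_{\cC_3(X)})\equiv \rch_2(\cF)\equiv \pm\,\rc_2(\cE)\circ(\id_\Gamma\times f)^*[\Delta_\Gamma],
\end{equation*}
which is precisely the identity that drives the commutative diagram in Proposition~\ref{proposition:aj-cd} for $r=0$, $e=1$.

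Finally, passing to Albanese varieties, both $f^*\colon\Pic^0(\Gamma)\to\Alb(\rF_3(X))$ (isomorphism because $f$ has rationally connected fibers) and the correspondence $[\Delta_\Gamma]\colon \CH^1_{\alg}(\Gamma)\to\CH^1_{\alg}(\Gamma)$ are isomorphisms, so the map $\alpha_3\colon\Alb(\rF_3(X))\to\Jac(X)$ is an isomorphism; compatibility with the principal polarizations then follows from Proposition~\ref{proposition:aj-c2-e}. The main technical obstacle is not any single step but the coherence check that all line-bundle twists and lower-order Chern-character terms introduced while passing from $I_C$ to $F_C$ lie in the null-algebraic ideal and so disappear upon projection to $\CH^1_{\alg}$; once this bookkeeping is in place, the conclusion follows from (an immediate extension of) Corollary~\ref{corollary:fd-gamma}.
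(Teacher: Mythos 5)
Your proposal is correct and follows the same route as the paper: Theorem~\ref{theorem:x18-f3} provides the $\PP^2$-bundle $f\colon\rF_3(X)\to\Gamma$, whose fibers are rationally connected, and the corollary is deduced by applying Corollary~\ref{corollary:fd-gamma} with $r=0$, $e=1$, for which the universal sheaf on $\Gamma\times\rM_\Gamma(0,1)\cong\Gamma\times\Gamma$ is $\cO_{\Delta_\Gamma}$. The only point where you diverge is the bookkeeping needed because $f$ was defined through the bundles $F_C$ rather than the ideals $I_C$: your universal-extension argument, with the correction term $\rch_2(\cU\boxtimes\cL')$ absorbed into the null-algebraic ideal via Lemma~\ref{lemma:ch-nil}\ref{nil:tp}, is sound, but there is a shorter bridge that lets Lemma~\ref{lemma:moduli-map} and Proposition~\ref{proposition:aj-cd} apply verbatim to the ideal sheaves: since $\Ext^\bullet(\cE_y,\cU)=0$ for every $y\in\Gamma$ (this is established in the proof of Lemma~\ref{lemma:phi-star-cu}), one has $\Phi^!(\cU)=0$, so the triangle obtained by applying $\Phi^!$ to the defining extension $0\to\cU\to F_C\to I_C\to 0$ of Lemma~\ref{lemma:fc-ax} yields $\Phi^!(I_C)\cong\Phi^!(F_C)\cong\cO_{f([C])}$ by Proposition~\ref{proposition:phi-s-fc}; hence $\Phi^!$ induces $f$ on the ideals themselves and no modification of the Grothendieck--Riemann--Roch computation of Proposition~\ref{proposition:aj-cd} is required. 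Either way the conclusion, including the compatibility with the principal polarizations via Proposition~\ref{proposition:aj-c2-e}, comes out as in the paper.
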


\subsection{Del Pezzo threefold of degree~4}
\label{subsection:v4}

Let $X$ be a smooth del Pezzo threefold of degree~4 over an algebraically closed field of characteristic~0,
i.e., a complete intersection of two quadrics in $\PP(V_6) = \PP^5$.
By~\cite{K06} we have a semiorthogonal decomposition
\begin{equation}
\label{eq:sod-v4}
\Db(X) = \langle \Phi(\Db(\Gamma)), \cO, \cO(1) \rangle,
\end{equation}
where $\Gamma$ is a curve of genus~2.
We will remind a description of the embedding functor 
\begin{equation*}
\Phi \colon \Db(\Gamma) \longrightarrow \Db(X) 
\end{equation*}
below (see~\eqref{eq:cey-v4}).
Meanwhile, recall that $\cO$ and $\cO(1)$ are exceptional bundles and that
\begin{equation}
\label{eq:coh-v4}
H^\bullet(X,\cO(-1)) = 0,
\qquad 
H^\bullet(X,\cO) = \kk,
\qquad 
H^\bullet(X,\cO(1)) = V_6^\vee.
\end{equation} 

Recall the following result.

\begin{theorem}[{\cite[Lemma~5.5]{K12}}]
\label{theorem:v4-f1}
The left adjoint functor $\Phi^* \colon \Db(X) \to \Db(\Gamma)$ induces a noncanonical isomorphism $\rF_1(X) \cong \Pic^0(\Gamma)$.
\end{theorem}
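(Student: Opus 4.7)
The plan is to follow the template set by Theorem~\ref{theorem:x16-f3} and Theorem~\ref{theorem:x18-f3}. Concretely, for every line~$L \subset X$ I want to show that the ideal sheaf~$I_L$ already lies in the subcategory $\Phi(\Db(\Gamma))$, and that the adjoint functor $\Phi^!$ (or equivalently $\Phi^*$, up to shift and twist) sends it to a line bundle on~$\Gamma$; then Lemma~\ref{lemma:moduli-map} packages these pointwise statements into a morphism $\rF_1(X) \to \Pic(\Gamma)$, and étaleness together with the construction of an inverse identifies the target with one connected component of $\Pic(\Gamma)$, which is isomorphic to $\Pic^0(\Gamma)$ noncanonically.

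First I would verify the orthogonality $I_L \in \langle \cO, \cO(1)\rangle^\perp$. This is the analogue of Lemma~\ref{lemma:ic-ax}, but it is much easier: from the short exact sequences
\begin{equation*}
0 \larrow I_L \larrow \cO_X \larrow \cO_L \larrow 0
\qquad\text{and}\qquad
0 \larrow I_L(-1) \larrow \cO_X(-1) \larrow \cO_L(-1) \larrow 0,
\end{equation*}
combined with $\cO_L \cong \cO_{\PP^1}$, $\cO_L(-1) \cong \cO_{\PP^1}(-1)$, and the cohomology data~\eqref{eq:coh-v4}, one immediately gets $H^\bullet(X, I_L) = 0$ and $H^\bullet(X, I_L(-1)) = \Ext^\bullet(\cO(1), I_L) = 0$, so $I_L$ lies in the image of~$\Phi$.

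Next I would compute $\Phi^!(I_L)$. Recall that the kernel $\cE$ of $\Phi$ is built from the family of spinor bundles on the pencil of quadrics containing~$X$ (cf.~\cite{K06,K12}), so that for each $y \in \Gamma$ the fiber $\cE_y$ is a rank~$2$ stable vector bundle on~$X$ with $\rc_1(\cE_y) = -H$ satisfying $\cE_y \in \langle \cO, \cO(1)\rangle^\perp$. The task is to show that $\Ext^\bullet_X(\cE_y, I_L)$ is concentrated in a single degree and one-dimensional there for every $y$. The vanishing in other degrees should follow from stability plus the acyclicity statements above applied to the defining resolution of $\cE_y$; the Euler characteristic computation (using Riemann--Roch, the Chern classes of $\cE_y$, and the Hilbert polynomial $t+1$ of~$L$) should give $\chi(\cE_y, I_L) = 1$. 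Together with Lemma~\ref{lemma:moduli-map} this will produce a morphism $f \colon \rF_1(X) \to \Pic(\Gamma)$.

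To finish I would construct an inverse. Given a line bundle of appropriate degree on $\Gamma$, apply $\Phi$ to it and analyze the resulting object on $X$: using the triangle structure coming from~\eqref{eq:sod-v4} one should recognize it as $I_L$ for some line $L \subset X$, producing a regular map $\Pic^0(\Gamma) \to \rF_1(X)$ mutually inverse to $f$ on $\kk$-points. Combining this with étaleness of~$f$ (which comes from the same $\Ext$ computation, now in families) and properness of~$\rF_1(X)$, we conclude that $f$ is an isomorphism onto a connected component of $\Pic(\Gamma)$, all of which are noncanonically identified with $\Pic^0(\Gamma)$. The main obstacle will be the second step: controlling the cohomology of $\cE_y \otimes I_L$ uniformly in~$y$ and~$L$ to simultaneously obtain concentration in one degree and dimension~$1$; compared to Theorem~\ref{theorem:x16-f3} this case is somewhat easier because the exceptional collection is the pair~$(\cO, \cO(1))$ rather than $(\cO, \cU^\vee)$, and the relevant resolution of $\cE_y$ is correspondingly shorter.
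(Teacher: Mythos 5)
The paper does not actually prove this statement: it is imported from \cite[Lemma~5.5]{K12}, just as Theorems~\ref{theorem:x16-f2} and~\ref{theorem:x18-f2} are imported from the literature, so there is no in-text proof to compare with. Your sketch follows exactly the template the paper uses for its new companion results (Theorems~\ref{theorem:x16-f3}, \ref{theorem:x18-f3} and~\ref{theorem:v4-f2}), which is also in substance the argument of the cited reference, so in spirit this is the same approach. Your first step is correct and complete: $H^\bullet(X,I_L)=0$ and $H^\bullet(X,I_L(-1))=0$ follow at once from~\eqref{eq:coh-v4}, hence $I_L\in\langle\cO,\cO(1)\rangle^\perp=\Phi(\Db(\Gamma))$, and indeed, in contrast with conics (Lemma~\ref{lemma:fc-ax-v4}), no auxiliary extension is needed for lines.

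The only place where real work remains is the pointwise statement that $\Phi^!(I_L)$ is a shifted line bundle; this is not a formal consequence of ``stability plus acyclicity''. From the long exact sequence of $0\to\cE_y^\vee\otimes I_L\to\cE_y^\vee\to\cE_y^\vee\vert_L\to 0$, what you must prove for \emph{every} $y\in\Gamma$ --- including the six points lying over the singular quadrics of the pencil, where $\cE_y$ is not the restriction of a spinor bundle of a smooth quadric --- is that $H^{>0}(X,\cE_y^\vee)=0$, that the restriction map $H^0(X,\cE_y^\vee)\to H^0(L,\cE_y^\vee\vert_L)$ is surjective, and that $H^1(L,\cE_y^\vee\vert_L)=0$ (equivalently, $\cE_y^\vee\vert_L$ has no summand of degree $\le-2$); only together with the Euler characteristic count does this give the one-dimensional concentration in a single degree, uniformly in $y$ and $L$. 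These are precisely the facts one extracts from the resolution~\eqref{eq:cey-v4} (orthogonality of $\cE_y$ to $\cO,\cO(1)$, global generation of $\cE_y^\vee$, surjectivity of $H^0(X,\cO(1))\to H^0(L,\cO_L(1))$), exactly as in the analogous Propositions~\ref{proposition:phi-s-ic} and~\ref{proposition:phi-s-fc-v4}. Alternatively, you can sidestep the uniform-in-$y$ analysis: once $I_L\cong\Phi(G)$, full faithfulness gives $\Hom(G,G)\cong\Hom(I_L,I_L)=\kk$ and $\Ext^1(G,G)\cong\Ext^1(I_L,I_L)\cong H^0(N_{L/X})=\kk^2$ (using $H^1(\cO_X)=H^2(\cO_X)=0$), and for a simple object of $\Db(\Gamma)$ on a genus-$2$ curve the equality $\ext^1=1+r^2$ forces $G$ to be a shifted line bundle; connectedness of $\rF_1(X)$ then fixes the degree and the shift so that Lemma~\ref{lemma:moduli-map} applies. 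After that, the remaining steps of your plan (\'etaleness, properness, inverse given by $\Phi$ up to shift, identification with a component of $\Pic(\Gamma)$) go through verbatim as in the proof of Theorem~\ref{theorem:x16-f3}.
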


Applying Corollary~\ref{corollary:fd-pic} we deduce 

\begin{corollary}
\label{corollary:v4-lines}
The Abel--Jacobi map $\AJ \colon \rF_1(X) \cong \Alb(\rF_1(X)) \to \Jac(X)$ is an isomorphism.
\end{corollary}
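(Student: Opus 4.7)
The plan is to obtain this corollary as an immediate application of Corollary~\ref{corollary:fd-pic} with $d=1$, feeding in the geometric input from Theorem~\ref{theorem:v4-f1}. Concretely, Theorem~\ref{theorem:v4-f1} says that the left adjoint $\Phi^*$ of the embedding $\Phi \colon \Db(\Gamma) \to \Db(X)$ in the semiorthogonal decomposition~\eqref{eq:sod-v4} induces a noncanonical isomorphism $f \colon \rF_1(X) \xrightarrow{\sim} \Pic^0(\Gamma)$. In particular, for every line $L \subset X$ the object $\Phi^*(I_L)$ is a shifted line bundle on $\Gamma$, so the hypotheses of Lemma~\ref{lemma:moduli-map} hold with $r=1$ and $\gcd(r,e)=1$ automatic, and $f$ is the induced morphism to $\rM_\Gamma(1,e) \cong \Pic^e(\Gamma) \cong \Pic^0(\Gamma)$.

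Next I would verify that the numerical hypotheses of Corollary~\ref{corollary:fd-pic} are in place: since $r=1$, the fine moduli space is $\Pic^0(\Gamma)$ and its universal sheaf $\cF$ is the Poincar\'e line bundle, whose first Chern class $\rc_1(\cF) \in \CH^1(\Gamma \times \Pic^0(\Gamma))$ is the classical correspondence realizing the self-duality of the Jacobian; in particular $\rc_1(\cF) \colon \CH^1_{\mathrm{alg}}(\Gamma) \to \CH^1_{\mathrm{alg}}(\Pic^0(\Gamma))$ is an isomorphism. Similarly, $f^*$ is an isomorphism because $f$ itself is. Corollary~\ref{corollary:fd-pic} then applies verbatim, yielding that $\alpha_1 \colon \Alb(\rF_1(X)) \to \Jac(X)$ is an isomorphism (this part ultimately rests on Proposition~\ref{proposition:aj-cd} and the core categorical computation in Proposition~\ref{proposition:aj-c2-e}).

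Finally, since $\rF_1(X)$ is isomorphic via $f$ to the abelian variety $\Pic^0(\Gamma)$, its Albanese morphism $\alb \colon \rF_1(X) \to \Alb(\rF_1(X))$ is itself an isomorphism, and the Abel--Jacobi map factors as the composition $\AJ = \alpha_1 \circ \alb$, which is therefore an isomorphism. There is essentially no obstacle here --- all the real work is done upstream in Theorem~\ref{theorem:v4-f1} (the identification $\rF_1(V_4) \cong \Pic^0(\Gamma)$ via derived categories) and in the categorical compatibility statements of Section~\ref{section:sod-aj}; the present corollary is purely a matter of verifying that these two inputs combine.
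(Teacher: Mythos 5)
Your proposal is correct and follows exactly the paper's route: the paper deduces this corollary by applying Corollary~\ref{corollary:fd-pic} (with $d=1$, $r=1$) to the isomorphism $\rF_1(X)\cong\Pic^0(\Gamma)$ of Theorem~\ref{theorem:v4-f1}, which is precisely your argument. Your unpacking of the Poincar\'e-bundle correspondence, the isomorphism $f^*$, and the factorization $\AJ=\alpha_1\circ\alb$ merely restates the internal proof of Corollary~\ref{corollary:fd-pic}, so there is nothing to add.
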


Below we prove an analogue of this result for conics on $X$.

\begin{theorem}
\label{theorem:v4-f2}
The Hilbert scheme $\rF_2(X)$ of conics on $X$ is a $\PP^3$-bundle over~$\Gamma$.
\end{theorem}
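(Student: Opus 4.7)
The plan is to imitate the strategy from the proof of Theorem~\ref{theorem:x18-f3}, with the line bundle $\cO(-1)$ playing the role of the rank-$2$ tautological bundle $\cU$ on $X_{18}$. For each conic $C \subset X$ I will construct a rank-$2$ vector bundle $F_C$ on $X$ lying in the subcategory $\Phi(\Db(\Gamma)) = \langle \cO, \cO(1) \rangle^\perp$. Since the image of $\Phi$ is generated by its skyscraper objects $\cE_y := \Phi(\cO_y)$, I will then show that $F_C \cong \cE_{y_C}$ for a unique point $y_C \in \Gamma$, producing an assignment $C \mapsto y_C$. The inclusion $\cO(-1) \hookrightarrow F_C$ provides a nonzero morphism $\Phi^*(\cO(-1)) \to \cO_{y_C}$, which I will promote to a morphism $\rF_2(X) \to \PP_\Gamma(\Phi^*(\cO(-1))^\vee)$; finally, I will verify that $\Phi^*(\cO(-1))$ is a rank-$4$ vector bundle on $\Gamma$ and construct an inverse.

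First I will construct $F_C$. A Serre duality computation
\[
\Ext^{i+1}(\cO_C, \cO(-1)) \cong H^{2-i}(C, \cO_C(-1))^\vee,
\]
together with the vanishing $H^\bullet(X, \cO(-1)) = 0$ from~\eqref{eq:coh-v4}, reduces $\Ext^1(I_C, \cO(-1))$ to $H^1(C, \cO_C(-1))^\vee$. The Hilbert polynomial $h_C(t) = 2t+1$ shows that the latter is one-dimensional, to be verified case by case (smooth conic, pair of intersecting lines, or double line). I define $F_C$ as the nontrivial extension $0 \to \cO(-1) \to F_C \to I_C \to 0$, equivalently the $4$-term sequence $0 \to \cO(-1) \to F_C \to \cO \to \cO_C \to 0$. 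The containment $F_C \in \cO^\perp$ is immediate, while $F_C \in \cO(1)^\perp$ reduces via the twisted sequence $0 \to \cO(-2) \to F_C(-1) \to I_C(-1) \to 0$ to showing that the connecting map $H^2(I_C(-1)) \to H^3(\cO(-2))$ is an isomorphism, which is precisely the statement that the extension class of $F_C$ in $\Ext^1(I_C, \cO(-1))$ is nonzero. Local freeness will follow by a Serre-construction-style argument, and stability is automatic from $H^0(F_C) = 0$ via Hoppe's criterion for rank-$2$ bundles with $\rc_1 = -H$ on a variety of Picard rank one.

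To identify $F_C$ with a member of the family $\{\cE_y\}_{y \in \Gamma}$, I will argue as in Proposition~\ref{proposition:phi-s-fc}: both $F_C$ and $\cE_y$ are stable of the same slope $-H/2$, so any nonzero morphism $\cE_y \to F_C$ is an isomorphism, and it remains to produce such a morphism for some necessarily unique $y$. For this I will need an explicit resolution of $\cE_y$ involving only $\cO$ and $\cO(1)$ (the analog of~\eqref{eq:cey-x18}) together with $F_C \in \langle \cO, \cO(1) \rangle^\perp$ and the Euler characteristic equality $\chi(\cE_y, F_C) = 0$, which forces either all Ext's to vanish or $\Hom \ne 0$. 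For the rank computation of $\Phi^*(\cO(-1))$, the fiber dimension is $\dim \Hom(\cO(-1), \cE_y) = h^0(\cE_y(1))$: from the defining sequence of $F_C = \cE_{y_C}$ twisted by $\cO(1)$,
\[
0 \to \cO \to \cE_y(1) \to I_C(1) \to 0,
\]
together with $h^0(I_C(1)) = 3$ (three linear forms through the $2$-plane spanned by $C$) and vanishing of higher cohomology, I obtain $h^0(\cE_y(1)) = 4$ uniformly in~$y$; hence $\Phi^*(\cO(-1))$ is a rank-$4$ bundle and the target is a $\PP^3$-bundle. The inverse morphism will send $(y, \phi)$ to the cokernel of $\phi \colon \cO(-1) \hookrightarrow \cE_y$, which is to be shown to be the ideal sheaf of a conic in $X$.

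The hardest part will be Step~3, matching $F_C$ with some $\cE_{y_C}$. Unlike the $X_{18}$ case, where the resolution~\eqref{eq:cey-x18} of $\cE_y$ used the tautological bundle $\cU^\vee$ directly, for $V_4$ the exceptional collection involves only $\cO$ and $\cO(1)$, so the required resolution must be extracted from the general semiorthogonal machinery, which will make the Ext computations more delicate. A secondary technical issue will be ensuring that the cokernel of the putative-inverse-map $\phi \colon \cO(-1) \to \cE_y$ is indeed the ideal sheaf of a $1$-dimensional subscheme with Hilbert polynomial $2t+1$, rather than some sheaf with extraneous embedded components, which will require a careful analysis of the geometry of the universal pair over $\PP_\Gamma(\Phi^*(\cO(-1))^\vee)$.
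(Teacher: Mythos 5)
Your proposal follows essentially the same route as the paper: the paper likewise defines $F_C$ as the unique nontrivial extension of $I_C$ by $\cO(-1)$ (Lemma~\ref{lemma:fc-ax-v4}), shows it is a stable rank-$2$ bundle in $\langle\cO,\cO(1)\rangle^\perp$, identifies it with $\cE_y=\Phi(\cO_y)$ using the explicit resolution $0\to\cE_y\to\cO^{\oplus 4}\to\cO(1)^{\oplus 4}\to\cE_y(1)\to 0$ (so the resolution you worry about is available and has exactly the expected shape), and concludes via the rank-$4$ computation for $\Phi^*(\cO(-1))$ and the mutually inverse constructions. The only small divergence is that the paper computes $\rank\Phi^*(\cO(-1))=4$ directly from that resolution for every $y\in\Gamma$ (as in Lemma~\ref{lemma:phi-star-cu}), whereas your computation of $h^0(\cE_{y}(1))$ via the sequence $0\to\cO\to\cE_{y}(1)\to I_C(1)\to 0$ only applies to those $y$ already known to arise from a conic, so it should be replaced by the direct calculation to get the statement uniformly in $y$.
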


We will show that appropriate extensions of ideals of conics in $X$ belong to the component~$\Db(\Gamma)$ 
of~\eqref{eq:sod-v4} and correspond to structure sheaves of points in $\Gamma$.
The proof is analogous to the proof of Theorem~\ref{theorem:x18-f3}.

\begin{lemma}
\label{lemma:fc-ax-v4}
If $C \subset X$ is a conic then 
\begin{equation*}
\Ext^1(I_C,\cO(-1)) \cong \kk.
\end{equation*}
Let $F_C$ be the corresponding nontrivial extension, so that we have an exact sequence
\begin{equation}
\label{eq:fc-sequence-v4}
0 \larrow \cO(-1) \larrow F_C \larrow \cO \larrow \cO_C \larrow 0.
\end{equation}
Then $F_C$ is a stable vector bundle of rank~$2$ with $\rc_1(F_C) = -H$. 
Moreover, $F_C \in \langle \cO, \cO(1) \rangle^\perp$.
\end{lemma}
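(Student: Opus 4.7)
The proof will closely parallel that of Lemma~\ref{lemma:fc-ax} (the genus~10 case), with $\cU$ and $\cU^\vee$ replaced by $\cO(-1)$ and $\cO(1)$ respectively.

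First I will compute $\Ext^1(I_C, \cO(-1))$ by Serre duality. Since $K_X = -2H$, Serre duality gives $\Ext^1(I_C,\cO(-1)) \cong H^2(X, I_C(-1))^\vee$. From the ideal sheaf sequence $0 \to I_C \to \cO \to \cO_C \to 0$ twisted by $\cO(-1)$, combined with the vanishing $H^\bullet(X,\cO(-1)) = 0$ from~\eqref{eq:coh-v4}, I get $H^2(X,I_C(-1)) \cong H^1(C,\cO_C(-1))$. A Riemann--Roch computation on $C$ using $h_C(t) = 2t+1$ yields $\chi(\cO_C(-1)) = -1$, and the vanishing $H^0(C,\cO_C(-1)) = 0$ (which holds for every conic, reduced or not, since $\cO_C(-1)$ has no sections when restricted to any $\PP^1$-component and gluing cannot create new sections) gives $H^1(C,\cO_C(-1)) = \kk$. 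This also yields $\Ext^i(I_C,\cO(-1)) = 0$ for $i \ne 1$.

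Next I will prove $F_C \in \langle \cO, \cO(1) \rangle^\perp$. Orthogonality to $\cO$ is immediate from the defining extension, since $\cO(-1) \in \cO^\perp$ by~\eqref{eq:coh-v4} and $I_C \in \cO^\perp$ by Lemma~\ref{lemma:ic-ox}. For orthogonality to $\cO(1)$, I apply $\Hom(\cO(1),-)$ to $0 \to \cO(-1) \to F_C \to I_C \to 0$, using: $\Ext^i(\cO(1),\cO(-1)) = H^i(X,\cO(-2))$ vanishes for $i < 3$ by Kodaira vanishing and equals $\kk$ for $i = 3$ by Serre duality since $\cO(-2) = \omega_X$; and $\Ext^i(\cO(1),I_C)$ computed from the ideal sequence using $H^\bullet(X,\cO(-1)) = 0$ shows it vanishes except for $\Ext^2 \cong H^1(C,\cO_C(-1)) \cong \kk$. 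The only subtle part is showing that the connecting map $\Ext^2(\cO(1),I_C) \to \Ext^3(\cO(1),\cO(-1))$ induced by our extension class is an isomorphism; this map is cup-product with the class $\xi \in \Ext^1(I_C,\cO(-1))$, and Serre duality identifies it with the Serre pairing with $\xi$, which is perfect because both spaces are $\kk$ and $\xi \ne 0$ by construction.

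Finally I will verify that $F_C$ is a stable rank~$2$ vector bundle with $\rc_1(F_C) = -H$. The rank and Chern class are immediate from the defining sequence. Local freeness follows from the standard Serre construction argument: $F_C$ is a rank~$2$ reflexive sheaf on the smooth threefold $X$, and since $C$ is a Cohen--Macaulay subscheme of codimension~$2$ (every conic in $\PP^5$ is locally a complete intersection), $F_C$ is locally free. For stability, by Hoppe's criterion on a variety of geometric Picard rank~$1$, it suffices to check $\Hom(\cO(k), F_C) = 0$ for all $k$ with $\mu(\cO(k)) \ge \mu(F_C) = -2$, i.e.\ $k \ge 0$. For $k = 0$ this is $H^0(X,F_C) = 0$ because $F_C \in \cO^\perp$. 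For $k \ge 1$, applying $\Hom(\cO(k),-)$ to the defining sequence, both outer terms $H^0(X,\cO(-1-k))$ and $\Hom(\cO(k),I_C) \hookrightarrow H^0(X,\cO(-k))$ vanish.

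The main obstacle will be the identification of the connecting map in the orthogonality computation with Serre duality pairing, which requires careful bookkeeping of the Yoneda product, and a careful treatment of local freeness across all possible degenerations of conics (reducible and non-reduced).
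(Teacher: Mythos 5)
Your proposal is correct and follows essentially the same route as the paper, whose proof of this lemma is literally ``analogous to the proof of Lemma~\ref{lemma:fc-ax}'' with $\cU,\cU^\vee$ replaced by $\cO(-1),\cO(1)$: your computation of $\Ext^\bullet(I_C,\cO(-1))$ via Serre duality and the ideal sequence, the orthogonality statements, and the stability check via Hoppe's criterion all match that template. Two minor points: the paper's template obtains orthogonality to $\cO(1)$ slightly more cheaply by first showing $\Ext^\bullet(F_C,\cO(-1))=0$ (the connecting map sends $\id_{\cO(-1)}$ to the nonzero class $\xi$, so no Serre-pairing bookkeeping is needed) and then applying Serre duality once; and for local freeness the honest justification is not ``reflexive plus lci implies locally free'' but the Serre-construction criterion that $\xi$ generates $\cExt^1(I_C,\cO(-1))\cong\cO_C$ at every point, which holds because $\xi\ne 0$ and $H^0(C,\cO_C)=\kk$.
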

\begin{proof}
Analogous to the proof of Lemma~\ref{lemma:fc-ax}.
\end{proof}

\begin{proposition}
\label{proposition:phi-s-fc-v4}
If $C \subset X$ is a conic then $\Phi^!(F_C) \cong \cO_y$, where $y \in \Gamma$.
\end{proposition}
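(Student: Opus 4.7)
The plan is to repeat the strategy of the proof of Proposition~\ref{proposition:phi-s-fc}, suitably adapted to the semiorthogonal decomposition~\eqref{eq:sod-v4}. Since~$\Phi$ is fully faithful, it suffices to exhibit an isomorphism~\mbox{$F_C \cong \Phi(\cO_y) = \cE_y$} for some $y \in \Gamma$, where~$\cE$ denotes the Fourier--Mukai kernel of~$\Phi$. Recall from Theorem~\ref{theorem:v4-f1} and its proof that~$\cE$ is a vector bundle of rank~$2$ on~$X \times \Gamma$ whose fibers~$\cE_y$ are precisely the stable rank~$2$ bundles on~$X$ with~$\rc_1(\cE_y) = -H$ arising from Serre's construction applied to lines on~$X$; in particular each~$\cE_y$ admits a resolution whose terms are direct sums of~$\cO$ and~$\cO(1)$ (after a Serre duality twist), analogous to~\eqref{eq:cey-x18}.

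First I would use Lemma~\ref{lemma:fc-ax-v4}, which asserts~$F_C \in \langle \cO, \cO(1) \rangle^\perp$. Applying~$\Ext^\bullet(-,F_C)$ to the resolution of~$\cE_y$ by direct sums of~$\cO$ and~$\cO(1)$ and using this orthogonality, I deduce~$\Ext^i(\cE_y,F_C) = 0$ for all~$i \ge 2$. Next, a direct Riemann--Roch computation with~$\rank(\cE_y) = \rank(F_C) = 2$ and~$\rc_1(\cE_y) = \rc_1(F_C) = -H$ shows that~$\chi(\cE_y, F_C) = 0$. Consequently, for every~$y \in \Gamma$ either the full Ext-complex~$\Ext^\bullet(\cE_y,F_C)$ vanishes, or~$\Hom(\cE_y, F_C) \ne 0$.

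If the vanishing held for every~$y$, then~$\Ext^\bullet(\cO_y, \Phi^!(F_C)) = 0$ for all~$y \in \Gamma$, forcing~$\Phi^!(F_C) = 0$; combined with~$F_C \in \langle \cO,\cO(1)\rangle^\perp$ and the decomposition~\eqref{eq:sod-v4} this would give~$F_C = 0$, a contradiction. Hence there exists~$y \in \Gamma$ and a nonzero morphism~$\varphi \colon \cE_y \to F_C$. Since both bundles are stable of the same rank and the same slope (namely, rank~$2$ with~$\rc_1 = -H$), any nonzero morphism between them is an isomorphism. Therefore~$F_C \cong \cE_y = \Phi(\cO_y)$, and applying~$\Phi^!$ yields~$\Phi^!(F_C) \cong \cO_y$ by full faithfulness of~$\Phi$.

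The substantive point, and the only possible obstacle, is the identification of~$\cE_y$ as a stable rank~$2$ bundle of the correct Chern character with a resolution built from~$\cO$ and~$\cO(1)$; once this is recorded, the argument is forced and parallels Proposition~\ref{proposition:phi-s-fc} step by step. The required description of~$\cE$ is essentially the content of~\cite[Lemma~5.5]{K12} (invoked already in Theorem~\ref{theorem:v4-f1}) together with the classical Serre construction for lines on the intersection of two quadrics, so no new difficulty arises.
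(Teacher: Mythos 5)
Your proposal is correct and follows essentially the same route as the paper, which proves this proposition by repeating the argument of Proposition~\ref{proposition:phi-s-fc} verbatim with the resolution~\eqref{eq:cey-v4} of~$\cE_y$ by copies of~$\cO$ and~$\cO(1)$ in place of~\eqref{eq:cey-x18}: orthogonality from Lemma~\ref{lemma:fc-ax-v4} kills $\Ext^{\ge 2}(\cE_y,F_C)$, Riemann--Roch gives $\chi(\cE_y,F_C)=0$, the vanishing for all $y$ would force $F_C=0$, and a nonzero map between the stable rank~$2$ bundles $\cE_y$ and $F_C$ of equal slope is an isomorphism.
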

\begin{proof}
Analogous to the proof of Proposition~\ref{proposition:phi-s-fc} with exact sequence
\begin{equation}
\label{eq:cey-v4}
0 \larrow \cE_y \larrow \cO^{\oplus 4} \larrow \cO(1)^{\oplus 4} \larrow \cE_y(1) \larrow 0
\end{equation}
used instead of~\eqref{eq:cey-x18}.
\end{proof}

This proposition already proves that the functor $\Phi^!$ induces a morphism $\rF_2(X) \to \Gamma$.
It remains to identify it with a $\PP^3$-bundle.

\begin{lemma}
The object $\Phi^*(\cO(-1))$ is a vector bundle of rank~$4$ on $\Gamma$.
\end{lemma}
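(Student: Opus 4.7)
The plan is to follow the template of the proof of Lemma~\ref{lemma:phi-star-cu} for~$X_{18}$. First recall that the left adjoint~$\Phi^*$ is the Fourier--Mukai functor whose kernel on~$X \times \Gamma$ is~\mbox{$\cE^\vee \otimes p_X^*\omega_X[3]$}, and that for a quartic del Pezzo threefold we have~\mbox{$\omega_X = \cO(-2)$}. Consequently, for each closed point~$y \in \Gamma$ the fibre of~$\Phi^*(\cO(-1))$ at~$y$ equals
\[
R\Gamma\bigl(X,\, \cO(-1) \otimes \cE_y^\vee \otimes \omega_X\bigr)[3] \;=\; R\Gamma\bigl(X,\,\cE_y^\vee(-3)\bigr)[3].
\]
By the standard criterion (see, e.g., \cite[Lemma~4.4]{K19}), in order to show that~$\Phi^*(\cO(-1))$ is a vector bundle of rank~$4$ concentrated in cohomological degree~$0$ it suffices to establish, for every~$y \in \Gamma$, the isomorphism
\[
R\Gamma\bigl(X,\,\cE_y^\vee(-3)\bigr) \;\cong\; \kk^{\oplus 4}[-3].
\]

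To carry out this pointwise computation I would dualize the four-term exact sequence~\eqref{eq:cey-v4} and tensor it by~$\cO(-2)$, obtaining
\[
0 \larrow \cE_y^\vee(-3) \larrow \cO(-3)^{\oplus 4} \larrow \cO(-2)^{\oplus 4} \larrow \cE_y^\vee(-2) \larrow 0,
\]
then split it into two short exact sequences and run the two corresponding long exact sequences of cohomology. The necessary inputs are: the semiorthogonality vanishings $H^\bullet(X,\cE_y) = 0$ and $H^\bullet(X,\cE_y(-1)) = 0$, which hold because $\cE_y = \Phi(\cO_y) \in \langle \cO,\cO(1)\rangle^\perp$, and which via Serre duality in particular yield~\mbox{$H^\bullet(X,\cE_y^\vee(-2)) = 0$}; together with the cohomology of the line-bundle twists $H^\bullet(X,\cO(-2))$ and~$H^\bullet(X,\cO(-3))$, which are concentrated in degree~$3$ by Serre duality from~\eqref{eq:coh-v4}.

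The principal obstacle is the careful bookkeeping required to extract precisely a $4$-dimensional contribution in degree~$3$ (and no contribution elsewhere) from these long exact sequences, including a close look at the ranks of the connecting homomorphisms. This step is exactly analogous to the computation yielding~$\kk^{\oplus 3}[-3]$ at the corresponding point of Lemma~\ref{lemma:phi-star-cu}, but with the numerics adjusted to the quartic del Pezzo setting. Once the pointwise cohomological identification is in hand, the application of~\cite[Lemma~4.4]{K19} completes the proof.
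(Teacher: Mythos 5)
Your overall strategy is the one the paper intends: its proof is literally ``analogous to the proof of Lemma~\ref{lemma:phi-star-cu}'', and your reduction is correct (the kernel of~$\Phi^*$ is $\cE^\vee(K_X)[3]$ with $\omega_X\cong\cO(-2)$, so by \cite[Lemma~4.4]{K19} it suffices to prove $H^\bullet(X,\cE_y^\vee(-3))\cong\kk^{\oplus 4}[-3]$ for all $y\in\Gamma$). The genuine gap is in the computation you defer. In the four-term sequence you display, the cohomology of all three right-hand terms is concentrated in degree~$3$ ($h^3(\cO(-3))=6$, $h^3(\cO(-2))=1$, $H^\bullet(\cE_y^\vee(-2))=0$), so there are no connecting homomorphisms to analyse: the chase forces $h^3(\cE_y^\vee(-3))=4\cdot 6-4\cdot 1+0=20$, not~$4$. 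Equivalently, Serre duality gives $\chi(\cE_y^\vee(-3))=-\chi(\cE_y(1))=-\chi(\cE_y^\vee)=-4$, while exactness of your sequence would force $-20$; so that sequence cannot be exact. The root of the problem is that~\eqref{eq:cey-v4} cannot hold as printed with last term $\cE_y(1)$: since $\cE_y\cong F_C$ has rank~$2$ and $\rc_1(\cE_y)=-H$ (Lemma~\ref{lemma:fc-ax-v4} and Proposition~\ref{proposition:phi-s-fc-v4}), the first Chern classes in that sequence do not balance. The correct last term is $\cE_y(2)=\cE_y^\vee(1)$; the sequence is obtained by splicing the restricted spinor sequence $0\to\cE_y\to\cO^{\oplus 4}\to\cE_{\tau(y)}^\vee\to 0$ with the $\cO(1)$-twist of the analogous sequence for the conjugate point $\tau(y)$ of the double cover $\Gamma\to\PP^1$.

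With the corrected resolution the argument really is a verbatim copy of Lemma~\ref{lemma:phi-star-cu}: dualize and tensor by $\cO(-1)$ (not by $\cO(-2)$), which yields
\begin{equation*}
0 \larrow \cE_y^\vee(-3) \larrow \cO(-2)^{\oplus 4} \larrow \cO(-1)^{\oplus 4} \larrow \cE_y^\vee(-1) \larrow 0.
\end{equation*}
Here the two right-hand terms are acyclic: $H^\bullet(X,\cO(-1))=0$ by~\eqref{eq:coh-v4}, and $H^\bullet(X,\cE_y^\vee(-1))\cong H^{3-\bullet}(X,\cE_y(-1))^\vee=\Ext^{3-\bullet}(\cO(1),\cE_y)^\vee=0$ because $\cE_y=\Phi(\cO_y)\in\langle\cO,\cO(1)\rangle^\perp$. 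Hence the chase gives at once $H^\bullet(X,\cE_y^\vee(-3))\cong H^\bullet(X,\cO(-2))^{\oplus 4}\cong\kk^{\oplus 4}[-3]$, the term $\cO(-2)^{\oplus 4}$ playing exactly the role of $(\cU\otimes\cU)^{\oplus 3}$ in the genus-$10$ computation, with no rank analysis of connecting maps needed. Your listed vanishings $H^\bullet(\cE_y)=0$ and $H^\bullet(\cE_y(-1))=0$ are precisely the right inputs; they just have to be applied to this sequence rather than to the one you wrote down.
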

\begin{proof}
Analogous to the proof of Lemma~\ref{lemma:phi-star-cu}.
\end{proof}

A combination of the above results proves Theorem~\ref{theorem:v4-f2} along the lines of the proof of Theorem~\ref{theorem:x18-f3}.

Of course, the isomorphism $\rF_2(X)$ and a $\PP^3$-bundle over $\Gamma$ can be established geometrically
(see, e.g., \cite[Proposition~2.3.8(ii)]{KPS}).
However, the above argument gives a categorical flavor to this isomorphism, 
and also allows to apply Corollary~\ref{corollary:fd-gamma} and deduce the following

\begin{corollary}
\label{corollary:v4-conics}
The Abel--Jacobi map $\AJ \colon \Pic^0(\Gamma) \cong \Alb(\rF_2(X)) \to \Jac(X)$ is an isomorphism
of principally polarized abelian varieties.
\end{corollary}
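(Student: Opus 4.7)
The plan is to deduce the corollary from Theorem~\ref{theorem:v4-f2} by a direct application of Corollary~\ref{corollary:fd-gamma}, entirely parallel to the way Corollary~\ref{corollary:x18-cubics} is deduced from Theorem~\ref{theorem:x18-f3}. The setup of Theorem~\ref{theorem:v4-f2} supplies all the geometric input: the semiorthogonal decomposition~\eqref{eq:sod-v4} expresses $\Db(X)$ with a curve component $\Phi(\Db(\Gamma))$ plus an exceptional collection $\langle\cO,\cO(1)\rangle$, and Theorem~\ref{theorem:v4-f2} constructs a morphism $f\colon\rF_2(X)\to\Gamma$ which is a $\PP^3$-bundle, so in particular has rationally connected fibers.

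The only subtlety, already present in the genus~10 case, is that the morphism $f$ is defined by means of the extensions~$F_C$ rather than the ideals~$I_C$: Proposition~\ref{proposition:phi-s-fc-v4} tells us that $\Phi^!(F_C)\cong\cO_y$, and it is the universal version of~\eqref{eq:fc-sequence-v4}, a short exact sequence
\begin{equation*}
0\larrow \cO(-1)\boxtimes\cO\larrow \cF\larrow I_{\cC_2(X)}\larrow 0
\end{equation*}
on $X\times\rF_2(X)$, to which Lemma~\ref{lemma:moduli-map} applies (with the bundle $\cF$ playing the role of the kernel). I would handle this exactly as in the proof of Corollary~\ref{corollary:x18-cubics}: in $K_0$ one has $[\cF]=[\cO(-1)\boxtimes\cO]+[I_{\cC_2(X)}]$, and the correction term $\cO(-1)\boxtimes\cO$ is a pure tensor, hence by Lemma~\ref{lemma:ch-nil}\ref{nil:tp} its Chern character lies in $\CH^{\bullet}_{\nil,\QQ}(X\times\rF_2(X))$. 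Consequently $\rc_2(\cF)$ and $\rc_2(I_{\cC_2(X)})=-[\cC_2(X)]$ agree modulo null-algebraic correspondences, so running through the proof of Proposition~\ref{proposition:aj-cd} with $\cF$ in place of $I_{\cC_d(X)}$ produces the same commutative diagram of Abel--Jacobi maps up to sign and torsion.

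With this modification in hand, Corollary~\ref{corollary:fd-gamma} in the regime $(r,e)=(0,1)$ applies verbatim: since $f$ is a $\PP^3$-bundle, both $f^*\colon\CH^1_{\mathrm{alg}}(\Gamma)\to\CH^1_{\mathrm{alg}}(\rF_2(X))$ and the universal map $\rc_1(\cO_{\Delta_\Gamma})=[\Delta_\Gamma]\colon\CH^1_{\mathrm{alg}}(\Gamma)\to\CH^1_{\mathrm{alg}}(\Gamma)$ are isomorphisms, and Proposition~\ref{proposition:aj-c2-e} identifies $\Jac(X)$ with $\Pic^0(\Gamma)$ as principally polarized abelian varieties. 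The Albanese of the $\PP^3$-bundle $\rF_2(X)$ is canonically $\Alb(\Gamma)\cong\Pic^0(\Gamma)$, and the resulting map $\alpha_2\colon\Pic^0(\Gamma)\cong\Alb(\rF_2(X))\to\Jac(X)$ is the claimed isomorphism. I expect no real obstacle beyond bookkeeping: since the analogous corollaries (e.g.\ Corollary~\ref{corollary:x18-cubics}) already invoke Corollary~\ref{corollary:fd-gamma} in the same mildly generalized form, the argument is essentially mechanical once Theorem~\ref{theorem:v4-f2} has been established.
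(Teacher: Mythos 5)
Your proposal is correct and follows essentially the same route as the paper, which likewise deduces the corollary from Theorem~\ref{theorem:v4-f2} by invoking Corollary~\ref{corollary:fd-gamma} in the regime $(r,e)=(0,1)$. Your explicit remark that the universal extension $\cF$ and the ideal sheaf $I_{\cC_2(X)}$ differ by the decomposable class $\cO(-1)\boxtimes\cO$, which is null-algebraic by Lemma~\ref{lemma:ch-nil}\ref{nil:tp}, is a correct and welcome clarification of a point the paper leaves implicit when passing from $F_C$ to the Abel--Jacobi computation of Proposition~\ref{proposition:aj-cd}.
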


\appendix

\section{Tetragonal property}

\makeatletter
\@addtoreset{equation}{subsection}
\makeatother
\renewcommand{\theequation}
{\Alph{section}.\arabic{subsection}.\arabic{equation}}
\renewcommand{\thesubsection}{\Alph{section}.\arabic{subsection}}

Recall that a curve $C$ is called \emph{tetragonal} if it has a 
positive dimensional linear system of degree~$4$.
The geometric Riemann--Roch theorem implies that a smooth canonical curve~\mbox{$C \subset \PP^g$} is tetragonal, 
if and only if it has a subscheme of length~4 contained in a plane.
This motivates the following

\begin{definition}
A projective variety~$X \subset \PP^n$ is {\sf not tetragonal} if it is an intersection of quadrics
and there is no plane $\Pi \subset \PP^n$ such that $X \cap \Pi$ is a finite scheme of length~4.
\end{definition}

The following lemma is obvious.

\begin{lemma}
\label{lemma:tetragonality-section}
An intersection of quadrics $X \subset \PP^n$ is not tetragonal if an only if 
the same is true for any \textup(not necessarily transverse\textup) linear section of~$X$.
\end{lemma}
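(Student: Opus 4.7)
The statement decomposes into two implications, and both are essentially formal. Let me sketch how I would organise the argument.

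For the backwards implication, I would simply take $L = \PP^n$, so that $X = X \cap L$ is itself a linear section of $X$; the hypothesis that every linear section is not tetragonal immediately gives that $X$ is not tetragonal. This is trivial.

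For the forward implication, I would fix an arbitrary linear subspace $L \subset \PP^n$ and set $X' = X \cap L$, and verify the two parts of the definition of "not tetragonal" for $X'$ viewed as a subscheme of $L$. First, I would check that $X'$ is an intersection of quadrics in $L$: writing $X$ as the intersection of quadrics $Q_1,\dots,Q_k \subset \PP^n$, the scheme $X'$ is cut out in $L$ by the restrictions $Q_i \cap L$, each of which is either all of $L$ or a quadric hypersurface in $L$; hence $X'$ is an intersection of quadrics in $L$. Second, I would argue that if $\Pi \subset L$ were a plane with $X' \cap \Pi$ a finite scheme of length~$4$, then $\Pi$ is also a plane in~$\PP^n$, and the containment $\Pi \subset L$ gives $X' \cap \Pi = (X \cap L) \cap \Pi = X \cap \Pi$, which would witness tetragonality of~$X$, contradicting the hypothesis.

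There is no real obstacle here; the only thing to be careful about is the trivial set-theoretic manipulation $X' \cap \Pi = X \cap \Pi$ when $\Pi \subset L$, and the remark that restrictions of quadrics are quadrics (or the whole ambient space, which can be dropped from an intersection). This matches the author's description of the lemma as obvious.
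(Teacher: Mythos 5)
Your argument is correct: the backward direction is the trivial case $L=\PP^n$, and the forward direction follows since restrictions of quadrics cut out $X\cap L$ in $L$ and any plane $\Pi\subset L$ satisfies $(X\cap L)\cap\Pi=X\cap\Pi$ scheme-theoretically. The paper gives no proof (it declares the lemma obvious), and your reasoning is exactly the intended one.
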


On the other hand, recall the property~$\bN_2$ of Green and Lazarsfeld~\cite{GL86}.

\begin{definition}
A projective variety~$X \subset \PP^n$ {\sf satisfies the property~$\bN_2$} if its structure sheaf has a locally free resolution of the form
\begin{equation*}
\dots \longrightarrow \cO_{\PP^{n}}(-3)^{\oplus m_3} \longrightarrow \cO_{\PP^{n}}(-2)^{\oplus m_2} \longrightarrow \cO_{\PP^{n}} \longrightarrow \cO_X \longrightarrow 0.
\end{equation*}
\end{definition}

We have the following implication.

\begin{proposition}
\label{prop:n2-tetragonality}
If a projective variety satisfies the property~$\bN_2$, it is not tetragonal.
\end{proposition}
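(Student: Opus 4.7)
The strategy is to argue by contradiction: suppose $X \subset \PP^n$ satisfies $\bN_2$ but is tetragonal, so that there is a plane $\Pi \cong \PP^2$ with $Z := X \cap \Pi$ a zero-dimensional subscheme of length~$4$. I would derive a contradiction by producing two incompatible descriptions of the minimal free resolution of $\cO_Z$ over $\cO_\Pi$: one from the internal geometry of $Z$ in $\Pi$, and one by restriction of the $\bN_2$ resolution of $\cO_X$.

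The first task is to pin down the structure of $Z$. Since $\bN_2$ for $X$ implies in particular that $X$ is scheme-theoretically cut out by quadrics, no line of $\Pi$ can lie on $X$---otherwise $X \cap \Pi$ would fail to be finite---and in consequence $Z$ contains no length-$3$ subscheme supported on a single line. A case analysis of length-$4$ subschemes of $\PP^2$ subject to this constraint (covering reduced configurations as well as those involving double or triple points) shows that $Z$ always imposes four independent conditions on conics: $h^0(\Pi, I_Z(2)) = 2$, and any two linearly independent conics $C_1, C_2$ through $Z$ cut it out as a scheme-theoretic complete intersection. The minimal free resolution of $\cO_Z$ over $\cO_\Pi$ is then the Koszul complex
\begin{equation*}
0 \longrightarrow \cO_\Pi(-4) \longrightarrow \cO_\Pi(-2)^{\oplus 2} \longrightarrow \cO_\Pi \longrightarrow \cO_Z \longrightarrow 0,
\end{equation*}
whose second-step generator is the Koszul syzygy in degree~$4$; in particular $\cO_Z$ does \emph{not} satisfy property $\bN_2$ on $\Pi$.

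Next, I would argue that the minimal $\bN_2$ resolution of $\cO_X$ on $\PP^n$, when tensored with $\cO_\Pi$, yields a complex on $\Pi$ whose terms all lie in the $\bN_2$ linear strand---quadratic generators and syzygies in degree~$\geq 3$. If this restricted complex is exact, it is a (possibly non-minimal) $\bN_2$-type resolution of $\cO_Z$ on $\Pi$, and passage to the minimal resolution by cancellation of matched summands in equal degrees preserves the $\bN_2$ shape, contradicting the Koszul calculation above.

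The principal obstacle is establishing exactness of the restricted complex, equivalently the vanishing of $\Tor_i^{\cO_{\PP^n}}(\cO_X, \cO_\Pi)$ for $i \geq 1$. In the proper-intersection case $\dim X = n-2$, this is automatic from Cohen--Macaulayness of the coordinate ring $R_X$ together with the regular-sequence property of the linear forms defining $\Pi$. For $\dim X < n-2$ one is in an excess-intersection situation and Tor-vanishing must be deduced indirectly: one natural route is to invoke Lemma~\ref{lemma:tetragonality-section} to reduce to the proper-intersection case by replacing $X$ with a suitable linear section, using Bertini-type preservation of $\bN_2$ under generic hyperplane sections; an alternative is a direct Koszul-cohomology comparison exploiting the K\"unneth-type decomposition of the Betti table of $Z \subset \PP^n$ induced by the complete intersection $\Pi \subset \PP^n$, combined with the $\bN_2$-vanishings $K_{i,q}(X, \cO(1)) = 0$ for $i \leq 2$ and $q \geq 2$. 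Making this reduction rigorous is the technical heart of the argument.
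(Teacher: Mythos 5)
Your identification of $Z=X\cap\Pi$ as a complete intersection of two conics, with the Koszul resolution
\begin{equation*}
0 \longrightarrow \cO_\Pi(-4) \longrightarrow \cO_\Pi(-2)^{\oplus 2} \longrightarrow \cO_\Pi \longrightarrow \cO_Z \longrightarrow 0,
\end{equation*}
is correct and agrees with the paper. The genuine gap is exactly where you locate it: your argument needs the restricted complex to be exact, i.e.\ $\Tor_{\ge 1}(\cO_X,\cO_\Pi)=0$, and neither of your proposed routes delivers this. In the intended applications the intersection $X\cap\Pi$ is an excess intersection ($\dim X$ is far from $n-2$), so Tor-vanishing is simply not available from the hypotheses; property $\bN_2$ does not imply Cohen--Macaulayness of the coordinate ring, so even the ``proper intersection'' case of your argument quietly adds an assumption. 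The reduction via linear sections is also blocked: to keep $Z$ in the picture the hyperplanes must \emph{contain} $\Pi$, so they cannot be taken generic in the sense required by Bertini-type statements, $\bN_2$ is not known to persist under such special sections, and intermediate sections can acquire embedded components supported at points of $Z$, which would force every further linear form through $\Pi$ to be a zerodivisor and destroy the regular-sequence argument. The Koszul-cohomology alternative is only sketched and does not close this hole.

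The paper's proof shows that exactness is not needed at all, and this is the idea you are missing. Tensor the $\bN_2$ resolution with $\cO_\Pi$ and observe only that the cohomology sheaves of the resulting complex are the sheaves $\Tor_p(\cO_\Pi,\cO_X)$, which are \emph{Artinian}, being supported on the finite scheme $Z$; in particular the rightmost map still yields a surjection $\cO_\Pi(-2)^{\oplus m_2}\twoheadrightarrow I_{Z,\Pi}$. Lifting this surjection through the Koszul resolution of $I_{Z,\Pi}$ shows the induced map $\cO_\Pi(-2)^{\oplus m_2}\to\cO_\Pi(-2)^{\oplus 2}$ is surjective, so the kernel of the rightmost arrow is $\cO_\Pi(-2)^{\oplus m_2-2}\oplus\cO_\Pi(-4)$. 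Since $\Hom(\cO_\Pi(-3),\cO_\Pi(-4))=0$, the line bundle $\cO_\Pi(-4)$ splits off as a direct summand of $\Tor_1(\cO_\Pi,\cO_X)$, contradicting the fact that this sheaf is Artinian. Replacing your exactness step by this ``Artinian Tor plus impossible summand'' argument is what makes the proof work without any Cohen--Macaulay or proper-intersection hypothesis.
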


\begin{proof}
Assume~$X$ satisfies the property~$\bN_2$.
The form of the resolution implies that~$X$ is an intersection of quadrics.
Assume there is a plane~$\Pi \subset \PP^n$ such that~$Z = X \cap \Pi$ is a finite scheme of length~4.
Tensoring the resolution of~$\cO_X$ with~$\cO_\Pi$ we obtain a complex 
\begin{equation}
\label{eq:resolution-restricted}
\dots \longrightarrow \cO_\Pi(-3)^{\oplus m_3} \longrightarrow \cO_\Pi(-2)^{\oplus m_2} \longrightarrow \cO_\Pi,
\end{equation}
whose cohomology in $p$-th term is~$\Tor_p(\cO_\Pi,\cO_X)$, which is an Artinian sheaf supported on~$Z$.
Moreover, the cohomology at the right term is~$\cO_Z$, 
hence the rightmost map in~\eqref{eq:resolution-restricted} is an epimorphism~$\cO_\Pi(-2)^{\oplus m_2} \twoheadrightarrow I_{Z,\Pi}$.

On the other hand, the scheme~$Z$ is an intersection of conics in~$\Pi$, hence it is a complete intersection of two conics.
Therefore, the ideal of~$Z$ on~$\Pi$ has a resolution
\begin{equation*}
0 \longrightarrow \cO_\Pi(-4) \longrightarrow \cO_\Pi(-2)^{\oplus 2} \longrightarrow I_{Z,\Pi} \longrightarrow 0.
\end{equation*}
The epimorphism~$\cO_\Pi(-2)^{\oplus m_2} \twoheadrightarrow I_{Z,\Pi}$ lifts canonically to a morphism~$\cO_\Pi(-2)^{\oplus m_2} \to \cO_\Pi(-2)^{\oplus 2}$,
which also has to be surjective (otherwise the composition~$\cO_\Pi(-2)^{\oplus m_2} \to \cO_\Pi(-2)^{\oplus 2} \to I_{Z,\Pi}$ 
factors through~$\cO_\Pi(-2)$ and thus cannot be surjective).
Therefore, the kernel of the rightmost arrow in~\eqref{eq:resolution-restricted} is isomorphic to $\cO_\Pi(-2)^{\oplus m_2 - 2} \oplus \cO_\Pi(-4)$, 
and hence we have an exact sequence
\begin{equation*}
\cO_\Pi(-3)^{\oplus m_3} \longrightarrow \cO_\Pi(-2)^{\oplus m_2 - 2} \oplus \cO_\Pi(-4) \longrightarrow \Tor_1(\cO_\Pi,\cO_X) \longrightarrow 0.
\end{equation*}
But $\Hom(\cO_\Pi(-3), \cO_\Pi(-4)) = 0$, hence $\cO_\Pi(-4)$ is a summand of $\Tor_1(\cO_\Pi,\cO_X)$,
which is absurd since the latter is an Artinian sheaf.
This contradiction shows that~$X$ is not tetragonal.
\end{proof}

\begin{lemma}
\label{lemma:hv-n2}
If~$\bX$ is one of the following homogeneous varieties
\begin{equation*}
\OGr_+(5,10) \subset \PP^{15},
\quad 
\Gr(2,6) \subset \PP^{14},
\quad 
\LGr(3,6) \subset \PP^{13},
\quad 
\GTGr(2,7) \subset \PP^{13},
\quad 
\Gr(3,7) \subset \PP^{34}
\end{equation*}
and~$x \in \bX$ is any point then the Hilbert scheme~$\rF_1(\bX,x) \subset \PP(T_x\bX)$ of lines on~$\bX$ through the point~$x$ satisfies the property~$\bN_2$.
\end{lemma}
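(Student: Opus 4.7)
The plan is to identify the Hilbert scheme $\rF_1(\bX,x)$ explicitly in each of the five cases as a well-studied projective variety, and then invoke a classical syzygy result to conclude that property~$\bN_2$ holds for its embedding in~$\PP(T_x\bX)$. Since $\bX$ is homogeneous, the isomorphism class of $\rF_1(\bX,x)$ does not depend on $x$, and $\rF_1(\bX,x)$ inherits a transitive action of the parabolic subgroup stabilizing $x$; this will identify it as a (possibly reducible) homogeneous variety.

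First I would treat the classical Grassmannian cases. For $\bX = \Gr(k,n)$ with $k \in \{2,3\}$ and $n \in \{6,7\}$, a line through $[W]$ is determined by a flag $L \subset W \subset H$ with $\dim L = 1$ and $\dim H = k+1$, so $\rF_1(\bX,x) \cong \PP(W) \times \PP(V/W)$ embedded by the Segre map into $\PP(W \otimes V/W) \cong \PP(T_x\bX)$. For $\bX = \OGr_+(5,10)$, lines through $[W]$ correspond to $3$-dimensional subspaces of $W$, giving $\rF_1(\bX,x) \cong \Gr(3,5)$ in its Plücker embedding; and for $\bX = \LGr(3,6)$, a line through $[W]$ corresponds to a $2$-plane $L \subset W$ (the rest of the data being determined by the symplectic form), so $\rF_1(\bX,x) \cong \PP(W) = \PP^2$ embedded by~$\cO(2)$ as the Veronese surface $v_2(\PP^2) \subset \PP(S^2 W) \subset \PP(T_x\bX)$. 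The case $\bX = \GTGr(2,7)$ requires extra care: using the $G_2$-invariant 3-form $\lambda$ to cut $\GTGr(2,7)$ out of $\Gr(2,7)$, one checks that among the lines of $\Gr(2,7)$ through $[U]$ (parametrized by $\PP(U) \times \PP(V_7/U)$) those lying in $\GTGr(2,7)$ form a proper subvariety cut out by the linear conditions from $\lambda$; a direct computation identifies this locus with a rational surface of Picard rank one, whose embedding can be described in terms of the $G_2$-representation theory of the stabilizer.

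Second, for each of the resulting varieties I would quote the known fact that the embedding satisfies $\bN_2$. For Segre embeddings of products of projective spaces, the ideal is generated by $2 \times 2$ minors of a generic matrix and its minimal free resolution (the Eagon--Northcott-type complex, or more precisely its equivariant form) has syzygies only in the required degrees; this gives $\bN_2$. For Grassmannians in the Plücker embedding, $\bN_2$ follows from the Kempf--Lascoux--Weyman resolution of the Plücker ideal. For the Veronese surface $v_2(\PP^2)$, the ideal is generated by the $2 \times 2$ minors of a generic symmetric matrix and the minimal resolution is the Eagon--Northcott complex with linear syzygies only, so $\bN_2$ holds. For the $\GTGr(2,7)$ case, once $\rF_1(\bX,x)$ and the line bundle giving its embedding have been identified, $\bN_2$ can either be verified by an equivariant Koszul cohomology computation using the Borel--Weil--Bott theorem on the relevant flag variety, or (more concretely) by exhibiting the quadratic generators of the ideal and their linear syzygies.

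The main obstacle will be the $\GTGr(2,7)$ case, as it is the only non-cominuscule example in the list and therefore the only one whose variety of lines is not immediately a homogeneous variety under a reductive group acting on the tangent space by a minuscule representation. The rest of the argument is essentially an assembly of classical syzygy facts.
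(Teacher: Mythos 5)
Your overall strategy coincides with the paper's: identify $\rF_1(\bX,x)$ explicitly in each case (the paper cites Landsberg--Manivel for all five identifications at once) and then verify $\bN_2$ by writing down the known minimal free resolutions. Four of your five identifications are correct, and your $\Gr(3,5)$ for $\OGr_+(5,10)$ agrees with the paper's $\Gr(2,5)$ up to the standard duality. The quoted syzygy facts (Eagon--Northcott for $2\times 2$ minors of generic and symmetric matrices, the Pl\"ucker resolution of $\Gr(2,5)$, the Lascoux-type resolutions of Segre products) do indeed give $\bN_2$ in those cases.

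The genuine gap is precisely the case you single out as the main obstacle, $\bX = \GTGr(2,7)$, where your description of $\rF_1(\bX,x)$ is wrong: it is not ``a rational surface of Picard rank one'' but the twisted cubic curve $v_3(\PP^1) \subset \PP^3$ (a dimension count already rules out a surface: $\GTGr(2,7)$ is the $5$-dimensional adjoint variety of $G_2$, the lines through a point lie in the $\PP^3$ spanned inside $\PP(T_x\bX) = \PP^4$ by the contact distribution, and the locus cut out of $\PP(U)\times\PP(V_7/U)$ by the linear conditions coming from $\lambda$ is one-dimensional). Since you then defer the $\bN_2$ verification for this case to an unspecified ``equivariant Koszul cohomology computation'' based on the incorrect model, that part of the argument does not go through as written. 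The repair is immediate once the correct identification (available in Landsberg--Manivel, which covers this non-cominuscule case as well) is used: the twisted cubic is defined by the $2\times 2$ minors of a generic $2\times 3$ matrix, with Eagon--Northcott resolution
\begin{equation*}
0 \longrightarrow \cO(-3)^{\oplus 2} \longrightarrow \cO(-2)^{\oplus 3} \longrightarrow \cO \longrightarrow \cO_{v_3(\PP^1)} \longrightarrow 0,
\end{equation*}
which visibly satisfies $\bN_2$.
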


\begin{proof}
By~\cite{LM78} the Hilbert schemes of lines~$\rF_1(\bX,x)$ for the above homogeneous varieties have an explicit description, namely
\begin{equation*}
\Gr(2,5) \subset \PP^{9},
\quad 
\PP^1 \times \PP^3 \subset \PP^{7},
\quad 
v_2(\PP^2) \subset \PP^{5},
\quad 
v_3(\PP^1) \subset \PP^{3},
\quad 
\PP^2 \times \PP^3 \subset \PP^{11},
\end{equation*}
where~$v_2$ and~$v_3$ stand for the Veronese embeddings of degree~2 and~3, respectively.
All of their structure sheaves have explicitly known locally free resolutions which take the following forms:
\begin{equation*}
\xymatrix@R=0.1em@C=1.7em{
0 \ar[r] & \cO(-5) \ar[r] & \cO(-3)^{\oplus 5} \ar[r] & \cO(-2)^{\oplus 5} \ar[r] & \cO \ar[r] & \cO_{\Gr(2,5)} \ar[r] & 0,
\\
0 \ar[r] & \cO(-4)^{\oplus 3} \ar[r] & \cO(-3)^{\oplus 8} \ar[r] & \cO(-2)^{\oplus 6} \ar[r] & \cO \ar[r] & \cO_{\PP^1 \times \PP^3} \ar[r] & 0,
\\
0 \ar[r] & \cO(-4)^{\oplus 3} \ar[r] & \cO(-3)^{\oplus 8} \ar[r] & \cO(-2)^{\oplus 6} \ar[r] & \cO \ar[r] & \cO_{v_2(\PP^2)} \ar[r] & 0,
\\
&0 \ar[r] & \cO(-3)^{\oplus 2} \ar[r] & \cO(-2)^{\oplus 3} \ar[r] & \cO \ar[r] & \cO_{v_3(\PP^1)} \ar[r] & 0,
\\
&\dots \ar[r] & \cO(-3)^{\oplus 52} \ar[r] & \cO(-2)^{\oplus 12} \ar[r] & \cO \ar[r] & \cO_{\PP^2 \times \PP^3} \ar[r] & 0,
} 
\end{equation*}
see~\cite{Wey},
in particular, the property~$\bN_2$ holds for~$\rF_1(\bX,x)$.
\end{proof}

\begin{corollary}
\label{cor:length-f1}
For any prime Fano threefold~$X$ of genus~$g \ge 7$ the length of the Hilbert scheme~$\rF_1(X,x)$ is at most~$3$.
\end{corollary}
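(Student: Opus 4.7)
The plan is to realise $X$ as a linear section of one of the homogeneous varieties from Lemma~\ref{lemma:hv-n2} and then reduce the bound to the non-tetragonality of its Hilbert scheme of lines. By Mukai's classification, any prime Fano threefold $X$ with $g := \g(X) \in \{7,8,9,10,12\}$ can be realised as a transverse linear section $X = \bX \cap L$ of one of the homogeneous varieties
\begin{equation*}
\OGr_+(5,10),\quad \Gr(2,6),\quad \LGr(3,6),\quad \GTGr(2,7),\quad \Gr(3,7)
\end{equation*}
appearing in Lemma~\ref{lemma:hv-n2}, with $L$ a projective subspace of the ambient space.

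For any $x \in X$ one has $T_xX = T_x\bX \cap L$, so Lemma~\ref{lemma:f1-x-x} together with the same description for~$\bX$ yields the identification
\begin{equation*}
\rF_1(X,x) \;=\; \rF_1(\bX,x) \cap \PP(T_xX),
\end{equation*}
where $\PP(T_xX) \subset \PP(T_x\bX)$ is a $2$-plane since $\dim X = 3$. The key input is that $\rF_1(\bX,x)$ satisfies property~$\bN_2$ by Lemma~\ref{lemma:hv-n2}, hence by Proposition~\ref{prop:n2-tetragonality} it is not tetragonal, i.e.\ no plane in $\PP(T_x\bX)$ meets it in a finite scheme of length exactly~$4$.

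To conclude, I would restrict the defining quadrics of $\rF_1(\bX,x)$ to the plane $\PP(T_xX) \cong \PP^2$, which realises $\rF_1(X,x)$ as an intersection of conics in $\PP^2$. Granted zero-dimensionality of $\rF_1(X,x)$ (which is standard for smooth prime Fano threefolds of genus $\ge 7$, since lines on $X$ sweep only a surface), Bézout's theorem applied to any two of these conics with zero-dimensional intersection bounds the length of $\rF_1(X,x)$ by~$4$; non-tetragonality of $\rF_1(\bX,x)$ then excludes the case of length exactly~$4$, leaving length at most~$3$. The main obstacle is the clean identification of $\rF_1(X,x)$ as a plane section of $\rF_1(\bX,x)$ together with the zero-dimensionality hypothesis; both follow from the realisation of $X$ as a transverse linear section of $\bX$ combined with Lemma~\ref{lemma:f1-x-x}.
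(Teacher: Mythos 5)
Your route is the paper's route: Mukai's embedding of $X$ into one of the homogeneous varieties $\bX$ of Lemma~\ref{lemma:hv-n2}, property~$\bN_2$ for $\rF_1(\bX,x)$, Proposition~\ref{prop:n2-tetragonality} to get non-tetragonality, the identification of $\rF_1(X,x)$ as a plane section of $\rF_1(\bX,x)$ (the paper packages the restriction step as Lemma~\ref{lemma:tetragonality-section}), and then a B\'ezout-type bound by $4$ refined to $3$ by non-tetragonality. Two remarks on accuracy: for $g=12$ the presentation of $X_{22}$ inside $\Gr(3,7)$ is \emph{not} a dimensionally transverse linear section (the paper deliberately writes ``not necessarily transverse''), but this is harmless since all you use is that every line of $X$ through $x$ is a line of $\bX$ through $x$ contained in the linear span, so $\rF_1(X,x)$ is still a (possibly non-transverse) linear section of $\rF_1(\bX,x)$; and the realisation of $\rF_1(X,x)$ as an intersection of conics in $\PP(T_xX)\cong\PP^2$ comes directly from Lemma~\ref{lemma:f1-x-x} applied to $X$ itself, since $X$ is an intersection of quadrics by Theorem~\ref{th:bht}.

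The one genuine weak point is your treatment of finiteness of $\rF_1(X,x)$. You take it as a ``standard'' hypothesis, justified by ``lines on $X$ sweep only a surface'' --- but that reasoning does not exclude a positive-dimensional $\rF_1(X,x)$: a one-parameter family of lines through $x$ sweeps a cone with vertex $x$, which is still only a surface, so the premise is compatible with exactly the situation you need to rule out. The paper closes this without any external input: since $\rF_1(X,x)\subset\PP^2$ is an intersection of conics, a positive-dimensional component forces $\rF_1(X,x)$ to contain a line or a conic, whose cone in $X$ would be a plane or a quadric surface of degree at most $2$, contradicting Theorem~\ref{th:bht}(ii) (no surfaces of degree less than $2g-2$ on $X_\bkk$). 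With that substitution for your finiteness claim, the rest of your argument (no common component among the conics, hence two of them meet in a length-$4$ complete intersection containing $\rF_1(X,x)$, and length exactly $4$ is excluded by non-tetragonality) goes through as in the paper.
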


\begin{proof}
By~\cite{Mukai92} any prime Fano threefold~$X$ of genus~$g \ge 7$ is a linear section (not necessarily transverse) 
of one of homogeneous varieties~$\bX$ of Lemma~\ref{lemma:hv-n2}.
Therefore, for any~$x \in X$ the scheme~$\rF_1(X,x)$ is a linear section (not necessarily transverse) of~$\rF_1(\bX,x)$.
The scheme~$\rF_1(\bX,x)$ satisfies the property~$\bN_2$ by Lemma~\ref{lemma:hv-n2}, 
hence~$\rF_1(\bX,x)$ is not tetragonal by Proposition~\ref{prop:n2-tetragonality}, 
and hence~$\rF_1(X,x)$ is not tetragonal by Lemma~\ref{lemma:tetragonality-section}.

On the other hand, by Lemma~\ref{lemma:f1-x-x} the scheme~$\rF_1(X,x) \subset \PP(T_xX) \cong \PP^2$ is an intersection of quadrics, 
so either it contains a conic, or it contains a line, or it is finite of length at most~4.
The first two cases are impossible, because~$X$ contains neither quadratic cones of dimension~2, nor planes by Theorem~\ref{th:bht}.
In the last case, the length cannot be equal to~4 because~$\rF_1(X,x)$ is not tetragonal.
Therefore, $\rF_1(X,x)$ is a finite scheme of length at most~3.
\end{proof}


\end{document}